\newtheorem{definition}{Definition}
\newtheorem{prop}[definition]{Proposition}
\newtheorem{theorem}[definition]{Theorem}
\newtheorem{corr}[definition]{Corollary}
\newtheorem{lemma}[definition]{Lemma}
\numberwithin{definition}{section}
\newtheorem{conjecture}{Conjecture}
\newcommand*\bigcdot{\mathpalette\bigcdot@{.5}}
\newcommand*\bigcdot@[2]{\mathbin{\vcenter{\hbox{\scalebox{#2}{$\m@th#1\bullet$}}}}}
\newcommand{\f}[2]{\text{\rm For}_{#2}(#1)}
\newcommand{\df}[2]{\text{\rm For}_{#2}(#1)}
\newcommand{\edf}[1]{\text{\rm ExFor}(#1)}
\newcommand{\kedf}[2]{\text{\rm ExFor}_{#2}(#1)}
\newcommand{\Za}{(\mathbb{Z}^{2})^\ast}
\newcommand{\Z}{\mathbb{Z}}
\newcommand{\V}{\mathbb{V}}
\newcommand{\PP}{\mathbb{P}}
\newcommand{\E}{\mathcal{E}}
\newcommand{\N}{\mathbb{N}}
\newcommand{\norm}[1]{\left \lVert  #1 \right \rVert}
\tikzset{cross/.style={cross out, draw=black, minimum size=10*(#1-\pgflinewidth), inner sep=0pt, outer sep=0pt},
	cross/.default={1pt}}
\newcommand{\dualempty}{%
		\ensuremath{%
			\mathchoice{%
				\dualemptybase{1.0}{1}%
			}{%
				\dualemptybase{0.6}{0.65}%
			}{%
				\dualemptybase{0.5}{-0.05}%
			}{%
				\dualemptybase{0.3}{-0.05}%
			}%
		}%
	}
	\newcommand{\dualemptybase}[2]{%
		\begin{tikzpicture}[baseline=#2em,scale=#1]
			\node[inner sep=0, outer sep=0] at (-0.2,0) {\hspace{2pt}};
			\node[inner sep=0, outer sep=0] at (1.2,0) {\hspace{2pt}};
			\draw (0,0) -- (0,1) -- (1,1) -- (1,0) -- (0,0);
			\draw[fill=red, color=red] (0.5,0.5) circle  (0.075);fill=red
		\end{tikzpicture}%
	}
	\newcommand{\dualopen}{%
		\ensuremath{%
			\mathchoice{%
				\dualopenbase{1.0}{1}%
			}{%
				\dualopenbase{0.6}{0.65}%
			}{%
				\dualopenbase{0.5}{-0.05}%
			}{%
				\dualopenbase{0.3}{-0.05}%
			}%
		}%
	}
	\newcommand{\dualopenbase}[2]{%
		\begin{tikzpicture}[baseline=#2em,scale=#1]
			\node[inner sep=0, outer sep=0] at (-0.2,0) {\hspace{2pt}};
			\draw (0,0) -- (0,1) -- (1,1) -- (1,0) -- (0,0);
			\draw[fill=red, color=red] (0.5,0.5) circle  (0.075);fill=red
			\draw[color=red] (0.5,0.5) -- (1,0.5);
			\draw[->,blue,very thick] (1,0) -- (1,1); 
			\path[pattern=north west lines, pattern color=blue] (0.9,0) rectangle (1,1);
			\node[inner sep=0, outer sep=0] at (1.2,0.5) {\hspace{2pt}};
		\end{tikzpicture}%
	}
	\newcommand{\dualopentwo}{%
		\ensuremath{%
			\mathchoice{%
				\dualopenbasetwo{1.0}{1}%
			}{%
				\dualopenbasetwo{0.6}{0.65}%
			}{%
				\dualopenbasetwo{0.5}{-0.05}%
			}{%
				\dualopenbasetwo{0.3}{-0.05}%
			}%
		}%
	}
	\newcommand{\dualopenbasetwo}[2]{%
			\begin{tikzpicture}[baseline=#2em,scale=#1]
				\node[inner sep=0, outer sep=0] at (-0.2,0) {\hspace{2pt}};
				\node[inner sep=0, outer sep=0] at (1.2,0) {\hspace{2pt}};
				\draw (0,0) -- (0,1) -- (1,1) -- (1,0) -- (0,0);
				\draw[fill=red, color=red] (0.5,0.5) circle  (0.075);fill=red
				\draw[color=red, very thick ,opacity= 0.4] (0.5,0.5) -- (0.5,0);
				\draw[color=red, very thick ,opacity= 0.4] (0.5,0.5) -- (0,0.5);
                \begin{scope}[rotate around={180:(0.5,0.5)}]
				\draw[->,blue,very thick] (1,0) -- (1,1); 
				\path[pattern=north west lines, pattern color=blue] (0.9,0) rectangle (1,1);
			\end{scope}
				\begin{scope}[rotate around={270:(0.5,0.5)}]
					\draw[->,blue,very thick] (1,0) -- (1,1); 
					\path[pattern=north west lines, pattern color=blue] (0.9,0) rectangle (1,1);
				\end{scope}
		\end{tikzpicture}
	}
	\newcommand{\dualopenthree}{%
		\ensuremath{%
			\mathchoice{%
				\dualopenbasethree{1.0}{1}%
			}{%
				\dualopenbasethree{0.6}{0.65}%
			}{%
				\dualopenbasethree{0.5}{-0.05}%
			}{%
				\dualopenbasethree{0.3}{-0.05}%
			}%
		}%
	}
	\newcommand{\dualopenbasethree}[2]{%
		\begin{tikzpicture}[baseline=#2em,scale=#1]
			\node[inner sep=0, outer sep=0] at (-0.2,0) {\hspace{2pt}};
			\node[inner sep=0, outer sep=0] at (1.2,0) {\hspace{2pt}};
			\draw (0,0) -- (0,1) -- (1,1) -- (1,0) -- (0,0);
			\draw[fill=red, color=red] (0.5,0.5) circle  (0.075);fill=red
			\draw[color=red, very thick ,opacity= 0.4] (0.5,0.5) -- (1,0.5);
			\draw[color=red, very thick ,opacity= 0.4] (0.5,0.5) -- (0,0.5);
			\draw[color=red, very thick ,opacity= 0.4] (0.5,0.5) -- (0.5,0);
			\draw[->,blue,very thick] (1,0) -- (1,1); 
			\path[pattern=north west lines, pattern color=blue] (0.9,0) rectangle (1,1);
			\begin{scope}[rotate around={180:(0.5,0.5)}]
				\draw[->,blue,very thick] (1,0) -- (1,1); 
				\path[pattern=north west lines, pattern color=blue] (0.9,0) rectangle (1,1);
			\end{scope}
			\begin{scope}[rotate around={270:(0.5,0.5)}]
				\draw[->,blue,very thick] (1,0) -- (1,1); 
				\path[pattern=north west lines, pattern color=blue] (0.9,0) rectangle (1,1);
			\end{scope}
		\end{tikzpicture}
	}
\newcommand{\pattern}{
\scalebox{0.4}{
\begin{tikzpicture}
    \draw[step=1cm,gray,very thin] (0,0) grid (2,1);
    \draw[blue, line width=1mm] (0,0) -- (0,1) -- (1,1) -- (1,0) -- (2,0) -- (2,1);
\end{tikzpicture}}\, }
\newcommand{\patterntwo}{
\scalebox{0.1}{
\begin{tikzpicture}
    \draw[step=1cm, gray,line width=0.05mm] (0,0) grid (2,1);
    \draw[blue, line width=2mm] (0,0) -- (0,1) -- (1,1) -- (1,0) -- (2,0) -- (2,1);
\end{tikzpicture}}\, }
\newcommand{\A}[1]{\mathcal{A}^{\patterntwo}_{#1}}
\title{The planar lattice two-neighbor graph percolates}
\date{\today}
\author{David Coupier} 
\address{IMT Nord Europe, Institut Mines-T\'el\'ecom, Univ.\ Lille, F-59000 Lille, France}
\email{david.coupier@imt-nord-europe.fr}
\author{Beno\^{i}t Henry}
\address{IMT Nord Europe, Institut Mines-T\'el\'ecom, Univ.\ Lille, F-59000 Lille, France}
\email{benoit.henry@imt-nord-europe.fr}
\author{Benedikt Jahnel}
\address{Institut f\"ur Mathematische Stochastik, Technische Universit\"at Braunschweig\\
and Weierstrass Institute for Applied Analysis and Stochastics,
Berlin, Germany 
}
\email{benedikt.jahnel@tu-braunschweig.de}
\author{Jonas K\"oppl}
\address{Weierstrass Institute for Applied Analysis and Stochastics, Berlin, Germany
}
\email{jonas.koeppl@wias-berlin.de}
\keywords{Degenerated random environment, lattice $k$-neighbor graphs, directed $k$-neighbor graph, oriented percolation, negatively correlated percolation models, planar duality, enhancement}
\subjclass[2020]{Primary 60K35; Secondary 82B43}
\begin{document}

\begin{abstract}
The $k$-neighbor graph is a directed percolation model on the hypercubic lattice $\Z^d$ in which each vertex independently picks exactly $k$ of its $2d$ nearest neighbors at random, and we open directed edges towards those. We prove that the $2$-neighbor graph percolates on $\Z^2$, i.e., that the origin is connected to infinity with positive probability. The proof rests on duality, an exploration algorithm, a comparison to i.i.d.~bond percolation under constraints as well as enhancement arguments. As a byproduct, we show that i.i.d.~bond percolation with forbidden local patterns has a strictly larger percolation threshold than $1/2$. Additionally, our main result provides further evidence that, in low dimensions, less variability is beneficial for percolation. 
\end{abstract}

\maketitle

\section{Introduction}

Percolation models have seen a tremendous interest in the last decades, in part because they have wide-ranging applications, for example, in statistical physics, communication networks, or mathematical epidemiology. On the other hand, they are mathematically intriguing in part because the models and related questions are often relatively easy to state, but rigorous answers are hard to obtain. For the case of i.i.d.~bond and site percolation on the hypercubic lattice, many aspects of the model are now very well understood (see, for example,~\cite{grimmett1999percolation,bollobas2006percolation}) but many variants of the classical paradigmatic models, in particular directed-percolation models, still remain to be explored and provide a large number of unsolved problems.  

Let us mention a few variants of percolation models on the lattice from the recent and not so recent literature. For example, in {\em $AB$-percolation}, vertices are assigned one of two possible colors independently and at random and the edge is declared open if and only if the vertices at the end carry different colors~\cite{wierman1989ab}.
In the {\em constraint-degree percolation model} every edge independently tries to open at a random time in $[0,1]$, however, this is only successful if, at that time, both of its end vertices have degree at most $k-1$, see~\cite{de2020constrained}. A wide literature focuses on {\em inhomogeneous percolation} in which a (possibly random) subset of edges are opened still independently but with a different parameter than other edges, introducing dependence in the model (see~\cite{zhang1994note,iliev2015phase,10.1214/11-AOP720,de2022approximation,newman1997percolation}).

In the domain of directed percolation, a non-trivial model can be analyzed where, in the two-dimensional square lattice, every horizontal (respectively, vertical) edge independently chooses to be oriented towards the left or right (respectively towards up or down), see~\cite{grimmett2001infinite}. 
Still considering directed lattice percolation, \cite{holmes2014degenerate} proposes a general framework for random fields driven by i.i.d.~random variables on the sites (not on the bonds), which they call a {\em degenerate random environment}. Every vertex draws its outgoing open edges from a distribution on the set of incident edges. Indeed, this framework includes, for example, the {\em compass model}, i.i.d.~site percolation, classical oriented site percolation, or the {\em (half-) orthant model}, see also~\cite{Holmes,beekenkamp2021sharpness,10.1214/20-AOP1476,beaton2024chemical}. However, even though many different degenerate random environments are closely related and many share the same connectivity features, so far, to the best of our knowledge, general percolation statements are very limited, see~\cite[Lemma 2.1 \& 2.2]{holmes2014degenerate} for some basic observations in that respect. Hence, percolation for degenerate random environments has to be studied on a case-by-case bases and this has been done, for example, for a number of models where only two distinct sets of outgoing edges are possible (such as the orthant model). 

\medskip
We contribute to this line of research by considering the {\em $k$-neighbor graph} on $\Z^d$, which can be viewed as a degenerate random environment according to~\cite{holmes2014degenerate}, in which every vertex chooses, independently of each other, precisely $k$ of its $2d$ incident bonds as open outgoing edges uniformly at random, see Figure~\ref{fig:config_and_forward_cluster} for an illustration of the case $k=d=2$. 

\begin{figure}[ht]
\centering
\includegraphics[width=.9\textwidth]{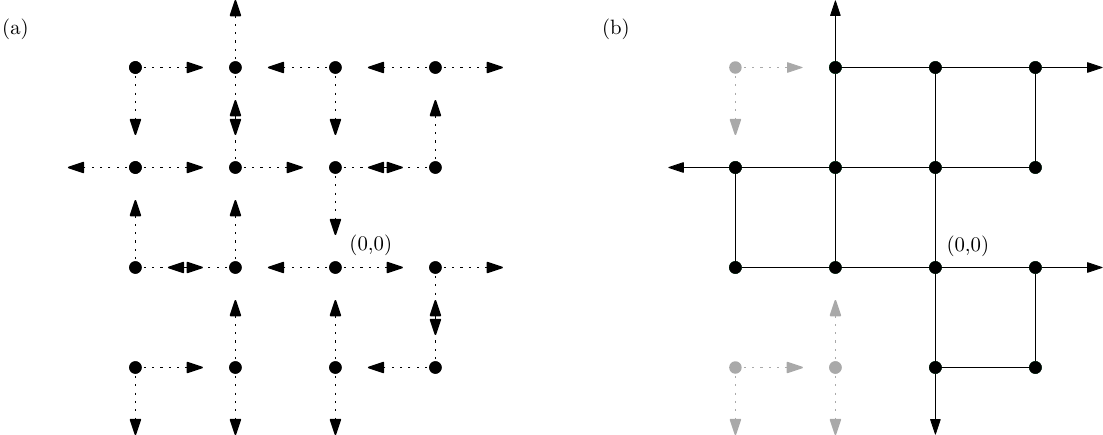}
\caption{\label{fig:config_and_forward_cluster}(a) A possible local configuration of the planar $2$-neighbor model and (b) the resulting forward cluster of the origin. Note that, since the model is directed, the relation $x \rightsquigarrow y$ is {\em not} reflexive.}
\end{figure}

The original motivation for studying the $k$-neighbor graph on $\Z^d$ comes from a continuum percolation model initially introduced in~\cite{HaggMee96}. In that work, the authors consider an homogeneous Poisson point process on $\mathbb{R}^d$ (say with intensity $1$) and draw undirected edges from each Poisson point to its $k\in \N$ nearest neighbors (w.r.t.~the Euclidean distance). They then show the existence of a non-trivial critical integer $k_c = k_c(d)$, depending on the dimension, such that for any $k \geq k_c$, the model percolates while it does not percolate for any $k<k_c$. The case of directed edges is also of interest and the same statement can be proved leading to a critical integer $k_c^{\to}(d)$. However, stating accurate theoretical bounds for the critical integers $k_c(d)$ or $k_c^\to(d)$ appears to be a very difficult task, especially for the most important dimensions $d \in \{2,3\}$, see~\cite{balister2013percolation}. This is the reason why the $k$-neighbor graph on $\Z^d$ was introduced in~\cite{KJLT} as a discrete counterpart to the continuum model from~\cite{HaggMee96}.

In~\cite{KJLT}, several criteria are derived for the existence and absence of infinite clusters, depending on $k$ and $d$, for both cases of directed and undirected edges. One of the intriguing questions that was left open concerned the directed case in $\Z^2$. Whereas the $k$-neighbor graph on $\Z^2$ clearly percolates when $k=3$ and does not percolate when $k=1$, the intermediate case with $k=2$ outgoing edges was conjectured to percolate, a claim strongly supported by simulations\footnote{See also \url{https://bennhenry.github.io/NeighPerc/}.}. Indeed, the set of vertices that can be reached from the origin in the $2$-neighbor graph seems to cover almost the whole lattice $\Z^2$. In this manuscript, we prove this conjecture in Theorem~\ref{thm:upperBound}.

\medskip
Let us finally mention that our model lies in the subclass of degenerate-random-environment models that feature a {\em degree constraint}. Degree-constraint models have been studied extensively also in the context of statistical mechanics where however the random field is not i.i.d.~but rather given by Gibbs measures of various types, see for example~\cite{holroyd2021constrained,grimmett20171,kenyon2006dimers,grimmett2010random}.
Our main result (Theorem~\ref{thm:upperBound}) can then be interpreted in the sense that a fixed degree is beneficial for percolation compared to the classical i.i.d.~(directed) bond percolation where an expected outdegree given by two is insufficient for percolation (see Lemma~\ref{lem:Critical-iid}). In a broader sense, we thus provide some further evidence for the hypothesis that less variability is good for percolation, at least in low dimensions. For comparison, let us mention here the associated discussion about the critical intensity for percolation in the Poisson--Boolean model with i.i.d.~radii and its dependence on the radius distribution. At least in dimensions two and three there is strong numerical evidence that, indeed, deterministic radii minimize the critical intensity; see~\cite{quintanilla2007asymmetry,gouere2014percolation}. In high dimensions this is however not true, see~\cite{gouere2016nonoptimality}. 
 
\medskip

The manuscript is organized as follows. In Section~\ref{sec_setting}, we define the {\em $2dp$-neighbor graph} on $\Z^d$, for $p \in [0,1]$, which is a continuous-parameter version of the $k$-neighbor graph previously mentioned. In particular, on $\Z^2$, the continuous $2$-neighbor graph coincides with the $2$-neighbor graph. The {\em percolation probability} $\theta_p(d)$ that the origin is connected to infinity in the $2dp$-neighbor graph on $\Z^d$ and the associated {\em critical parameter} $p_c(d)$ are introduced in Section~\ref{sec_results}, followed by a lower bound for $p_c(d)$ (Lemma~\ref{lem_low1st}) and an upper bound for $p_c(2)$ (Theorem~\ref{thm:upperBound}, our main result). In Section~\ref{sec_discussion}, the way that lack of variability for the number of outgoing edges promotes percolation is discussed through two extra directed percolation models, namely the {\em independent} and {\em all-or-none directed percolation models}, whose critical parameters are compared to $p_c(2)$. The main ingredients of the proof of Theorem~\ref{thm:upperBound} are explained in Section~\ref{sec_ingredients} while its complete proof is provided in  Sections~\ref{sec:block-construction}--\ref{sect:enhancement}. In Section~\ref{sec_upperbound_cor} we give the proof for a statement similar to our main Theorem~\ref{thm:upperBound} stating that percolation occurs also for a related model, namely the {\em directed-corner model} (Theorem~\ref{thm_upperbound_cor}). Finally, in the Appendix, we provide the proofs of all remaining lemmas.

\section{Setting, main results and discussion}\label{sec_setting_results}

\subsection{Setting}
\label{sec_setting}
Let us define the model in any dimension and with a continuous parameter before focusing on the two-neighbor graph in two dimensions. We consider the $d$-dimensional  integer lattice $\mathbb Z^d$ and a parameter $p\in [0,1]$ to which we associate two further parameters
\begin{equation}
\label{param}
k:=\lfloor 2dp\rfloor \; \text{ and } \; \varepsilon:=2dp-k\in [0,1).
\end{equation}
Based on this, we define a directed edge-percolation model as follows. Each vertex in $\mathbb Z^d$ chooses, independently and uniformly at random, $k$ out of its $2d$ direct neighbors and we draw directed arrows towards these neighbors. Additionally, independently each vertex also throws a coin with success probability $\varepsilon$ and in case of success, one additional neighbor is chosen uniformly from the previously not chosen neighbors and we draw an additional directed edge towards that neighbor. This construction provides a probability distribution denoted by $\mathbb{P}_p$ on the configuration set $\Omega := \{0,1\}^\E$, where $\E := \{(x,y) \in (\Z^d)^2\colon \|x-y\|_{\ell^{1}} = 1\}$ stands for the set of directed edges of $\mathbb Z^d$. As usual, a directed edge $e$ is called {\em open} (respectively {\em closed}) in the configuration $\omega \in \Omega$ if $\omega(e) = 1$.

This is a continuous-parameter version of the $k$-neighbor graph studied previously in~\cite{KJLT}, which we call the {\em $2dp$-neighbor graph}. On the one hand, it generalizes the $k$-neighbor graph since, for example for $d=2$, the $2dp$-neighbor graph with $p=1/4$, $p=1/2$ and $p=3/4$, respectively, correspond to the $k$-neighbor graph with $k=1$, $k=2$ and $k=3$.  On the other hand, when for instance $d=2$ and the parameter $p$ goes from $1/4$ to $1/2$, the $2dp$-neighbor graph interpolates the $1$ and $2$-neighbor graphs (with $k=1$ and $\varepsilon$ going from $0$ to $1$), which will turn out to be useful to solve percolation questions.

Note that, in this model, each vertex has minimal and maximal degrees respectively given by $k$ and $k+1$. Moreover, the expected degree is equal to 
\begin{align}
\mathbb E_p [\text{deg}(o)] = k(1-\varepsilon)+(k+1)\varepsilon = k + \varepsilon = 2dp, 
\end{align}
where $o$ denotes the origin in $\mathbb Z^d$ and $\mathbb{E}_p$ the expectation w.r.t.~the probability measure $\mathbb{P}_p$. As a result, each directed edge has probability $p$ to be open, which justifies the choice of the parametrization~\eqref{param}. It is worth pointing out that, in the $2dp$-neighbor model, the states of directed edges starting from a given vertex are dependent and negatively correlated.

\subsection{Results}
\label{sec_results}

Given two vertices $x,y \in \Z^d$, we write $x\rightsquigarrow y$ for the event that there exists a directed open path from $x$ to $y$ in the lattice $\Z^d$, i.e., if there exists $x_0,x_1,\dots,x_n$ in $\Z^d$ with $x_0 = x$ and $x_n = y$ such that all the $(x_i,x_{i+1})$'s are open directed edges. We write $o \rightsquigarrow \infty$ for the event that there exists an infinite self-avoiding open path starting at the origin $o$. We are interested in the percolation behavior of the resulting directed graph represented by the {\em percolation probability}
\begin{align}
\theta_p = \theta_p(d) := \mathbb P_p(o\rightsquigarrow\infty).
\end{align}
When $\theta_p > 0$, we will say that {\em percolation occurs} in the $2dp$-neighbor model.

A standard coupling argument (given in the appendix for completeness) guarantees that the percolation probability can not decrease as $p$ increases.
\begin{lemma}[Monotonicity]
\label{lem:monotoneP_kappa}
For all $d\ge 1$, the function $p \mapsto \mathbb \theta_p(d)$ is non-decreasing.
\end{lemma}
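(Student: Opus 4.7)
The plan is to construct a single probability space on which all the models $(\mathbb P_p)_{p\in[0,1]}$ are realized simultaneously, in such a way that increasing $p$ can only open more edges. Since the event $\{o\rightsquigarrow\infty\}$ is increasing in the configuration, monotonicity of $\theta_p$ will follow immediately.

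Concretely, for each vertex $x\in\Z^d$ and independently across $x$, I would sample a family $(V_{x,y})_{y\sim x}$ of i.i.d.\ uniform $[0,1]$ random variables indexed by the $2d$ neighbors of $x$, together with an independent uniform $[0,1]$ random variable $U_x$. Given $p\in[0,1]$, set $k=\lfloor 2dp\rfloor$ and $\varepsilon=2dp-k$, and declare the directed edge $(x,y)$ to be open in the configuration $\omega_p$ if and only if either (i) $V_{x,y}$ is among the $k$ smallest values of $(V_{x,z})_{z\sim x}$, or (ii) $V_{x,y}$ is the $(k+1)$-th smallest of these values and $U_x<\varepsilon$. Because the rank statistics of i.i.d.\ continuous variables are uniform on permutations, the first $k$ neighbors selected in (i) form a uniformly chosen $k$-subset of the $2d$ neighbors, and the $(k+1)$-th smallest is then uniform on the remaining $2d-k$ neighbors and independent of the first $k$; this shows that the law of $\omega_p$ under this construction coincides with $\mathbb P_p$.

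Monotonicity of the coupling then reduces to a short case analysis. Take $p\le p'$ and write $k=\lfloor 2dp\rfloor$, $k'=\lfloor 2dp'\rfloor$, $\varepsilon=2dp-k$, $\varepsilon'=2dp'-k'$. If $k=k'$ then necessarily $\varepsilon\le\varepsilon'$, and the same neighbors are selected in step (i) for both parameters, while $\{U_x<\varepsilon\}\subseteq\{U_x<\varepsilon'\}$ makes step (ii) monotone as well. If $k<k'$ then $k+1\le k'$, so every neighbor that step (i) or step (ii) selects at level $p$ sits among the $k'$ smallest-labeled neighbors and is therefore selected by step (i) at level $p'$. In both cases $\omega_p(e)\le\omega_{p'}(e)$ for every directed edge $e$ incident to $x$, and since the construction is independent across vertices this holds simultaneously everywhere.

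Finally, the event $\{o\rightsquigarrow\infty\}$ that there exists an infinite directed open self-avoiding path from the origin is increasing in $\omega$, so $\omega_p\le\omega_{p'}$ pointwise gives $\one_{\{o\rightsquigarrow\infty\}}(\omega_p)\le \one_{\{o\rightsquigarrow\infty\}}(\omega_{p'})$ and hence $\theta_p(d)\le\theta_{p'}(d)$ after taking expectations. I do not foresee any serious obstacle here; the only point worth care is verifying the marginal law, where one must check that conditioning on which $k$ neighbors have the smallest labels leaves the identity of the $(k+1)$-th smallest uniform on the remaining neighbors, a standard property of order statistics of i.i.d.\ continuous variables.
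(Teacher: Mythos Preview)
Your proof is correct and is essentially the same coupling argument as the paper's: the paper samples a uniform random permutation $\sigma^x$ of the outgoing edges at each vertex together with an independent uniform $U_x$, opens the first $k$ edges in the permutation and the $(k{+}1)$-st when $U_x<\varepsilon$, and checks monotonicity by the same two-case analysis on $k=k'$ versus $k<k'$. Your use of order statistics of i.i.d.\ uniform labels $V_{x,y}$ is just a concrete way to realize that uniform permutation, so the two proofs coincide.
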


As usual, let us define the {\em critical parameter} as
\begin{align}
p_c = p_c(d) := \sup\{p\ge 0\colon\theta_p=0\}.
\end{align}
The monotonicity property asserts that $\theta_p$ is zero when $p < p_c$ and positive when $p > p_c$. As a reference, in Table~\ref{table-1}, we give estimates on the critical parameter based on simulations.

\begin{table}[hb]
\begin{center}
    \begin{tabular}{|c|c|c|c|} \hline
       &  $d=2$ & $d=3$ & $d=4$ \\ \hline
       $2dp_c $   & 1.84  & 1.38  & 1.20 \\ \hline
        $p_c $   & 0.45  & 0.23  & 0.15  \\ \hline
    \end{tabular}
\end{center}
\caption{Estimates for the critical parameters based on simulations\protect\footnotemark.}\label{table-1}
\end{table}
\footnotetext{See \url{https://bennhenry.github.io/NeighPerc/}.}
It is clear that $p_c(1)=1$ as the degree-one graph does not percolate since backtracking is possible with positive probability, see~\cite[Proposition~2.1]{KJLT}. Additionally, a general lower bound can be derived from a simple union bound (given again in the appendix). For this, recall the {\em connective constant} of $\mathbb Z^d$
\begin{align*}
c(d) := \lim_{n \to \infty} c_n(d)^{1/n},
\end{align*}
see, e.g., \cite{grimmett2006random}, where $c_n(d)$ represents the number of self-avoiding paths of length $n$ in the $d$-dimensional hypercubic lattice that start at the origin.

\begin{lemma}[Lower bound]
\label{lem_low1st}
For all $d\ge 2$, it holds that $p_c(d) \geq 1/c(d)$.
\end{lemma}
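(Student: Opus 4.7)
The plan is a first-moment argument via a union bound over self-avoiding directed paths, exactly as in the classical proof that $p_c^{\text{bond}}(\Z^d) \geq 1/c(d)$ for i.i.d. bond percolation.

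First I would observe that if $o \rightsquigarrow \infty$, then for every $n \geq 1$ there must exist an open self-avoiding directed path of length $n$ starting at the origin. Writing $\mathrm{SAP}_n$ for the (deterministic) set of self-avoiding directed paths of length $n$ starting at $o$ in $\Z^d$, monotonicity in $n$ gives
\begin{equation}
\theta_p(d) \;\leq\; \PP_p\bigl(\exists\, \gamma \in \mathrm{SAP}_n \text{ with all edges open}\bigr) \;\leq\; \sum_{\gamma \in \mathrm{SAP}_n} \PP_p(\gamma \text{ open}).
\end{equation}

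The key structural point is the computation of $\PP_p(\gamma \text{ open})$ for a fixed $\gamma = (x_0, x_1, \dots, x_n) \in \mathrm{SAP}_n$. Although the states of directed edges issuing from a common vertex are (negatively) correlated in the $2dp$-neighbor model, the edge $(x_i, x_{i+1})$ depends only on the random choice made at $x_i$; since $\gamma$ is self-avoiding, the starting points $x_0, \dots, x_{n-1}$ are pairwise distinct, and the independence across vertices built into the model gives
\begin{equation}
\PP_p\bigl((x_i,x_{i+1}) \text{ open for all } 0 \leq i \leq n-1\bigr) \;=\; \prod_{i=0}^{n-1} \PP_p\bigl((x_i,x_{i+1}) \text{ open}\bigr) \;=\; p^n,
\end{equation}
using that each directed edge is individually open with probability $p$ as noted after~\eqref{param}.

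Combining these two steps yields $\theta_p(d) \leq c_n(d)\, p^n$ for every $n$. By definition of the connective constant, for any $\eta > 0$ there exists $n_0$ with $c_n(d) \leq (c(d)+\eta)^n$ for $n \geq n_0$, so that
\begin{equation}
\theta_p(d) \;\leq\; \bigl((c(d)+\eta)\,p\bigr)^n \quad \text{for all } n \geq n_0.
\end{equation}
If $p < 1/c(d)$, choosing $\eta$ small enough that $(c(d)+\eta)p < 1$ and letting $n \to \infty$ forces $\theta_p(d) = 0$, whence $p_c(d) \geq 1/c(d)$.

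There is essentially no obstacle here; the only point that requires care—and the only place where the model-specific structure enters—is verifying that the edges along a self-avoiding path are genuinely independent despite the negative correlation among the edges out of a single vertex. Self-avoidance is precisely what rescues the computation, so it is important to phrase the union bound over \emph{self-avoiding} paths (for which $c_n(d)$ is subexponential with growth $c(d)$) rather than over arbitrary paths.
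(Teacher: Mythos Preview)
Your proof is correct and takes essentially the same approach as the paper: a union bound over self-avoiding paths, using that the edges along a self-avoiding path have distinct starting vertices and are hence independent, yielding $\theta_p \leq c_n(d)\,p^n$ and the result by letting $n\to\infty$. If anything, your treatment of the independence step is slightly more explicit than the paper's, which merely remarks that ``a self-avoiding path visits a vertex only once and hence independence applies.''
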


For $d=2$, it is known that $c(2)\leq 2.679192495$, see~\cite{PonTitt00}, and hence $p_c(2) \ge 0.37324679054$. However, the main focus of this article is to show that $p_c(2)$ is strictly less than $1/2$ in the planar case.

\begin{theorem}[Upper bound]
\label{thm:upperBound}
It holds that $p_c(2) < 1/2$. In particular, the $2$-neighbor graph percolates.  
\end{theorem}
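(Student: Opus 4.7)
The plan is to leverage the fact that each directed edge of the $2$-neighbor graph is open with marginal probability exactly $1/2$ — the critical threshold for i.i.d.~bond percolation on $\Z^2$ — while the ``exactly two out of four'' constraint imposes strong negative correlations at every vertex. The goal is to convert these negative correlations into a strict shift of the effective threshold below $1/2$. First I would reformulate the event $\{o\not\rightsquigarrow\infty\}$ via planar duality: when the forward cluster of the origin is finite, there exists a dual circuit around $o$ made of dual edges corresponding to closed primal directed edges. To expose such a circuit in a controlled way, I would run an exploration algorithm that visits the forward cluster vertex by vertex and reveals, at each newly reached vertex, only its two outgoing edges. This keeps the configuration at unexplored vertices independent of the revealed data and produces, on $\{o\not\rightsquigarrow\infty\}$, a dual blocking circuit measurable with respect to the exploration filtration.

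Next I would compare the explored configuration to i.i.d.~Bernoulli-$1/2$ bond percolation on the dual lattice. Because the marginal edge probability is $1/2$, the dual environment \emph{locally} looks critical, but the vertex-wise constraint forbids certain local patterns — in particular the pattern \pattern\, encoding three closed outgoing primal edges at a single vertex is ruled out deterministically. I would couple the exploration to i.i.d.~Bernoulli-$1/2$ bond percolation on the dual conditioned to avoid these patterns, reducing the theorem to the statement, advertised in the abstract, that i.i.d.~bond percolation with forbidden local patterns has critical parameter strictly greater than $1/2$.

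To exploit the forbidden patterns I would combine a block construction with an enhancement-style argument. On a mesoscopic scale $N$, define a good block event encoding that the exploration crosses the block in a prescribed direction, and that no dual blocking path separates the ingress face from the egress face. The 1-dependence of the resulting block process, together with the Liggett--Schonmann--Stacey comparison theorem, reduces matters to showing that the probability of the good block event can be made larger than the critical threshold of $1$-dependent site percolation on $\Z^2$. Here the enhancement argument enters: the deterministic absence of the pattern \pattern\, at each vertex of $\Z^2$ provides a positive density of ``bonus'' edges compared to i.i.d.~Bernoulli-$1/2$ percolation, and this density can be converted, in the spirit of Aizenman--Grimmett, into a strict gain that makes the good block event sufficiently likely for $N$ large.

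The hardest part will be the strict inequality in the last step. Classical enhancement arguments rely on positive association (FKG) and on sprinkling extra independent bonds, neither of which is directly compatible with the ``exactly two out of four'' constraint, which introduces negative correlations and a rigid local degree condition. A viable route is to couple the constrained model with an auxiliary independent family of Bernoulli variables that re-randomizes the choice at selected ``pivotal'' vertices, in such a way that the $2$-out-of-$4$ constraint is preserved while producing monotone modifications of the dual configuration; quantifying the resulting strict improvement, and showing it persists under the block renormalization, is the technical core of the proof. A secondary subtlety is to formulate the planar duality used in Step~1 precisely for the directed setting, so that the closed dual circuit around the origin is both well-defined and adapted to the exploration.
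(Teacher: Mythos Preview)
Your proposal has the right large-scale architecture --- duality, an exploration, comparison with i.i.d.\ Bernoulli percolation under a local constraint, an enhancement argument, and a block renormalization via Liggett--Schonmann--Stacey --- and this matches the paper's strategy. However, there is a genuine gap at the coupling step.

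You write that the exploration ``keeps the configuration at unexplored vertices independent of the revealed data'' and that you will ``couple the exploration to i.i.d.\ Bernoulli-$1/2$ bond percolation on the dual conditioned to avoid these patterns''. This is precisely where the proof breaks. The relevant dependence is not between vertices but among the four dual edges surrounding a single \emph{primal} vertex. Suppose during the dual exploration you are about to reveal a dual edge $e^\ast$ and another dual edge $e^\ast_W$ around the same primal vertex has already been revealed and found \emph{closed} (equivalently, the corresponding primal outgoing edge is open). Then the conditional probability that $e^\ast$ is open is
\[
\mathbb{P}_{(2,0)}\big(e^\ast\text{ open}\,\big|\,e^\ast_W\text{ closed}\big)=\tfrac{2}{3}>\tfrac{1}{2},
\]
so no domination by Bernoulli-$1/2$ is possible at such a step. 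The paper names these edges \emph{pivotal} and shows (Lemma~\ref{lem:pivot}, a planarity argument using the depth-first counter-clockwise rule together with Rules~1 and~2) that whenever a pivotal edge is reached it is the \emph{only} edge left in the exploration list. Hence each pivotal edge is a chance for the algorithm to terminate, and the explored dual set decomposes as a chain of ``visited clusters'' separated by pivotal edges, with a subgeometric number of links (Lemma~\ref{lem:SubGeom}). Within each visited cluster every revealed edge has conditional probability $\le 1/2$, and \emph{only then} does the comparison to constrained Bernoulli-$1/2$ (Proposition~\ref{prop:stochasticDomination}) go through. Your outline does not identify this obstacle or the planarity mechanism that resolves it; the ``pivotal'' you mention in the final paragraph is a different object, living inside the enhancement step rather than before it.

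A smaller but related point: the paper explores the \emph{dual} forward set $\mathrm{For}(o^\ast)$ directly and proves it is small (Theorem~\ref{thm:clusterSize}), then deduces primal crossings of rectangles (Lemma~\ref{lem:G(z)TendsTo1}). Your plan explores the primal cluster and extracts a dual blocking circuit afterwards. The paper's choice is what makes the ``no left winding'' constraint visible along the exploration (three open dual edges around one primal vertex would force three closed primal outgoing edges, which is impossible), and it is also what makes the pivotal-edge analysis tractable.
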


Combining these lower and upper bounds leads to $0.373 < p_{c}(2) < 0.5$ and thus proves the conjecture of~\cite{KJLT}.

Let us finally mention a straightforward consequence of Theorem~\ref{thm:upperBound}. In~\cite[Theorem~2.3]{KJLT}, it is proved that the percolation of the $k$-neighbor graph in $\Z^d$ implies the one of the $(k+1)$-neighbor graph in $\Z^{d+1}$. 
This implies the following result.
\begin{corr}[Upper bound in all dimensions]
For $d\ge 2$ it holds that $p_c(d) < 1/2$. In particular, the $d$-neighbor graph percolates in $\Z^d$.
\end{corr}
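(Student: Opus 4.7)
The corollary contains two claims: the strict inequality $p_c(d)<1/2$ for all $d\ge 2$, and percolation of the $d$-neighbor graph on $\mathbb{Z}^d$. The second follows from the first together with Lemma~\ref{lem:monotoneP_kappa}: the $d$-neighbor graph is the $2dp$-neighbor model at $p=1/2$, and if $p_c(d)<1/2$ then by monotonicity and the definition of $p_c$ one has $\theta_{1/2}(d)\ge\theta_{p_0}(d)>0$ for any $p_0\in(p_c(d),1/2)$. Thus it suffices to prove the strict inequality.

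The plan is induction on $d$, with Theorem~\ref{thm:upperBound} as the base case $d=2$. For the inductive step, I would use a continuous-parameter extension of \cite[Theorem~2.3]{KJLT}, which asserts that percolation of the $k$-neighbor graph on $\mathbb{Z}^d$ implies percolation of the $(k+1)$-neighbor graph on $\mathbb{Z}^{d+1}$. The integer version, iterated from $d=k=2$, only yields $p_c(d)\le 1/2$. To obtain strict inequality, the goal is the continuous statement: for every $p\in(0,1/2)$, $\theta_p(d)>0\Rightarrow\theta_p(d+1)>0$, which then gives $p_c(d+1)\le p_c(d)$ and so, by induction, $p_c(d)\le p_c(2)<1/2$ for all $d\ge 2$.

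To establish the continuous version, I would build an explicit coupling. Fix a slice $\mathbb{Z}^d\times\{0\}\subset\mathbb{Z}^{d+1}$ and view each vertex in the slice as having $2d$ "horizontal" neighbors and $2$ "vertical" neighbors. Under the $2(d+1)p$-neighbor rule a vertex picks $k'=\lfloor 2(d+1)p\rfloor$ of these $2d+2$ neighbors, plus an extra with probability $2(d+1)p-k'$. One would sample this selection in two stages, first drawing the number of horizontal picks, then conditioning uniformly on that count, so that the horizontal marginal stochastically dominates the $2dp$-neighbor selection on $\mathbb{Z}^d$. Combined with monotonicity of the percolation event in the edge set, this shows that percolation in the $d$-dimensional slice forces percolation in the full $(d+1)$-dimensional lattice, yielding $p_c(d+1)\le p_c(d)$.

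The main obstacle will be verifying the stochastic domination in the regime $p<1/2$, where $k$ and $k'$ can differ by one and the $2dp$-neighbor law is not a product measure but a uniform choice of $k$ edges plus a correlated extra. A careful case analysis in the values of $(k,\varepsilon)$ versus $(k',\varepsilon')$, exploiting that $2(d+1)p-2dp=2p$ so the additional horizontal mass is small, should suffice; but it is the delicate combinatorial step underlying the continuous generalization of \cite[Theorem~2.3]{KJLT} and the only nontrivial input beyond the main Theorem~\ref{thm:upperBound}.
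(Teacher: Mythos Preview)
You correctly observe that iterating the integer statement of \cite[Theorem~2.3]{KJLT} from the base case $d=k=2$ only yields percolation of the $d$-neighbor graph in $\mathbb{Z}^d$, i.e., $p_c(d)\le 1/2$. The paper itself deduces the corollary by exactly this one-line citation and does not spell out how the strict inequality is obtained; a continuous-parameter refinement of the KJLT comparison is indeed the natural route, and this is what you propose.

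However, the specific mechanism you put forward---that the horizontal marginal of the $2(d+1)p$-neighbor rule in a slice of $\mathbb{Z}^{d+1}$ stochastically dominates the $2dp$-neighbor rule on $\mathbb{Z}^d$---is false, not merely delicate. Take $d=2$ and any $p\in[1/3,1/2)$. Then $k=\lfloor 4p\rfloor=1$, so every vertex in the $4p$-neighbor model on $\mathbb{Z}^2$ has out-degree at least one. In $\mathbb{Z}^3$ at the same $p$ one has $k'=\lfloor 6p\rfloor=2$ and $\varepsilon'=6p-2<1$, and with probability $(1-\varepsilon')/\binom{6}{2}=(1-\varepsilon')/15>0$ the two chosen neighbours are precisely the two vertical ones, giving $H=0$ horizontal outgoing edges. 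Thus $\mathbb{P}(H\ge 1)<1$, whereas the $\mathbb{Z}^2$ out-degree is deterministically at least $1$, so the claimed domination fails. The obstruction is general: since $2(d+1)p-2dp=2p<1$ for $p<1/2$, one always has $k'\in\{k,k+1\}$, and the event that both vertical neighbours lie among the $k'$ picks (positive probability once $k'\ge 2$) forces $H=k'-2\le k-1<k$, strictly below the deterministic minimum degree in the $d$-dimensional model. Any valid extension of the KJLT argument must therefore use the vertical edges---for instance, allowing excursions out of the slice that may return---rather than discard them; the slice-restriction coupling you outline cannot be salvaged by case analysis alone.
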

In the following section, we discuss the relation of our model to a variety of other degenerate random environments.  

\subsection{Discussion}
\label{sec_discussion}

As mentioned in the introduction, our result (and even more so our simulations) suggests that the degree constraint is beneficial for percolation. In order to make this more precise, we consider several related directed percolation models that are also isotropic and i.i.d.~over the set of sites.  

\subsubsection{Independent directed percolation}

First, consider the product probability distribution $\mathbb{P}_p^{\text{iid}}$ on $\Omega = \{0,1\}^\E$, where $\E$ still denotes the set of directed edges of $\mathbb Z^2$, in which directed edges are open independently from each other with probability $p\in [0,1]$. To be consistent with the previous notations, we set
\[
\theta_p^{\text{iid}} := \mathbb P_p^{\text{iid}}(o\rightsquigarrow\infty) \quad \mbox{ and } \quad p_c^{\text{iid}} := \sup\{p\ge 0\colon\theta_p^{\text{iid}}=0\}.
\]
A coupling with the undirected independent bond percolation model on $\Z^2$ allows to prove that planar independent directed percolation also features a critical parameter given by $1/2$. 

\begin{lemma}[Critical independent directed percolation]
\label{lem:Critical-iid}
In dimension $d=2$, the percolation probability satisfies $\theta_p^{\text{iid}} = 0$ if and only if $p\leq 1/2$. In particular, $p_c^{\text{iid}} = 1/2$.
\end{lemma}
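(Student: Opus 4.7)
The argument splits naturally into the lower bound $p_c^{\text{iid}}\ge 1/2$ and the upper bound $p_c^{\text{iid}}\le 1/2$, both exploiting planar duality of the i.i.d.\ directed model on $\Z^2$. I would first set up the dual: to each primal directed edge $(x,y)$, associate the directed edge on $(\Z^2)^\ast$ obtained by rotating $(x,y)$ by $90^\circ$ counterclockwise, and declare the dual edge open iff the primal is closed. A direct check shows that, under $\mathbb{P}_p^{\text{iid}}$, the dual is again an i.i.d.\ directed bond percolation on the dual lattice, at parameter $1-p$; in particular, the model is self-dual at $p=1/2$. An elementary inspection of the boundary of a finite forward cluster $C(o) = \{x : o \rightsquigarrow x\}$ yields the key duality: $C(o)$ is finite if and only if there is an open directed counterclockwise dual cycle surrounding $o$.

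For the lower bound, by monotonicity it suffices to show $\theta_{1/2}^{\text{iid}}=0$. Adapting Harris's classical strategy to the directed setting, self-duality at $p=1/2$ combined with the FKG inequality (which holds trivially by independence of the directed edges) and standard rectangle-crossing probability estimates at the self-dual point yields that almost surely there is an open directed counterclockwise dual cycle around $o$, whence $\theta_{1/2}^{\text{iid}}=0$. Note that the naive comparison with the ``at least one orientation open'' undirected projection only gives $p_c^{\text{iid}}\ge 1-1/\sqrt{2}$, so duality is essential to reach the self-dual value $1/2$.

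For the upper bound, fix $p > 1/2$ and couple the i.i.d.\ directed model with undirected bond percolation at parameter $p$ by attaching to each undirected edge $\{x,y\}$ two independent Bernoulli$(p)$ coins, one per orientation, and declaring $\{x,y\}$ open iff the canonical-orientation coin equals $1$. Since the undirected model percolates by the Harris--Kesten theorem, a renormalization / block-construction argument leveraging the independent reverse-orientation coins to repair anti-canonical passages along an infinite undirected path from $o$ inside the undirected infinite cluster produces an open directed infinite path from $o$ with positive probability, giving $\theta_p^{\text{iid}}>0$.

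The main technical obstacle is the lower bound at criticality: transferring Harris's rectangle-crossing argument from undirected bond percolation to the directed setting has to be done with care, since a directed crossing is strictly more restrictive than its undirected counterpart and the self-duality of the directed model involves a $90^\circ$ rotation of orientations which must be handled compatibly with the chosen crossings.
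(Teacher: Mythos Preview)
Your proposal takes a substantially harder route than necessary, and both halves contain genuine gaps.

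For the upper bound, the step ``a renormalization/block-construction argument leveraging the independent reverse-orientation coins to repair anti-canonical passages'' is not a standard argument and is not justified. An infinite undirected open path from $o$ at parameter $p$ traverses edges in both canonical and anti-canonical directions; the anti-canonical coins being independent Bernoulli($p$) does not by itself produce an infinite \emph{directed} path, and a block construction would need to show that some positive density of blocks can be ``repaired'' with positive probability in a percolating way. You have not indicated how to do this, and it is not obvious.

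For the lower bound, invoking ``standard rectangle-crossing probability estimates at the self-dual point'' hides the real work. The Harris/RSW machinery rests on the topological dichotomy ``either a primal left--right crossing or a dual top--bottom crossing''; in the directed model this dichotomy for \emph{directed} crossings is not the standard planar-duality fact and would have to be proved separately. You correctly flag this as the main obstacle, but you do not resolve it.

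The paper bypasses all of this with a single exploration coupling that gives the exact identity $\theta_p^{\text{iid}}=\mathcal{P}_p(o\leftrightarrow\infty)$ for every $p$, after which both inequalities are immediate from Harris--Kesten. Explore the directed forward set of $o$ (breadth-first, counter-clockwise). Each undirected edge $\{x,y\}$ is queried in at most one direction: whichever endpoint the exploration reaches first. The crucial point is that \emph{which} of $U_{(x,y)},U_{(y,x)}$ gets read is determined by the exploration history and hence is independent of the pair $(U_{(x,y)},U_{(y,x)})$ itself; the revealed bits are therefore i.i.d.\ Bernoulli($p$), one per undirected edge touched. This makes the explored directed forward set of $o$ coincide, as a set of vertices, with the undirected cluster of $o$ in i.i.d.\ bond percolation at parameter $p$. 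No duality, no RSW, no renormalization is needed.
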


The complete proof is again presented in the Appendix.

\subsubsection{All-or-none directed percolation}

Second, consider a percolation model where, for each vertex of $\mathbb Z^d$ and independently from each other, either its $2d$ outgoing edges are open with probability $p\in [0,1]$ or they are all closed with probability $1-p$. We denote by $\mathbb{P}_p^{\text{aon}}$ the corresponding probability distribution on $\Omega = \{0,1\}^\E$. According to $\mathbb{P}_p^{\text{aon}}$, the probability for a given directed edge to be open is $p$. As before we also define the percolation probability $\theta_p^{\text{aon}}$ and the critical parameter $p_c^{\text{aon}}$. In this model only the vertices having four outgoing edges may help the origin to reach infinity. This basic remark provides the next result where $p_c^{\text{site}} = p_c^{\text{site}}(\Z^d)$ denotes the critical value for the i.i.d.~site percolation.

\begin{lemma}[Critical all-or-none directed percolation]
\label{lem:Critical-0-4}
In all dimensions $d\ge 1$, $p_c^{\text{aon}} = p_c^{\text{site}}$.
\end{lemma}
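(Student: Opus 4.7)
The plan is to exhibit an almost tautological coupling between the all-or-none model and Bernoulli site percolation on $\Z^d$. Declare a vertex $x \in \Z^d$ to be \emph{active} if all of its $2d$ outgoing edges are open in the all-or-none configuration, and \emph{inactive} otherwise. By construction, vertices are active independently with probability $p$, so the random field of active vertices is distributed as i.i.d.~site percolation on $\Z^d$ with parameter $p$.

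Next, I would verify that the event $\{o \rightsquigarrow \infty\}$ under $\mathbb{P}_p^{\text{aon}}$ coincides exactly with the event that the origin lies in an infinite cluster of active vertices. The key observation is that an edge $(x_i,x_{i+1})$ is open in the all-or-none model if and only if its tail $x_i$ is active, and conversely, if $x_i$ is active then \emph{every} outgoing edge at $x_i$ is open. Therefore a directed open path $o = x_0, x_1, x_2, \dots$ exists if and only if there is a nearest-neighbor self-avoiding sequence $(x_i)_{i \ge 0}$ starting at $o$ along which every vertex is active. Since the underlying lattice is bidirectional, this is precisely the condition for $o$ to be connected to infinity in the (undirected) site percolation cluster of active vertices.

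Combining these two facts gives the identity
\begin{equation}
\theta_p^{\text{aon}} = \mathbb{P}_p^{\text{aon}}(o \rightsquigarrow \infty) = \mathbb{P}^{\text{site}}_p(o \text{ lies in an infinite active cluster}),
\end{equation}
from which taking suprema over $\{p \colon \theta_p^{\text{aon}} = 0\}$ yields $p_c^{\text{aon}} = p_c^{\text{site}}$.

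There is essentially no hard step here; the only thing to be careful about is the \emph{direction} of the edges. One must notice that in the all-or-none model the state of an outgoing edge $(x,y)$ depends solely on the activity of its tail $x$, so an active origin automatically opens a path toward any neighbor, and the directedness of the model does not create an asymmetry. Once this observation is in place, the statement follows immediately without any further estimate.
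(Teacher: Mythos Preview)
Your proposal is correct and follows essentially the same approach as the paper: both couple the all-or-none model with i.i.d.\ site percolation by declaring a vertex open (``active'') precisely when all its outgoing edges are open, and then observe that $o \rightsquigarrow \infty$ in the directed model coincides with $o$ lying in an infinite open site cluster. Your write-up is in fact slightly more explicit about why the directed and undirected connectivity notions agree here, which is a helpful clarification.
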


The complete proof is again presented in the appendix. Let us recall that, in dimension $2$, $p_c^{\text{site}}$ is estimated at around $0.59$, see~\cite{jacobsen2015critical}, and the best rigorous lower bound is $0.55$, see~\cite{van1996new}.

\subsubsection{Geometrically constraint directed percolation}

Let us mention two more isotropic degree constraint models, called the {\em northsouth-eastwest model} and the {\em directed-corner model}, that will be used for comparison. We focus only on the planar case $\Z^2$. Consider again $p\in [0,1]$ and the associated parameters $k = \lfloor 4p\rfloor$ and $\varepsilon = 4p-k \in [0,1)$. Both models coincide with the $4p$-neighbor graph when $k=0$ (or $p < 1/4$). Now, if $k=1$, we pick uniformly one of the four neighbors and draw a directed edge towards it. Then, we throw a coin with success parameter $\varepsilon$ and, in case of success, we open the edge to the neighbor that is located in the opposite direction of the already connected neighbor for the northsouth-eastwest model. In the limiting case $\varepsilon=1$ (corresponding to $p=1/2$), we thus get a model in which either the edges towards north and south or the edges towards east and west are open, see Figure~\ref{fig:straight_config_and_forward_cluster} for an illustration. For the directed-corner model, in case of success, we open one of the two neighboring edges of the already open edge uniformly at random. Again, in the limiting case $\varepsilon=1$ (corresponding to $p=1/2$), we get a model where each of the following combinations of edges is open with probability $1/4$: northeast, northwest, southeast and southwest. See Figure~\ref{fig:corner_config_and_forward_cluster} for an illustration. When $k=2$ and in case of success (with probability $\varepsilon$), for both models, a third open edge is added, chosen uniformly from the remaining unopened edges. Both models present no interest w.r.t.~percolation when $k\geq 3$. We write $\theta_p^{\text{ns--ew}}$, $p_c^{\text{ns--ew}}$ and $\theta_p^{\text{corn}}$, $p_c^{\text{corn}}$ for the associated percolation functions and the critical parameters. Let us mention the following simple observation without proof.

\begin{lemma}[Percolation for the northsouth-eastwest model]
We have that $\theta_{1/2}^{\text{ns--ew}}=1$. 
\end{lemma}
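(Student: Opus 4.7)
The plan is to give a deterministic construction: for every configuration in the support of $\mathbb P_{1/2}^{\text{ns--ew}}$, I will exhibit an explicit infinite self-avoiding directed open path starting at $o$, which immediately yields $\theta_{1/2}^{\text{ns--ew}} = 1$. The key observation is that at $p=1/2$ each vertex $x \in \Z^2$ is, independently, either of type NS, meaning its open outgoing edges go to $x \pm e_2$, or of type EW, meaning its open outgoing edges go to $x \pm e_1$, each with probability $1/2$. Crucially, regardless of type, every vertex has at least one open outgoing edge that strictly increases one of the two coordinates, namely the edge to $x + e_2$ if NS, or the edge to $x + e_1$ if EW.

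I therefore define the sequence $(X_n)_{n \geq 0}$ by $X_0 := o$ and
\[
X_{n+1} := \begin{cases} X_n + e_2 & \text{if } X_n \text{ is of type NS}, \\ X_n + e_1 & \text{if } X_n \text{ is of type EW}. \end{cases}
\]
By the observation above, each directed edge $(X_n, X_{n+1})$ is open. Moreover, both coordinates of $X_n$ are non-decreasing in $n$, and exactly one of them strictly increases by $1$ at each step. In particular, the vertices $X_0, X_1, X_2, \ldots$ are pairwise distinct, so $(X_n)_{n \geq 0}$ is an infinite self-avoiding open directed path. Hence $o \rightsquigarrow \infty$ on every configuration, so that $\theta_{1/2}^{\text{ns--ew}} = 1$.

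There is no real obstacle to overcome: the statement holds surely, not merely almost surely, and no concentration, duality, or ergodicity input is required. The only point to verify is that the construction remains self-avoiding, which is immediate from the strict monotonicity of the coordinate sum along the path. This also explains why the authors state the result without proof.
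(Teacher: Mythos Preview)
Your argument is correct and is precisely the simple observation the authors have in mind; the paper states the lemma without proof, and your greedy north-or-east walk is the natural way to see that every configuration in the support of $\mathbb P_{1/2}^{\text{ns--ew}}$ contains an infinite self-avoiding directed open path from $o$.
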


\begin{figure}[ht]
\centering
\includegraphics[width=.9\textwidth]{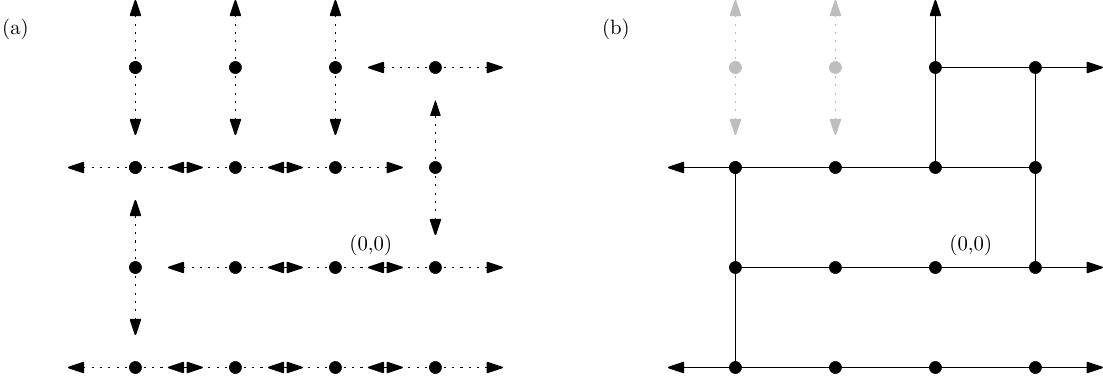}
\caption{\label{fig:straight_config_and_forward_cluster}(a) A possible local configuration of the northsouth-eastwest model with $p=1/2$ and (b) the resulting forward cluster of the origin.}
\end{figure}

Our method of proof for Theorem~\ref{thm:upperBound} is sufficiently robust to also provide an upper bound for the critical value in the directed-corner model, leading to a corresponding statement in this case, which will be proved in Section~\ref{sec_upperbound_cor}. 

\begin{theorem}[Upper bound for the directed-corner model]
\label{thm_upperbound_cor}
It holds that $p_c^{\text{corn}} < 1/2$.
\end{theorem}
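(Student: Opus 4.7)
The plan is to transport the scheme developed for Theorem~\ref{thm:upperBound} to the directed-corner model, exploiting the structural similarity between the two models at $p = 1/2$: both are i.i.d.~across vertices, both have every vertex almost surely of out-degree two, and both assign marginal probability $1/2$ to every directed edge. It should therefore suffice to follow the same four-step strategy used in Sections~\ref{sec_ingredients}--\ref{sect:enhancement}: monotonicity, a block construction driven by an exploration algorithm, a comparison with i.i.d.~bond percolation under a local-pattern constraint via planar duality, and a final enhancement step.

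Concretely, I would first verify monotonicity of $p \mapsto \theta_p^{\text{corn}}$ by the same coupling argument as in Lemma~\ref{lem:monotoneP_kappa}, so that it is enough to prove $\theta_p^{\text{corn}} > 0$ at some $p < 1/2$. Next, I would adapt the exploration algorithm to the corner configuration: at each revealed vertex the outgoing pair is now sampled uniformly from the four corner pairs, so that the two straight pairs NS and EW are forbidden \emph{a priori}. Since the set of admissible local configurations is strictly smaller than in the $2$-neighbor model, the same planar-duality reasoning should identify a dual connection event with the appearance of an infinite cluster in an i.i.d.~bond percolation model whose configurations avoid a local pattern (such as $\pattern$, or one that dominates it). The constrained-percolation byproduct of the paper then certifies that this dual model has critical threshold strictly above $1/2$, so that its probability of percolation at $p = 1/2$ vanishes and the directed-corner model must itself percolate at $p = 1/2$. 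Finally, the enhancement argument of Section~\ref{sect:enhancement} upgrades this non-strict bound to $p_c^{\text{corn}} < 1/2$.

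The main obstacle will be to check that the planar-duality step actually produces a forbidden-pattern constraint on the dual side when applied to the corner model. The duality map underlying the proof of Theorem~\ref{thm:upperBound} is engineered around the symmetric $\binom{4}{2}$-choice of two outgoing edges, and in the corner model the admissible pairs are no longer closed under that symmetry. One needs to verify either that the pushforward of the corner distribution under the duality map is \emph{dominated} by a constrained Bernoulli measure whose critical threshold already exceeds $1/2$, or that a modified duality is available that produces a forbidden pattern whose associated i.i.d.~threshold can be shown, by the same argument as for $\pattern$, to be strictly above $1/2$. A secondary issue is that the enhancement step for the corner model adds the second outgoing edge from a restricted set (the two corner neighbors of the first edge, rather than the three remaining neighbors available in the $2$-neighbor case), so the specific computations in the enhancement proof will have to be redone, although the overall structure of the argument should persist.
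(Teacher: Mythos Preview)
Your high-level outline is correct, but you have misidentified where the real difficulty lies, and the gap you have left is precisely the one that requires new work.

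The two obstacles you flag are non-issues. First, the duality step producing a forbidden pattern goes through verbatim: the left winding is forbidden in the corner model for exactly the same reason as in the $2$-neighbor model, namely because every primal vertex has out-degree exactly two, so three dual edges winding counter-clockwise around a single primal vertex cannot all be open. One checks directly from the four corner configurations that every non-pivotal revealed dual edge is open with conditional probability at most $1/2$, so the domination of each visited cluster by i.i.d.\ bond percolation at parameter $1/2$ under the constraint of avoiding \scalebox{0.7}{\pattern} (Proposition~\ref{prop:stochasticDomination}) carries over unchanged. Second, the enhancement argument of Section~\ref{sect:enhancement} is a statement purely about i.i.d.\ bond percolation with a forbidden pattern; it does not see the underlying directed model at all and is reused literally. (Your description of the enhancement as ``adding the second outgoing edge from a restricted set'' conflates the $\varepsilon$-parameter in the definition of the model with the $q$-parameter enhancement of Section~\ref{sect:enhancement}; these are unrelated.)

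The genuine gap is in the analogue of Lemma~\ref{lem:SubGeom}, the subgeometric control of the number $\mathcal{T}_{\text{piv}}$ of open pivotal edges. In the $2$-neighbor model a pivotal edge is open with conditional probability at most $2/3$, which yields the geometric tail. In the corner model this fails: if the dual edge $e^\ast_W$ opposite the pivotal edge $e^\ast$ has already been explored and found closed, then the primal vertex must have chosen one of the two corner pairs containing the west edge, and in either case the east primal edge is closed, i.e.\ the pivotal edge $e^\ast$ is open with conditional probability exactly $1$. Such \emph{automatically open} pivotal edges can occur in succession, and nothing in your proposal controls how many.

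The paper resolves this with a short but genuinely new local analysis (Lemmas~\ref{lem:NbPivotCorner} and~\ref{lem:RkAutoOpen}): after an automatically open pivotal edge is revealed, the next few edges in the exploration list are forced, by planarity and the rules of the algorithm, to be pivotal but \emph{not} automatically open, so the process terminates within at most three further steps with probability at least $1/8$. This is the one place where the proof for the corner model departs from that of Theorem~\ref{thm:upperBound}.
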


\begin{figure}[ht]
\centering
\includegraphics[width=.9\textwidth]{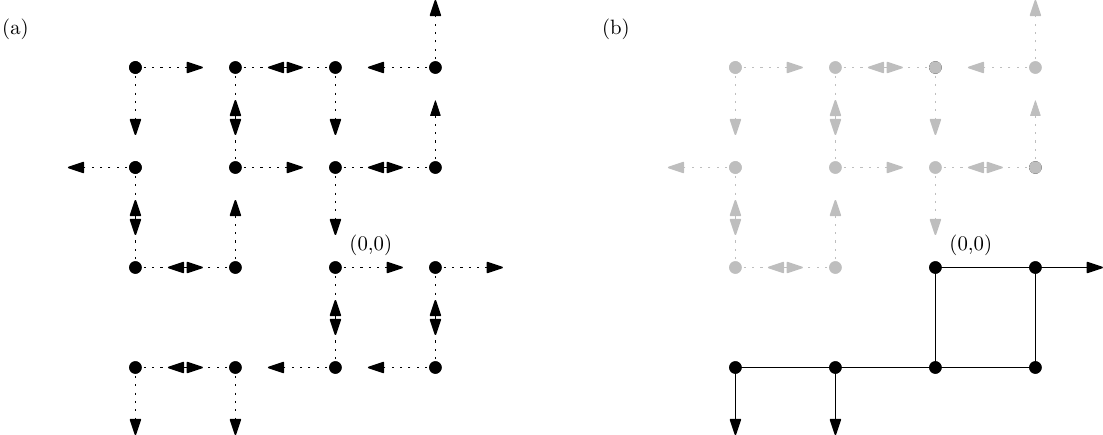}
\caption{\label{fig:corner_config_and_forward_cluster}(a) A possible local configuration of the directed corner model with $p=1/2$ and (b) the resulting forward cluster of the origin.}
\end{figure}

\subsubsection{Comparison}

From our rigorous analysis we can formulate the following comparison of critical values.

\begin{corr}[Comparison]
For $d=2$, we have that $
p_c < p_c^{\text{iid}} = 1/2 < p_c^{\text{aon}}$.
\end{corr}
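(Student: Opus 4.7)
The three claims assemble directly from results already established (or cited) in this section, so my plan is mainly bookkeeping rather than new argumentation.

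The plan is to read off each of the three pieces from the preceding lemmas and theorem. For the strict inequality $p_c < 1/2$ in the leftmost position, I will simply invoke Theorem~\ref{thm:upperBound}, which gives exactly this statement. For the middle equality $p_c^{\text{iid}} = 1/2$, I will quote Lemma~\ref{lem:Critical-iid}, which identifies the critical parameter of planar independent directed bond percolation with $1/2$ via a coupling to undirected Bernoulli bond percolation on $\Z^2$.

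For the rightmost inequality $1/2 < p_c^{\text{aon}}$, I will combine Lemma~\ref{lem:Critical-0-4}, which states $p_c^{\text{aon}} = p_c^{\text{site}}(\Z^2)$, with the known strict lower bound on the critical parameter for i.i.d.~planar site percolation. The cleanest route is to cite the rigorous lower bound $p_c^{\text{site}}(\Z^2) \geq 0.55$ from~\cite{van1996new} (also mentioned in the paragraph following Lemma~\ref{lem:Critical-0-4}), from which $p_c^{\text{aon}} \geq 0.55 > 1/2$ is immediate. One could alternatively invoke the classical strict inequality $p_c^{\text{site}}(\Z^2) > p_c^{\text{bond}}(\Z^2) = 1/2$ obtained via enhancement techniques, but the numerical bound is by far the most economical.

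There is no real obstacle: the only step that is not literally a quotation is the observation that the cited lower bound on $p_c^{\text{site}}(\Z^2)$ is strictly greater than $1/2$, which is numerical. Hence the proof is essentially a one-line concatenation of Theorem~\ref{thm:upperBound}, Lemma~\ref{lem:Critical-iid}, Lemma~\ref{lem:Critical-0-4} and the citation to~\cite{van1996new}.
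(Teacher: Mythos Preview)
Your proposal is correct and matches the paper's intended argument exactly: the corollary is stated without proof precisely because it is an immediate concatenation of Theorem~\ref{thm:upperBound}, Lemma~\ref{lem:Critical-iid}, Lemma~\ref{lem:Critical-0-4}, and the cited rigorous lower bound $p_c^{\text{site}}(\Z^2)\geq 0.55$ from~\cite{van1996new}. There is nothing to add.
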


Additionally, our simulations suggest the following strict order of percolation thresholds in two spatial dimensions.

\begin{conjecture}
For $d=2$, we have that 
$
p_c^{\text{ns--ew}}<p_c<p_c^{\text{corn}}< p_c^{\text{iid}}< p_c^{\text{aon}}$. 
\end{conjecture}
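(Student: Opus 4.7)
Only two of the four strict inequalities in the conjectured chain require new work; the outer two follow immediately from results already established in the paper. Specifically, $p_c^{\text{corn}}<p_c^{\text{iid}}$ is the combination of Theorem~\ref{thm_upperbound_cor}, which gives $p_c^{\text{corn}}<1/2$, with Lemma~\ref{lem:Critical-iid}, which identifies $p_c^{\text{iid}}=1/2$. Likewise, $p_c^{\text{iid}}<p_c^{\text{aon}}$ follows from Lemma~\ref{lem:Critical-0-4} together with the rigorous lower bound $p_c^{\text{site}}(\mathbb{Z}^2)\geq 0.55$ of~\cite{van1996new}, since $1/2<0.55\leq p_c^{\text{site}}=p_c^{\text{aon}}$. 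So the genuine challenge concerns the two middle strict inequalities $p_c^{\text{ns--ew}}<p_c<p_c^{\text{corn}}$, which compare degenerate random environments all sharing the same local outdegree~$2$.

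For $p_c<p_c^{\text{corn}}$, the plan is an enhancement argument in the spirit of Aizenman--Grimmett. First, I would build a vertex-by-vertex coupling between the 2-neighbor and the corner models at a common parameter $p$ using, at every site $v$, a single uniform random source: the first outgoing edge is sampled identically, while the second outgoing edge is drawn uniformly from the three remaining neighbors in the 2-neighbor model but is restricted to the two adjacent ones in the corner model. The 2-neighbor model then appears as a local enhancement of the corner model in which each vertex is, independently and with positive probability $1/3$ (at $p=1/2$), allowed to replace its corner configuration by a straight (opposite-pair) one. The second step is to show that this enhancement is \emph{essential}: in a large box of the corner model near criticality, I would identify a finite geometric pattern in which replacing one vertex's corner configuration by a straight configuration opens up a brand-new directed crossing with uniformly positive conditional probability. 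A Russo-type derivative of the percolation function in the enhancement parameter, following the classical recipe, then yields the strict inequality.

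For $p_c^{\text{ns--ew}}<p_c$, the plan runs in the opposite direction: ns--ew appears as the \emph{restriction} of the 2-neighbor model to straight configurations, and the philosophy ``less variability is beneficial for percolation'' suggests this restriction helps. Since the paper establishes $\theta_{1/2}^{\text{ns--ew}}=1$, a block-construction argument analogous to Section~\ref{sec:block-construction} (but exploiting the bi-infinite line structure of straight configurations, which is more rigid and hence easier to control by FKG-type inequalities along axes) should give supercriticality of ns--ew on an interval $(1/2-\delta,1/2]$ for some explicit $\delta>0$. To conclude, one either needs a quantitative upper bound showing $1/2-\delta$ strictly below $p_c$, or, more robustly, one shows $\theta_p^{\text{ns--ew}}\geq\theta_p$ for all $p$ via an indirect coupling (for instance by re-sampling a 2-neighbor configuration until each vertex draws a straight pair and comparing forward clusters through a coupling on the hitting times of opposite pairs), and then applies an enhancement at one value of $p$ to promote the inequality to a strict one.

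The main obstacle is that enhancement arguments in the degenerate-environment setting are subtler than in classical i.i.d.~bond percolation: one cannot simply add edges, because each admissible perturbation must respect the fixed outdegree budget of a vertex. This forces one to reason in terms of \emph{swaps} of local configurations rather than additions, and hence to track carefully how a single-vertex swap may open one new open path but simultaneously close another. Among the two middle inequalities, $p_c^{\text{ns--ew}}<p_c$ looks the hardest, since ns--ew and 2-neighbor do not admit an obvious monotone coupling at a common $p$: the long straight-line clusters of ns--ew and the locally turning clusters of 2-neighbor live on genuinely different combinatorial scales, so closing the gap may well require sharpening the rigorous bounds on $p_c$ beyond the current lower bound $0.373$ in parallel with an improved rigorous upper bound on $p_c^{\text{ns--ew}}$.
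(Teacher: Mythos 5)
You should first note that this statement is presented in the paper as a \emph{conjecture}, supported only by simulations: the paper's rigorous comparison is limited to $p_c < p_c^{\text{iid}} = 1/2 < p_c^{\text{aon}}$, and the positions of $p_c^{\text{ns--ew}}$ and $p_c^{\text{corn}}$ in the chain rest on numerical estimates (about $0.42$ and $0.48$). Your treatment of the two outer inequalities is correct and is exactly what the paper's results yield: $p_c^{\text{corn}}<1/2=p_c^{\text{iid}}$ from Theorem~\ref{thm_upperbound_cor} and Lemma~\ref{lem:Critical-iid}, and $p_c^{\text{iid}}=1/2<0.55\le p_c^{\text{site}}=p_c^{\text{aon}}$ from Lemma~\ref{lem:Critical-0-4} together with the cited lower bound on $p_c^{\text{site}}$. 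So you have correctly isolated where the real content lies.

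However, for the two middle inequalities your proposal is a research plan, not a proof, and the central step would not go through as described. Viewing the $2$-neighbor model as an ``enhancement'' of the corner model is misleading: replacing a corner pair by an opposite (straight) pair at a vertex removes one outgoing edge while adding another, so the perturbation is a non-monotone \emph{swap}, and the Aizenman--Grimmett/Russo machinery you invoke requires a monotone (and essential) enhancement; there is no differential inequality of the form $\partial_{\text{enh}}\theta\ge c\,\partial_p\theta$ available for swaps without a genuinely new argument controlling the paths that the swap destroys. (Note that the enhancement in Section~\ref{sect:enhancement} of the paper is applied to an auxiliary \emph{undirected, i.i.d.} constrained model, not to a comparison between two directed degree-constrained models, and even there the authors could not reuse the classical results because their enhancement is not essential.) Moreover, the intermediate fact you lean on, $\theta_{1/2}>\theta_{1/2}^{\text{corn}}$, is itself only simulation-based in the paper, and the inequality $\theta_p^{\text{ns--ew}}\ge\theta_p$ that your second plan needs is part of another open conjecture there; your fallback of ``supercriticality of ns--ew on $(1/2-\delta,1/2]$ with $1/2-\delta<p_c$'' would require quantitative bounds (a rigorous lower bound on $p_c$ above $1/2-\delta$, or an upper bound on $p_c^{\text{ns--ew}}$) that neither you nor the paper provide. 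So the statement remains unproven both in your proposal and in the paper; what you can legitimately claim is only the sub-chain $p_c^{\text{corn}}<p_c^{\text{iid}}<p_c^{\text{aon}}$ and, from Theorem~\ref{thm:upperBound}, $p_c<p_c^{\text{iid}}$.
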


This only partially supports the hypothesis that less variability benefits percolation. Let us add that simulations indicate that $p_c^{\text{ns--ew}}$ and $p_c^{\text{corn}}$ are respectively estimated at around  $0.42$ and $0.48$, meaning that the strict upper bound $1/2$ given by Theorem~\ref{thm_upperbound_cor} is quite accurate.

Let us elaborate on the relation between variability and percolation based on the observation that also the percolation functions at $p=1/2$ are ordered,
\begin{align}\label{Comp}
1 = \theta_{1/2}^{\text{ns--ew}} > \theta_{1/2} > \theta_{1/2}^{\text{corn}} > \theta_{1/2}^{\text{iid}} = \theta_{1/2}^{\text{aon}} = 0,
\end{align}
where only the inequality $\theta_{1/2} > \theta_{1/2}^{\text{corn}}$ is based on simulations.  
First, we note that, for $p=1/2$, the expected degree is equal to two in all the models. However, the {\em variance in the number of outgoing edges} differs and hence may serve as an indicator. Indeed, $\V_{1/2}[\deg(o)]=0$ whereas $\V^{\text{iid}}_{1/2}[\deg(o)]=1$ and $\V^{\text{aon}}_{1/2}[\deg(o)]=4$. However, the number variance is unable to explain the differences in the geometrically constraint models since also there it is zero.

Another notion that quantifies variability is rigidity, inspired from statistical mechanics. That is, a point process is called {\em rigid} (or more precisely {\em number-rigid}) if the number of points in a bounded Borel set $\Delta$ is a function of the point configuration outside $\Delta$. This property has been introduced in~\cite{HolroydSoo} and studied for a large class of point processes as for instance the Ginibre process~\cite{GhoshPeres} or perturbed lattices~\cite{PeresSly}. It is believed that, in a very rough sense, also {\em rigidity should help to percolate} but, to our knowledge and until now, there is no evidence sustaining this claim. 
Now, our main model (as well as the two other geometrically constraint models) is rigid in the following sense. Given a bounded subset $\Delta$ of directed edges and a configuration $\omega \in \Omega$ distributed according to $\mathbb{P}_{1/2}$, then the number of directed edges of $\omega$ inside $\Delta$ is determined by the configuration outside $\Delta$. Of course, this is not the case for the models $\mathbb{P}_{1/2}^{\text{iid}}$ and $\mathbb{P}_{1/2}^{\text{aon}}$ and hence also rigidity is a criterion that is able to describe the second right-most inequality sign in~\eqref{Comp}, but again not the others. 

Focusing on the three degree-constraint models, we can say the following. Within the set of isotropic, degree-two  degenerate random environments the northsouth-eastwest model and the directed-corner model are extremal in the following sense. In order to maintain isotropy the four corner-edge configurations need to have the same probability, say $\rho\in [0,1/4]$, and the same is true for the two configurations northsouth and eastwest to which we associate the  probability $\psi\in [0,1/2]$. Note that this imposes the constraint $4\rho+2\psi=1$ and that the choice $\rho=1/6$ recovers our main model whereas $\rho=0$ represents the northsouth-eastwest model respectively $\rho=1/4$ the directed corner model. Hence, we can parametrized any isotropic degree-two model via a single parameter $\rho\in[0,1/4]$. Our simulations for the three values $\rho\in\{0,1/6,1/4\}$ now suggest that $\rho\mapsto\theta_{1/2}^{\rho}$ is decreasing. In summary, it remains a challenge for the future to find a good notion of variability that is able to explain the differences in percolation behavior. This may include a notion of geometrical variance that seems to favor edge pairs facing in opposite directions, or rigidity beyond number rigidity, or even a geometrical version of convex ordering that has been successfully used in the context of first-passage percolation.

Let us close this section with a more specific open question. Let $\mathcal{C}_p$ denote the class of probability distributions on $\Omega$ which are i.i.d.~over the vertices, isotropic, and with expected degree equal to $p$. Then, let $\mathbf{P}$ be any element of the class $\mathcal{C}_p$ and denote by $\theta(\bf P)$ its percolation probability. Based on our numerical observations and variance intuition we pose the following conjecture. 

\begin{conjecture}
For all $\mathbf{P}\in \mathcal{C}_p$, we have  $\theta^{\text{ns--ew}}_p \geq \theta({\bf P}) \geq \theta^{\text{aon}}_p$. 
\end{conjecture}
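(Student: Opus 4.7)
The plan is to prove both bounds via local rearrangements inside the polytope $\mathcal{C}_p$ of admissible vertex distributions. Parametrize any $\mathbf{P} \in \mathcal{C}_p$ by the probabilities $(q_0, q_1, q_\parallel, q_\angle, q_3, q_4)$ it assigns to the six rotation-invariant classes of local configurations at a vertex (empty, a single edge, two parallel edges, two perpendicular ``corner'' edges, three edges, all four edges), subject to the normalization $\sum q_i = 1$ and to the prescribed expected-degree constraint. Within this polytope, both the ns-ew and the aon models sit at extreme points, so the conjecture recasts as the statement that $\theta$ attains its maximum on $\mathcal{C}_p$ at the ns-ew vertex and its minimum at the aon vertex.

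For the upper bound, I would attempt to prove monotonicity of $\theta$ along every extremal direction leaving the ns-ew distribution. Concretely, given a small $\delta > 0$, at each site replace $\delta$ units of $q_\parallel$ mass by an admissible combination of other local configurations that preserves isotropy and the edge marginal. The goal would be to show, via an enhancement/depression argument in the spirit of Aizenman--Grimmett, that each such swap strictly decreases the percolation probability, and then to iterate along the finitely many extremal directions of $\mathcal{C}_p$. For the lower bound, a complementary ``vertex-splitting'' argument seems natural: every site carrying mass on a partial-degree configuration is resampled, with the edge marginal held fixed, as a convex combination of $q_0$ and $q_4$, so that after one sweep the distribution becomes aon. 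Showing that each individual splitting step monotonically decreases $\theta$ would then close the argument.

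The heuristic behind both directions is that concentrating the open edges into coherent linear pairs (as in ns-ew) favors long straight paths, whereas clumping the edge budget into all-or-nothing degree-four pockets (as in aon) leaves connectivity at the mercy of the much larger site-percolation threshold. Any intermediate $\mathbf{P}$ should then be compared to these extremes by moving along the edges of $\mathcal{C}_p$ in either direction.

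The main obstacle, and the reason this remains a conjecture in the manuscript, is that $\theta$ is not known to be monotone under edge-marginal-preserving redistributions. Standard tools such as Russo's formula, FKG, or monotone coupling all fail, because every such swap simultaneously opens some edges and closes others at the same site, so no pointwise domination holds between the two local configurations. A successful proof would likely require a genuinely new geometric notion of variability on degree distributions, analogous to the convex ordering that governs passage-time comparisons in first-passage percolation, or a substantial extension of the enhancement machinery that is capable of comparing non-nested dependent percolation models. Identifying such a framework is, in our view, the heart of the problem.
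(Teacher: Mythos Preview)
This statement is labeled \emph{Conjecture} in the paper and is not proved there; it is offered purely on the basis of simulations and the heuristic that lower variability should favor percolation. There is therefore no proof in the paper to compare against.

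Your proposal is not a proof either, and you are candid about this: you sketch a program (parametrize $\mathcal{C}_p$ by the six rotation classes, move along extremal directions of the polytope, hope for monotonicity of $\theta$ via an Aizenman--Grimmett-type argument) and then correctly identify why it stalls. The key obstruction you name---that any edge-marginal-preserving swap simultaneously opens and closes edges, so no pointwise domination or standard enhancement applies---is exactly the difficulty. The paper makes the same point more briefly when it remarks that neither degree variance nor number rigidity distinguishes the geometrically constrained models, and that ``it remains a challenge for the future to find a good notion of variability'' explaining the ordering.

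In short: your write-up is an honest discussion of obstacles rather than a proof, which is the appropriate response to an open conjecture; there is no gap to flag beyond the one you already flag yourself.
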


In the next section we provide an overview for the proof of our main result. 

\subsection{Strategy of proof of Theorem~\ref{thm:upperBound}}
\label{sec_ingredients}

Let us mainly focus on the $4p$-neighbor graph with $4p = 2+\varepsilon$ outgoing edges in mean per vertex, i.e., we allow a third outgoing edge to occur with probability $\varepsilon \geq 0$. Hence, the probability for a given directed edge to be open is given by
\begin{equation}
\label{Setting:2+eps}
p = \frac{1}{4} \mathbb E[ \text{deg}(o) ] = \frac{1}{2} + \frac{\varepsilon}{4} \; \mbox{ with $\varepsilon \geq 0$.}
\end{equation}
This is why we write in the sequel $\mathbb{P}_{(2,\varepsilon)}$ instead of $\mathbb{P}_p$ with $p = 1/2 + \varepsilon/4$. For short, we will sometimes refer to the probability measure $\mathbb{P}_{(2,\varepsilon)}$ as the $(2,\varepsilon)$-model. Although the monotonicity property stated in Lemma~\ref{lem:monotoneP_kappa} allows to reduce our attention to the case $\varepsilon = 0$, we will work in the general setting \eqref{Setting:2+eps} to emphasize the fact that, when $\varepsilon = 0$, more work will be required.

The main work consists in proving Theorem~\ref{thm:clusterSize} below,  which roughly says that, in the $(2,\varepsilon)$-model with $\varepsilon \geq 0$, the size of dual forward sets decay exponentially fast. This strong decay in the case of $\varepsilon = 0$ (i.e., $p = 1/2)$, combined with a classical block-renormalization approach, allows us to state in Section~\ref{sec:block-construction} that the $4p$-neighbor graph percolates for some $p < 1/2$, leading to $p_c < 1/2$ (i.e., Theorem~\ref{thm:upperBound}). Let us now give some more details on the proof of Theorem~\ref{thm:clusterSize}.

\medskip
A dual (directed) edge $e^\ast$ is said to be {\em open} if and only if its associated primal (directed) edge $e$ is closed, which occurs with probability
\[
\mathbb{P}_{(2,\varepsilon)} \left( \scalebox{1.2}{\dualopen} \right) = 1 - \Big(\frac{1}{2}+\frac{\varepsilon}{4}\Big) = \frac{1}{2} - \frac{\varepsilon}{4}
\]
in the $(2,\varepsilon)$-model. The fact that this probability is smaller than $1/2$ is our basic ingredient to get Theorem~\ref{thm:clusterSize}, i.e., that the probability for the dual forward set $\df{o^\ast}{} = \{x^\ast \in (\Z^\ast)^2 \colon o^\ast \rightsquigarrow x^\ast \}$ to be large decreases exponentially fast. Now, to carry out this strategy, we face two major obstacles:
\begin{itemize}
\item In our model, the primal edges are dependent and so do the dual edges, which prevents a direct comparison to the i.i.d.~(undirected) bond percolation model with parameter $1/2 - \varepsilon/4$.
\item Assuming such domination would hold, when $\varepsilon = 0$, the tail distribution of the size of the cluster of the origin in the (critical) i.i.d.~bond percolation model does not decrease fast enough to get Theorem~\ref{thm:clusterSize}.
\end{itemize}

In Section~\ref{sec_Exploration}, we define an exploration process allowing to explore the forward set $\f{o^\ast}{}$. During the exploration process, each revealed dual edge of $\f{o^\ast}{}$ is open with probability less than $1/2 - \varepsilon/4$, except some special edges, called {\em pivotal}, whose probability to be open tends to $2/3$ as $\varepsilon \to 0$. The existence of such pivotal edges, due to the dependency of our model, is not good for a comparison to a subcritical (or critical) i.i.d.~bond percolation model. But the planarity helps: each revealed pivotal edge appears to be an opportunity to stop the exploration process (Lemma~\ref{lem:pivot}). Hence, roughly speaking, the dual forward set $\f{o^\ast}{}$ can be viewed as a subgeometric number of disjoint clusters, called {\em visited clusters} and denoted by $\kedf{o^\ast}{k}$ for $k\geq 1$, separated by pivotal edges. Also, inside a given visited cluster $\kedf{o^\ast}{k}$, each revealed dual edge has a probability to be open smaller than $1/2 - \varepsilon/4$, and thus a comparison with the i.i.d.~bond percolation model with parameter $1/2 - \varepsilon/4$ is doable for each $\kedf{o^\ast}{k}$. The first major obstacle is overcome.

At this stage, a subexponential decay for the size of each of these visited clusters (Theorem~\ref{theo:subGeomClusterSize}) will be enough to obtain Theorem~\ref{thm:clusterSize}. This is when we take advantage of the rigidity of the $(2,\varepsilon)$-model: since at least two primal outgoing edges start from each vertex, the exploration process of $\kedf{o^\ast}{k}$ cannot create the pattern \scalebox{0.3}{\begin{tikzpicture}
    \draw[step=1cm, gray,very thin] (-1,0) grid (2,1);
    \draw[blue, line width=1mm, -latex] (0,1) -- (0,0);
    \draw[blue, line width=1mm, -latex] (0,0) -- (1,0);
    \draw[blue, line width=1mm, -latex] (1,0) -- (1,1);
\end{tikzpicture}}, called a {\em left winding}, because it involves three dual directed edges which all refer to the same primal vertex. Along this line, we prove in Proposition~\ref{prop:stochasticDomination} that each visited cluster is dominated by the cluster of the origin in the i.i.d.~(undirected) bond percolation model with parameter $1/2 - \varepsilon/4$ but under constraints. The constraints are mainly that the undirected pattern \scalebox{0.7}{\pattern}, which necessarily contains a left winding, is forbidden to be used for percolating paths.

In the case where $\varepsilon > 0$, the parameter $1/2-\varepsilon/4$ is strictly smaller than $1/2$ and then corresponds to the subcritical regime for the i.i.d.~bond percolation model in $\Z^2$. A sharp transition being well-known in that case (see for instance~\cite[Chapter~6]{grimmett1999percolation}), we immediately obtain an exponential decay for the size of visited clusters, without using the constraint of forbidden patterns. When $\varepsilon = 0$, the i.i.d.~bond percolation model with parameter $1/2 - \varepsilon/4$ becomes critical and we have no choice but to take advantage of the constraint of forbidden patterns. This last step is done in Section~\ref{sect:enhancement} in which we use an enhancement approach to prove that taking into account the constraint of forbidden patterns makes percolation harder and actually strictly increases the critical parameter of the corresponding percolation model. However, let us note that while the proof is inspired by the works~\cite{aizenman1991strict} and~\cite{balister2014essential}, we cannot reuse their results for our purposes, because the enhancements (or diminishments) we consider are {\em not essential}.
To the best of our knowledge, this also provides the first successful application of the enhancement technique to models in which percolating paths are forbidden to use certain patterns and is of independent interest. Therefore, we made sure to keep Section~\ref{sect:enhancement} self-contained so that it can be read independently from the rest of the manuscript.

\section{Strong decay of dual forward sets}
\label{sec:block-construction}

Recall that we write in the sequel $\mathbb{P}_{(2,\varepsilon)}$ instead of $\mathbb{P}_p$ with $p = 1/2 + \varepsilon/4$ and we call it the $(2,\varepsilon)$-model.

\subsection{Duality}
\label{sec:duality}

Let $\Za = (1/2,1/2)+\mathbb{Z}^{2}$ be the {\em dual lattice} of $\Z^2$ whose set of directed dual edges is denoted by $\E^\ast$. Let $o^\ast = (1/2,1/2)$ be the origin of $\Za$. To each {\em primal edge} $e =(x,y) \in \E$, we define its {\em dual edge} $e^\ast \in \E^\ast$ as follows: $e^\ast$ is the dual directed edge obtained from $e$ by the counter-clockwise rotation with center $(x+y)/2$ and angle $\pi/2$, see Figure~\ref{fig:dualEdge} for an illustration.

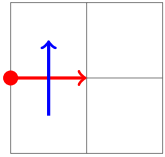
\begin{figure}[!ht]
\centering
\begin{tikzpicture}
\tikzset{arrow/.style={->,>=stealth,thick}}
\coordinate (start) at (2,1.5);
\coordinate (end2) at (2,1.5);
\coordinate (control1) at (10,5); 
\coordinate (control3) at (-5,5);
\draw[step=1cm,gray,very thin] (0,0) grid (2,2);
\draw[->,red, very thick] (0,1) -- (1,1);
\draw[->,blue, very thick] (0.5,0.5) -- (0.5,1.5);
\node at (0,1) [circle,fill=red,inner sep=2pt] {};
\end{tikzpicture}
\caption{Primal edge in red and its dual edge in blue. This color code will be conserved for the whole paper. As primal edges, dual edges are also directed.}
\label{fig:dualEdge}
\end{figure}

Given a configuration $\omega \in \Omega$, we define its {\em dual configuration} $\omega^{\ast}$ as the element of $\Omega^\ast = \{0,1\}^{\E^{\ast}}$ such that
\[
\forall e \in \E, \, \omega^{\ast}(e^{\ast}) = 1 \; \text{ if and only if } \; \omega(e) = 0.
\]
As usual, the dual edge $e^\ast$ is open if and only if its primal edge $e$ is closed. Hence, the probability in the $(2,\varepsilon)$-model for a given dual edge to be open is
\begin{equation}
\label{ProbaOpenDualEdge}
\mathbb{P}_{(2,\varepsilon)} \left( \scalebox{1.2}{\dualopen} \right) = 1 - \Big(\frac{1}{2}+\frac{\varepsilon}{4}\Big) = \frac{1}{2} - \frac{\varepsilon}{4},
\end{equation}
which is smaller than $1/2$ since $\varepsilon \geq 0$.

The {\em forward set} of $x \in \Z^2$ is made up with vertices that can be reached by a directed open path starting at $x$, i.e., 
\[
\df{x}{} = \{y \in \Z^2 \colon x \rightsquigarrow y\}.
\]
We use the convention that $x \rightsquigarrow x$ is always true meaning that any vertex belongs to its own forward set. Next, we extend the relation '$\cdot\rightsquigarrow\cdot$' to the dual lattice. That is, given $x^\ast,y^\ast \in \Za$, we set $x^\ast \rightsquigarrow y^\ast$ if there exists a directed open dual path from $x^\ast$ to $y^\ast$. The (dual) forward set of $x^\ast$ is then defined as
\[
\df{x^\ast}{} = \{y^\ast \in \Z^2 \colon x^\ast \rightsquigarrow y^\ast \}.
\]

Now that we have introduced the dual model and set up the whole notation, let us state the main ingredient for the proof of Theorem~\ref{thm:upperBound}: the tail distribution of the size of dual forward sets admits a subexponential decay.

\begin{theorem}
\label{thm:clusterSize}
Let $\varepsilon\geq 0$. Then, there exist constants $c,C > 0$ such that, for any integer $n$,
\begin{align}
\PP_{(2,\varepsilon)} \big( | \df{o^\ast}{} | \geq n \big) \leq C e^{-c n^{1/4}}.
\end{align}
\end{theorem}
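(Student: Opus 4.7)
The plan is to analyze the dual forward set $\df{o^\ast}{}$ through an exploration algorithm, leveraging the fact that each dual edge is open with probability only $1/2-\varepsilon/4 \leq 1/2$, together with the rigidity afforded by the two-outgoing-edge constraint.

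First I would set up an exploration of $\df{o^\ast}{}$ that reveals only the primal information strictly needed to decide the status of each candidate dual edge, following e.g.~a breadth-first order. At most steps, the conditional probability that the next dual edge $e^\ast$ is open is bounded above by $1/2-\varepsilon/4$. The difficulty is that dependencies concentrate at primal vertices: if two dual edges incident to a common primal vertex $v$ have already been queried, then the third one---which I would call \emph{pivotal}---may be open with conditional probability as high as $2/3$ in the limit $\varepsilon\to 0$, which prevents a direct comparison with a subcritical i.i.d.~bond percolation.

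Planarity helps via Lemma~\ref{lem:pivot}: whenever a pivotal edge is encountered the exploration may be truncated along that branch. This decomposes the explored forward set as a disjoint union of \emph{visited clusters} $\kedf{o^\ast}{k}$, $k\geq 1$, separated by pivotal edges; I expect the number $N$ of visited clusters to carry a subexponential tail of order $e^{-c\sqrt{n}}$, since each pivotal event has uniformly positive probability to terminate the relevant branch. Within a single visited cluster no pivotal edge arises, so every revealed dual edge is conditionally open with probability at most $1/2-\varepsilon/4$; moreover the two-outgoing-edge rigidity forbids the left-winding configuration \scalebox{0.3}{\begin{tikzpicture}\draw[step=1cm,gray,very thin] (-1,0) grid (2,1);\draw[blue, line width=1mm, -latex] (0,1) -- (0,0);\draw[blue, line width=1mm, -latex] (0,0) -- (1,0);\draw[blue, line width=1mm, -latex] (1,0) -- (1,1);\end{tikzpicture}} from appearing inside $\kedf{o^\ast}{k}$, since such a left winding would require three outgoing dual arrows at a single primal vertex. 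These two properties let me, via a coupling (Proposition~\ref{prop:stochasticDomination}), dominate each $\kedf{o^\ast}{k}$ by the origin cluster in i.i.d.~bond percolation on $\Z^2$ at parameter $1/2-\varepsilon/4$, restricted to paths that avoid the forbidden undirected pattern \pattern.

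At this point the argument splits into two cases. When $\varepsilon>0$, the parameter $1/2-\varepsilon/4$ lies strictly below the critical value $1/2$ of bond percolation on $\Z^2$, and sharpness of the phase transition yields an exponential tail for the size of each visited cluster. The hard case---and I expect this to be the main obstacle---is $\varepsilon=0$, where the dominating model sits exactly at criticality. Here I would adapt the enhancement approach of~\cite{aizenman1991strict,balister2014essential} to prove that forbidding the pattern \pattern strictly raises the critical parameter of bond percolation above $1/2$, so that the constrained model at $p=1/2$ is genuinely subcritical and hence enjoys an exponential tail; the delicate point, which prevents directly invoking existing enhancement theorems as black boxes, is that the diminishment induced by forbidding this pattern is not essential in the classical sense, forcing a direct re-derivation of the relevant differential inequality tailored to forbidden-pattern constraints. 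Once a subexponential tail on $N$ of order $e^{-c\sqrt{\cdot}}$ and an exponential tail on each $|\kedf{o^\ast}{k}|$ are in hand, I would conclude by writing $\PP_{(2,\varepsilon)}(|\df{o^\ast}{}| \geq n) \leq \PP(N \geq m) + m\,\PP(|\kedf{o^\ast}{1}| \geq n/m)$ and balancing the two contributions at $m \sim \sqrt{n}$, which recovers the advertised stretched-exponential bound $Ce^{-cn^{1/4}}$.
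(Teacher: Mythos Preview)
Your outline follows the paper's strategy closely and identifies all the main ingredients correctly: the exploration, pivotal edges, the chain-of-clusters decomposition, stochastic domination by constrained Bernoulli percolation, and the enhancement argument at $\varepsilon=0$. Two points, however, need correction.

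First, the exploration cannot be breadth-first. Lemma~\ref{lem:pivot} holds only for a \emph{depth-first}, counter-clockwise exploration equipped with Rule~2 (refusing to enter regions already surrounded by visited vertices): its conclusion is not that a pivotal edge lets you ``truncate along that branch'' but that the pivotal edge is the \emph{only} edge left in the to-explore list, so that if it is closed the entire algorithm halts. This is what forces the linear chain structure of Figure~\ref{fig:cluster_chain} and makes the number $\mathcal{T}_{\text{piv}}$ of open pivotal edges genuinely geometric, $\PP_{(2,\varepsilon)}(\mathcal{T}_{\text{piv}}\ge n)\le (2/3)^n$, not merely $e^{-c\sqrt n}$ as you suggest.

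Second, and this is where the exponent $n^{1/4}$ actually comes from, Rule~2 means the algorithm explores only $\edf{o^\ast}$, which can be strictly smaller than $\df{o^\ast}{}$; one only has $\df{o^\ast}{}\subset\text{Fill}(\edf{o^\ast})$ by Lemma~\ref{lem:AlgoStop}. Your final union-bound inequality therefore controls $|\edf{o^\ast}|=\sum_k|\kedf{o^\ast}{k}|$, not $|\df{o^\ast}{}|$. With the geometric tail on $\mathcal{T}_{\text{piv}}$ and the exponential tail on each $|\kedf{o^\ast}{k}|$, balancing at $m\sim\sqrt n$ gives $\PP_{(2,\varepsilon)}(|\edf{o^\ast}|\ge n)\le Ce^{-c\sqrt n}$. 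The stated bound then follows from the crude implication $|\edf{o^\ast}|\le m \Rightarrow |\df{o^\ast}{}|\le|\text{Fill}(\edf{o^\ast})|\le 4m^2$, which costs a further square root: $\PP_{(2,\varepsilon)}(|\df{o^\ast}{}|\ge n)\le \PP_{(2,\varepsilon)}(|\edf{o^\ast}|\ge \sqrt n/2)\le Ce^{-cn^{1/4}}$. Your balancing landed on $n^{1/4}$ only because you understated the tail of $N$; with the correct inputs the $n^{1/4}$ comes from the Fill step, not from the cluster-count tail.
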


The proof of this result is split into several steps and the rest of the manuscript is devoted to it. From Theorem~\ref{thm:clusterSize}, it will be easy to get our main result Theorem~\ref{thm:upperBound} and this is done in the next section.

\subsection{Proof of Theorem~\ref{thm:upperBound}}
\label{sect:proofTheoUB}

For this section only, it will be more convenient to use the notation $\mathbb{P}_p$ for the $4p$-neighbor graph (recall that $d=2$). We will employ a block-renormalization argument to reduce the percolation problem for the $4p$-neighbor graph, for some $p<1/2$, to that of a dependent site percolation model which will be proved to be supercritical (or percolating) thanks to the classical stochastic domination result of~\cite{liggett1997domination}. The key input here is that, at the level $p=1/2$, the dual forward sets decay subexponentially fast by Theorem~\ref{thm:clusterSize}.

\medskip
Let us define, for any $z \in \Z^2$ and $L \in \N$, the (good) event
\[
G(Lz,3L) := \big\{ \text{$\exists$ an open (primal) cycle surrounding $Lz$ in $B( Lz,3L/2 )\!\setminus\! B(Lz,L/2)$} \big\}, 
\]
where $B(x,R)$ denotes the $\norm{\cdot}_\infty$-ball with center $x$ and radius $R>0$. See Figure~\ref{fig:block-argument} for an illustration. Thus, we introduce the (dependent) site percolation model $\{\xi^{(L)}_z\}_{z \in \Z^2}$ on $\{0,1\}^{\Z^2}$ where
\[
\xi^{(L)}_z := \mathds{1}_{G(Lz,3L)}
\]
for any $z \in \Z^2$ and $L \in \N$. Since the $4p$-neighbor graph is translation invariant in law, the $\xi^{(L)}_z$'s are identically distributed. The random field $\{\xi^{(L)}_z\}_{z \in \Z^2}$ is also $6$-dependent since the indicators $\xi^{(L)}_z$ and $\xi^{(L)}_{z'}$ are independent whenever $\|z-z'\|_\infty > 6$. Finally, percolation for $\{\xi^{(L)}_z\}_{z \in \Z^2}$ implies percolation for the $4p$-neighbor graph, as depicted in Figure~\ref{fig:block-argument}. Henceforth, it is sufficient to prove that the random field $\{\xi^{(L)}_z\}_{z \in \Z^2}$ percolates under the probability measure $\mathbb{P}_p$ for some $p<1/2$.

\begin{figure}[!ht]
\centering
\includegraphics[width=.4\textwidth]{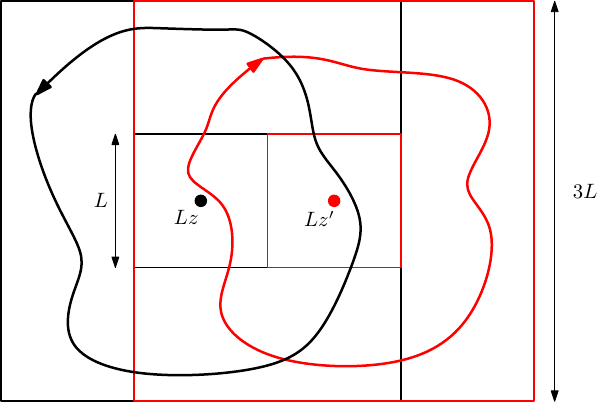}
\caption{\label{fig:block-argument} Two neighboring blocks, centered at $Lz$ and $Lz'$ with $z'=z+(1,0)$, and the corresponding open cycles realizing the events $G(Lz,3L)$ and $G(Lz',3L)$.}
\end{figure}

Theorem~\ref{thm:clusterSize} allows us to establish the next result whose proof is postponed to the end of the section.

\begin{lemma}
\label{lem:G(z)TendsTo1}
The following limit holds:
\[
\lim_{L\to\infty} \mathbb{P}_{1/2} (G(o,3L)) = 1.
\]
\end{lemma}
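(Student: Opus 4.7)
The strategy is to combine the subexponential decay of dual forward sets given by Theorem~\ref{thm:clusterSize} with a planar duality argument. The key input is that the complementary event $\overline{G(o,3L)}$ forces the existence of a long open dual directed path across the annulus $A := B(o,3L/2)\setminus B(o,L/2)$, whose probability can be controlled once $L$ is large.

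First, I would establish the following duality statement: if no counter-clockwise (CCW) open primal directed cycle surrounds $o$ in $A$, then there exists a dual vertex $x^\ast$ lying on the outer boundary of the dual annulus such that its dual forward set $\df{x^\ast}{}$ reaches a vertex on the inner boundary. The reason is that, by the CCW rotation convention from Section~\ref{sec:duality}, the dual of every primal edge appearing in a CCW cycle around $o$ points inward, i.e., from outside the cycle to the side containing $o$. Consequently, an open CCW primal cycle would force the dual edges crossing it in the inward direction to be closed, blocking every open dual directed path from outer to inner boundary of $A$. The converse implication---absence of an open primal CCW cycle produces an inward open dual directed crossing of $A$---follows from a standard planar max-flow/min-cut argument applied to the finite sub-graph of primal CCW edges in $A$.

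Second, any such open dual directed path has $\ell^1$-length at least $L$ (the width of the annulus), so $|\df{x^\ast}{}| \geq L$. Since the outer boundary of the dual annulus contains $O(L)$ vertices and the law of the model is translation invariant, a union bound combined with Theorem~\ref{thm:clusterSize} at $\varepsilon = 0$ gives
\begin{align*}
\mathbb{P}_{1/2}\big(\overline{G(o,3L)}\big)
\leq C_0\, L\, e^{-cL^{1/4}},
\end{align*}
which tends to $0$ as $L \to \infty$, proving the lemma.

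The main obstacle is the duality step. Unlike the classical undirected correspondence between open primal cycles and closed dual crossings, here both primal and dual edges carry orientations simultaneously, and only dual edges whose primal counterparts are CCW-oriented with respect to $o$ can be used to block an open CCW cycle. A Menger/min-cut argument on the annulus must therefore be performed carefully with respect to orientations, taking full advantage of the planarity of the underlying lattice. Once this directed planar duality is rigorously established, the remaining estimates are routine consequences of Theorem~\ref{thm:clusterSize}.
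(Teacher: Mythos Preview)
Your overall strategy---duality plus a union bound over boundary vertices plus Theorem~\ref{thm:clusterSize}---is exactly the paper's strategy, but the paper takes a different and simpler route to the duality step that you identify as ``the main obstacle.'' Rather than attacking the annulus directly, the paper introduces an intermediate event $\text{Cross}(L)$, namely the existence of a directed open primal path crossing the rectangle $[0,3L]\times[0,L]$ from left to right. Four rotated and shifted copies of this crossing event, when they all occur, glue together to form a directed cycle in the annulus; by rotation invariance it therefore suffices to show $\mathbb{P}_{1/2}(\text{Cross}(L))\to 1$. For the rectangle, the directed duality statement is much more elementary: the absence of a left-to-right primal crossing forces a bottom-to-top open dual crossing starting from one of $3L$ dual vertices just below the rectangle, and then the union bound with Theorem~\ref{thm:clusterSize} applies exactly as you describe.

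What the paper's decomposition buys is precisely the avoidance of the directed annulus Menger/max-flow argument that you flag as delicate. The correspondence ``no CCW primal cycle $\Rightarrow$ inward dual crossing'' is plausible and the orientation observation you make (duals of CCW edges point inward) is correct, but turning this into a rigorous statement on the annulus with directed edges on both primal and dual sides is genuinely more work than the rectangle case and is not a textbook fact one can simply cite. So your proposal is not wrong, but it trades the cheap gluing step for an expensive duality step; the paper makes the opposite trade, and its version is the cleaner one to write down.
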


Let us now conclude by applying~\cite[Theorem~0.0]{liggett1997domination} to the $6$-dependent random field $\{\xi^{(L)}_z\}_{z \in \Z^2}$. Let us first pick $\rho > p_c^{\text{site}}(\Z^2)$ such that the product $\{0,1\}$-valued random field with density $\rho$ percolates. \cite[Theorem~0.0]{liggett1997domination} asserts that there exists $\rho'<1$ (only depending on $\rho$, the lattice $\Z^2$ and the degree of dependence which is $6$ here) such that the lower bound on the marginals
\begin{equation}
\label{LiggettMarginals}
\forall z \in \Z^2 , \; \mathbb{P}_p(\xi^{(L)}_z = 1) \geq \rho'
\end{equation}
implies that the random field $\{\xi^{(L)}_z\}_{z \in \Z^2}$, under the probability measure $\mathbb{P}_p$, stochastically dominates the product $\{0,1\}$-valued random field with density $\rho$, and then it almost-surely percolates.

It then remains to prove that \eqref{LiggettMarginals} holds for some $p<1/2$. It is time to invoke Lemma \ref{lem:G(z)TendsTo1}: for any $z \in \Z^2$,
\[
\mathbb{P}_{1/2}(\xi^{(L)}_z = 1) = \mathbb{P}_{1/2}(\xi^{(L)}_o = 1) = \mathbb{P}_{1/2}(G(o,3L)) > \rho',
\]
for $L$ large enough and by translation invariance. The parameter $L$ being fixed, the event $G(o,3L)$ only involves a finite number of random variables leading to the continuity of the function $p \mapsto \mathbb{P}_p\left(G(o,3L)\right)$, especially at $p=1/2$. Hence, we can choose $p<1/2$ but close enough to $1/2$ such that $\mathbb{P}_{p}(G(o,3L))$ is still larger than $\rho'$. The lower bound~\eqref{LiggettMarginals} is then satisfied for the probability measure $\mathbb{P}_p$ with $p<1/2$.

In conclusion, we have proved that for some $p < 1/2$ the $4p$-neighbor graph percolates. This means that $p_c < 1/2$ and achieves the proof of Theorem~\ref{thm:upperBound}.

\begin{proof}[Proof of Lemma~\ref{lem:G(z)TendsTo1}]
Let us fix $p = 1/2$ here. Given $L \in \N$, we consider the event $\text{Cross}(L)$ defined by the existence of an open primal path crossing the horizontal rectangle $[0,3L]\times[0,L]$ from the left side to the right side while remaining inside the rectangle. As illustrated in Figure~\ref{fig:glueing}, it is sufficient to prove that
\begin{equation}
\label{glueing}
\lim_{L\to\infty} \mathbb{P}_{1/2} ( \text{Cross}(L) ) = 1
\end{equation}
to get Lemma~\ref{lem:G(z)TendsTo1}. We use here that the $4p$-neighbor graph is invariant in distribution w.r.t.~rotations with angles $\pi/2$, $\pi$ and $3\pi/2$.

\begin{figure}[!ht]
\centering
\begin{tabular}{cp{1cm}c}
\includegraphics[width=.4\textwidth]{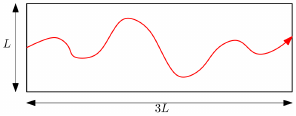} & & \includegraphics[width=.25
\textwidth]{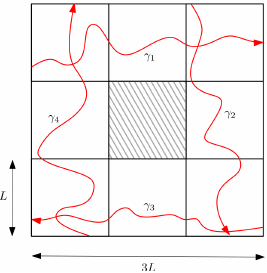} 
\end{tabular}
\caption{\label{fig:glueing}\textit{Left:} A representation of the event $\text{Cross}(L)$. \textit{Right:} Considering four rotated and shifted copies of the event $\text{Cross}(L)$ and glueing the corresponding open paths $\gamma_i$, $i=1,2,3,4$, we obtain a cycle surrounding the origin $o$ in $B(o,3L/2)\!\setminus\! B(o,L/2)$. The event $G(o,3L)$ then occurs.}
\end{figure}

The absence of an open primal path horizontally crossing the rectangle $[0,3L]\times[0,L]$ forces the existence of an open dual one crossing vertically that rectangle. Precisely, there exists an open dual path starting at $z_i^\ast := (i+1/2,-1/2)$, for some integer $0\leq i\leq 3L-1$, and hitting the horizontal line $y=L+1/2$. Since such dual path admits more than $L$ edges, we can assert that:
\[
1 - \mathbb{P}_{1/2} ( \text{Cross}(L) ) \leq \mathbb{P}_{1/2} \Big( \bigcup_{i=0}^{3L-1} \big\{ |\df{z_i^\ast}{}| \geq L \big\} \Big) \leq 3L \times C e^{-c L^{1/4}} 
\]
by Theorem~\ref{thm:clusterSize} applied with $\varepsilon = 0$. This implies the claimed convergence as $L$ tends to infinity.
\end{proof}

\section{Exploration of the dual forward set}
\label{sec_Exploration}

To prove Theorem~\ref{thm:clusterSize} we aim to compare the dual forward set of $o^\ast$ to a standard Bernoulli percolation. Recall that the probability for a dual edge to be open is smaller than $1/2$, see~\eqref{ProbaOpenDualEdge}. This cannot be done directly for reasons to be made explicit later. As a consequence, to attain this purpose, we need to introduce an exploration process allowing to explore $\f{o^\ast}{}$ part by part, each part being independently dominated. To do this, we introduce an exploration algorithm in Section~\ref{sect:ExploAlgo}. During the exploration process of a dual forward set, some explored edges will play a crucial role: they are called {\em pivotal edges} and are studied in Section~\ref{sect:PivotalAlgo}. Roughly speaking, we will prove that a dual forward set can be viewed as a subgeometric number of disjoint clusters separated by pivotal edges (see Figure~\ref{fig:cluster_chain}). Henceforth, a subexponential decay for the size of these clusters (Theorem~\ref{theo:subGeomClusterSize}) will be enough to get Theorem~\ref{thm:clusterSize}, see Section~\ref{sect:VisitedCluster}.

\subsection{The exploration algorithm}
\label{sect:ExploAlgo}

Given a dual vertex $x^\ast$, the exploration algorithm performs a (partial) exploration of the forward set $\f{x^\ast}{}$. It deals with an ordered list of directed dual edges that are to be explored. When this list becomes empty, the algorithm stops. During the whole process, we will keep a record of the set of dual vertices and edges already explored. Here, exploring the edge $e^\ast$ means its state $\omega^\ast(e^\ast)$ is revealed. Note also that the algorithm is processed trajectorially, i.e., for a given $\omega\in\Omega$, but, as before, we omit the dependence in $\omega$ to lighten notations.

Let us denote by $\mathcal{U}$ the set of (finite) ordered lists of directed dual edges of arbitrary length, that is
\[
\mathcal{U} = \bigcup_{n\geq 1} (\E^{\ast})^{n}.
\] 
For $u \in \mathcal{U}$, denote by $|u|$ the length of the list $u$, i.e., the unique integer $n$ such that $u \in (\E^\ast)^{n}$. The concatenation of two lists $u,v \in \mathcal{U}$ is defined by
\[
u:v = (u_{1},\ldots u_{|u|},v_{1},\ldots,v_{|v|}).
\]

Denote by $L^{(n)} \in \mathcal{U}$ the list of edges to be explored at the beginning of the $n$-th step of the algorithm. The current state during the $n$-th step is
\[
X_n = (x^\ast_n , e^\ast_n , \omega^\ast(e^\ast_n))
\]
where $e^\ast_n = (x^\ast_n,y^\ast_n)$ is the first element of the list $L^{(n)}$. The algorithm explores the forward set $\f{x^\ast}{}$ in the {\em depth-first fashion} and in the {\em counter-clockwise sense}, see Figure~\ref{fig:explorationOrder}. In other words, if at the end of the $n$-th step there are three new directed dual edges to be explored and to be added to the list $L^{(n)}$, say $(y^\ast_n,a^\ast_1)$, $(y^\ast_n,a^\ast_2)$ and $(y^\ast_n,a^\ast_3)$, then they are placed at the beginning of the new list $L^{(n+1)}$ and such that $\det(e^\ast_n , (y^\ast_n,a^\ast_{i}))$ is increasing with $i \in \{1,2,3\}$.

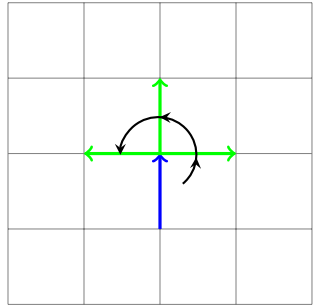
\begin{figure}[!ht]
\centering
\begin{tikzpicture}
\tikzset{arrow/.style={->,>=stealth,thick}}
\coordinate (start) at (2,1.5);
\coordinate (end2) at (2,1.5);
\coordinate (control1) at (10,5); 
\coordinate (control3) at (-5,5);
\draw[step=1cm,gray,very thin] (0,0) grid (4,4);
\draw[->,blue, very thick] (2,1) -- (2,2);
\draw[->,green, very thick] (2,2) -- (2,3);
\draw[->,green, very thick] (2,2) -- (3,2);
\draw[->,green, very thick] (2,2) -- (1,2);
\draw[arrow, postaction={decorate, decoration={markings,
	mark=at position 0.2 with {\arrow{>}},
	mark=at position 0.6 with {\arrow{>}}}}](2.3,1.6) arc (-50:180:0.5cm);
\end{tikzpicture}
\caption{In the exploration algorithm, edges are explored in the counter-clockwise sense.}
\label{fig:explorationOrder}
\end{figure}

In addition, our exploration algorithm will respect two limiting rules detailed below. Let $\mathcal{V}_{n} \subset \Za$ be the set of vertices already explored (or visited) by the exploration process until the $n$-th step. We do not explore an edge leading to an element of $\mathcal{V}_{n}$:
\begin{enumerate}[\bfseries (Rule 1)]
    \item A dual directed edge $(a^\ast,b^\ast)$ whose endpoint $b^\ast$ has already been visited (i.e., belongs to $\mathcal{V}_{n}$) will not be explored.
\end{enumerate}
For a finite and connected (w.r.t.~the $\ell^1$-norm) subset $A$ of $\Z^2$, denote by  $A^{\text{out}}_\infty$ the unique unbounded connected component of its complement and set
\[
\text{Fill}(A) := (A^{\text{out}}_\infty)^c ~.
\]
Then by construction, $A \subset \text{Fill}(A)$, the set $\text{Fill}(A)$ is still connected and finite, but this time without holes. We also apply this filling operation to subsets of $\Za$.
\begin{enumerate}[\bfseries (Rule 2)]
    \item A dual directed edge $(a^\ast,b^\ast)$ whose endpoint $b^\ast$ satisfies $b^\ast \in \text{Fill}(\mathcal{V}_{n}) \setminus \mathcal{V}_{n}$ will not be explored.
\end{enumerate}
Rule 2 has to be understood as follows. Such an edge $(a^\ast,b^\ast)$ with $b^\ast \in \text{Fill}(\mathcal{V}_{n}) \setminus \mathcal{V}_{n}$ targets a subpart of $\Za$ which is completely trapped and surrounded by the set of edges and vertices already explored. The exploration of $\f{b^\ast}{}$, which is a subset of $\f{x^\ast}{}$, actually is irrelevant to determine if $\f{x^\ast}{}$ reaches the outside of some large balls.

\medskip
Let us now describe precisely the exploration process\footnotemark of the forward set $\f{x^\ast}{}$ \footnotetext{See  \url{https://bennhenry.github.io/NeighPerc/} for simulations of the exploration process.}. It starts with initializing $\mathcal{V}_{0} = \{x^\ast\}$, $\E_{0} = \emptyset$ and $L^{(0)} = \big( (x^\ast,x^\ast+(1,0)),(x^\ast,x^\ast+(0,1)),(x^\ast,x^\ast-(1,0)),(x^\ast,x^\ast-(0,1))\big)$. Thus, for any $n$, while the list $L^{(n)}$ is not empty, we reveal the state of its first element and we update the sets $\mathcal{V}_{n}$, $\E_{n}$ and $L^{(n)}$ accordingly:
\begin{enumerate}
\item The current state of the $n$-th step is $X_n = (x^\ast_n,e^\ast_n,\omega^\ast(e^\ast_n))$ where $e^\ast_n = (x^\ast_n,y^\ast_n)$ is the first element of $L^{(n)}$.
\item We update the set of revealed edges, i.e., $\E_{n+1} = \E_{n} \cup \{e^\ast_n\}$.
\item If $\omega^{\ast}(e^\ast_n)=1$ ($e^\ast_n$ is open), then $\mathcal{V}_{n+1}=\mathcal{V}_{n} \cup \{y^\ast_n\}$ and $U^{(n)}$ contains the three new (directed, dual) edges starting at $y^\ast_n$, ordered in the counter-clockwise sense from $e^\ast_n$ (as in Figure~\ref{fig:explorationOrder}). Else ($\omega^{\ast}(e^\ast_n)=0$, i.e., $e^\ast_n$ is closed), $\mathcal{V}_{n+1}=\mathcal{V}_{n}$ and we set $U^{(n)}=\emptyset$.
\item We update the ordered list of edges to be explored, i.e., 
\[
L^{(n+1)} = U^{(n)}\colon L^{(n)}_{>1} ~,
\]
where $L^{(n)}_{>1}$ denotes the list $L^{(n)}$ deprived of its first element.
\item We clean $L^{(n+1)}$ according to Rules 1 and 2. Indeed, the set of vertices already visited being possibly augmented by one element (from $\mathcal{V}_{n}$ to $\mathcal{V}_{n+1}$), some edges of $L^{(n+1)}$ might not satisfy Rules~1 and 2 and should therefore be deleted.
\end{enumerate}

We denote by $\edf{x^\ast}$, and call the {\em explored cluster} of $x^\ast$, the set of vertices visited during the exploration process of $\df{x^\ast}{}$. When the exploration algorithm stops (and we will prove later that it stops $\PP_{(2,\varepsilon)}$-almost surely), the exploration of $\df{x^\ast}{}$ is almost exhaustive in the following sense:

\begin{lemma}
\label{lem:AlgoStop}
Let $x^\ast \in \Za$ be such that the exploration algorithm of $\df{x^\ast}{}$ stops. Then, the set $\df{x^\ast}{}$ is finite and
\[
\edf{x^\ast} \subset \df{x^\ast}{} \subset \text{{\upshape Fill}}(\edf{x^\ast}).
\]
\end{lemma}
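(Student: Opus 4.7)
The plan is to establish the three claims — finiteness of $\df{x^\ast}{}$ and the two inclusions $\edf{x^\ast}\subset\df{x^\ast}{}\subset\text{Fill}(\edf{x^\ast})$ — in that order. Since the algorithm has stopped after finitely many steps, only finitely many dual vertices can have been added to $\mathcal V$, so $\edf{x^\ast}$ is finite. A parallel induction on the step index shows that $\edf{x^\ast}$ is $\ell^1$-connected (each added vertex is a neighbor of a previously visited one) and that $\edf{x^\ast}\subset\df{x^\ast}{}$: a vertex $y^\ast_n$ is adjoined to $\mathcal V_{n+1}$ only when the edge $e^\ast_n=(x^\ast_n,y^\ast_n)$ is revealed open with $x^\ast\rightsquigarrow x^\ast_n$ already known, and concatenation yields $x^\ast\rightsquigarrow y^\ast_n$. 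In particular $\text{Fill}(\edf{x^\ast})$ is a well-defined finite subset of $\Za$.

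The main work is the inclusion $\df{x^\ast}{}\subset\text{Fill}(\edf{x^\ast})$, which I would prove by contradiction. Suppose there exists $y^\ast\in\df{x^\ast}{}\setminus\text{Fill}(\edf{x^\ast})$ and fix a directed open path $\pi=(z^\ast_0,\dots,z^\ast_m)$ from $x^\ast$ to $y^\ast$. Let $j\geq 1$ be the smallest index with $z^\ast_j\notin\text{Fill}(\edf{x^\ast})$, so that $z^\ast_j$ lies in the unbounded complementary component $\edf{x^\ast}^{\mathrm{out}}_{\infty}$ while $z^\ast_{j-1}\in\text{Fill}(\edf{x^\ast})$. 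The two vertices are $\ell^1$-adjacent, and any vertex of $\text{Fill}(\edf{x^\ast})\setminus\edf{x^\ast}$ neighboring $\edf{x^\ast}^{\mathrm{out}}_{\infty}$ would itself be in that unbounded component; this forces $a^\ast:=z^\ast_{j-1}\in\edf{x^\ast}$. Set $b^\ast:=z^\ast_j$.

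Now I would trace how the algorithm treated the edge $(a^\ast,b^\ast)$. Since $a^\ast\in\edf{x^\ast}$, at the step where $a^\ast$ was first visited all four of its outgoing dual edges were pushed onto the exploration list, in particular $(a^\ast,b^\ast)$. Because the algorithm has terminated with an empty list, this edge was either actually explored at some step, or deleted at some step $n$ by Rule 1 or Rule 2. Each possibility leads to a contradiction: being explored and open along $\pi$ would force $b^\ast\in\mathcal V\subset\edf{x^\ast}$; Rule 1 requires $b^\ast\in\mathcal V_n\subset\edf{x^\ast}$; Rule 2 requires $b^\ast\in\text{Fill}(\mathcal V_n)$, and since $\mathcal V_n\subset\edf{x^\ast}$ the monotonicity $A\subset B\Rightarrow\text{Fill}(A)\subset\text{Fill}(B)$ (for finite $A,B$) gives $b^\ast\in\text{Fill}(\edf{x^\ast})$. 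All three outcomes contradict the choice of $b^\ast$. Finiteness of $\df{x^\ast}{}$ then follows from its inclusion in the finite set $\text{Fill}(\edf{x^\ast})$.

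The main obstacle, more bookkeeping than conceptually deep, is the planar-topology step that upgrades the merely hypothetical $a^\ast\in\text{Fill}(\edf{x^\ast})$ to $a^\ast\in\edf{x^\ast}$, together with the monotonicity of $\text{Fill}$ used to discard the Rule 2 case. Both are elementary properties of the operation $A\mapsto(A^{\mathrm{out}}_{\infty})^c$, but the entire argument hinges on them and they must be stated cleanly. Everything else is a careful case analysis verifying that Rules 1 and 2, together with the depth-first exploration mechanism, block precisely the pathologies that could allow $\df{x^\ast}{}$ to escape $\text{Fill}(\edf{x^\ast})$.
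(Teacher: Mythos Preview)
Your argument is correct and shares the paper's core idea: locate an open dual edge $(a^\ast,b^\ast)$ crossing the boundary of $\text{Fill}(\edf{x^\ast})$, argue that $a^\ast$ must lie in $\edf{x^\ast}$ itself (not merely in its fill), and then check that neither exploration nor Rules~1--2 can account for the disappearance of $(a^\ast,b^\ast)$ from the list. The paper, however, organizes this in two separate stages: it first runs exactly your contradiction argument under the extra hypothesis that $\df{x^\ast}{}$ is \emph{infinite}, deducing finiteness; then, with finiteness in hand, it introduces a ``free exploration process'' (the same algorithm but with Rule~2 suppressed) which visits all of $\df{x^\ast}{}$, and observes that re-imposing Rule~2 only discards vertices already trapped inside $\text{Fill}(\edf{x^\ast})$. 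Your route is more direct --- you get the inclusion and finiteness in one stroke --- while the paper's detour through the free process makes the role of Rule~2 slightly more transparent.

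One minor inaccuracy: for a vertex $a^\ast\neq x^\ast$, the algorithm pushes only the \emph{three} outgoing edges other than the reverse of the arrival edge, not all four. This does not harm your argument, since the arrival vertex lies in $\edf{x^\ast}$ and hence cannot equal $b^\ast$; the edge $(a^\ast,b^\ast)$ is therefore among the three that are pushed.
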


\begin{proof}
By construction, the exploration algorithm adds to the $\mathcal{V}_n$'s only vertices that one can reach from $x^\ast$ through dual open edges. This justifies the inclusion $\edf{x^\ast} \subset \df{x^\ast}{}$.

The second inclusion requires more work. We first claim:
\begin{equation}
\label{ExForFini}
\text{the exploration algorithm stops $\,\Rightarrow\,$ $\df{x^\ast}{}$ is finite.}
\end{equation}
Let us proceed by contradiction and assume that the exploration algorithm stops, which means that $\edf{x^\ast}$ is finite, while $\df{x^\ast}{}$ is infinite. Note that the set $W=\text{Fill}(\edf{x^\ast})$ is finite too. Since $\df{x^\ast}{}$ is infinite, there exists a directed open edge $(y^\ast,z^\ast)$ such that $y^\ast,z^\ast \in \df{x^\ast}{}$, $y^\ast \in W$ and  $z^\ast \notin W$. Since $z^\ast$ belongs to the unbounded connected component of $\edf{x^\ast}$ then $y^\ast$ actually is in $\edf{x^\ast}$. So the vertex $y^\ast$ has been visited during the exploration process, and hence the edge $(y^\ast,z^\ast)$ has been added to the list of edges to be explored, but not really explored by the algorithm since $z^\ast \notin W$ (whereas $(y^\ast,z^\ast)$ is open). Necessarily the edge $(y^\ast,z^\ast)$ has been removed from the list before exploration due to Rules~1 or 2. This cannot be due to Rule~1 because $z^\ast \notin W$. This cannot be due to Rule~2 too since otherwise $z^\ast$ would be in $\text{Fill}(\edf{x^\ast}) = W$, which provides the contradiction.

Let us call the {\em free exploration process} the exploration process previously defined but {\em without Rule~2}. Henceforth, because the set $\df{x^\ast}{}$ is finite, it is not difficult to be convinced that the set of vertices $V^{\text{free}}(x^\ast)$ visited by the free exploration process is equal to $\df{x^\ast}{}$. Indeed, the set of directed edges visited by the free exploration process performs a tree rooted at $x^\ast$ and spanning the whole set $\df{x^\ast}{}$. Now, taking into account Rule 2 reduces the set $V^{\text{free}}(x^\ast)$ to $\edf{x^\ast}$ by removing subsets of vertices which are included in $\text{Fill}(\edf{x^\ast})$ by the definition of Rule 2. This forces
\[
\df{x^\ast}{} = V^{\text{free}}(x^\ast) \subset \text{Fill}(\edf{x^\ast}),
\]
as desired. 
\end{proof}

\subsection{Pivotal edges}
\label{sect:PivotalAlgo}

During the exploration process, we will sometimes find edges whose probability to be open is strictly larger than $1/2$ due to the presence of another already explored and closed dual edge, see Figure~\ref{fig:PivotalEdge}. These edges pose a problem for dominating $\text{For}(x^\ast)$ by a subcritical bond percolation cluster and need to be dealt with separately. Let us first make the notion of pivotality precise. 

For this, consider the edge $e^\ast_n = (x^\ast_n,y^\ast_n)$ revealed at the $n$-th step of the exploration algorithm of $\df{x^\ast}{}$. Let $z_n \in \Z^2$ be the starting point of the primal directed edge $e_n$ whose $e^\ast_n$ is the dual one. For the current section we assume by rotation invariance that $e^\ast_n$ is the east side of the unit square centered at $z_n$, i.e., $x^\ast_n = z_n + (1/2,-1/2)$ and $y^\ast_n = z_n + (1/2,1/2)$. Thus, let us respectively denote by $e^\ast_N$, $e^\ast_W$ and $e^\ast_S$ the elements of $\E^\ast$ obtained from $e^\ast_n$ by rotation (in the counter-clockwise sense) with center $z_n$ and angle $\pi/2$, $\pi$ and $3\pi/2$, which respectively correspond to the north, west and south sides of the unit square centered at $z_n$. The states of dual edges $e^\ast_n$, $e^\ast_N$, $e^\ast_W$ and $e^\ast_S$ depend of the states of primal edges starting at $z_n$ and then are dependent.

We call the dual edge $e^\ast_n$ {\em pivotal} for the exploration process of $\df{x^\ast}{}$ if at least one of the edges $e^\ast_N$, $e^\ast_W$ and $e^\ast_S$ has already been explored by the exploration algorithm before step $n$ and is closed, see Figure~\ref{fig:PivotalEdge}. The fact that $e^\ast_n$ is pivotal or not does not depend on its own state.

\begin{figure}[!ht]
\begin{center}
\begin{tikzpicture}
\draw (0,0) -- (0,1) -- (1,1) -- (1,0) -- (0,0);
\draw[fill=red, color=red] (0.5,0.5) circle  (0.075);
\draw[->,green,very thick] (1,0) -- (1,1); 
\begin{scope}[rotate around={180:(0.5,0.5)}]
\draw[fill=red, color=red] (0.5,0.5) circle  (0.075);
\draw[->,very thick,color=red,opacity=1] (0.5,0.5) -- (1.5,0.5);
\draw[->,blue,very thick, opacity=0.3] (1,0) -- (1,1); 
\draw (1,0.5) node[color=blue,cross,rotate=10] {};
\end{scope}
\end{tikzpicture}
\caption{\label{fig:PivotalEdge}The directed dual edge $e^\ast_n$ is the east edge of the unit square and is represented in green. It is pivotal for the exploration process since the west (dual) edge $e^\ast_W$ has been already visited by the exploration process and is closed (in light blue). This is why its primal associated edge (in red) is open.}
\end{center}
\end{figure}
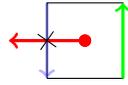

The pivotal edge $e^\ast_n$ constitutes an obstacle in order to stochastically dominate $\df{x^\ast}{}$ by a subcritical bond percolation cluster since its probability to be open can be strictly larger than $1/2$ due to the presence of an already explored and closed dual edge among $e^\ast_N$, $e^\ast_W$ and $e^\ast_S$. Indeed, following the example of Figure~\ref{fig:PivotalEdge}, the probability for the edge $e^\ast_n$ to be open, knowing that $e^\ast_W$ is already explored and closed, is equal to
\[
\mathbb{P}_{(2,\varepsilon)} \left(
\begin{tikzpicture}[baseline=0.6em,scale=0.65]
\draw (0,0) -- (0,1) -- (1,1) -- (1,0) -- (0,0);
\draw[fill=red, color=red] (0.5,0.5) circle  (0.075);
\draw[->,blue,very thick] (1,0) -- (1,1); 
\begin{scope}[rotate around={180:(0.5,0.5)}]
\draw[fill=red, color=red] (0.5,0.5) circle  (0.075);
\draw[->,very thick,color=red,opacity=1] (0.5,0.5) -- (1.5,0.5);
\draw[color=red, very thick ,opacity= 0.4] (0,0.5) -- (0.5,0.5);
\draw[->,blue,very thick, opacity=0.3] (1,0) -- (1,1); 
\draw (1,0.5) node[color=blue,cross,rotate=10] {};
\end{scope}
\end{tikzpicture} \; \Big| \; \begin{tikzpicture}[baseline=0.6em,scale=0.65]
\draw (0,0) -- (0,1) -- (1,1) -- (1,0) -- (0,0);
\draw[fill=red, color=red] (0.5,0.5) circle  (0.075); 
\begin{scope}[rotate around={180:(0.5,0.5)}]
\draw[fill=red, color=red] (0.5,0.5) circle  (0.075);
\draw[->,very thick,color=red,opacity=1] (0.5,0.5) -- (1.5,0.5);
\draw[->,blue,very thick, opacity=0.3] (1,0) -- (1,1); 
\draw (1,0.5) node[color=blue,cross,rotate=10] {};
\end{scope}
\end{tikzpicture} \; \right) = \frac{\mathbb{P}_{(2,\varepsilon)} \left(
\begin{tikzpicture}[baseline=0.6em,scale=0.65]
\draw (0,0) -- (0,1) -- (1,1) -- (1,0) -- (0,0);
\draw[fill=red, color=red] (0.5,0.5) circle  (0.075);
\draw[->,blue,very thick] (1,0) -- (1,1); 
\begin{scope}[rotate around={180:(0.5,0.5)}]
\draw[fill=red, color=red] (0.5,0.5) circle  (0.075);
\draw[->,very thick,color=red,opacity=1] (0.5,0.5) -- (1.5,0.5);
\draw[color=red, very thick ,opacity= 0.4] (0,0.5) -- (0.5,0.5);
\draw[->,blue,very thick, opacity=0.3] (1,0) -- (1,1); 
\draw (1,0.5) node[color=blue,cross,rotate=10] {};
\end{scope}
\end{tikzpicture} \; \right)}{\mathbb{P}_{(2,\varepsilon)} \left(
\begin{tikzpicture}[baseline=0.6em,scale=0.65]
\draw (0,0) -- (0,1) -- (1,1) -- (1,0) -- (0,0);
\draw[fill=red, color=red] (0.5,0.5) circle  (0.075);
\begin{scope}[rotate around={180:(0.5,0.5)}]
\draw[fill=red, color=red] (0.5,0.5) circle  (0.075);
\draw[->,very thick,color=red,opacity=1] (0.5,0.5) -- (1.5,0.5);
\draw[->,blue,very thick, opacity=0.3] (1,0) -- (1,1); 
\draw (1,0.5) node[color=blue,cross,rotate=10] {};
\end{scope}
\end{tikzpicture} \; \right)} = \frac{\varepsilon/4 + (1-\varepsilon)/3}{1/2+\varepsilon/4} = \frac{2}{3} \times \frac{1-\varepsilon/4}{1+\varepsilon/2},
\]
which tends to $2/3$ as $\varepsilon \downarrow 0$. This is why pivotal edges have to be treated separately.

However, encountering a pivotal edge during the exploration process is also a good news; its revealment constitutes an opportunity to stop the algorithm, as stated in the following result.

\begin{lemma}
\label{lem:pivot}
Using notations of Section~\ref{sect:ExploAlgo}, let $L^{(n)}$ be the list of edges remaining to be explored at the $n$-th step of the exploration algorithm and $e^\ast_n$ its first element. If $e^\ast_n$ is pivotal for the exploration algorithm, then $L^{(n)}$ is actually reduced to $e^\ast_n$, i.e., $|L^{(n)}| = 1$. Consequently, if $e^\ast_n$ is closed, then the exploration process stops.
\end{lemma}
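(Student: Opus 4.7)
The argument analyzes the contents of the stack $L^{(n)}$ by exploiting the depth-first, right-turn-first structure of the exploration together with the planarity of $\Za$ and the cleaning Rules~1 and~2.

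First, I would rule out that $e^\ast_N$ is the closed edge responsible for the pivotality. Since $e^\ast_N$ starts at $y^\ast_n$, its earlier exploration at some step $m<n$ would have forced $y^\ast_n$ into the visited set $\mathcal{V}_m$, and the Rule~1 cleaning performed at step $m$ (or immediately after) would then have removed $e^\ast_n=(x^\ast_n,y^\ast_n)$ from the list, contradicting the fact that $e^\ast_n$ sits at position~$1$ of $L^{(n)}$. Hence the pivotality is caused by either $e^\ast_W$ or $e^\ast_S$, and by the symmetry of the algorithm under the $\pi/2$ rotation that swaps these two configurations, it suffices to treat the case where $e^\ast_W$ has been explored and found closed.

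Second, by the standard DFS invariant, after the cleaning performed at the end of each step, $L^{(n)}$ consists of $e^\ast_n$ at the top, followed by the remaining untried outgoing edges at $x^\ast_n$ (its ``siblings''), then those at the parent of $x^\ast_n$ in the exploration tree (its ``uncles''), and so on up to the root $x^\ast$. To prove $|L^{(n)}|=1$ it is therefore enough to check that every such sibling or uncle edge has already been removed by Rule~1 or Rule~2. The geometric heart of the argument is planar: the explored closed dual edge $e^\ast_W$, together with the exploration-tree path from $x^\ast$ to its source $A$, the exploration-tree path from $x^\ast$ to $x^\ast_n$, and the primal unit face at $z_n$, form a closed curve in $\mathbb{R}^2$. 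Because the DFS processes outgoing edges right-turn first, every remaining sibling or uncle edge points into the interior of this curve, and hence its endpoint lies either in $\mathcal{V}_n$ (and is removed by Rule~1) or in $\text{Fill}(\mathcal{V}_n)\setminus\mathcal{V}_n$ (and is removed by Rule~2). An induction along the exploration-tree path from $x^\ast_n$ down to the root $x^\ast$ then gives $|L^{(n)}|=1$. The last statement of the lemma is immediate: if $e^\ast_n$ is closed, step~(3) of the algorithm produces no new elements, so after step~(4) the updated list $L^{(n+1)}$ is empty and the algorithm stops.

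The main obstacle is to make the enclosure argument rigorous. One must track carefully how the exploration winds between step $m$, where $e^\ast_W$ was explored, and step $n$, and verify by an inductive case analysis along the ancestor chain that the right-turn-first rule really does force every pending untried alternative into the enclosed region. In particular, the base case at $x^\ast_n$ (where one must show that the three other outgoing edges at $x^\ast_n$ all have endpoints in $\text{Fill}(\mathcal{V}_n)$ or $\mathcal{V}_n$) and the propagation to the siblings at each ancestor rely on a delicate combination of planar topology, the cyclic geometry of the dual face around $z_n$, and the precise ordering imposed by the determinant rule.
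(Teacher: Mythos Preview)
Your overall strategy---combining planarity, the right-turn-first DFS structure, and Rules~1 and~2 to show that every pending stack entry has been cleaned away---is exactly the paper's approach. However, there are two structural gaps in your reduction.

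First, the claimed $\pi/2$-rotation symmetry swapping the $e^\ast_S$-closed and $e^\ast_W$-closed cases does not exist: any rotation of the plane rotates $e^\ast_n$ together with the other three face edges, preserving their cyclic positions around $z_n$, so no rotation exchanges $e^\ast_W$ and $e^\ast_S$ while fixing $e^\ast_n$. A reflection would exchange them, but it reverses the counter-clockwise exploration order and is therefore not a symmetry of the algorithm. The paper instead says the $e^\ast_S$-only case is ``completely similar'' (analogous, not symmetric) to the $e^\ast_W$-only case, and treats it by the same style of argument rather than by an appeal to symmetry.

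Second, and more seriously, your reduction never addresses the situation where \emph{both} $e^\ast_W$ and $e^\ast_S$ have been explored before step~$n$. The paper handles this in two further cases (its Cases~2 and~3, according to whether $e^\ast_W$ is closed or open), and the geometry differs genuinely from the single-edge case: the enclosing curve is built from the exploration paths to $e^\ast_W$ and to $e^\ast_S$ (rather than to $x^\ast_n$ directly), one must argue that $e^\ast_S$ was itself pivotal when it was revealed and was found \emph{open} (else the algorithm would already have halted), and the exploration reaches $x^\ast_n$ through $e^\ast_S$ rather than via an independent branch. Even granting an analogy between the $e^\ast_W$-only and $e^\ast_S$-only cases, these two-edge cases remain untreated in your plan.

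A smaller point: the paper makes the orientation dichotomy explicit. The Jordan curve built from $\pi_0$, $\pi'_0$ and the face boundary could a priori enclose either $e^\ast_n$ or $e^\ast_W$; the paper rules out the former by observing that Rule~2 would then have deleted $e^\ast_n$ from the list before step~$n$. Your appeal to right-turn-first to pin down which side is the bounded one is plausible, but this explicit contradiction with Rule~2 is the cleanest way to resolve it and is worth including.
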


\begin{proof}
Assume we are at the $n$-th step of the exploration algorithm of the forward set $\df{x^\ast}{}$ and we use the notations introduced before: $e^\ast_n$ is going to be revealed during this $n$-th step and is pivotal, i.e., among $e^\ast_N$, $e^\ast_W$ and $e^\ast_S$, at least one has been already explored and is closed. Hence, several cases have to be distinguished. Some of them will turn out to be forbidden by Rules 1 and 2. For all the remaining cases, we will prove that $L^{(n)}$ is actually reduced to $e^\ast_n$.

First, let us remark that the north edge $e^\ast_N$ could not be explored before step $n$. Otherwise its starting point $y^\ast_n$ would have already been visited and would be stocked in $\mathcal{V}_n$. Then Rule~1 prevents the algorithm to explore the edge $e^\ast_n$. So, only $e^\ast_W$ and $e^\ast_S$ could be explored before step $n$ and by hypothesis ($e^\ast_n$ is pivotal) at least one of them is closed.

\medskip

\textbf{Case 1.} {\em Only the edge $e^\ast_W$ has been explored before step $n$.} The case where the only edge already explored is $e^\ast_S$ is completely similar and will not be treated. For this case, we refer to Figure~\ref{fig:Case1} to help the reader. In a first time the exploration process reaches the edge $e^\ast_W$ which is revealed and closed. So there is a path $\pi$ from $x^\ast$ to $e^\ast_W$ of dual open edges explored by the algorithm. In a second time, the exploration process reaches the edge $e^\ast_n$ and its starting point $x^\ast_n$. So there is a path $\pi'$ from $x^\ast$ to $e^\ast_n$ of dual open edges explored by the algorithm. The directed paths $\pi$ and $\pi'$ coincide from the root $x^\ast$ to some bifurcating vertex from which they are disjoint by Rule~1 (see Figure~\ref{fig:Case1}). We denote by $\pi_0$ and $\pi'_0$ their respective pieces beyond their bifurcating vertex. Then, by planarity, two situations may occur. Either $\pi_0$ and $\pi'_0$ surround and trap the edge $e^\ast_n$ (see the left part of Figure~\ref{fig:Case1}) but in this case the exploration of $e^\ast_n$ contradicts Rule~2. This situation is forbidden. Or $\pi_0$ and $\pi'_0$ surround and trap $e^\ast_W$ (see the right part of Figure~\ref{fig:Case1}). Since the exploration algorithm proceeds in the depth-first fashion and in the counter-clockwise sense, all the edges starting from $\pi$ and from the right side of $\pi'_0$ (recall that $\pi'_0$ is oriented which identifies its right and left sides) have already been explored. The remaining edges to be explored are among $e^\ast_n$ and the edges starting from the left side of $\pi'_0$. However these later edges are irrelevant thanks to Rule~2 so that $e^\ast_n$ actually is the last edge to be explored at this stage of the algorithm.

\begin{figure}[!ht]
\begin{center}
\includegraphics[width=11cm,height=6cm]{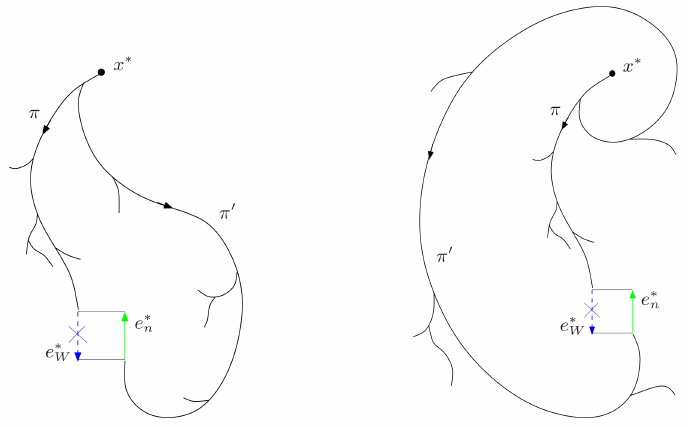}
\caption{\label{fig:Case1} Both pictures represent two paths $\pi$ and $\pi'$ performed during the exploration process of the forward set of $x^\ast$ until the $n$-th step of the algorithm. $\pi$ and $\pi'$ respectively reach the directed dual edges $e^\ast_W$ and $e^\ast_n$: $e^\ast_W$ has been previously revealed and is closed (in blue) while $e^\ast_n$ is about to be revealed (in green). The paths $\pi$ and $\pi'$ surround the edge $e^\ast_n$ to the left and the edge $e^\ast_W$ to the right.}
\end{center}
\end{figure}

\medskip
\textbf{Case 2.} {\em Both edges $e^\ast_W$ and $e^\ast_S$ have been explored before step $n$ and $e^\ast_W$ is closed.} By Rule~1, we can assert that $e^\ast_W$ is visited before $e^\ast_S$. We can exhibit two open paths $\pi$ and $\pi'$ from $x^\ast$ to $e^\ast_W$ and $e^\ast_S$ respectively. We also denote by $\pi_0$ and $\pi'_0$ their respective pieces beyond their bifurcating vertex. As before, using planarity, we can distinguish two cases. In the first one (see the left part of Figure~\ref{fig:Case2}), $\pi$ and $\pi'$ surround and trap the edges $e^\ast_S$ and $e^\ast_n$. This case is forbidden by Rule~2 since $e^\ast_S$ and $e^\ast_n$ target toward a region already surrounded by the exploration algorithm. It then remains the case where $\pi$ and $\pi'$ surround the edge $e^\ast_W$ (see the right part of Figure~\ref{fig:Case2}). Note that all the edges starting from $\pi$ and from the right side of $\pi'_0$ have already been explored. So, when the exploration algorithm reaches $e^\ast_S$, this edge is the last opportunity for the algorithm to continue its exploration and $e^\ast_S$ is a pivotal edge. If $e^\ast_S$ was closed, then the algorithm would stop and $e^\ast_n$ would never be explored, which is impossible. So $e^\ast_S$ was necessarily open and the exploration of $\df{x^\ast}{}$ continues after that until step $n$ at which $e^\ast_n$ has to be revealed (see the right-hand side of Figure~\ref{fig:Case2}). The edge $e^\ast_n$ is then the last edge to be explored at this stage of the algorithm.

Let us point out here that, when one explores the edge $e^\ast_n$, it is impossible, by planarity, that both edges $e^\ast_W$ and $e^\ast_S$ have already been explored and declared closed. This remark will be used in Section~\ref{sec_upperbound_cor}.

\begin{figure}[!ht]
\begin{center}
\includegraphics[width=11cm,height=6cm]{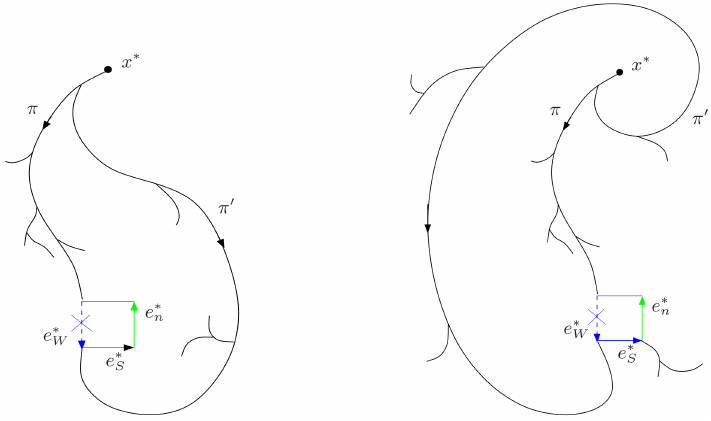}
\caption{\label{fig:Case2} Unlike Figure~\ref{fig:Case1}, the paths $\pi$ and $\pi'$ performed during the exploration process of $\df{x^\ast}{}$ until step $n$, respectively reach $e^\ast_W$ and $e^\ast_S$. On the left part, is depicted the case where $\pi$ and $\pi'$ surround $e^\ast_S$ and $e^\ast_n$ which is forbidden by Rule~2. On the right part, $\pi$ and $\pi'$ surround $e^\ast_W$ which is closed, while $e^\ast_S$ is open.}
\end{center}
\end{figure}

\medskip
\textbf{Case 3.} {\em Both edges $e^\ast_W$ and $e^\ast_S$ have been explored before step $n$ and $e^\ast_W$ is open}, meaning that $e^\ast_S$ is closed (since $e^\ast_n$ is pivotal). The same arguments as in the first two cases will allow us to conclude. As before, $e^\ast_W$ has been visited before $e^\ast_S$ (by Rule~1). Let $\pi$ and $\pi'$ be two open paths from $x^\ast$ to $e^\ast_W$ and $e^\ast_n$ respectively, explored by the algorithm. We also denote by $\pi_0$ and $\pi'_0$ their respective pieces beyond their bifurcating vertex. Once again, the case where $\pi_0$ and $\pi'_0$ surround and trap the edge $e^\ast_n$ is forbidden by Rule~2. We focus on the case where $\pi_0$ and $\pi'_0$ surround the edges $e^\ast_W$ and $e^\ast_S$. As previously, using planarity and the exploration in the depth first fashion and in the counter-clockwise sense, $e^\ast_n$ is then the last dual edge to be explored.
\end{proof}

\subsection{The geometric structure of the explored cluster}

Note that the result stated in Lemma~\ref{lem:pivot} is not only a termination property of the algorithm but also informs us about the geometry of $\edf{x^\ast}$. Roughly speaking, $\edf{x^\ast}$ can be seen as a disjoint union of clusters of vertices visited during the algorithm, linked by pivotal edges, see Figure~\ref{fig:cluster_chain}.

More precisely, let $\mathcal{T}_{x^\ast}$ be the number of steps of the exploration algorithm starting at $x^\ast$, i.e., 
\[
\mathcal{T} = \mathcal{T}_{x^\ast} = \inf \big\{ n \geq 1 \colon L^{(n+1)} = \emptyset \big\} \in \mathbb{N} \cup \{\infty\}.
\]
The exploration algorithm stops if and only if $\mathcal{T} < \infty$. In the sequel, we will prove that this algorithm stops with $\PP_{(2,\varepsilon)}$-probability $1$.
To give a rigorous description of the decomposition obtained from Lemma~\ref{lem:pivot} set $T_{0} = 0$ and let
\[
T_1 = \inf \big\{ 1 \leq n \leq \mathcal{T} \colon \text{the edge explored at the $n$-th step is pivotal} \big\} \in \mathbb{N} \cup \{\infty\}
\]
be the {\em first pivotal step} (if it occurs, using the convention $\inf \emptyset = \infty$). Now we can define the {\em first visited cluster} as the set of visited vertices until the first pivotal edge (if it occurs):
\[
\kedf{x^\ast}{1} =  \big\{ X_n(1) \colon 1 \leq n \leq T_{1} \wedge \mathcal{T} \big\}.
\]
Henceforth, several cases can be distinguished. When $T_1 = \infty$, the algorithm never meets pivotal edges, no matter if the exploration algorithm stops ($\mathcal{T} < \infty$) or not ($\mathcal{T} = \infty$). The sets $\kedf{x^\ast}{1}$ and $\edf{x^\ast}$ coincide in this case. When $T_1 < \infty$, the exploration algorithm discovers a first pivotal edge, say $e^\ast(1)$. So the set $\kedf{x^\ast}{1}$ is finite and contains by construction the starting vertex of $e^\ast(1)$, but not its ending one denoted by $x^\ast(1)$. If the pivotal edge $e^\ast(1)$ is closed, then the exploration process stops by Lemma~\ref{lem:pivot} since, at the $T_1$-th step, the list of edges to be explored is reduced to $e^\ast(1)$. In this case $T_1 = \mathcal{T} < \infty$ and $\kedf{x^\ast}{1} = \edf{x^\ast}$. Otherwise, this first pivotal edge is open, and the process continues from the vertex $x^\ast(1)$.

By induction, we can then define for any integer $k \geq 2$ such that the pivotal times $T_1,\ldots,T_{k-1}$ are all finite and whose corresponding pivotal edges are all open, the {\em $k$-th pivotal step} or {\em pivotal time} $T_k$ by
\[
T_{k} = \inf \big\{ T_{k-1} < n \leq \mathcal{T} \colon \text{the edge explored at the $n$-th step is pivotal} \big\} \in \mathbb{N} \cup \{\infty\}
\]
and the {\em $k$-th visited cluster} by 
\[
\kedf{x^\ast}{k} =  \big\{ X_n(1) \colon T_{k-1}+1 \leq n \leq T_{k} \wedge \mathcal{T} \big\}.
\]
As above, a trichotomy appears:
\begin{itemize}
\item Either $T_k$ is infinite, then the exploration process never stops and reveals exactly $k-1$ pivotal edges which are all open. While the first $(k-1)$ visited clusters are finite, the $k$-th counts infinitely many elements.
\item Or $T_k$ is finite and the algorithm discovers a $k$-th pivotal step, which turns out to be closed. Then the exploration process stops by Lemma~\ref{lem:pivot}. Exactly $k$ pivotal edges have been discovered by the algorithm and only the first $k-1$ are open. The $k$ visited clusters are all finite.
\item Or $T_k$ is finite and the algorithm discovers a $k$-th pivotal step, say $e^\ast(k)$, which turns out to be open. In this case, the exploration process continues from the ending vertex of $e^\ast(k)$.
\end{itemize}

Let us finally denote by $\mathcal{T}_{\text{piv}}$ the number of open pivotal edges revealed by the exploration process. The previous analysis leads to the following decomposition of the explored cluster $\edf{x^\ast}$:
\begin{equation}
\label{decomposition}
\edf{x^\ast} = \bigcup_{1 \leq k \leq \mathcal{T}_{\text{piv}}+1} \kedf{x^\ast}{k}.
\end{equation}

\begin{figure}[!ht]
\centering
\includegraphics[width=.75\textwidth]{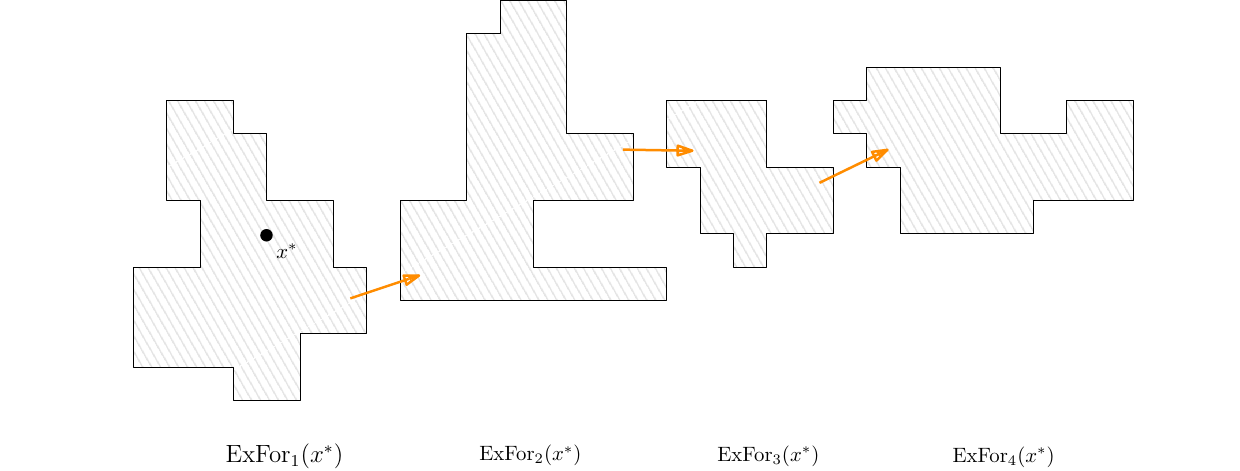}
\caption{\label{fig:cluster_chain} Schematic representation of the structure of the explored forward cluster $\text{ExFor}(x^*)$ as the union of the $k$-th explored clusters (shaded) linked by pivotal edges (orange).}
\end{figure}

\subsection{Proof of Theorem~\ref{thm:clusterSize}}
\label{sect:VisitedCluster}

In order to conclude the proof of Theorem~\ref{thm:clusterSize}, we need two more ingredients in addition to the decomposition~\eqref{decomposition}. The first one asserts that each of the visited clusters is actually quite small with high probability. 

\begin{theorem}
\label{theo:subGeomClusterSize}
For any $\varepsilon \geq 0$, there exist constants $C,c>0$ such that, for any integer $k \ge 1$,
\[
\forall n \in \mathbb{N} , \; \PP_{(2,\varepsilon)} \big( | \kedf{o^\ast}{k} | \geq n \big) \leq C e^{-c n}.
\]
\end{theorem}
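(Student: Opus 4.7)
The plan is to deduce Theorem~\ref{theo:subGeomClusterSize} from a stochastic-domination estimate (the forthcoming Proposition~\ref{prop:stochasticDomination}) combined with the sharpness of the phase transition for an appropriate \emph{constrained} i.i.d.~(undirected) bond percolation model on $\Z^2$. The starting observation is that inside a single visited cluster $\kedf{o^\ast}{k}$ the exploration algorithm reveals only \emph{non-pivotal} dual edges, so every newly revealed dual edge is conditionally open with probability at most $1/2 - \varepsilon/4$ by~\eqref{ProbaOpenDualEdge}. Moreover, the rigidity of the $(2,\varepsilon)$-model (each primal vertex emits at least two outgoing edges) forbids any left-winding pattern among the dual edges used inside $\kedf{o^\ast}{k}$, since a left winding would require three simultaneously closed primal edges sharing a common vertex. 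I would therefore first couple $\kedf{o^\ast}{k}$ with the cluster of the origin in i.i.d.~bond percolation on $\Z^2$ at parameter $q := 1/2 - \varepsilon/4$, where percolating paths are additionally forbidden from using the pattern \pattern\ (which always contains a left winding).

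Next, let $q_c^{\pattern}$ denote the critical parameter of this constrained percolation model. It is clear from the removal of potential open paths that $q_c^{\pattern} \geq p_c^{\text{bond}}(\Z^2) = 1/2$. When $\varepsilon > 0$, the comparison parameter satisfies
\begin{equation}
q = \tfrac{1}{2} - \tfrac{\varepsilon}{4} < \tfrac{1}{2} \leq q_c^{\pattern},
\end{equation}
so the constrained cluster of the origin has an exponentially decaying size by the classical sharpness of the phase transition in subcritical i.i.d.~bond percolation (e.g.\ Menshikov or Aizenman--Barsky), and the bound for $|\kedf{o^\ast}{k}|$ follows from the coupling. When $\varepsilon = 0$, the comparison parameter is exactly the critical value $q = 1/2$, at which the \emph{unconstrained} cluster does not enjoy exponential tails, and the forbidden pattern must genuinely be exploited. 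I would then invoke the enhancement-style argument developed in Section~\ref{sect:enhancement} to establish that $q_c^{\pattern} > 1/2$ strictly, which places the constrained model at parameter $1/2$ firmly in its subcritical phase; a sharpness statement for the constrained model then yields the desired exponential tail bound, uniformly in $k$ thanks to translation invariance and the fact that the bound depends only on the law of a generic visited cluster rooted at $o^\ast$.

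The main obstacle is the strict inequality $q_c^{\pattern} > 1/2$ in the critical case $\varepsilon = 0$. The standard enhancement theorems of Aizenman--Grimmett and Balister--Bollob\'as--Riordan apply only to \emph{essential} modifications, a property that is delicate to verify, and that in fact fails for the diminishment we consider, as noted in the introduction. The workaround is to re-develop the enhancement technique from scratch for this specific pattern-based diminishment, which is precisely the purpose of Section~\ref{sect:enhancement}. Once the strict shift $q_c^{\pattern} > 1/2$ is available, combining it with Proposition~\ref{prop:stochasticDomination} and the corresponding sharpness statement yields the uniform exponential decay of $|\kedf{o^\ast}{k}|$ claimed in Theorem~\ref{theo:subGeomClusterSize}, which in turn feeds into Theorem~\ref{thm:clusterSize} and ultimately Theorem~\ref{thm:upperBound}.
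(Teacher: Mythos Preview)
Your overall strategy matches the paper's: dominate $\kedf{o^\ast}{k}$ by the cluster of the origin in i.i.d.\ bond percolation at parameter $q=1/2-\varepsilon/4$ under the pattern constraint (Proposition~\ref{prop:stochasticDomination}), handle $\varepsilon>0$ by dropping the constraint and using subcritical sharpness, and for $\varepsilon=0$ invoke the enhancement argument of Section~\ref{sect:enhancement}.

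There is, however, a subtle but genuine gap in the $\varepsilon=0$ case. You phrase the output of the enhancement section as the strict inequality $q_c^{\pattern}>1/2$, and then appeal to ``a sharpness statement for the constrained model'' to get exponential decay at $q=1/2$. But sharpness for the \emph{constrained} connection event is not covered by the standard theorems (Menshikov, Aizenman--Barsky, Duminil-Copin--Tassion): those results concern ordinary connection events in i.i.d.\ or FKG-monotone models, and the event ``$o\to\partial\Lambda_n$ without \scalebox{0.7}{\pattern}'' is not of that form. You would have to supply a separate proof of subcritical exponential decay for this nonstandard event, which is nontrivial.

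The paper sidesteps this issue entirely. Its enhancement result (Theorem~\ref{prop:enhancement}) is \emph{not} stated as $q_c^{\pattern}>1/2$; it is stated as the direct probability comparison
\[
\mathcal{P}_{1/2}\big(o\to\partial\Lambda_n\text{ without }\scalebox{0.7}{\pattern}\big)\ \le\ \mathcal{P}_{1/2-\varepsilon'}\big(o\to\partial\Lambda_n\big)
\]
for some fixed $\varepsilon'>0$ and all large $n$. This transfers the problem back to the \emph{unconstrained} model at a strictly subcritical parameter, where exponential decay is classical. So the enhancement argument buys you more than just a shifted critical point: it gives the finite-$n$ inequality that makes a separate sharpness theorem for the constrained model unnecessary. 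Your plan becomes correct once you replace ``$q_c^{\pattern}>1/2$ plus constrained sharpness'' by this direct comparison.
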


The proof of Theorem~\ref{theo:subGeomClusterSize} requires a considerable amount of work and the rest of this and the next section is devoted to it. It consists essentially of two main steps. First, we compare the $k$-th visited cluster in the $(2,\varepsilon)$-model to the cluster of the origin in a standard i.i.d.~Bernoulli bond percolation model, with parameter $1/2-\varepsilon/4$ but {\em with some constraints}, see Proposition~\ref{prop:stochasticDomination}. These constraints have to be understood as a family of patterns which are forbidden to be used by percolating paths. When $\varepsilon>0$ we can simply drop the constraint and use standard results for Bernoulli bond percolation to obtain Theorem~\ref{thm:clusterSize}.
However, when $\varepsilon = 0$, we are exactly at the critical value and the decay in the model without constraints is too slow for our purposes. Therefore, a more careful analysis based on enhancement techniques is needed. This will be carried out in Section~\ref{sect:enhancement}.

But before we do this, let us for now assume that Theorem~\ref{theo:subGeomClusterSize} holds and complete the proof of  Theorem~\ref{thm:clusterSize} by providing the last ingredient which tells us that the number of visited clusters in the decomposition~\eqref{decomposition} is subgeometric. Its proof is straightforward and given at the end of this section.

\begin{lemma}
\label{lem:SubGeom}
In the $(2,\varepsilon)$-model, with $\varepsilon \geq 0$, the random variable $\mathcal{T}_{\text{piv}}$ is subgeometric, i.e., 
\[
\forall n \in \mathbb{N} , \; \PP_{(2,\varepsilon)}(\mathcal{T}_{\text{piv}} \geq n) \leq (2/3)^n.
\]
\end{lemma}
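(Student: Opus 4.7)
The plan is to argue that at each pivotal time, the conditional probability that the pivotal edge is open is uniformly bounded by $2/3$, no matter what has been explored before. A stochastic domination by a geometric random variable then yields the stated tail bound.

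Fix a pivotal step $T_k$ (assumed finite) and let $e^\ast_n$ be the dual edge revealed at that step, with associated primal vertex $z_n$. The four dual edges bordering the unit square around $z_n$ are determined exclusively by the independent choice of outgoing primal edges at $z_n$, so conditioning on the full exploration history $\mathcal{F}_{T_k-1}$ reduces to conditioning on the already-revealed states among the three sibling edges $e^\ast_N, e^\ast_W, e^\ast_S$. By Rule~1 the north sibling $e^\ast_N$ can never be among these, and by the case analysis in the proof of Lemma~\ref{lem:pivot} we are (up to the evident symmetry swapping $e^\ast_W$ and $e^\ast_S$) in one of three configurations: (i) only $e^\ast_W$ has been revealed and it is closed; (ii) $e^\ast_W$ is closed and $e^\ast_S$ is open; (iii) $e^\ast_W$ is open and $e^\ast_S$ is closed.

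The conditional probability in case (i) is precisely the one computed in the paragraph preceding Lemma~\ref{lem:pivot}, namely $\tfrac{2}{3}\cdot \tfrac{1-\varepsilon/4}{1+\varepsilon/2}$, which is at most $2/3$ (with equality exactly at $\varepsilon=0$). In cases (ii) and (iii), a short combinatorial calculation based on the distribution of outgoing edges at $z_n$ (choose $2$ uniformly with probability $1-\varepsilon$, choose $3$ uniformly with probability $\varepsilon$) gives in both cases the conditional probability $\tfrac{2(1-\varepsilon)}{4-\varepsilon}$, which is bounded above by $1/2$ for all $\varepsilon \in [0,1)$. Thus in every case the conditional probability that $e^\ast_n$ is open is at most $2/3$.

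Combining these bounds through the tower property, for every $n \geq 1$,
\begin{align}
\PP_{(2,\varepsilon)}(\mathcal{T}_{\text{piv}} \geq n)
= \PP_{(2,\varepsilon)}\bigl(T_1,\dots,T_n<\infty \text{ and the first } n \text{ pivotal edges are all open}\bigr) \leq (2/3)^n,
\end{align}
since each of the $n$ pivotal edges contributes a factor of at most $2/3$ conditional on the previous exploration history. The main obstacle, namely ruling out configurations with more than two already-revealed siblings and identifying the three possible cases, has already been handled in Lemma~\ref{lem:pivot}; all that remains here is the three elementary conditional-probability computations, each of which is a direct consequence of the $(2,\varepsilon)$ outdegree distribution at a single vertex.
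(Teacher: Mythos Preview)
Your proof is correct and follows essentially the same approach as the paper's: bound the conditional probability that each pivotal edge is open by $2/3$ given the exploration history, then iterate via the tower property. The paper is slightly more terse---it simply observes that the worst case is when exactly one sibling edge is closed (the case with both $e^\ast_W$ and $e^\ast_S$ closed being excluded by planarity, as noted in Case~2 of Lemma~\ref{lem:pivot}) and quotes the bound $\tfrac{2}{3}\cdot\tfrac{1-\varepsilon/4}{1+\varepsilon/2}\le \tfrac{2}{3}$---whereas you additionally work out the two mixed cases explicitly and obtain the sharper bound $\tfrac{2(1-\varepsilon)}{4-\varepsilon}\le \tfrac12$ there; but the overall structure and conclusion are identical.
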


With these two results, we are ready to prove Theorem~\ref{thm:clusterSize}.

\begin{proof}[Proof of Theorem~\ref{thm:clusterSize}]
The decomposition \eqref{decomposition} gives
\[
| \edf{x^\ast} | = \sum_{1\leq k\leq \mathcal{T}_{\text{piv}}+1} | \kedf{x^\ast}{k} |
\]
which implies, thanks to Theorem~\ref{theo:subGeomClusterSize} and Lemma~\ref{lem:SubGeom}, the upper bound
\begin{equation*}
\begin{split}
\PP_{(2,\varepsilon)} \big( | \edf{x^\ast} | \geq n \big) & \leq \PP_{(2,\varepsilon)} \Big( \mathcal{T}_{\text{piv}} < \lfloor \sqrt{n} \rfloor , \, \sum_{1\leq k \leq \lfloor \sqrt{n} \rfloor} | \kedf{x^\ast}{k} | \geq n \Big) \, + \, \PP_{(2,\varepsilon)} \big( \mathcal{T}_{\text{piv}} \geq \lfloor \sqrt{n} \rfloor \big) \\
& \leq  \sum_{1\leq k\leq \lfloor \sqrt{n} \rfloor} \PP_{(2,\varepsilon)} \big( | \kedf{x^\ast}{k} | \geq \sqrt{n} \big) \, + \, \Big( \frac{2}{3} \Big)^{\lceil\sqrt{n}\rceil} \\
& \leq  C_1 e^{- c_1 \sqrt{n}},
\end{split}
\end{equation*}
where $C_1,c_1$ are positive constants. As a conclusion, the set of vertices visited by the exploration algorithm is almost-surely finite, meaning that the exploration process of $\df{x^\ast}{}$ almost-surely stops after a finite number of steps. Lemma~\ref{lem:AlgoStop} then applies and the forward set $\df{x^\ast}{}$ is almost-surely finite too, and is included in $\text{Fill}(\edf{x^\ast})$. 

To conclude, note that $|\edf{x^\ast}| \leq n$ implies $|\df{x^\ast}{}| \leq 4 n^2$. Indeed, $|\edf{x^\ast}| \leq n$ means that $\edf{x^\ast}$ is included in the square $x^\ast + \llbracket -n,n \rrbracket^2$ (this is a connected set containing $x^\ast$). By definition, the same inclusion also holds for $\text{Fill}(\edf{x^\ast})$ and hence, by Lemma~\ref{lem:AlgoStop} for $\df{x^\ast}{}$, this leads to $|\df{x^\ast}{}| \leq 4 n^2$. Therefore,
\[
\PP_{(2,\varepsilon)} \big( | \df{x^\ast}{} | \geq n \big) \leq \PP_{(2,\varepsilon)} \Big( | \edf{x^\ast} | \geq \sqrt{n}/2 \Big) \leq  C_1 e^{- c_2 n^{1/4}},
\]
where $C_1,c_2$ are positive constants.
\end{proof}

\begin{proof}[Proof of Lemma~\ref{lem:SubGeom}]
Let us denote by $\mathcal{F}_{n}$ the $\sigma$-algebra generated by $X_k = (x^\ast_k , e^\ast_k ,\omega^\ast(e^\ast_k))$, the states of the exploration process until step $n-1$ (included). That is, conditionally on $\mathcal{F}_{n}$, the $n$-th dual edge which will be explored is known but its state is yet unknown. Conditionally on $\mathcal{F}_{T_{k-1}}$ the event $T_k < \infty$ means that $T_{k-1} < \infty$ and especially that the algorithm explores a $k$-th visited cluster, which is possible only if the $(k-1)$-th pivotal edge was open. So,
\[
\PP_{(2,\varepsilon)} \big( T_k < \infty \,|\, \mathcal{F}_{T_{k-1}} \big) \leq \mathds{1}_{T_{k-1} < \infty} \PP_{(2,\varepsilon)} \big( \text{the $(k-1)$-th pivotal edge is open} \,|\, \mathcal{F}_{T_{k-1}} \big).
\]
Reusing the notations of Section~\ref{sect:PivotalAlgo}, let the pivotal edge be denoted by $e^\ast$ and assumed to be the east side of a unit square while the other edges are denoted by $e^\ast_N$, $e^\ast_W$ and $e^\ast_S$. To compute the above conditional probability, the worst case occurs when only one edge among $e^\ast_N$, $e^\ast_W$ and $e^\ast_S$ is closed (the case where both $e^\ast_W$ and $e^\ast_S$ are closed is forbidden, see the proof of Lemma~\ref{lem:pivot} and Figure~\ref{fig:Case2}). Then,
\[
\PP_{(2,\varepsilon)} \big( \text{the $(k-1)$-th pivotal edge is open} \,|\, \mathcal{F}_{T_{k-1}} \big) \leq \frac{2}{3} \times \frac{1-\varepsilon/4}{1+\varepsilon/2}\leq \frac{2}{3}.
\]
We can now conclude,
\begin{align*}
\PP_{(2,\varepsilon)} \big( T_k < \infty \big)&= \mathbb{E} \big[ \PP_{(2,\varepsilon)} \big( T_k < \infty \,|\, \mathcal{F}_{T_{k-1}} \big) \big] \\
&=\mathbb{E} \big[ \mathds{1}_{T_{k-1}<\infty} \PP_{(2,\varepsilon)} \big( \text{the $(k-1)$-th pivotal edge is open} \,|\, \mathcal{F}_{T_{k-1}} \big) \big]\\
&\leq \frac{2}{3} \PP_{(2,\varepsilon)} \big( T_{k-1} < \infty \big)
\end{align*}
and the prove is finished by induction.
\end{proof}

\section{Domination by a percolation model under constraints}
\label{sec:UnderConstraints}
This section is dedicated to the first ingredient of the proof of the exponential tails of the size of the visited clusters $\kedf{x^\ast}{k}$, $k\geq 1$. Roughly speaking, we compare the $k$-th visited cluster in the $(2,\varepsilon)$-model to the cluster of the origin in an i.i.d.~Bernoulli bond percolation model with parameter $1/2-\varepsilon/4$ {\em under some constraints}, see Proposition~\ref{prop:stochasticDomination}. These constraints have to be understood as a family of patterns which are forbidden to be used by percolating paths. 

In Section~\ref{sect:forbidden}, we introduce the forbidden patterns and state Proposition~\ref{prop:stochasticDomination}, which will then be proved in Section~\ref{sect:ProofForbiddenPatterns}.

\subsection{Independent bond percolation and forbidden patterns.}
\label{sect:forbidden}

Let us now introduce the independent and undirected bond percolation model on the dual lattice $\Za$ by which we want to dominate each cluster $\kedf{x^\ast}{k}$. Let $\Omega' = \{0,1\}^{\E'}$ be the set of undirected bond configurations where $\E'$ denotes the set of undirected edges of $\Za$. The configuration set $\Omega'$ is endowed with the product measure $\mathcal{P}_{q}$ where each coordinate is a Bernoulli random variable of parameter $q \in [0,1]$.

We are interested in the event $\{ o^\ast \rightsquigarrow o^\ast + \partial \Lambda_{n} \}$ in which the origin $o^\ast$ is linked by an (undirected) open path to the boundary $o^\ast + \partial \Lambda_{n}$, where $\partial \Lambda_{n} = \{ z \in \Z^2 \colon \|z\|_{\ell^\infty} = n\}$, but under constraints. For this purpose, let us introduce two {\em forbidden patterns} to which we will refer by using the symbol \scalebox{0.7}{\pattern}. The first one is defined by its open and closed edges:
\begin{itemize}
\item $\{o^\ast,o^\ast+(0,1)\}$, $\{o^\ast+(0,1),o^\ast+(1,1)\}$, $\{o^\ast+(1,0),o^\ast+(1,1)\}$, $\{o^\ast+(1,0),o^\ast+(2,0)\}$ and $\{o^\ast+(2,0),o^\ast+(2,1)\}$ are open;
\item $\{o^\ast,o^\ast+(1,0)\}$ and $\{o^\ast+(1,1),o^\ast+(2,1)\}$ are closed.
\end{itemize}
The second forbidden pattern is obtained by rotating the first pattern (in the counter-clockwise sense) by the angle $\pi/2$ around the center $o^\ast$, see Figure~\ref{fig:ForbiddenPatterns} for an illustration. Hence, the set of forbidden patterns is given by the first and second forbidden patterns and all their translations in $\Za$. We now define the event denoted by
\[
\big\{ o^\ast \rightsquigarrow o^\ast + \partial \Lambda_{n} \; \mbox{without \scalebox{0.7}{\pattern}} \big\}
\]
in which $o^\ast$ is linked to the boundary $o^\ast + \partial \Lambda_{n}$ by an{\em  open path using none of the forbidden patterns}, see Figure~\ref{fig:ForbiddenPatterns} for examples. Note that the forbidden pattern is allowed to appear in a configuration and its individual edges may be used for an open path to the boundary, we only exclude open paths which use the {\em whole pattern}. 
\begin{figure}[ht!]
\begin{center}
\begin{tabular}{cp{1cm}c}
\includegraphics[scale=1]{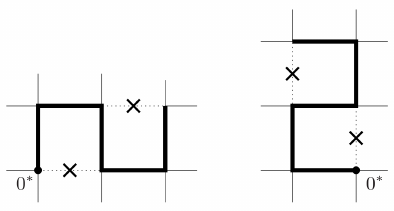} & & \includegraphics[scale=1]{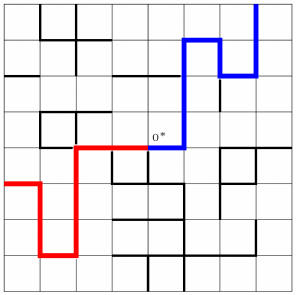}
\end{tabular}
\caption{Left: the first and second forbidden patterns. The small black crosses emphasize the fact that the corresponding edges are closed. 
Right: here is a configuration of the box $o^\ast+\Lambda_4$ satisfying the event $\{o^\ast \rightsquigarrow o^\ast + \partial \Lambda_{4} \text{ without }\scalebox{0.7}{\protect\pattern}\}$ thanks to the red path (for instance), but not via the blue one which uses a forbidden pattern.
}
\label{fig:ForbiddenPatterns}
\end{center}
\end{figure}

We are ready to state the main result of this section. Roughly speaking, it says that each visited cluster $\kedf{x^\ast}{k}$ is dominated by the cluster of the origin $o^\ast$ in a Bernoulli percolation model under constraints. 

\begin{prop}
\label{prop:stochasticDomination}
For any $\varepsilon \geq 0$, $x^\ast \in \Za$ and for any integer $k \ge 1$, the following inequality holds for any integer $n$,
\[
\PP_{(2,\varepsilon)} \big( | \kedf{x^\ast}{k} | \geq n \big) \leq \mathcal{P}_{1/2-\varepsilon/4} \big( o^\ast \rightsquigarrow o^\ast + \partial \Lambda_{n} \; \mbox{without \scalebox{0.7}{\pattern}} \big).
\]
\end{prop}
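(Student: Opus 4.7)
My plan is to build a step-by-step coupling between the exploration of the $k$-th visited cluster $\kedf{x^\ast}{k}$ under $\PP_{(2,\varepsilon)}$ and an i.i.d.\ Bernoulli bond percolation $\mathcal{P}_q$ on the undirected dual lattice with parameter $q := 1/2 - \varepsilon/4$. The key point is that, within a visited cluster, only non-pivotal directed dual edges are revealed, so the negative correlation among the four dual edges around each primal vertex keeps the conditional probability of every revealed edge being open below $q$.

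First, let $\mathcal{F}_n$ denote the filtration generated by the first $n-1$ exploration steps. For every step $T_{k-1} < n \leq T_k \wedge \mathcal{T}$ I would establish
\[
\PP_{(2,\varepsilon)}\big(\omega^\ast(e^\ast_n) = 1 \,\big|\, \mathcal{F}_{n-1}\big) \;\leq\; q.
\]
By the definition of the $k$-th visited cluster, $e^\ast_n$ is non-pivotal, hence none of the three other dual edges at the primal vertex $z_n$ associated to $e^\ast_n$ has been previously revealed and found closed. A case analysis on the number of previously revealed \emph{open} dual edges at $z_n$ (zero, one, or two) yields respectively $q$, $2(1-\varepsilon)/(3(2-\varepsilon))$, and $0$, all bounded by $q$ by a direct check; the last case uses the rigidity of the $(2,\varepsilon)$-model, which forces at most two of the four dual edges at any primal vertex to be open. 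Feeding this pointwise conditional domination into a Strassen-type construction produces on an enlarged probability space an i.i.d.\ Bernoulli field $\eta \sim \mathcal{P}_q$ such that every undirected edge underlying a directed dual edge revealed open during the exploration is also $\eta$-open. In particular, the visited cluster, together with its supporting revealed open edges, is contained in the $\eta$-open cluster of $x^\ast$.

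To see where the forbidden pattern $\pattern$ enters, I would next exploit that the revealed open edges of $\kedf{x^\ast}{k}$ are genuine open dual edges of the $(2,\varepsilon)$-configuration. Rigidity is equivalent to the fact that no primal vertex can be simultaneously surrounded by three open dual edges, and an inspection of $\pattern$ reveals that its five open edges contain precisely such a triple (the three edges surrounding the primal vertex at the center of the pattern). Hence $\pattern$ cannot be realized by the open edges supporting $\kedf{x^\ast}{k}$, so any $\eta$-open path matching an explored dual path of $\kedf{x^\ast}{k}$ avoids $\pattern$. The desired upper bound then follows: if $|\kedf{x^\ast}{k}| \geq n$, the connectedness of the visited cluster together with the depth-first exploration rule produces an open dual path from $x^\ast$ within $\kedf{x^\ast}{k}$ reaching the translated boundary $x^\ast + \partial \Lambda_n$, whose coupled image is an $\eta$-open pattern-avoiding path, and the translation invariance of $\mathcal{P}_q$ shifts $x^\ast$ to $o^\ast$.

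The main obstacle is the pointwise conditional domination step: one must carefully track the history of the exploration at the current primal vertex and exploit the negative correlation of the $(2,\varepsilon)$-model there, dealing with the fact that the past also contains revelations at other primal vertices and possibly at pivotal steps from earlier $j$-th visited clusters ($j<k$). Once this bound is in place, the identification of $\pattern$ as the minimal undirected obstruction forced by rigidity is the structural ingredient that will later be leveraged by the enhancement argument in Section~\ref{sect:enhancement} to push the comparison through at the critical case $\varepsilon = 0$.
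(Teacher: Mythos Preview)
Your proposal is correct and follows the same three-step strategy as the paper: the conditional bound $\PP_{(2,\varepsilon)}(\omega^\ast(e^\ast_n)=1\mid\mathcal{F}_{n})\leq q$ via the identical case analysis on previously revealed sibling edges at the current primal vertex (necessarily open, else $e^\ast_n$ would be pivotal); a sequential coupling with i.i.d.\ Bernoulli($q$) bonds (you invoke Strassen, the paper builds it explicitly through intermediate configurations $\mathcal{R}_m(\omega,\gamma^\ast)$ and a heredity inequality for increasing local functions); and the observation that the explored path cannot traverse the forbidden pattern.

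One point deserves sharpening. Your phrasing ``no primal vertex can be surrounded by three open dual edges, and the pattern contains such a triple'' conflates directed and undirected edges: rigidity constrains the four \emph{directed} dual edges at a primal vertex, whereas $\scalebox{0.7}{\pattern}$ is undirected, and three undirected sides of a unit square being open does not by itself contradict rigidity. The paper makes the argument precise at the level of the \emph{path}: the directed exploration path $\pi^\ast$ can never contain a \emph{left winding} (three consecutive directed edges all associated to the same primal vertex), and one checks that either directed traversal of the five open edges of $\scalebox{0.7}{\pattern}$ contains a left winding (around the right square's primal center in one orientation, around the left square's in the other). Thus it is the undirected explored path $\tilde{\pi}^\ast$ that is guaranteed to avoid the pattern as a sub-path, which is exactly what the event $\A{o^\ast}$ requires. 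With this adjustment your sketch coincides with the paper's proof.
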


Proposition~\ref{prop:stochasticDomination} turns out to be sufficient to prove Theorem~\ref{thm:clusterSize} in the case where $\varepsilon > 0$.

\begin{proof}[Proof of Theorem~\ref{thm:clusterSize} when $\varepsilon > 0$] In this case, the parameter $1/2-\varepsilon/4$ is strictly smaller than $1/2$ and then corresponds to the subcritical regime for the independent bond percolation model in $\Z^2$ for which a sharp transition is well-known (see for instance~\cite[Chapter 6]{grimmett1999percolation}). Therefore, we can simply neglect the forbidden patterns and just write
\begin{eqnarray}
\label{BoundEps>0}
\PP_{(2,\varepsilon)} \big( | \kedf{x^\ast}{k} | \geq n \big) & \leq &  \mathcal{P}_{1/2-\varepsilon/4} \big( o^\ast \rightsquigarrow o^\ast + \partial \Lambda_{n} \; \mbox{without \scalebox{0.7}{\pattern}} \big) \nonumber \\
& \leq &  \mathcal{P}_{1/2-\varepsilon/4} \big( o^\ast \rightsquigarrow o^\ast + \partial \Lambda_{n} \big) \\
& \leq & C e^{-c n} \nonumber
\end{eqnarray}
for any $n\geq 1$, where $C,c$ are positive constants (depending on $\varepsilon$).
\end{proof}

Remark that, when $\varepsilon = 0$, the upper bound~\eqref{BoundEps>0} is merely $\mathcal{P}_{1/2}(o^\ast \rightsquigarrow o^\ast + \partial \Lambda_{n})$ and this decays too slowly as $n$ tends to infinity to be useful for our purposes. In this situation, we need to benefit from the forbidden patterns through a sharper analysis that we achieve in Section~\ref{sect:enhancement} using enhancement techniques.

\subsection{Proof of Proposition~\ref{prop:stochasticDomination}}
\label{sect:ProofForbiddenPatterns}
Let us start by introducing some more notation. Let $k$ be some positive integer. If the $(k-1)$-th pivotal time $T_{k-1}$ is infinite, or finite but the corresponding pivotal edge is closed, then the $k$-th visited cluster $\kedf{x^\ast}{k}$ is empty by definition; there is nothing to do in this case. So, in the sequel, we focus on the case where $T_{k-1} < \infty$ and the corresponding pivotal edge is open, so that $\kedf{x^\ast}{k}$ is non-empty. The exploration of the visited cluster $\kedf{x^\ast}{k}$ ends at step $T_k \wedge \mathcal{T} \in \mathbb{N}\cup\{\infty\}$, i.e., according to the exploration or not of a $k$-th pivotal edge. Let us introduce the random variable
\[
T := \big( T_k \wedge \mathcal{T} - 1 \big) \mathds{1}_{T_k \wedge \mathcal{T} < \infty} + \infty \mathds{1}_{T_k \wedge \mathcal{T} = \infty}
\]
which indicates, when at least $T_k$ or $\mathcal{T}$ is finite, the index of the last explored edge before the exploration of the $k$-th visited cluster $\kedf{x^\ast}{k}$ terminates. Now, we go through all the edges explored during the $k$-th visited cluster, indexed by $m \in \{1,\ldots,T-T_{k-1}\}$. Recall that the $n$-th edge explored by the exploration algorithm is denoted by $e^\ast_n$ and its state by $\omega^\ast(e^\ast_n)$. To be shorter, we set in this proof for $1\leq m\leq T-T_{k-1}$,
\[
Y_{m} := e^\ast_{T_{k-1}+m} \; \mbox{ and } \; Z_m := \omega^\ast(e^\ast_{T_{k-1}+m})
\]
the $m$-th explored edge during the exploration of $\kedf{x^\ast}{k}$, and its state. Also the $\sigma$-algebra $\mathcal{F}_m$, for $1\leq m\leq T-T_{k-1}$, encodes the information collected by the exploration process until step $m$, including the edge $Y_m$ but not its state $Z_m$:
\[
\mathcal{F}_{m} = \sigma \big( e^\ast_1 , \ldots , e^\ast_{T_{k-1}+m} ; \omega^\ast(e^\ast_1) , \ldots , \omega^\ast(e^\ast_{T_{k-1}+m-1}) \big).
\]

Given a configuration $\omega \in \Omega$, recall that $\omega^\ast \in \Omega^\ast$ denotes its dual configuration, that is to say a configuration of dual directed edges. Let us also consider a configuration $\gamma^\ast \in \Omega'$ of dual undirected edges of $\E'$ (the edge set of the dual lattice $\Za$). From the tuple $(\omega,\gamma^\ast)$, let us build a new configuration $\mathcal{R}(\omega,\gamma^\ast) \in \Omega'$ as follows. First recall that $\E_{T+1}\!\setminus\!\mathcal{E}_{T_{k-1}+1}$ is the set (depending on $\omega^\ast$) of directed edges revealed during the exploration of the $k$-th visited cluster $\kedf{x^\ast}{k}$. For any (undirected) edge $\{u^\ast,v^\ast\} \in \E'$, we set
\[
\mathcal{R}(\omega,\gamma^\ast)(\{u^\ast,v^\ast\}) = \gamma^\ast(\{u^\ast,v^\ast\})
\]
when both edges $(u^\ast,v^\ast)$ and $(v^\ast,u^\ast)$ do not belong to $\E_{T+1}\!\setminus\!\mathcal{E}_{T_{k-1}+1}$. Otherwise, only one of them has been revealed by Rule~1, and then we set
\[
\mathcal{R}(\omega,\gamma^\ast)(\{u^\ast,v^\ast\}) = \begin{cases}
\omega^\ast((u^\ast,v^\ast)) \mbox{ if $(u^\ast,v^\ast) \in \E_{T+1}\!\setminus\!\E_{T_{k-1}+1}$}\\
\omega^\ast((v^\ast,u^\ast)) \mbox{ if $(v^\ast,u^\ast) \in \E_{T+1}\!\setminus\!\E_{T_{k-1}+1}$}
\end{cases}.
\]

\begin{figure}[ht!]
\begin{center}
\includegraphics[scale=1]{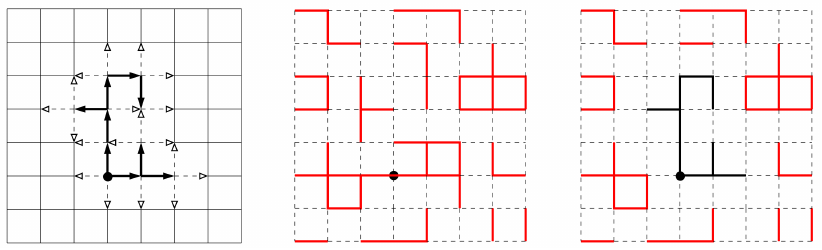}
\caption{{\em Left}: The set of revealed directed edges during the exploration process of $\df{x^\ast}{}$ (associated to the dual configuration $\omega^\ast$). The open and closed revealed edges are respectively represented by bold and dashed arrows. In this basic example, there are no pivotal edges and the first visited cluster equals to the whole forward set of $x^\ast$: $\kedf{x^\ast}{1} = \edf{x^\ast}{} = \df{x^\ast}{}$. {\em Center}: The configuration $\gamma^\ast \in \Omega'$ whose open (undirected) edges are in red. {\em Right}: The new configuration $\mathcal{R}(\omega,\gamma^\ast)$ built from $\omega$ and $\gamma^\ast$. Its (undirected) open edges are in red and black and its closed ones are given with dashed lines. Its black open edges come from $\kedf{x^\ast}{1}$, i.e., from $\omega^\ast$, while the red ones are given by $\gamma^\ast$.}
\label{fig:ConfigR}
\end{center}
\end{figure}

To go from $\mathcal{R}_{0}(\omega,\gamma^\ast) := \gamma^\ast$ to the (final) configuration $\mathcal{R}(\omega,\gamma^\ast)$, we introduce a collection of configurations $\{\mathcal{R}_{m}(\omega,\gamma^\ast)\colon 1\leq m\leq T-T_{k-1}\}$. Precisely, for any $1\leq m\leq T-T_{k-1}$ and for any edge $\{u^\ast,v^\ast\} \in \E'$, we set
\[
\mathcal{R}_m(\omega,\gamma^\ast)(\{u^\ast,v^\ast\}) = \begin{cases}
\gamma^\ast(\{u^\ast,v^\ast\}) \mbox{ if $(u^\ast,v^\ast)$ and $(u^\ast,v^\ast)$ do not belong to $\E_{T_{k-1}+m+1}\!\setminus\!\E_{T_{k-1}+1}$}\\
\omega^\ast((u^\ast,v^\ast)) \mbox{ if $(u^\ast,v^\ast) \in \E_{T_{k-1}+m+1}\!\setminus\!\E_{T_{k-1}+1}$}\\
\omega^\ast((v^\ast,u^\ast)) \mbox{ if $(v^\ast,u^\ast) \in \E_{T_{k-1}+m+1}\!\setminus\!\E_{T_{k-1}+1}$},\\
\end{cases}
\]
where $\E_{T_{k-1}+m+1}\!\setminus\!\E_{T_{k-1}+1}$ is the set of edges revealed during the exploration of $\kedf{x^\ast}{k}$ until step $m$ (included). Let us point out that, as $\mathcal{R}$, the $\mathcal{R}_m$'s are configurations of undirected edges.

To be consistent with the previous notations, we denote by $X_1$ the starting vertex of the edge $Y_{1} := e^\ast_{T_{k-1}+1}$ which is the first revealed edge during the exploration of $\kedf{x^\ast}{k}$. The random variable $X_1$ is known during the exploration of $\kedf{x^\ast}{k}$ and this is why we will work in the sequel conditionally to the event $X_1 = z^\ast$ for some $z^\ast \in \Za$. So we set $\mathbb{P}^{z^\ast}_{(2,\varepsilon)} := \mathbb{P}_{(2,\varepsilon)}(\cdot \mid X_1=z^\ast)$. Finally, let us set respectively $\widetilde{\mathbb{P}}$ and $\widetilde{\mathbb{P}}_{m}$ the push-forward measures of $\mathbb{P}^{z^\ast}_{(2,\varepsilon)}\otimes \mathcal{P}_{q_\varepsilon}$ through the maps $\mathcal{R}$ and $\mathcal{R}_{m}$, with $q_\varepsilon = 1/2-\varepsilon/4$.\\

We split the proof of Proposition~\ref{prop:stochasticDomination} into three steps.\\

\noindent
\textbf{Step 1: Heredity.} The goal in the first step consists in proving that, for any non-negative, local and increasing function $f\colon \Omega' \to \mathbb{R}$ and for any $1\leq m\leq T-T_{k-1}$,
\begin{equation}
\label{Induction}
\int f(\gamma^\ast) \, \widetilde{\mathbb{P}}_{m}(d\gamma^\ast) \, \leq \, \int f(\gamma^\ast)\ \widetilde{\mathbb{P}}_{m-1}(d\gamma^\ast)
\end{equation}
which we will apply to the indicator function $f$ of the event (defined in the previous section)
\[
\A{z^\ast} := \big\{ \gamma^\ast \in \Omega'\colon z^\ast \rightsquigarrow z^\ast + \partial \Lambda_{\sqrt{n}} \; \mbox{without \scalebox{0.7}{\pattern}} \big\}.
\]

Let us start by writing, for a fixed $\gamma^\ast$,
\begin{equation*}
\begin{split}
\int f(\mathcal{R}_{m}(\omega,\gamma^\ast)) \, &\mathbb{P}^{z^\ast}_{(2,\varepsilon)}(d\omega)  =  \int \mathds{1}_{Z_m=0} f(\mathcal{R}_{m}(\omega,\gamma^\ast)) \, \mathbb{P}^{z^\ast}_{(2,\varepsilon)}(d\omega) + \int \mathds{1}_{Z_m=1} f(\mathcal{R}_{m}(\omega,\gamma^\ast)) \, \mathbb{P}^{z^\ast}_{(2,\varepsilon)}(d\omega) \\
& =  \int \mathds{1}_{Z_m=0} f(\mathcal{R}^{Y_m,0}_{m}(\omega,\gamma^\ast)) \, \mathbb{P}^{z^\ast}_{(2,\varepsilon)}(d\omega) + \int \mathds{1}_{Z_m=1} f(\mathcal{R}^{Y_m,1}_{m}(\omega,\gamma^\ast)) \, \mathbb{P}^{z^\ast}_{(2,\varepsilon)}(d\omega),
\end{split}
\end{equation*}
where $\mathcal{R}_m^{Y_m,i}(\omega,\gamma^\ast)(\{u^\ast,v^\ast\})$ equals $i$ if $Y_m \in \{(u^\ast,v^\ast),(v^\ast,u^\ast\}$ and $\mathcal{R}_m(\omega,\gamma^\ast)(\{u^\ast,v^\ast\})$ otherwise. Note that, $\gamma^\ast$ being fixed, $\mathcal{R}_m^{Y_m,i}(\omega,\gamma^\ast)(\{u^\ast,v^\ast\})$ is measurable with respect to $\mathcal{F}_m$ (but not $Z_m$). Hence, conditioning with respect to $\mathcal{F}_m$, we obtain
\begin{eqnarray}
\label{step1-Fm}
\int f(\mathcal{R}_{m}(\omega,\gamma^\ast)) \, \mathbb{P}^{z^\ast}_{(2,\varepsilon)}(d\omega) & = & \int \mathbb{P}^{z^\ast}_{(2,\varepsilon)} \big( Z_m=0 \mid \mathcal{F}_{m} \big) f(\mathcal{R}^{Y_m,0}_{m}(\omega,\gamma^\ast)) \, \mathbb{P}^{z^\ast}_{(2,\varepsilon)}(d\omega) \nonumber \\
& & + \int \mathbb{P}^{z^\ast}_{(2,\varepsilon)} \big( Z_m=1 \mid \mathcal{F}_{m} \big) f(\mathcal{R}^{Y_m,1}_{m}(\omega,\gamma^\ast)) \, \mathbb{P}^{z^\ast}_{(2,\varepsilon)}(d\omega).
\end{eqnarray}
Let us assume for the moment that
\begin{equation}
    \label{Zm<q_eps}
    \mathbb{P}^{z^\ast}_{(2,\varepsilon)} \big( Z_m=1 \mid \mathcal{F}_{m} \big) \leq q_{\varepsilon}.
\end{equation}
Using the monotonicity of $f$ and \eqref{Zm<q_eps}, the following inequality holds
\[
\mathbb{P} \big( Z_m=1 \mid \mathcal{F}_{m} \big) \underbrace{\big( f(\mathcal{R}^{Y_m,1}_{m}(\omega,\gamma^\ast)) - f(\mathcal{R}^{Y_m,0}_{m}(\omega,\gamma^\ast)) \big)}_{ \geq 0} \leq q_\varepsilon \big( f(\mathcal{R}^{Y_m,1}_{m}(\omega,\gamma^\ast)) - f(\mathcal{R}^{Y_m,0}_{m}(\omega,\gamma^\ast)) \big)
\]
and, combining with \eqref{step1-Fm}, this leads to
\begin{align}\label{step1-Major}
    \int f&(\mathcal{R}_{m}(\omega,\gamma^\ast)) \, \mathbb{P}^{z^\ast}_{(2,\varepsilon)}(d\omega) 
    \\\
    \leq 
    &(1-q_\varepsilon) \int f(\mathcal{R}^{Y_m,0}_{m}(\omega,\gamma^\ast)) \, \mathbb{P}^{z^\ast}_{(2,\varepsilon)}(d\omega) \nonumber + q_\varepsilon \int f(\mathcal{R}^{Y_m,1}_{m}(\omega,\gamma^\ast)) \, \mathbb{P}^{z^\ast}_{(2,\varepsilon)}(d\omega).
    \nonumber
\end{align}

On the other hand, the random variables $\mathcal{R}_{m-1}(\omega,\gamma^\ast)$ and $\mathcal{R}^{Y_m,1}_{m}(\omega,\gamma^\ast)$ differ only on the edge $Y_m$, where, with a slight abuse of notation, we also consider $Y_m$ as an undirected edge when we write $\gamma^\ast(Y_m)$. The state of $Y_m$ is given by $\gamma^\ast(Y_m)$ for the configuration $\mathcal{R}_{m-1}(\omega,\gamma^\ast)$ and equals $1$ for $\mathcal{R}^{Y_m,1}_{m}(\omega,\gamma^\ast)$ by definition. In particular, $\mathcal{R}_{m-1}(\omega,\gamma^\ast)$ and $\mathcal{R}^{Y_m,1}_{m}(\omega,\gamma^\ast)$ coincide when $\gamma^\ast(Y_m) = 1$. Moreover, $\mathcal{R}^{Y_m,1}_{m-1}(\omega,\gamma^\ast)$ and $\gamma^\ast(Y_m)$ are independent. We then can write
\begin{equation*}
\begin{split}
\int \mathds{1}_{\gamma^\ast(Y_m)=1} f(\mathcal{R}_{m-1}(\omega,\gamma^\ast)) \, \mathbb{P}^{z^\ast}_{(2,\varepsilon)}(d\omega) \mathcal{P}_{q_\varepsilon}(d\gamma^\ast) & = \int \mathds{1}_{\gamma^\ast(Y_m)=1} f(\mathcal{R}^{Y_m,1}_{m}(\omega,\gamma^\ast)) \, \mathbb{P}^{z^\ast}_{(2,\varepsilon)}(d\omega) \mathcal{P}_{q_\varepsilon}(d\gamma^\ast) \\
& = q_\varepsilon \int f(\mathcal{R}^{Y_m,1}_{m}(\omega,\gamma^\ast)) \, \mathbb{P}^{z^\ast}_{(2,\varepsilon)}(d\omega) \mathcal{P}_{q_\varepsilon}(d\gamma^\ast),
\end{split}
\end{equation*}
and hence,
\begin{eqnarray*}
\int f(\mathcal{R}_{m-1}(\omega,\gamma^\ast)) \, \mathbb{P}^{z^\ast}_{(2,\varepsilon)}(d\omega) \mathcal{P}_{q_\varepsilon}(d\gamma^\ast) & = & \int \mathds{1}_{\gamma^\ast(Y_m)=0} f(\mathcal{R}_{m-1}(\omega,\gamma^\ast)) \, \mathbb{P}^{z^\ast}_{(2,\varepsilon)}(d\omega) \mathcal{P}_{q_\varepsilon}(d\gamma^\ast) \\
& & + \int \mathds{1}_{\gamma^\ast(Y_m)=1} f(\mathcal{R}_{m-1}(\omega,\gamma^\ast)) \, \mathbb{P}^{z^\ast}_{(2,\varepsilon)}(d\omega) \mathcal{P}_{q_\varepsilon}(d\gamma^\ast) \\
& = & (1-q_\varepsilon) \int f(\mathcal{R}^{Y_m,0}_{m}(\omega,\gamma^\ast)) \, \mathbb{P}^{z^\ast}_{(2,\varepsilon)}(d\omega) \mathcal{P}_{q_\varepsilon}(d\gamma^\ast) \\
& & +\,  q_\varepsilon \int f(\mathcal{R}^{Y_m,1}_{m}(\omega,\gamma^\ast)) \, \mathbb{P}^{z^\ast}_{(2,\varepsilon)}(d\omega) \mathcal{P}_{q_\varepsilon}(d\gamma^\ast) \\
& \geq & \int f(\mathcal{R}_{m}(\omega,\gamma^\ast)) \, \mathbb{P}^{z^\ast}_{(2,\varepsilon)}(d\omega) \mathcal{P}_{q_\varepsilon}(d\gamma^\ast),
\end{eqnarray*}
thanks to \eqref{step1-Major}. Inequality \eqref{Induction} is now proved.\\

It then remains to prove~\eqref{Zm<q_eps}. This is the only place where the knowledge of the previous explored edges plays a role. Recall that $Y_m$ is the $m$-th revealed edge during the exploration of $\kedf{x^\ast}{k}$. Without loss of generality we can assume that $Y_m$ is the east (directed) side of a unit square (centered at some primal vertex, say $z$). Let us again denote by $e^\ast_N$, $e^\ast_W$ and $e^\ast_S$ the edges obtained from $Y_m$ by rotation (in the counter-clockwise sense) with center $z$ and angle $\pi/2$, $\pi$ and $3\pi/2$, which are the north, west and south sides of the unit square centered at $z$. By Rule~1, only $e^\ast_W$ and $e^\ast_S$ may have already been explored. In this case, they are necessarily open. Otherwise $Y_m$ would be by definition a pivotal edge and this cannot be true (by construction, each edge visited at step $T_{k-1}+m$, with $1\leq m\leq T-T_{k-1}$, is not pivotal). Hence, the only possibilities are given by
\begin{equation}
\label{eq:casesForDomination}
\begin{cases}
\mathbb{P}_{(2,\varepsilon)} \Big( \dualopen \Big| \dualempty \Big) = \mathbb{P}_{(2,\varepsilon)}\Big(\dualopen\Big) = \frac{1}{2} - \frac{\varepsilon}{4},\\
\\
\mathbb{P}_{(2,\varepsilon)}\Big(\begin{tikzpicture}[baseline=0.6em,scale=0.65]
			\node[inner sep=0, outer sep=0] at (-0.2,0) {\hspace{2pt}};
			\node[inner sep=0, outer sep=0] at (1.2,0) {\hspace{2pt}};
			\draw (0,0) -- (0,1) -- (1,1) -- (1,0) -- (0,0);
			\draw[fill=red, color=red] (0.5,0.5) circle  (0.075);fill=red
			\draw[color=red, very thick ,opacity= 0.4] (0.5,0.5) -- (1,0.5);
			\draw[color=red, very thick ,opacity= 0.4] (0.5,0.5) -- (0.5,0);
			\draw[->,blue,very thick] (1,0) -- (1,1); 
			\path[pattern=north west lines, pattern color=blue] (0.9,0) rectangle (1,1);
			\begin{scope}[rotate around={270:(0.5,0.5)}]
				\draw[->,blue,very thick] (1,0) -- (1,1); 
				\path[pattern=north west lines, pattern color=blue] (0.9,0) rectangle (1,1);
			\end{scope}
		\end{tikzpicture}
  \Big| 
  \begin{tikzpicture}[baseline=0.6em,scale=0.65]
			\node[inner sep=0, outer sep=0] at (-0.2,0) {\hspace{2pt}};
			\node[inner sep=0, outer sep=0] at (1.2,0) {\hspace{2pt}};
			\draw (0,0) -- (0,1) -- (1,1) -- (1,0) -- (0,0);
			\draw[fill=red, color=red] (0.5,0.5) circle  (0.075);fill=red
			\draw[color=red, very thick ,opacity= 0.4] (0.5,0.5) -- (0.5,0);
			\begin{scope}[rotate around={270:(0.5,0.5)}]
				\draw[->,blue,very thick] (1,0) -- (1,1); 
				\path[pattern=north west lines, pattern color=blue] (0.9,0) rectangle (1,1);
			\end{scope}
		\end{tikzpicture}
  \Big) = \mathbb{P}_{(2,\varepsilon)}\Big(\begin{tikzpicture}[baseline=0.6em,scale=0.65]
			\node[inner sep=0, outer sep=0] at (-0.2,0) {\hspace{2pt}};
			\node[inner sep=0, outer sep=0] at (1.2,0) {\hspace{2pt}};
			\draw (0,0) -- (0,1) -- (1,1) -- (1,0) -- (0,0);
			\draw[fill=red, color=red] (0.5,0.5) circle  (0.075);
			\draw[color=red, very thick ,opacity= 0.4] (0.5,0.5) -- (1,0.5);
			\draw[color=red, very thick ,opacity= 0.4] (0.5,0.5) -- (0,0.5);
			\draw[->,blue,very thick] (1,0) -- (1,1); 
			\path[pattern=north west lines, pattern color=blue] (0.9,0) rectangle (1,1);
			\begin{scope}[rotate around={180:(0.5,0.5)}]
				\draw[->,blue,very thick] (1,0) -- (1,1); 
				\path[pattern=north west lines, pattern color=blue] (0.9,0) rectangle (1,1);
			\end{scope}
		\end{tikzpicture}
  \Big| 
  \begin{tikzpicture}[baseline=0.6em,scale=0.65]
			\node[inner sep=0, outer sep=0] at (-0.2,0) {\hspace{2pt}};
			\node[inner sep=0, outer sep=0] at (1.2,0) {\hspace{2pt}};
			\draw (0,0) -- (0,1) -- (1,1) -- (1,0) -- (0,0);
			\draw[fill=red, color=red] (0.5,0.5) circle  (0.075);
			\draw[color=red, very thick ,opacity= 0.4] (0.5,0.5) -- (0,0.5);
			\begin{scope}[rotate around={180:(0.5,0.5)}]
				\draw[->,blue,very thick] (1,0) -- (1,1); 
				\path[pattern=north west lines, pattern color=blue] (0.9,0) rectangle (1,1);
			\end{scope}
		\end{tikzpicture}
  \Big) = \frac{(1 - \varepsilon)/6}{1/2 - \varepsilon/4} = \frac{1}{3} \frac{1 - \varepsilon}{1 - \varepsilon/2} \leq \frac{1}{3}, \\ 
\\
\mathbb{P}_{(2,\varepsilon)}\Big(\dualopenthree \Big| \dualopentwo\Big) = 0.
\end{cases}
\end{equation}
All these probabilities are smaller than $q_\varepsilon=1/2-\varepsilon/4$ for $\varepsilon\geq 0$ small enough. Actually, the knowledge of the past only acts through open already explored edges which decrease the probability for $Y_m$ to be open, and then allow a stochastic domination with a Bernoulli random variable with parameter $q_\varepsilon$. So for all $A \in \mathcal{F}_{m}$, $\mathbb{P}_{(2,\varepsilon)}( Z_m=1 \mid A ) \leq q_\varepsilon$ and thus almost surely
\begin{equation}
\label{eq:domnination}
\mathbb{P}_{(2,\varepsilon)} \big( Z_{m}=1 \mid \mathcal{F}_{m} \big) \leq q_\varepsilon.
\end{equation}
To conclude, it is enough to say that the event $\{X_1 = z^\ast\}$ belongs to $\mathcal{F}_{m}$ for any $m \geq 1$. Henceforth, for $i \in \{0,1\}$,
\[
\mathbb{P}^{z^\ast}_{(2,\varepsilon)} \big( Z_{m}=i \mid \mathcal{F}_m \big) = \mathbb{P}_{(2,\varepsilon)} \big( Z_{m}=i \mid \mathcal{F}_m \big),
\]
$\mathbb{P}^{z^\ast}_{(2,\varepsilon)}$-almost surely. This proves inequality \eqref{Zm<q_eps}.\\

\noindent
\textbf{Step 2: Domination of $\widetilde{\mathbb{P}}$ by $\mathcal{P}_{q_\varepsilon}$.} We first use the heredity property stated in Step~1 to get, for any $1\leq m\leq T-T_{k-1}$,
\begin{equation}\label{Step1-heredity}
\int f(\gamma^\ast) \, \widetilde{\mathbb{P}}_{m}(d\gamma^\ast) \leq \int f(\gamma^\ast) \, \widetilde{\mathbb{P}}_{0}(d\gamma^\ast) = \int f(\gamma^\ast) \, \mathcal{P}_{q_\varepsilon}(d\gamma^\ast)
\end{equation}
since by construction $\mathcal{R}_{0}(\omega,\gamma^\ast) = \gamma^\ast$, i.e., $\widetilde{\mathbb{P}}_{0} = \mathcal{P}_{q_\varepsilon}$.

Let us first assume that $T = T(\omega)$ is finite, for some configuration $\omega$. That is, the $k$-th visited cluster $\kedf{x^\ast}{k}$ contains a finite number of edges, precisely $T-T_{k-1}$. In this case, the final configuration $\mathcal{R}(\omega,\gamma^\ast)$ equals to $\mathcal{R}_{T-T_{k-1}}(\omega,\gamma^\ast)$. Thus, taking $m = T-T_{k-1}$ in the previous inequality, gives the desired result
\begin{equation}
\label{Step2-Goal}
\int f(\gamma^\ast) \, \widetilde{\mathbb{P}}(d\gamma^\ast) \leq \int f(\gamma^\ast) \, \mathcal{P}_{q_\varepsilon}(d\gamma^\ast).
\end{equation}
It then remains to prove that~\eqref{Step2-Goal} still holds when $T=\infty$. Passing to the limit $m \to \infty$ is required. Recall that the function $f\colon \Omega' \to \mathbb{R}$ is local since it only depends on edges of $\E'$ included in some bounded box $z^\ast+\Lambda$. The visited cluster $\kedf{x^\ast}{k}$ being infinite (since $T=\infty$), its exploration only concerns edges outside $z^\ast+\Lambda$ from a certain integer $m_0 = m_0(\omega)$. Hence, for any $m \geq m_0$, the random variables $\mathcal{R}_m(\omega,\gamma^\ast)$ and $\mathcal{R}_{m_0}(\omega,\gamma^\ast)$ coincide on edges including in the box $z^\ast+\Lambda$. So, the same holds for $\mathcal{R}(\omega,\gamma^\ast)$ and $\mathcal{R}_{m_0}(\omega,\gamma^\ast)$ and in particular
\[
f\big(\mathcal{R}(\omega,\gamma^\ast)\big) = f\big(\mathcal{R}_{m_0}(\omega,\gamma^\ast)\big).
\]
This means that the sequence $(f(\mathcal{R}_{m}(\omega,\gamma^\ast)))_m$ converges to $f(\mathcal{R}(\omega,\gamma^\ast))$ for almost every $(\omega,\gamma^\ast)$. Since $f$ is non-negative, the Fatou's lemma applies and we get
\begin{equation*}
\begin{split}
\int f(\gamma^\ast) \, \widetilde{\mathbb{P}}(d\gamma^\ast) = \int f(\mathcal{R}(\omega,\gamma^\ast)) \, \mathbb{P}^{z^\ast}_{(2,\varepsilon)}(d\omega) \mathcal{P}_{q_\varepsilon}(d\gamma^\ast) & \leq  \liminf_{m \to \infty} \int f(\mathcal{R}_{m}(\omega,\gamma^\ast)) \, \mathbb{P}^{z^\ast}_{(2,\varepsilon)}(d\omega) \mathcal{P}_{q_\varepsilon}(d\gamma^\ast) \\
& =  \liminf_{m \to \infty} \int f(\gamma^\ast) \, \widetilde{\mathbb{P}}_{m}(d\gamma^\ast) \\
& \leq  \int f(\gamma^\ast) \, \mathcal{P}_{q_\varepsilon}(d\gamma^\ast),
\end{split}
\end{equation*}
where the last inequality is due to \eqref{Step1-heredity}.

\medskip

\noindent
\textbf{Step 3: Conclusion.} Recall the following event defined in the previous section:
\[
\A{z^\ast} := \big\{ \gamma^\ast \in \Omega' : \, z^\ast \stackrel{\gamma^\ast}{\rightsquigarrow} z^\ast + \partial \Lambda_{n} \; \mbox{without \scalebox{0.7}{\pattern}} \big\}.
\]
Besides, we work conditioned on $X_1 = z^\ast$ and we consider the new event
\[
\big\{ \omega \in \Omega\colon  z^{\ast} \stackrel{\omega^\ast}{\rightsquigarrow} z^{\ast}+\partial \Lambda_{n} \mbox{ in $\kedf{x^\ast}{k}$} \big\}
\]
meaning that the exploration of the $k$-th visited cluster $\kedf{x^\ast}{k}$ starting at $z^\ast$ reaches the boundary $z^{\ast}+\partial \Lambda_{n}$. Let us emphasize that $\A{z^\ast}$ depends on (undirected) configurations of $\Omega'$ while the event stated just above depends on (directed) configurations of $\Omega$. This is why we add the upperscripts $\gamma^\ast$ and $\omega^\ast$ over $\rightsquigarrow$. For the moment let us assume that the following inclusion holds for any $\gamma^\ast \in \Omega'$:
\begin{equation}
\label{Inclusion}
\big\{ \omega \in \Omega \colon z^{\ast} \stackrel{\omega^\ast}{\rightsquigarrow} z^{\ast}+\partial \Lambda_{n} \mbox{ in $\kedf{x^\ast}{k}$} \big\} \subset \big\{ \omega \in \Omega\colon \mathcal{R}(\omega,\gamma^\ast) \in \A{z^{\ast}} \big\}.
\end{equation}
It is not difficult to conclude from \eqref{Inclusion}. We first write
\[
\PP^{z^\ast}_{(2,\varepsilon)} \big( z^{\ast} \stackrel{\omega^\ast}{\rightsquigarrow} z^{\ast}+\partial \Lambda_{n} \mbox{ in $\kedf{x^\ast}{k}$} \big) \leq \int \mathds{1}_{ \mathcal{R}(\omega,\gamma^\ast) \in  \A{z^{\ast}}} \, \PP^{z^\ast}_{(2,\varepsilon)}(d\omega)
\]
for any $\gamma^\ast \in \Omega'$. Thus, integrating with respect to $\mathcal{P}_{q_\varepsilon}$ and using Step~2 with $f=\mathds{1}_{\A{z^{\ast}}}$ (which is a local, non-negative and increasing function), we obtain:
\begin{eqnarray*}
\PP^{z^\ast}_{(2,\varepsilon)} \big( z^{\ast} \stackrel{\omega^\ast}{\rightsquigarrow} z^{\ast}+\partial \Lambda_{n} \mbox{ in $\kedf{x^\ast}{k}$} \big) & \leq & \int \mathds{1}_{ \mathcal{R}(\omega,\gamma^\ast) \in  \A{z^{\ast}}} \, \PP^{z^\ast}_{(2,\varepsilon)}(d\omega) \mathcal{P}_{q_\varepsilon}(d\gamma^\ast) \\
& = & \widetilde{\mathbb{P}}(\A{z^{\ast}}) \leq \mathcal{P}_{q_\varepsilon}(\A{z^{\ast}}) =\mathcal{P}_{q_\varepsilon}(\A{0^{\ast}}),
\end{eqnarray*}
where the last equality is due to translation-invariance in the independent bond percolation model on $\Za$. Finally,
\begin{equation*}
\begin{split}
\PP_{(2,\varepsilon)} \big( &| \kedf{x^\ast}{k} | \geq n \big)  \leq  \PP_{(2,\varepsilon)} \big( X_1 \stackrel{\omega^\ast}{\rightsquigarrow} X_1 + \partial \Lambda_{n} \mbox{ in $\kedf{x^\ast}{k}$} \big) \\
& =  \sum_{z^{\ast} \in \Za} \PP^{z^\ast}_{(2,\varepsilon)} \big( z^\ast \stackrel{\omega^\ast}{\rightsquigarrow} z^\ast + \partial \Lambda_{n} \mbox{ in $\kedf{x^\ast}{k}$} \big) \PP_{(2,\varepsilon)} (X_1 = z^\ast) \mathds{1}_{\PP_{(2,\varepsilon)} (X_1 = z^\ast) > 0} \\
& \leq \mathcal{P}_{q_\varepsilon}(\A{0^{\ast}}) \sum_{z^{\ast} \in \Za} \PP_{(2,\varepsilon)} (X_1 = z^\ast) \mathds{1}_{\PP_{(2,\varepsilon)} (X_1 = z^\ast) > 0} \\
& =  \mathcal{P}_{q_\varepsilon}(\A{0^{\ast}}),
\end{split}
\end{equation*}
as desired.\\

The last statement to prove is the inclusion~\eqref{Inclusion}. To do it, let us consider a configuration $\omega \in \Omega$ satisfying
\[
\big\{ z^{\ast} \stackrel{\omega^\ast}{\rightsquigarrow} z^{\ast}+\partial \Lambda_{n} \mbox{ in $\kedf{x^\ast}{k}$} \big\}
\]
and a configuration $\gamma^\ast$ in $\Omega'$. Hence, the exploration of the $k$-th visited cluster $\kedf{x^\ast}{k}(\omega)$ contains a path $\pi^\ast$ of directed edges from $z^{\ast}$ to $z^{\ast}+\partial \Lambda_{n}$. By definition of the exploration process, this path $\pi^\ast$ cannot contain the pattern \scalebox{0.3}{\begin{tikzpicture}
    \draw[step=1cm, gray,very thin] (-1,0) grid (2,1);
    \draw[blue, line width=1mm, -latex] (0,1) -- (0,0);
    \draw[blue, line width=1mm, -latex] (0,0) -- (1,0);
    \draw[blue, line width=1mm, -latex] (1,0) -- (1,1);
\end{tikzpicture}}, called a {\em left winding}, (nor its rotated variants) because this pattern contains three dual directed edges which all refer to the same primal vertex say $w$ meaning that such $w$ admits at most one (primal) outgoing edge, which is impossible in our 'rigid' $(2,\varepsilon)$-model. However the path $\pi^\ast$ may contain the pattern \scalebox{0.3}{\begin{tikzpicture}
    \draw[step=1cm, gray,very thin] (-1,0) grid (2,1);
    \draw[blue, line width=1mm, -latex] (0,0) -- (0,1);
    \draw[blue, line width=1mm, -latex] (1,0) -- (0,0);
    \draw[blue, line width=1mm, -latex] (1,1) -- (1,0);
\end{tikzpicture}}, called a {\em right winding}, (or its rotated variants) because the three edges that it contains refer to three different (primal) vertices. Now, let us denote by $\widetilde{\pi}^\ast$ the path $\pi^\ast$ in which orientation of edges are neglecting: $\widetilde{\pi}^\ast$ is a path of undirected edges joining $z^{\ast}$ to $z^{\ast}+\partial \Lambda_{n}$. Since, by construction, the configuration $\mathcal{R}(\omega,\gamma)$ contains all the open edges revealed during the exploration of $\kedf{x^\ast}{k}(\omega)$ (but without orientation), it also contains the path $\widetilde{\pi}^\ast$. Thanks to the previous discussion, the path $\widetilde{\pi}^\ast$ may contain the pattern \scalebox{0.3}{\begin{tikzpicture}
    \draw[step=1cm, gray,very thin] (-1,0) grid (2,1);
    \draw[blue, line width=1mm] (0,0) -- (0,1);
    \draw[blue, line width=1mm] (1,0) -- (0,0);
    \draw[blue, line width=1mm] (1,1) -- (1,0);
\end{tikzpicture}} (or its rotated variants) as being the undirected version of a right winding. However, $\widetilde{\pi}^\ast$ cannot contain \scalebox{0.7}{\pattern}. Indeed, any of the two directed versions of that pattern  necessarily contains a left winding which is forbidden for $\pi^\ast$.

In conclusion, the configuration $\mathcal{R}(\omega,\gamma^\ast)$ contains a path from $z^{\ast}$ to $z^{\ast}+\partial \Lambda_{n}$ which does not use \scalebox{0.7}{\pattern}, i.e., $\mathcal{R}(\omega,\gamma^\ast) \in \A{z^\ast}$ and the inclusion \eqref{Inclusion} is proved.

\section{Enhancement}
\label{sect:enhancement}

This section concerns the standard (undirected) Bernoulli bond percolation model and can be read independently of the rest of the paper. The main result is Theorem~\ref{prop:enhancement} which roughly says that one can trade the occurrence of the forbidden patterns for a slightly smaller parameter (for an edge to be open).

\begin{theorem}
\label{prop:enhancement}
There exists $\varepsilon'>0$ such that for all integer $n$ large enough, we have
\begin{equation}
\label{prop:Goal-Enhancement}
\mathcal{P}_{1/2}(o \to \partial \Lambda_n  \text{ without } \scalebox{0.7}{\pattern}) \leq \mathcal{P}_{1/2-\varepsilon'} (o \to \partial\Lambda_n ).
\end{equation}
\end{theorem}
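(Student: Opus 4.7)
The plan is to prove Theorem~\ref{prop:enhancement} via a diminishment argument in the spirit of Aizenman--Grimmett. I introduce a two-parameter product measure $\widetilde{\mathcal{P}}_{p,s}$ on $\Omega' \times \{0,1\}^{\Za}$, under which each edge of $\Za$ is open with probability $p$ and, independently, at each dual vertex $x$ (viewed as the anchor of a translate of \scalebox{0.7}{\pattern}) a ``blocker'' variable $\xi_x$ takes value $1$ with probability $s$. Let $\widetilde{A}_n$ be the event that $o^\ast$ is joined to $\partial\Lambda_n$ by an open path that does not entirely traverse any pattern $x$ with $\xi_x=1$, and set $h(p,s) := \widetilde{\mathcal{P}}_{p,s}(\widetilde{A}_n)$. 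The boundary values are $h(p,0) = \mathcal{P}_p(o^\ast \to \partial\Lambda_n)$ and $h(p,1) = \mathcal{P}_p(o^\ast \to \partial\Lambda_n \text{ without } \scalebox{0.7}{\pattern})$, so the target reduces to showing $h(1/2,1) \leq h(1/2-\varepsilon', 0)$.

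Since $\widetilde{A}_n$ is increasing in the edge configuration $\omega$ and decreasing in the blocker configuration $\xi$, Russo's formula yields $\partial_p h(p,s) = \sum_e \widetilde{\mathcal{P}}_{p,s}(e \text{ pivotal for } \widetilde{A}_n) \geq 0$ and $-\partial_s h(p,s) = \sum_x \widetilde{\mathcal{P}}_{p,s}(\xi_x \text{ pivotal for } \widetilde{A}_n) \geq 0$, both sums being finite for fixed $n$. The heart of the argument is to establish a differential inequality $-\partial_s h(p,s) \geq c\, \partial_p h(p,s)$, uniformly over $(p,s)$ in a neighborhood of $(1/2,0)$, for some absolute $c>0$. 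Granting this, parametrize $(p(t), s(t)) = (1/2 - \varepsilon'(1-t),\, t)$ for $t\in[0,1]$; then $\tfrac{d}{dt} h(p(t), s(t)) = \varepsilon'\, \partial_p h + \partial_s h \leq (\varepsilon' - c)\,\partial_p h \leq 0$ as soon as $\varepsilon' \leq c$, and integrating from $t=0$ to $t=1$ delivers exactly the announced bound.

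The main obstacle is the differential inequality, which I plan to establish by a finite-energy-style local surgery. Given a configuration $(\omega,\xi) \in \widetilde{A}_n$ in which some edge $e$ is pivotal, I would specify a bounded neighborhood $U_e$ of $e$ and describe a replacement $\Phi(\omega,\xi)$ that agrees with $(\omega,\xi)$ outside $U_e$, has conditional probability bounded below by a constant $\eta > 0$ (uniformly in $(p,s)$ in our range and in $n$), and in which some specified pattern position $x(e)$ close to $e$ has become pivotal for $\xi_{x(e)}$. Concretely, the surgery forces the five open and two closed edges of a chosen translate of \scalebox{0.7}{\pattern} to appear around $e$ and sets $\xi_{x(e)} = 0$, so that this translate serves as the unique local bridge between the two sides of the connection while the event still holds. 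Because a given pattern location is within a bounded distance of only $O(1)$ edges, summing over $e$ yields the differential inequality with an explicit $c = c(\eta)$.

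The genuine technical difficulty, and the reason we cannot simply invoke the essentiality results of~\cite{aizenman1991strict, balister2014essential}, is that the diminishment under consideration is \emph{not essential} in the classical sense: activating a single pattern typically does not disconnect a cluster, since alternative routes are abundant at $p \approx 1/2$. The purpose of the surgery is precisely to engineer, at bounded entropic cost, a local configuration in which the diminishment does become essential. Verifying that the replacement $\Phi$ can be described for every geometric type of pivotal edge and that the associated conditional-probability bound is uniform in $n$ will constitute the bulk of the technical work; the remainder of Section~\ref{sect:enhancement} then follows from the integration argument outlined above.
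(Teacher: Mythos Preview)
Your diminishment approach is sound and essentially dual to the paper's enhancement approach. Where you add blocker variables $\xi_x$ that forbid traversal of active patterns, the paper adds diagonal bypass edges $Q(e)$ (open with probability $q$, but only when the associated pattern occurs), proves $\partial_p \Theta_n \leq C\,\partial_q \Theta_n$ via a local surgery, and concludes from the pair of identities $\Theta_n(p,0)=\mathcal{P}_p(o\to\partial\Lambda_n\text{ without }\scalebox{0.7}{\pattern})$ and $\Theta_n(p,q)\le\mathcal{P}_p(o\to\partial\Lambda_n)$. Your surgery (force the pattern and make it the unique local bridge) is geometrically the same construction the paper carries out; the only difference is whether the ``switch'' on that bridge is a bypass edge or a blocker bit.

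Two points need adjusting. First, your interpolation path runs all the way to $s=1$, so the differential inequality must hold for every $s\in[0,1]$, not merely in a neighborhood of $(1/2,0)$; correspondingly, drop the clause ``sets $\xi_{x(e)}=0$'' from the surgery, since pivotality of $\xi_{x(e)}$ is independent of its own value and specifying it would cost a factor $(1-s)$ that vanishes at $s=1$. Second, to obtain a lower bound on the surgery probability that is uniform in $s$, the replacement must act on edges only and must create \emph{exactly one} pattern inside (or straddling) the box, namely the one at $x(e)$; otherwise an unintended pattern with $\xi=1$ could block the path you built when $s$ is large. The paper secures this by routing along a fixed cycle $C$ around the pattern, by an entry/exit selection rule designed to avoid creating boundary patterns, and by a separate bad-edge argument for edges near the origin or near $\partial\Lambda_n$ (where the natural target $x(e)$ cannot be pivotal). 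The same three ingredients will be needed in your version; once they are in place, your integration argument goes through exactly as written.
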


To get such a result, the strategy consists in using an enhancement technique. 
For this, we introduce a new probabilistic model with two parameters $p$ and $q$, enhancing a standard, independent bond percolation model with parameter $p$ on $\Z^2$. 
The basic idea is that any forbidden pattern can be bypassed by using an extra diagonal edge which is open with probability $q$. The set of extra diagonal edges will be denoted by $\mathcal{D}$ and is defined below, see Figure~\ref{fig:addEdges} for an illustration. Thus, working on partial derivatives with respect to the parameters $p$ and $q$, we aim to compensate a small decrease of $p$ by a small increase of $q$, leading to~\eqref{prop:Goal-Enhancement}. The strategy is similar to the classical works on enhancement~\cite{aizenman1991strict} and~\cite{balister2014essential}, but note that we cannot simply apply their results because the enhancement we consider is {\em not essential}. This is mainly due to the fact that the constraint we impose only forbids the use of the whole pattern, while individual parts of the pattern may still appear in paths that connect the origin to the boundary of large boxes. This will become more clear in the following sections. 

\medskip 
But let us first show how one can use Theorem~\ref{prop:enhancement} to prove Theorem~\ref{thm:clusterSize} in the remaining case where $\varepsilon = 0$.

\begin{proof}[Proof of Theorem~\ref{thm:clusterSize} when $\varepsilon = 0$] 
It suffices to combine both Proposition~\ref{prop:stochasticDomination} and Theorem~\ref{prop:enhancement}:
\begin{eqnarray*}
\PP_{(2,0)} \big( | \kedf{x^\ast}{k} | \geq n \big) & \leq &  \mathcal{P}_{1/2} \big( o^\ast \rightsquigarrow o^\ast + \partial \Lambda_{n} \; \mbox{without \scalebox{0.7}{\pattern}} \big) \\
& \leq &  \mathcal{P}_{1/2-\varepsilon'} \big( o \rightsquigarrow o + \partial \Lambda_{n} \big)
\end{eqnarray*}
for any $n$ large enough, where $\varepsilon' > 0$ is given by Theorem~\ref{prop:enhancement}. We conclude using that the parameter $1/2-\varepsilon'$ corresponds to the subcritical regime for the independent bond percolation model in $\Z^2$ meaning that $\mathcal{P}_{1/2-\varepsilon'}( o \rightsquigarrow o + \partial \Lambda_{n})$ decreases exponentially fast to $0$ as $n$ tends to infinity.
\end{proof}

\subsection{The enhanced model}
Let $\mathcal{E}_0$ denote the set of (undirected) edges of $\mathbb{Z}^2$. Let us now consider the set of additional (undirected) diagonal edges
\[
\mathcal{D} := \bigcup_{(a,b)\in \mathbb{Z}^{2}} \Big\{ \{(a,b),(a-1,b+2)\},\{(a,b),(a+2,b+1)\} \Big\}.
\]
The new set of edges is the union $\mathcal{E}_0 \cup \mathcal{D}$ and so the enhanced configuration set is
\[
\Omega^+ := \{0,1\}^{\mathcal{E}_0 \cup \mathcal{D}}.
\]
To distinguish edges from $\mathcal{E}_0$ and $\mathcal{D}$, the edges of $\mathcal{E}_0$ are called $p$-edges while those of $\mathcal{D}$ are called $q$-edges or diagonal edges. Let us first remark that each occurrence of a forbidden pattern (created by $p$-edges) can be bypassed by exactly one diagonal edge; this is the role of the elements of $\mathcal{D}$. We will say that a forbidden pattern and its diagonal edge are {\em associated}.

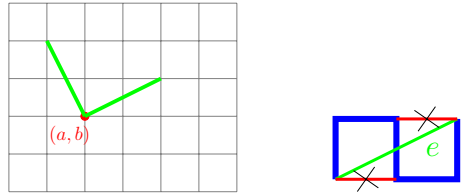
\begin{figure}[!ht]
\begin{center}
\scalebox{0.5}{
\begin{tikzpicture}
\draw[step=1cm,gray,thin] (1,3) grid (7,8);    
\filldraw[red] (3,5) circle (3pt);
\node at (2.6,4.5){\textcolor{red}{\Large $(a,b)$}};
\draw[green,line width=1mm] (3,5) -- (2,7);
\draw[green,line width=1mm] (3,5) -- (5,6);
\end{tikzpicture}} \hspace*{1cm}
\scalebox{0.8}{\begin{tikzpicture}
    \draw[step=1cm,gray,very thin] (0,0) grid (2,1);
    \draw[blue,line width=1mm] (0,0) -- (0,1) -- (1,1) -- (1,0) -- (2,0) -- (2,1);
    \draw[green,line width=0.5mm] (0,0) -- (2,1);
    \draw[red,line width=0.5mm] (0,0) -- (1,0);
    \draw[red,line width=0.5mm] (1,1) -- (2,1);
    \node at (0.5,0) [color=red,cross,rotate=10,inner sep=0pt, minimum size=10pt] {};
    \node at (1.5,1) [color=red,cross,rotate=10,inner sep=0pt, minimum size=10pt] {};
    \node at (1.6,0.5) [color=green] {\Large $e$};
\end{tikzpicture}}
\caption{\label{fig:addEdges}Left: Two diagonal edges (or $q$-edges) incident to the vertex $(a,b)$. Right: A forbidden pattern and its associated diagonal edge (in green).}
\end{center}
\end{figure}

Let us now define the family of probability measures on $\Omega^+$ that we want to study. There are two layers of randomness. First, edges of $\mathcal{E}_0$ are open or closed according to a standard independent bond percolation model with parameter $p$. Thus, conditionally to the states of the edges of $\mathcal{E}_0$, consider the subset $\mathcal{D}' \subset \mathcal{D}$ of diagonal edges whose associated forbidden pattern occurs: edges of $\mathcal{D}'$ are open with probability $q$, independently from each other. In other words, each diagonal edge is open with probability $q$ provided its associated forbidden pattern occurs. Otherwise, i.e., with probability $1-q$ or if its associated forbidden pattern does not occur, it is closed. We will denote by $\mathbb{P}_{p,q}^+$ the probability measure on $\Omega^+$ defined by this process. Note that the diagonal edges can only appear, if the associated forbidden pattern occurs, so the enhancement is {\em not essential} in the sense of~\cite[Chapter~3.3.]{grimmett1999percolation}. 
Let us begin with the following result.

\begin{lemma}
\label{lem:enhancedEqBer}
For any $p,q \in [0,1]$, the following holds.
\begin{itemize}
\item[(i)] $\mathbb{P}_{p,0}^+ \big( o \to \partial \Lambda_n  \text{ without } \scalebox{0.7}{\pattern} \big) = \mathcal{P}_{p} \big( o \to \partial \Lambda_n \text{ without } \scalebox{0.7}{\pattern} \big)$
\item[(ii)] $\mathbb{P}_{p,q}^+ \big( o \to \partial \Lambda_n  \text{ without } \scalebox{0.7}{\pattern} \big) \leq \mathcal{P}_{p} \big( o \to \partial \Lambda_n  \big)$
\end{itemize}
\end{lemma}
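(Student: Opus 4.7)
My approach handles the two parts separately: (i) is immediate from the construction, while (ii) reduces to a short deterministic pathwise replacement.

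For (i), when $q=0$ every diagonal edge is almost surely closed under $\mathbb{P}^+_{p,0}$, whereas the $p$-edges remain i.i.d.~Bernoulli$(p)$ by definition of the enhanced model. Thus the marginal on $\mathcal{E}_0$ is exactly $\mathcal{P}_p$, and since no open path can traverse a diagonal edge, the event $\{o\to\partial\Lambda_n \text{ without } \scalebox{0.7}{\pattern}\}$ depends only on the $\mathcal{E}_0$-coordinates and has the same probability under both laws.

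For (ii), the plan is to factor through the projection $\pi\colon\Omega^+\to\{0,1\}^{\mathcal{E}_0}$ onto $p$-edges. By construction of $\mathbb{P}^+_{p,q}$, the $p$-edges are i.i.d.~Bernoulli$(p)$ regardless of the value of $q$, so the pushforward $\pi_\ast\mathbb{P}^+_{p,q}$ equals $\mathcal{P}_p$. It therefore suffices to establish the deterministic inclusion
\[
\bigl\{\omega^+ : o\to\partial\Lambda_n \text{ without } \scalebox{0.7}{\pattern} \text{ in } \omega^+\bigr\} \subseteq \bigl\{\omega^+ : o\to\partial\Lambda_n \text{ in } \pi(\omega^+)\bigr\},
\]
after which the inequality (ii) follows by taking $\mathbb{P}^+_{p,q}$-probabilities on both sides.

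The key mechanism, built into the definition of $\mathbb{P}^+_{p,q}$, is that an open diagonal edge $e\in\mathcal{D}$ can exist only if its associated forbidden pattern is present in $\pi(\omega^+)$; in particular, the five open $p$-edges of that pattern form a path in $\pi(\omega^+)$ between the two endpoints of $e$. This is a direct check from the definitions of $\mathcal{D}$ and of the two pattern orientations related by the $\pi/2$ rotation. Given a witnessing self-avoiding open path $\gamma$ in $\omega^+$ avoiding the forbidden pattern, I would replace each diagonal edge appearing in $\gamma$ by this five-edge open $p$-subpath, obtaining an open $p$-edge walk from $o$ to $\partial\Lambda_n$; loop-erasure then yields the desired self-avoiding open $p$-edge path. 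No real obstacle is expected: the resulting $p$-edge path may well traverse a forbidden pattern, but this is harmless since the right-hand side of (ii) imposes no such avoidance constraint.
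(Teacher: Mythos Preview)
Your proof is correct and follows essentially the same approach as the paper: both arguments use that the projection onto $p$-edges pushes $\mathbb{P}^+_{p,q}$ forward to $\mathcal{P}_p$, and then verify the deterministic inclusion by noting that any open diagonal edge can be bypassed through the five open $p$-edges of its associated pattern. The paper states this inclusion in one line without spelling out the replacement, whereas you make the five-edge substitution and loop-erasure explicit, but the underlying mechanism is identical.
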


\begin{proof}
When the parameter $q$ of diagonal edges is null, the probability distribution $\mathbb{P}_{p,q}^+$ is nothing but the standard independent bond percolation model with parameter $p$, i.e., the probability distribution $\mathcal{P}_{p}$. This gives Item (i).

Let us consider $\Pi\colon \Omega^+ \mapsto \{0,1\}^{\mathcal{E}_0}$ the canonical projection of $\Omega^+$ onto $\{0,1\}^{\mathcal{E}_0}$. Since under $\mathbb{P}_{p,q}^+$ each edge of $\mathcal{E}_0$ is open independently with probability $p$, we have
\[
\mathcal{P}_{p} = \mathbb{P}_{p,q}^+\circ \Pi^{-1},
\]
for any $q \in [0,1]$. Besides, let $\omega^+ \in \Omega^+$ satisfying $\{o \to \partial \Lambda_n  \text{ without } \scalebox{0.7}{\pattern}\}$ and consider $\omega := \Pi(\omega^+) \in \{0,1\}^{\mathcal{E}_0}$. Whether $\omega^+$ uses diagonal edges or not, its projection $\omega := \Pi(\omega^+)$ belongs to $\{o \to \partial \Lambda_n\}$. In other words,
\[
\big\{\omega^+ \colon o \to \partial \Lambda_n  \text{ without } \scalebox{0.7}{\pattern} \big\} \subset \Pi^{-1} \big( \{ w \colon  o \to \partial \Lambda_n\} \big)
\]
which leads to Item (ii),
\begin{align*}
\mathbb{P}_{p,q}^+ \big( o \to \partial \Lambda_n  \text{ without } \scalebox{0.7}{\pattern} \big) 
\leq 
\mathbb{P}_{p,q}^+\circ \Pi^{-1} \big( o \to \partial \Lambda_n \big) = \mathcal{P}_{p} \big( o \to \partial \Lambda_n \big),
\end{align*}
as desired.
\end{proof}

In the sequel, it will be convenient to use the shorter notation
\[
\Theta_n(p,q) := \mathbb{P}_{p,q}^+ \big( o \to \partial \Lambda_n  \text{ without } \scalebox{0.7}{\pattern} \big).
\]

The main ingredient for the enhancement technique is an estimate that allows to control the partial derivative $\partial_p \Theta_n(p,q)$ in terms of the partial derivative with respect to the other parameter. This will be done in the following sections through several steps.

\begin{prop}
\label{prop:derivativeComp}
The partial derivatives of $\Theta_n$ w.r.t.~$p$ and $q$ exist and are non-negative for any $n\geq 1$. In addition, there exists a constant $C > 0$ such that $\forall n \geq 1$, $\forall q \in [0,1]$ and $\forall p \in [10^{-5},1-10^{-5}]$, we have
\[
\partial_p \Theta_n(p,q) \leq C \partial_q \Theta_n(p,q).
\]
\end{prop}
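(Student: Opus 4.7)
The plan is to prove this via an Aizenman--Grimmett-style enhancement comparison. Since $\Theta_n(p,q)$ depends on only finitely many edges inside $\Lambda_n$, it is a polynomial in $p$ and $q$, so the partial derivatives exist automatically. Non-negativity of $\partial_q\Theta_n$ is clear: conditional on the $p$-layer, opening diagonal edges only adds pattern-free routes from $o$. Non-negativity of $\partial_p\Theta_n$ requires more care, because raising $p$ can destroy a pattern and thereby close a previously open diagonal shortcut. I would handle this via a monotone coupling on the underlying uniform $[0,1]$ variables: whenever raising $p$ destroys a pattern by opening one of its required-closed edges, the five already-open $p$-edges of that pattern combined with the newly opened edge furnish a $p$-path that replaces the lost diagonal shortcut.

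By Russo's formula applied coordinatewise on the $p$- and $q$-edges,
\[
\partial_p\Theta_n(p,q) = \sum_{e \in \mathcal{E}_0 \cap \Lambda_n} \mathbb{P}_{p,q}^+(e \text{ is }p\text{-pivotal}), \quad \partial_q\Theta_n(p,q) = \sum_{d \in \mathcal{D} \cap \Lambda_n} \mathbb{P}_{p,q}^+(d \text{ is }q\text{-pivotal}),
\]
where $e$ is \emph{$p$-pivotal} if toggling its state changes the indicator of $\{o \to \partial\Lambda_n \text{ without } \scalebox{0.7}{\pattern}\}$, and $d$ is \emph{$q$-pivotal} if the pattern associated to $d$ occurs in the $p$-layer and toggling $\omega^+(d)$ changes the same indicator.

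The comparison comes from a local surgery argument: for every $e \in \mathcal{E}_0$ and every configuration $\omega^+$ in which $e$ is $p$-pivotal, I would construct an explicit modification $\Phi_e(\omega^+)$ differing from $\omega^+$ only on $p$-edges inside a box $B_e$ of fixed radius $R = O(1)$ centered near $e$, such that in $\Phi_e(\omega^+)$ some diagonal $d \in \mathcal{D} \cap B_e$ becomes $q$-pivotal. The key idea is to \emph{plant} a forbidden pattern adjacent to $e$ in such a way that its associated diagonal $d$ takes over the connection role played by $e$. Since $\Phi_e$ alters only $O(1)$ many $p$-edges (the $q$-layer then updates automatically through the new pattern occurrences), the Radon--Nikodym cost of the modification is at most $(\min(p,1-p))^{-O(1)}$, which is uniformly bounded for $p \in [10^{-5}, 1-10^{-5}]$ and crucially \emph{independent} of $q$. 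Since each candidate $d$ is the image of only boundedly many pairs $(e,\omega^+)$, summing the pivotality contributions over $e$ yields $\partial_p\Theta_n \leq C\,\partial_q\Theta_n$ with a universal constant.

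The principal obstacle is that the enhancement is \emph{not essential}: a diagonal $q$-edge is physically available only when its associated pattern appears in the $p$-layer, so one cannot simply introduce a $q$-edge by fiat. Consequently, the surgery $\Phi_e$ must, depending on the local geometry of a pivotal witness through $e$, simultaneously (i) install a valid forbidden pattern anchored near $e$, (ii) arrange that the intended diagonal is the unique remaining link for pattern-free connectivity after the modification, and (iii) ensure that no auxiliary pattern-free route is accidentally created or destroyed elsewhere in $B_e$. Since the forbidden pattern has only two orientations and the local geometry around $e$ admits finitely many case types, this case analysis is bounded in size, but it is where the bulk of the technical work lies and where the departure from the standard frameworks of~\cite{aizenman1991strict,balister2014essential} becomes visible.
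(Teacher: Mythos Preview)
Your proposal is correct and follows essentially the same route as the paper: monotonicity via the coupling you describe, Russo's formula, and a local surgery that plants the forbidden pattern so that the associated diagonal becomes pivotal, with cost bounded by $[p(1-p)]^{-O(1)}$. One point you gloss over that the paper treats explicitly: for $p$-edges $e$ close to $\partial\Lambda_n$ (or such that the origin sits on the interior of the associated pattern), the natural diagonal $Q(e)$ cannot be $q$-pivotal at all, so no surgery inside $B_e$ will work; the paper handles these finitely many ``bad'' edges by a preliminary bounded-cost surgery mapping them to nearby ``good'' $p$-edges before applying the main construction.
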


Theorem~\ref{prop:enhancement} is a direct consequence of Lemma~\ref{lem:enhancedEqBer} and Proposition~\ref{prop:derivativeComp}.

\begin{proof}[Proof of Theorem~\ref{prop:enhancement}]
According to the finite increment formula, we can assert that, for any $\varepsilon' \in [0,1/4]$ and any $q_0 \in [0,1]$, there exists $\xi = \xi(\varepsilon',q_0) \in [1/4,1/2] \times [0,1]$ such that
\begin{equation}
\label{eq:enchanceproof1}
\Theta_n(1/2,0) - \Theta_n(1/2-\varepsilon',q_0) = \nabla \Theta_n(\xi)\cdot(\varepsilon',-q_0).
\end{equation}
Then Proposition~\ref{prop:derivativeComp} provides the upper bound
\[
\nabla \Theta_n(\xi)\cdot(\varepsilon',-q_0) = \varepsilon' \partial_p \Theta_n(\xi) - q_0 \partial_q \Theta_n(\xi) \leq (C \varepsilon' - q_0) \partial_q \Theta_n(\xi),
\]
which can be made non-positive by choosing $\varepsilon'>0$ sufficiently small and using that $\partial_q \Theta_n$ is non-negative by Proposition~\ref{prop:derivativeComp}. We finally get  
\begin{equation}
\label{Ineg-Enhancement}
\Theta_n(1/2,0) \leq \Theta_n(1/2-\varepsilon',q_0).
\end{equation}
We conclude by combining~\eqref{Ineg-Enhancement} and Lemma~\ref{lem:enhancedEqBer}, Items~(i) and (ii): 
\begin{equation*}
\mathcal{P}_{1/2} \big( o \to \partial \Lambda_n  \text{ without } \scalebox{0.7}{\pattern} \big)=  \Theta_n(1/2,0)\leq \Theta_n(1/2-\varepsilon',q_0)\leq \mathcal{P}_{1/2-\varepsilon'} \big( o \to \partial \Lambda_n \big),
\end{equation*}
where $\varepsilon'>0$ has been chosen small enough.
\end{proof}

\subsection{Partial derivatives and Russo's formula}

The aim of this section is to show that for fixed $n \in \N$ the partial derivatives of $\Theta_n$ with respect to the parameters $p$ and $q$ exist and then to expand them using Russo's formula (Lemma~\ref{lem:Russo}). As a first step, we state that $\Theta_n$ is a monotone function with respect to each parameter $p$ and $q$, see Lemma~\ref{lem:monotonicity}.

Let us consider a family $\{U_e\colon e \in \mathcal{E}_0 \cup \mathcal{D}\}$ of independent random variables uniformly distributed on $[0,1]$. Let us denote by $\mathbb{P}^{+}$ the product measure that the $U_e$'s generate on the measurable space $[0,1]^{\mathcal{E}_0 \cup \mathcal{D}}$ (equipped with the product $\sigma$-algebra). Let us define the random subgraph $\mathcal{G}_{p,q}$ of $(\mathbb{Z}^2,\mathcal{E}_0 \cup \mathcal{D})$ as follows:
\begin{itemize}
\item Any $e$ in $\mathcal{E}_0$ is open in $\mathcal{G}_{p,q}$ if and only if $U_e \leq p$;
\item Any $e$ in $\mathcal{D}$ is open in $\mathcal{G}_{p,q}$ if and only if its associated forbidden pattern occurs (see the right side of Figure~\ref{fig:addEdges}) and $U_e \leq q$.
\end{itemize}
Under $\mathbb{P}^{+}$, the random graph $\mathcal{G}_{p,q}$ is distributed according to $\mathbb{P}^{+}_{p,q}$. This construction provides a coupling between several versions of the probability measure $\mathbb{P}^{+}_{p,q}$ which will be useful for proving Lemma~\ref{lem:Russo}.

Let us set
\[
\mathcal{A}_{n,p,q} := \big\{ o \to \partial\Lambda_n \; \mbox{in $\mathcal{G}_{p,q}$ without \scalebox{0.7}{\pattern}} \big\},
\]
which allows us to write the probability $\Theta_n(p,q)$ as
\[
\Theta_n(p,q) = \mathbb{P}^{+} \big( \mathcal{A}_{n,p,q} \big).
\]
To lighten notations, we will from now on omit the subscript $n$ in $\mathcal{A}_{n,p,q}$ and just write $\mathcal{A}_{p,q}$. The monotonicity of $\Theta_n$ with respect to each of the parameters $p$ and $q$ is an immediate consequence of the next result.

\begin{lemma}
\label{lem:monotonicity}
Let $p,p',q,q'\in [0,1]$ such that $p\leq p'$ and $q\leq q'$, then, 
\[
\mathcal{A}_{p,q} \subset \mathcal{A}_{p',q} \, \text{ and } \, \mathcal{A}_{p,q} \subset \mathcal{A}_{p,q'}.
\]
\end{lemma}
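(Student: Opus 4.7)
The plan is to argue via the uniform coupling $\{U_e\}_{e\in\mathcal{E}_0\cup\mathcal{D}}$ defined just above the statement, showing each inclusion pointwise in the underlying probability space.

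For the inclusion $\mathcal{A}_{p,q}\subset\mathcal{A}_{p,q'}$ I would note that the two graphs $\mathcal{G}_{p,q}$ and $\mathcal{G}_{p,q'}$ share the same $p$-parameter, so their sets of open $p$-edges and hence their sets of occurring forbidden patterns coincide. Any diagonal $e\in\mathcal{D}$ open in $\mathcal{G}_{p,q}$ (meaning its associated pattern occurs and $U_e\leq q$) is also open in $\mathcal{G}_{p,q'}$ since the same pattern still occurs and $U_e\leq q\leq q'$. Consequently, any path witnessing $\mathcal{A}_{p,q}$ witnesses $\mathcal{A}_{p,q'}$ as well, with the same ``without $\pattern$'' constraint.

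For the inclusion $\mathcal{A}_{p,q}\subset\mathcal{A}_{p',q}$ the coupling enjoys monotonicity on $p$-edges (once open, always open) but \emph{not} on diagonal edges: a diagonal $e$ open in $\mathcal{G}_{p,q}$ closes in $\mathcal{G}_{p',q}$ precisely when its associated pattern is destroyed, which forces at least one of the two ``required closed'' $p$-edges of that pattern to become open. Starting from a witness $\gamma$ for $\mathcal{A}_{p,q}$, I would build $\gamma'$ by keeping every edge of $\gamma$ that is still open in $\mathcal{G}_{p',q}$ and, for each closing diagonal of $\gamma$, inserting the $3$-edge sub-path of the destroyed pattern that passes through the newly opened ``required closed'' edge; this sub-path is open in $\mathcal{G}_{p',q}$ by the $p$-monotonicity.

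The core of the proof is then the verification that $\gamma'$ uses no forbidden pattern in $\mathcal{G}_{p',q}$. I would classify $5$-edge sub-sequences of $\gamma'$ into three types. Sub-sequences lying entirely inside inherited segments of $\gamma$ are controlled by $p$-monotonicity: a pattern occurring along such a sub-sequence in $\mathcal{G}_{p',q}$ has its two ``required closed'' edges closed in $\mathcal{G}_{p',q}$, hence already closed in $\mathcal{G}_{p,q}$, so the pattern was occurring in $\mathcal{G}_{p,q}$ too and would have been used by $\gamma$, contradicting its validity. Sub-sequences lying entirely inside a $3$-edge bypass are impossible by cardinality. The remaining straddling case, where a $5$-edge sub-sequence spans a bypass and adjacent edges of $\gamma$, is handled by a direct geometric comparison between the step-sequences of the two pattern shapes (namely $(N,E,S,E,N)$ and $(W,N,E,N,W)$, up to reversal) and those of the two admissible bypasses (namely $(E,E,N)$ and $(N,E,E)$, up to reversal).

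The main obstacle is this straddling analysis. When the bypass orientation is forced by which ``required closed'' edge opens, some unfortunate local geometries of $\gamma$ can in principle align with the step-sequence of a \emph{different} forbidden pattern that newly occurs in $\mathcal{G}_{p',q}$. In those exceptional configurations the fix is to resort to an alternative reroute through the diagonal edge of the newly occurring pattern, which by construction is also open in $\mathcal{G}_{p',q}$; since diagonals belong to $\mathcal{D}$ and not to $\mathcal{E}_0$, inserting such an edge in $\gamma'$ automatically interrupts any offending $5$-edge sub-sequence of $p$-edges, closing the argument.
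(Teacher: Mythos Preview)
Your approach is the same as the paper's: the $q$-inclusion is immediate, and for the $p$-inclusion both you and the paper replace each diagonal of $\gamma$ that closes in $\mathcal{G}_{p',q}$ by a short $p$-path inside the associated $2\times 1$ rectangle. The paper's proof is brief --- it observes that when a diagonal $\{a,b\}$ closes because its pattern is destroyed, an admissible $p$-path from $a$ to $b$ appears inside that rectangle, and simply asserts that the resulting locally modified path is globally admissible. You go further and correctly isolate the straddling $5$-edge windows as the only non-obvious point.

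Your fix for the exceptional straddling configurations, however, does not work as written. You claim the diagonal of the newly occurring pattern is ``by construction also open in $\mathcal{G}_{p',q}$'', but openness of a diagonal $e\in\mathcal{D}$ requires both that its pattern occurs \emph{and} that $U_e\le q$; the second condition is an independent Bernoulli$(q)$ coin that can perfectly well fail. Concretely: let $\gamma$ start with the diagonal $\{(0,0),(2,1)\}$ and continue from $(2,1)$ with $p$-steps $E,S,E,N$, and suppose the first-type pattern at $(2,0)$ also occurs in $\mathcal{G}_{p,q}$ (nothing forces its required-closed edges $\{(2,0),(3,0)\}$ and $\{(3,1),(4,1)\}$ open) with $U_{\{(2,0),(4,1)\}}>q$. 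If $\{(0,0),(1,0)\}$ opens when passing from $p$ to $p'$, the only available $3$-edge bypass is $(E,E,N)$, reaching $(2,1)$ via $(2,0)$; your $\gamma'$ then contains the five consecutive $p$-edges $N,E,S,E,N$ starting at $(2,0)$, which is an occurring forbidden pattern there --- yet its diagonal $\{(2,0),(4,1)\}$ is closed, so the reroute you invoke is unavailable and the argument does not close.
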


\begin{proof}
Let us consider a configuration $\omega \in \mathcal{A}_{p,q}$, i.e., there exists an open path $\gamma$ in $\mathcal{G}_{p,q}(\omega)$ from $o$ to $\partial \Lambda_n$ that does not use the forbidden patterns. Since increasing $q$ only has the effect of opening new $q$-edges (whose associated forbidden patterns occur), the random graph $\mathcal{G}_{p,q'}(\omega)$ contains $\mathcal{G}_{p,q}(\omega)$ and also the path $\gamma$. So $\omega \in \mathcal{A}_{p,q'}$ and the second inclusion of Lemma~\ref{lem:monotonicity} is proven. 
However, for increasing $p$ to $p'$ one needs to be more careful, since the opening of new $p$-edges may (create or) destroy some forbidden patterns and then may delete the associated $q$-edges that could be used by $\gamma$. Let $\mathcal{D}'$ be the set of $q$-edges which are open in $\mathcal{G}_{p,q}(\omega)$ but closed in $\mathcal{G}_{p',q}(\omega)$ because their associated forbidden patterns have been altered by the adding of new $p$-edges: see Figure~\ref{fig:diff} for an illustration. Either $\gamma$ does not use any element of $\mathcal{D}'$ and in this case, any open edge of $\gamma$ (in $\mathcal{G}_{p,q}(\omega)$) is still open in $\mathcal{G}_{p',q}(\omega)$. Or $\gamma$ uses such a diagonal edge, say $e = \{a,b\}$. This diagonal edge is now closed in $\mathcal{G}_{p',q}(\omega)$ but, in the same time, an open path of $p$-edges is appeared, included in the rectangle delimited by vertices $a$ and $b$, and joining $a$ and $b$ without using a forbidden pattern. This means that around each element of $\mathcal{D}'$ used by $\gamma$, local modifications can be performed so as to change $\gamma$ into a new open path, included in $\mathcal{G}_{p',q}(\omega)$ and linking $o$ to $\partial \Lambda_n$ without using a forbidden pattern. In both cases, the configuration $\omega$ still belongs to $\mathcal{A}_{p',q}$. This proves the first inclusion of Lemma~\ref{lem:monotonicity}.
\end{proof}

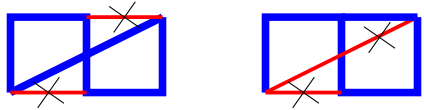
\begin{figure}[ht!]
\begin{center}
\begin{tikzpicture}
    \draw[step=1cm,gray,very thin] (0,0) grid (2,1);
    \draw[blue,line width=1mm] (0,0) -- (0,1) -- (1,1) -- (1,0) -- (2,0) -- (2,1);
    \draw[blue,line width=1mm] (0,0) -- (2,1);
    \draw[red,line width=0.5mm] (0,0) -- (1,0);
    \draw[red,line width=0.5mm] (1,1) -- (2,1);
    \node at (0.5,0) [color=red,cross,rotate=10,inner sep=0pt, minimum size=10pt] {};
    \node at (1.5,1) [color=red,cross,rotate=10,inner sep=0pt, minimum size=10pt] {};
\end{tikzpicture} \hspace*{1cm}
\begin{tikzpicture}
    \draw[step=1cm,gray,very thin] (0,0) grid (2,1);
    \draw[blue,line width=1mm] (0,0) -- (0,1) -- (1,1) -- (1,0) -- (2,0) -- (2,1);
    \draw[red,line width=0.5mm] (0,0) -- (2,1);
    \draw[red,line width=0.5mm] (0,0) -- (1,0);
    \draw[blue,line width=1mm] (1,1) -- (2,1);
    \node at (0.5,0) [color=red,cross,rotate=10,inner sep=0pt, minimum size=10pt] {};
    \node at (1.5,0.73) [color=red,cross,rotate=10,inner sep=0pt, minimum size=10pt] {};
\end{tikzpicture}
\caption{\label{fig:diff}The opening of a closed edge in the left pattern makes useless the corresponding diagonal edge. However, a new admissible open path of $p$-edges (i.e. without using a forbidden pattern) appears in the right pattern.}
\end{center}
\end{figure}

We are now ready to show Russo-type formula for partial derivatives of $\Theta_n$. To do so, let us first introduce some more notations. For any edge $e \in \mathcal{E}_0 \cup \mathcal{D}$, the random graphs $\mathcal{G}_{p,q}^e$ and $\mathcal{G}_{p,q}^{\neg e}$ are defined as $\mathcal{G}_{p,q}$ but with $U_e=0$ and $U_e=1$ respectively. Notice that a $p$-edge $e$ is always open in $\mathcal{G}_{p,q}^e$ (and always closed in $\mathcal{G}_{p,q}^{\neg e}$ resp.) whereas a diagonal edge $e$ is open in $\mathcal{G}_{p,q}^e$ only if its associated forbidden pattern occurs. Let us then define the event
\[
\mathcal{A}^{\square}_{p,q} := \big\{ o \to \partial\Lambda_n \text{ in $\mathcal{G}_{p,q}^\square$ without \scalebox{0.7}{\pattern}} \big\},
\]
where $\square \in \{e,\neg e\}$. An edge $e \in \mathcal{E}_0 \cup \mathcal{D}$ is called {\em pivotal} for $\mathcal{A}_{p,q}$ when the event $\mathcal{A}^{e}_{p,q}\!\setminus\!\mathcal{A}^{\neg e}_{p,q}$ occurs. As usual, the fact that $e$ is pivotal does not depend on its own state, i.e., on the random variable $U_e$.

Recall that $\Lambda_{n} = \{ z \in \Z^2 \colon \|z\|_{\ell^\infty} \leq n \}$ and $\partial \Lambda_{n} = \{ z \in \Z^2 \colon \|z\|_{\ell^\infty} = n \}$. Let us denote by $\mathcal{E}^{\tiny{\mbox{in}}}_n$ the set of $p$-edges having {\em at least one} endpoint in $\Lambda_n\!\setminus\!\partial\Lambda_n$ and $\mathcal{D}^{\tiny{\mbox{in}}}_n$ the set of diagonal edges having {\em both} endpoints in $\Lambda_n\!\setminus\!\partial\Lambda_n$.

\begin{lemma}
\label{lem:Russo}
The partial derivatives of $\Theta_n$ w.r.t.~$p$ and $q$ exist and are non-negative. In addition,
\begin{equation}
\label{eq:formuleRusso1}
\begin{split}
\partial_p\Theta_n(p,q) &= \sum_{e \in \mathcal{E}^{\tiny{\mbox{in}}}_n} \mathbb{P}^{+}\!\big( \text{$e$ is pivotal for $\mathcal{A}_{p,q}$}\big)\quad\text{ and}\\
\partial_q\Theta_n(p,q) &= \sum_{e \in \mathcal{D}^{\tiny{\mbox{in}}}_n} \mathbb{P}^{+} \big( \text{$e$ is pivotal for $\mathcal{A}_{p,q}$} \big).
\end{split}
\end{equation}
\end{lemma}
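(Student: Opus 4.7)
The plan is to first observe that $\Theta_n(p,q)$ is a polynomial in $(p,q)$, since it depends on only the finitely many $p$- and $q$-edges meeting $\Lambda_n$. In particular $\Theta_n$ is smooth, so both partial derivatives exist, and non-negativity follows immediately from Lemma~\ref{lem:monotonicity} applied along the $p$- and $q$-coordinate directions.

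To obtain the pivotal representations, I would exploit the uniform coupling introduced just before the statement. Setting $X_e := \mathbf{1}_{U_e \leq p}$ for $e \in \mathcal{E}_0$ and $Y_{e'} := \mathbf{1}_{U_{e'} \leq q}$ for $e' \in \mathcal{D}$, the families $(X_e)$ and $(Y_{e'})$ become mutually independent and i.i.d., with respective marginals Bernoulli$(p)$ and Bernoulli$(q)$; crucially, $\mathbf{1}_{\mathcal{A}_{p,q}} = f(X,Y)$ for a non-random $\{0,1\}$-valued function $f$ which itself does not depend on $(p,q)$, the entire parameter dependence having been absorbed into the marginals. This places us in the standard product-Bernoulli framework, so the classical Russo formula applied coordinate-wise yields
\begin{equation*}
\partial_p \Theta_n(p,q) = \sum_{e \in \mathcal{E}_0} \mathbb{P}^{+}\big(\mathcal{A}^e_{p,q}\!\setminus\!\mathcal{A}^{\neg e}_{p,q}\big), \qquad \partial_q \Theta_n(p,q) = \sum_{e \in \mathcal{D}} \mathbb{P}^{+}\big(\mathcal{A}^e_{p,q}\!\setminus\!\mathcal{A}^{\neg e}_{p,q}\big),
\end{equation*}
and the pointwise edge-monotonicity statement that underlies Lemma~\ref{lem:monotonicity}---opening a single edge $e$ in a configuration satisfying $\mathcal{A}_{p,q}$ preserves $\mathcal{A}_{p,q}$, possibly after a local modification of the crossing path near $e$---identifies each set difference above with the event that $e$ is pivotal for $\mathcal{A}_{p,q}$ in the sense of the lemma.

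It then remains to trim the sums to $\mathcal{E}^{\tiny{\mbox{in}}}_n$ and $\mathcal{D}^{\tiny{\mbox{in}}}_n$. An edge $e$ outside these sets has all (or all but one) of its endpoints on $\partial\Lambda_n$ or beyond, so any crossing from $o$ to $\partial\Lambda_n$ staying inside $\Lambda_n$ can be truncated at its first visit to the boundary without ever using $e$; moreover, a short check using the explicit $3\times 2$ shape of the forbidden pattern shows that if a diagonal has both its endpoints strictly inside $\Lambda_n$ then all six vertices of its associated pattern also lie strictly inside $\Lambda_n$, so an outer $p$-edge cannot be part of such a pattern either. In particular every such $e$ is $\mathbb{P}^{+}$-a.s.~not pivotal, leaving exactly the two sums claimed.

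I expect the truly routine part to be the Russo-type differentiation itself, which is standard once the coupling is in place; the main technical nuisance I anticipate is the last boundary bookkeeping, where one has to check carefully---using planarity and the concrete shape of the forbidden patterns---that no boundary edge can be pivotal. The asymmetry between ``at least one'' for $p$-edges and ``both'' for diagonals in the definitions of $\mathcal{E}^{\tiny{\mbox{in}}}_n$ and $\mathcal{D}^{\tiny{\mbox{in}}}_n$ ultimately reflects the fact that a diagonal jumps over two lattice spacings, so its boundary contribution is subtler than that of an ordinary $p$-edge and is the one place where one has to be careful.
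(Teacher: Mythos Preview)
Your proposal is correct and follows essentially the same route as the paper: both reduce to the standard Russo computation via the uniform coupling and the edgewise monotonicity of Lemma~\ref{lem:monotonicity}, differing only in that the paper first argues $\mathcal{A}_{p,q}$ depends solely on $\mathcal{E}^{\text{in}}_n\cup\mathcal{D}^{\text{in}}_n$ and then differentiates, whereas you differentiate over all edges and then trim. Your boundary check---that a diagonal in $\mathcal{D}^{\text{in}}_n$ forces all six pattern vertices into $\Lambda_n\setminus\partial\Lambda_n$, so no outer $p$-edge can influence a relevant pattern---is exactly the content of the paper's opening remark in its proof.
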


\begin{proof}
An important point consists in remarking that the event 
\[
\mathcal{A}_{p,q} = \big\{ o \to \partial\Lambda_n \; \mbox{in $\mathcal{G}_{p,q}$ without \scalebox{0.7}{\pattern}} \big\}
\]
only depends on edges of $\mathcal{E}^{\tiny{\mbox{in}}}_n$ and $\mathcal{D}^{\tiny{\mbox{in}}}_n$. Consider a $q$-edge $e=\{x,y\}$ whose endpoint $y$ does not belong to $\Lambda_n\!\setminus\!\partial\Lambda_n$. Even if its associated forbidden pattern will occur, there would exist an open path of $p$-edges joining $x$ to $\partial\Lambda_n$ (without making a forbidden pattern) bypassing the edge $e$ and making it useless for the event $\mathcal{A}_{p,q}$. In the same way, a $p$-edge whose both endpoints belong to $\partial\Lambda_n$ cannot help an open path starting at $o$ to join the boundary $\partial\Lambda_n$. Indeed, it could contribute to the occurrence of a forbidden pattern making the associated $q$-edge active, but such $q$-edge would not be in $\mathcal{D}^{\tiny{\mbox{in}}}_n$, and hence useless for $\mathcal{A}_{p,q}$.

The rest of the proof is standard, see~\cite[Chapter~2]{grimmett1999percolation}. We only focus on the case of the derivative with respect to $p$ since that with respect to $q$ is completely similar. Let us replace the parameter $p$ with the vector
\[
\textbf{p} = (p_e)_{e \in \mathcal{E}^{\tiny{\mbox{in}}}_n} \in [0,1]^{\mathcal{E}^{\tiny{\mbox{in}}}_n}.
\]
Let $f \in \mathcal{E}^{\tiny{\mbox{in}}}_n$ and $\textbf{p}' = (p'_e)_{e \in \mathcal{E}^{\tiny{\mbox{in}}}_n}$ where $p'_e=p_e$, for any edge $e \not= f$, and $p_f' \geq p_f$. Lemma~\ref{lem:monotonicity} allows to write
\begin{eqnarray*}
\Theta_n(\textbf{p}',q) - \Theta_n(\textbf{p},q) & = & \mathbb{P}^{+} \big( \mathcal{A}_{\textbf{p}',q} \setminus \mathcal{A}_{\textbf{p},q} \big) \\
& = & \mathbb{P}^{+} \big( U_f \in (p_f,p'_f] , \mathcal{A}_{\textbf{p},q}^e \setminus \mathcal{A}_{\textbf{p},q}^{\neg e} \big) \\
& = & (p'_f - p_f) \mathbb{P}^{+} \big( \text{$f$ is pivotal for $\mathcal{A}_{\textbf{p},q}$} \big)
\end{eqnarray*}
using the independence between the random variable $U_f$ and the fact that $f$ is pivotal. Henceforth, dividing by $p'_f - p_f$ and taking $p'_f - p_f \to 0$, we get
\[
\partial_{p_f} \Theta_n(\textbf{p},q) = \mathbb{P}^{+} \big( \text{$f$ is pivotal for $\mathcal{A}_{\textbf{p},q}$} \big).
\]
Since $\Theta_n(\textbf{p},q)$ only depends on a finite number of variables, we can write
\[
\partial_{p} \Theta_n(p,q) = \sum_{f \in \mathcal{E}^{\tiny{\mbox{in}}}_n} \partial_{p_f} \Theta_n(\textbf{p},q) \big|_{\textbf{p}=(p,\ldots,p)} = \sum_{f \in \mathcal{E}^{\tiny{\mbox{in}}}_n} \mathbb{P}^{+} \big( \text{$f$ is pivotal for $\mathcal{A}_{p,q}$} \big),
\]
which concludes the proof. 
\end{proof}

\subsection{Comparison of pivotal probabilities}

Recall that our goal is to prove Proposition~\ref{prop:derivativeComp}, i.e., to compare the partial derivatives $\partial_p \Theta_n(p,q)$ and $\partial_q \Theta_n(p,q)$. Thanks to Lemma~\ref{lem:Russo}, this can be reduced to a comparison of the probabilities for a $p$-edge and a $q$-edge to be pivotal. In this section, we shorten '$e$ is pivotal for $\mathcal{A}_{p,q}$' to '$e$ is pivotal'.

Recall that $\mathcal{E}^{\tiny{\mbox{in}}}_n$ denotes the set of $p$-edges having at least one endpoint in $\Lambda_n\!\setminus\!\partial\Lambda_n$ and $\mathcal{D}^{\tiny{\mbox{in}}}_n$ the set of $q$-edges having both endpoints in $\Lambda_n\!\setminus\!\partial\Lambda_n$. First of all, note that in the enhanced lattice there is a one-to-one correspondence between $p$-edges and $q$-edges, i.e., between the sets $\mathcal{E}_0$ and $\mathcal{D}$, as depicted in Figure~\ref{fig:associatedEdges}. From now on, for any $p$-edge $e$, we denote by $Q(e)$ its corresponding $q$-edge.

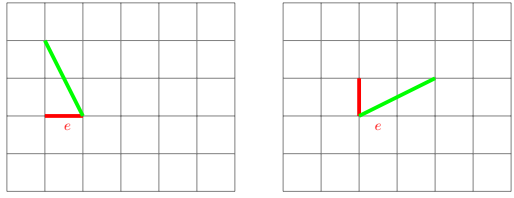
\begin{figure}[!ht]
\centering
\scalebox{0.5}{
\begin{tikzpicture}
\draw[step=1cm,gray,thin] (1,3) grid (7,8);    
\node at (2.6,4.7){\textcolor{red}{$e$}};
\draw[red,line width=1mm] (2,5) -- (3,5);
\draw[green,line width=1mm] (3,5) -- (2,7);
\end{tikzpicture}
\hspace*{1cm}
\begin{tikzpicture}
\draw[step=1cm,gray,thin] (1,3) grid (7,8);    
\node at (3.5,4.7){\textcolor{red}{$e$}};
\draw[red,line width=1mm] (3,5) -- (3,6);
\draw[green,line width=1mm] (3,5) -- (5,6);
\end{tikzpicture}
}
\caption{Two $p$-edges (in red) and their associated $q$-edges (in green).}
\label{fig:associatedEdges}
\end{figure}

The basic idea is the following: given a $p$-edge $e \in \mathcal{E}^{\tiny{\mbox{in}}}_n$, we will perform a local modification around $e$ so that the event $\{\text{$e$ is pivotal}\}$ becomes $\{\text{$Q(e)$ is pivotal}\}$ while comparing their probabilities. This strategy will apply successfully to most $p$-edges in $\mathcal{E}^{\tiny{\mbox{in}}}_n$, but not to all of them. Indeed, some $p$-edges satisfy $\mathbb{P}^{+}(\text{$e$ is pivotal}) > 0$ while $\mathbb{P}^{+}(\text{$Q(e)$ is pivotal}) = 0$. In that case, our strategy fails. Such pathological $p$-edges $e \in \mathcal{E}^{\tiny{\mbox{in}}}_n$ are of two types:
\begin{itemize}
\item When $Q(e) \notin \mathcal{D}^{\tiny{\mbox{in}}}_n$ (i.e., the associated $q$-edge $Q(e)$ touches the boundary $\partial\Lambda_n$). We have already noticed in Lemma~\ref{lem:Russo} that the event $\mathcal{A}_{p,q}$ only depends on $q$-edges of $\mathcal{D}^{\tiny{\mbox{in}}}_n$.
\item When $e$ belongs to the set
\begin{multline*}\mathcal{I}_{\text{bad}}=\Big\{\{(0,-1),(0,0)\},\{(-1,-1),(-1,0)\},\{(-1,0),(-1,1)\},\{(-2,0),(-2,1)\},\\ \{(0,0),(1,0)\},\{(0,-1),(1,-1)\},\{(-1,-1),(-1,0)\},\{(-2,-1),(-2,0)\}\Big\}.
\end{multline*}
In that case, the origin $o$ is one of the vertices located on the forbidden pattern associated to $e$ (except both extremities) and the associated $q$-edge $Q(e)$ cannot be pivotal for $\mathcal{A}_{p,q}$.
\end{itemize}
To overcome this difficulty, we make a distinction between the so-called {\em good} and {\em bad} edges: a $p$-edge $e \in  \mathcal{E}^{\tiny{\mbox{in}}}_n$ is called {\em good} if $Q(e)\in \mathcal{D}^{\tiny{\mbox{in}}}_n$ and $e\notin \mathcal{I}_{\text{ bad}}$. When $e$ is good, we will manage to compare $\mathbb{P}^{+}(\text{$e$ is pivotal})$ and $\mathbb{P}^{+}(\text{$Q(e)$ is pivotal})$ in~\eqref{PivotGood} below. An edge which is not good is called {\em bad}. In other words, bad edges are those edges $e$ such that the associated $q$-edges cannot be pivotal while $e$ itself is pivotal with positive probability. When $e$ is bad, we will only be able to compare its probability to be pivotal to that of another edge which will be good. See \eqref{PivotBad} below. Let us denote by $\mathcal{E}^{\tiny{\mbox{good}}}_n$ the set of good $p$-edges, and by $\mathcal{E}^{\tiny{\mbox{bad}}}_n$ its complement set in $\mathcal{E}^{\tiny{\mbox{in}}}_n$, i.e., $\mathcal{E}^{\tiny{\mbox{bad}}}_n := \mathcal{E}^{\tiny{\mbox{in}}}_n\!\setminus\!\mathcal{E}^{\tiny{\mbox{good}}}_n$. The next result allows to prove Proposition~\ref{prop:derivativeComp}. Its proof is postponed to the two following subsections.

\begin{lemma}
\label{lem:PivotGood-Bad}
The following inequalities hold:
\begin{equation}
\label{PivotGood}
\forall e \in \mathcal{E}^{\tiny{\mbox{good}}}_n , \; \mathbb{P}^+\!\big( \text{$e$ is pivotal} \big) \leq [p(1-p)]^{-71} \mathbb{P}^+\!\big( \text{$Q(e)$ is pivotal} \big), and
\end{equation}
\begin{equation}
\label{PivotBad}
\sum_{e \in \mathcal{E}^{\tiny{\mbox{bad}}}_n} \mathbb{P}^+\!\big( \text{$e$ is pivotal} \big) \leq 5 [p(1-p)]^{-71} \sum_{e \in \mathcal{E}^{\tiny{\mbox{good}}}_n} \mathbb{P}^+\!\big( \text{$e$ is pivotal} \big).
\end{equation}
\end{lemma}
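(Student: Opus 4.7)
Both inequalities rest on the classical enhancement surgery: build an explicit finite-range local modification $T_e$ which sends every configuration in which a $p$-edge $e$ is pivotal for $\mathcal{A}_{p,q}$ into a configuration in which either $Q(e)$ (for good $e$) or a nearby good $p$-edge (for bad $e$) is pivotal, and pay the probability cost of flipping a bounded number of $p$-edges via the product structure of $\mathbb{P}^{+}$.

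\medskip

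\textbf{Good edges.} Fix $e\in\mathcal{E}^{\mathrm{good}}_n$ and let $W_e$ be a window built from the seven $p$-edges of the forbidden pattern associated to $Q(e)$ together with a fixed neighborhood of $p$-edges around that pattern chosen large enough that any ``short-circuit'' alternative to $Q(e)$ across the pattern lies entirely inside $W_e$; a careful count gives $|W_e|\leq 71$. Given $\omega$ with $e$ pivotal, replace the restriction of $\omega$ to $W_e$ by a deterministic template $\tau_e$ that (i) realises exactly the five open and two closed $p$-edges of the forbidden pattern of $Q(e)$, and (ii) closes every other $p$-edge in $W_e$ that could be used to bypass the pattern. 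Since $T_e(\omega)$ coincides with $\omega$ outside $W_e$, the global connectivity between $\partial W_e$ and both $o$ and $\partial\Lambda_n$ is preserved (this is what makes $e$ pivotal in $\omega$), while by construction the only admissible pattern-free crossing of $W_e$ in $T_e(\omega)$ is through $Q(e)$; hence $Q(e)$ is pivotal for $T_e(\omega)$. Using the product structure together with the fact that each $p$-edge marginal has both atoms of mass at least $p(1-p)$ and that $T_e$ is at most $2^{|W_e|}$-to-one, a standard swap-of-restrictions argument yields
\[
\mathbb{P}^{+}(e\text{ is pivotal})\leq [p(1-p)]^{-71}\,\mathbb{P}^{+}(Q(e)\text{ is pivotal}),
\]
the bounded multiplicity being absorbed in the constant.

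\medskip

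\textbf{Bad edges.} The set $\mathcal{E}^{\mathrm{bad}}_n$ splits into the at most eight origin-type edges in $\mathcal{I}_{\mathrm{bad}}$ and the boundary-type edges $\{e\in\mathcal{E}^{\mathrm{in}}_n\colon Q(e)\notin\mathcal{D}^{\mathrm{in}}_n\}$. In both cases $Q(e)$ cannot be pivotal, so I attach each bad $e$ to a good edge $e'$ at bounded graph distance: for $e\in\mathcal{I}_{\mathrm{bad}}$ pick a fixed $e'$ near the origin but outside $\mathcal{I}_{\mathrm{bad}}$, and for a boundary $e$ shift $e$ one lattice step inward so that its associated forbidden pattern now sits inside $\Lambda_n\setminus\partial\Lambda_n$. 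The same local surgery sends $\{e\text{ pivotal}\}$ into $\{e'\text{ pivotal}\}$ at cost $[p(1-p)]^{-71}$. The combinatorial point is that each good $e'$ arises as the target of at most five distinct bad $e$ under this correspondence, producing the prefactor $5$ after summing over $e\in\mathcal{E}^{\mathrm{bad}}_n$ and regrouping by $e'$.

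\medskip

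\textbf{Main obstacle.} The technical heart of the argument is the verification that $\tau_e$ truly makes $Q(e)$ (respectively $e'$) pivotal. One must simultaneously (a) rule out any pattern-free crossing of $W_e$ alternative to the one through $Q(e)$, and (b) ensure the surgery neither creates nor destroys a forbidden pattern straddling $\partial W_e$, since such a pattern would either enable a forbidden shortcut or suppress an admissible long-range path in the unchanged exterior. Enlarging the buffer of controlled $p$-edges inside $W_e$ so that every forbidden pattern touching $W_e$ is in fact contained in $W_e$ resolves both issues at once, and it is precisely the size of the buffer required to handle these two failure modes which produces the explicit constant $71$.
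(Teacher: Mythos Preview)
Your overall strategy---local surgery inside a bounded window at cost $[p(1-p)]^{-71}$---is the paper's, but there is a real gap at the point you yourself flag as the ``technical heart.'' You claim a single \emph{deterministic} template $\tau_e$ inside $W_e$ makes $Q(e)$ pivotal whenever $e$ was pivotal. This cannot work. For $Q(e)$ to be pivotal in $T_e(\omega)$ you need simultaneously (i) an admissible path from $o$ through $Q(e)$ to $\partial\Lambda_n$, and (ii) \emph{no} admissible path avoiding $Q(e)$. Requirement (i) forces the template to connect the specific entry point $x\in\partial W_e$ (where the outside $o$-cluster touches $W_e$) to one end of the pattern and the specific exit point $y$ (where the outside $\partial\Lambda_n$-cluster touches) to the other end; but $x$ and $y$ depend on $\omega$ outside $W_e$ and can be essentially any pair of boundary vertices. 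A fixed template rich enough to connect every boundary pair creates short-circuits violating (ii); a sparse fixed template fails (i) for most pairs. The paper resolves this by letting $T_e(\omega)$ on $\text{In}(B_e)$ depend \emph{measurably on $\omega$ restricted to $\text{Out}(B_e)$}: it first selects an entry--exit pair $(x,y)$ from the outside configuration (with a tie-breaking rule $(\ast)$ designed precisely to prevent a forbidden pattern from straddling $\partial B_e$), then opens a single path $x\to u\to\text{pattern}\to v\to y$ along a fixed cycle $C\subset B_e$ and closes everything else. The conditioning argument then reads: $\{e\text{ pivotal}\}\subset\text{Piv}^{\text{Out}}_{B_e}\in\mathcal{G}_{\text{Out}(B_e)}$, and on $\text{Piv}^{\text{Out}}_{B_e}$ the inside target is deterministic \emph{given the outside}, so $\mathbb{P}^+(\text{inside matches}\mid\mathcal{G}_{\text{Out}(B_e)})\geq [p(1-p)]^{71}$. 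Your ``$2^{|W_e|}$-to-one'' bookkeeping does not substitute for this dependence.

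For the bad edges the paper's map $J$ is also more delicate than a uniform one-step inward shift: there are six cases (a)--(f) with shifts of one or two steps in varying directions, and the bound $\#J^{-1}(\{e'\})\leq 5$ comes from inspecting those cases (the worst occurring near corners of $\partial\Lambda_n$). Your sketch asserts this bound without the casework. Finally, the boundary-pattern issue is not handled by enlarging the window as you suggest, but again by the outside-dependent selection rule $(\ast)$---which is only available once you abandon the deterministic-template idea.
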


\begin{proof}[Proof of Proposition~\ref{prop:derivativeComp}] Using Lemma~\ref{lem:PivotGood-Bad}, we first write
\begin{eqnarray*}
\sum_{e \in \mathcal{E}^{\tiny{\mbox{in}}}_n} \mathbb{P}^{+}\!\big( \text{$e$ is pivotal} \big) & = & \sum_{e \in \mathcal{E}^{\tiny{\mbox{good}}}_n} \mathbb{P}^{+}\!\big( \text{$e$ is pivotal} \big) + \sum_{e \in \mathcal{E}^{\tiny{\mbox{bad}}}_n} \mathbb{P}^{+}\!\big( \text{$e$ is pivotal} \big) \\
& \leq & \big( 1 + 5 [p(1-p)]^{-71} \big) \sum_{e \in \mathcal{E}^{\tiny{\mbox{good}}}_n} \mathbb{P}^{+}\!\big( \text{$e$ is pivotal} \big) \\
& \leq & \big( 1 + 5 [p(1-p)]^{-71} \big) [p(1-p)]^{-71} \sum_{e \in \mathcal{E}^{\tiny{\mbox{good}}}_n} \mathbb{P}^{+}\!\big( \text{$Q(e)$ is pivotal} \big) \\
& \leq & \big( 1 + 5 [p(1-p)]^{-71} \big) [p(1-p)]^{-71} \sum_{e \in \mathcal{D}^{\tiny{\mbox{in}}}_n} \mathbb{P}^{+}\!\big( \text{$e$ is pivotal} \big).
\end{eqnarray*}
Lemma~\ref{lem:Russo} then provides
\[
\partial_p \Theta_n(p,q) \leq \left( 5 [p(1-p)]^{-142} + [p(1-p)]^{-71} \right) \partial_q \Theta_n(p,q)
\]
from which we get Proposition~\ref{prop:derivativeComp} with $C := 5 \times 10^{710}+10^{355}$ since by hypothesis $p\in [10^{-5},1-10^{-5}]$.
\end{proof}

\subsubsection{From good $p$-edges to $q$-edges.}
\label{sect:pToq}
This section is devoted to the proof of~\eqref{PivotGood} in Lemma~\ref{lem:PivotGood-Bad}. Let us consider a good $p$-edge $e \in \mathcal{E}^{\tiny{\mbox{good}}}_n$. By symmetry of the lattice $\Z^2$, we can assume without loss of generality that $e$ is vertical, precisely $e := \{(a,b),(a,b+1)\}$. Let us consider the box
\[
B := \big( \llbracket0,6\rrbracket \times \llbracket0,5\rrbracket \big) \setminus \big\{ (0,0),(0,5),(6,0),(6,5) \big\} \subset \Z^2
\]
(without corners) and thus set $B_e := B + (a-2,b-2)$. See Figure~\ref{fig:box} for an illustration.

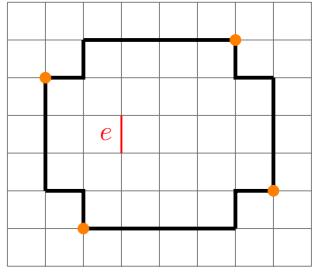
\begin{figure}[!ht]
\centering
\scalebox{0.5}{
\begin{tikzpicture}
\draw[step=1cm,gray,thin] (0,1) grid (8,8);    
\draw[red,line width=0.5mm] (3,4) -- (3,5);
\draw[line width=1mm] (1,3) -- (1,6);
\draw[line width=1mm] (2,7) -- (6,7);
\draw[line width=1mm] (7,6) -- (7,3);
\draw[line width=1mm] (6,2) -- (2,2);
\draw[line width=1mm] (2,2) -- (2,3) -- (1,3);
\draw[line width=1mm] (6,2) -- (6,3) -- (7,3);
\draw[line width=1mm] (1,6) -- (2,6) -- (2,7);
\draw[line width=1mm] (7,6) -- (6,6) -- (6,7);
\node at (2.6,4.5){\textcolor{red}{\huge$e$}};
\filldraw[orange] (2,2) circle (4pt);
\filldraw[orange] (1,6) circle (4pt);
\filldraw[orange] (7,3) circle (4pt);
\filldraw[orange] (6,7) circle (4pt);
\end{tikzpicture}}
\caption{The vertical edge $e := \{(a,b),(a,b+1)\}$ (in red) and the box $B_e$ whose border is delimited by a black line. Moreover, the orange dots represent the points to avoid (if it is possible) according to rule ($\ast$). See below.}
\label{fig:box}
\end{figure}

Note that, although $e$ is a good $p$-edge, the box $B_e$ could exceed $\Lambda_n$. Let us denote by $\text{In}(B_e)$ the set of edges in $\mathcal{E}_0 \cup \mathcal{D}$ having both endpoints in $B_e \cap \Lambda_n$ and by $\text{Out}(B_e)$ the set of edges in $\mathcal{E}_0 \cup \mathcal{D}$ having both endpoints in $\Lambda_n$ and at most one in $B_e$. Now, let us consider the set $\text{Piv}^\text{Out}_{B_e}$ of enhanced configurations which are compatible outside the box $B_e$ with the fact that $e$ is pivotal:
\[
\text{Piv}^\text{Out}_{B_e} := \big\{ \omega \in \Omega^+ \colon \exists \eta \in \{ \text{$e$ is pivotal} \} \, s.t.~\omega(e') = \eta(e') , \; \forall e' \in \text{Out}(B_e) \big\}.
\]
The event $\text{Piv}^\text{Out}_{B_e}$ belongs to $\mathcal{G}_{\text{Out}(B_e)}$ the $\sigma$-algebra generated by the states of the edges in $\text{Out}(B_e)$.

Here is our local modification argument. Any configuration $\omega$ in $\text{Piv}^\text{Out}_{B_e}$ can be modified inside the box $B_e$ into a new configuration $T_e(\omega)$ for which $Q(e)$ is now pivotal.

\begin{lemma}
\label{lem:mapToQedge}
Let $e \in \mathcal{E}^{\tiny{\mbox{good}}}_n$, where the set of good edges is defined as above. Then, there exists a measurable map $T_e\colon \Omega^+\to\Omega^+ $ such that, for any $\omega \in \Omega^+$,
\begin{itemize}
\item[(i)] The configurations $\omega$ and $T_e(\omega)$ may only differ on $\text{In}(B_e)$.
\item[(ii)] $T_e(\omega)$ is defined on $\text{In}(B_e)$ in a deterministic way according to $\omega$ on $\text{Out}(B_e)$.
\item[(iii)] If $\omega \in \text{Piv}^\text{Out}_{B_e}$ and all its $p$-edges in $\text{In}(B_e)$ coincide with those of $T_e(\omega)$ then $\omega \in \{ \text{$Q(e)$ is pivotal} \}$.
\end{itemize}
\end{lemma}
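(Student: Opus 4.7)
The plan is to perform a purely local surgery inside $B_e$: install the forbidden pattern associated to $Q(e)$ so that $Q(e)$ becomes an active diagonal edge, open two $p$-corridors connecting the endpoints of $Q(e)$ to $\partial B_e$, and close every other $p$-edge of $\text{In}(B_e)$. Using the lattice symmetries I assume $e=\{(a,b),(a,b+1)\}$ is vertical, so $Q(e)=\{(a,b),(a+2,b+1)\}$ with its associated forbidden pattern sitting around $(a,b)$. Goodness of $e$ ensures that $Q(e)\in\mathcal{D}^{\text{in}}_n$ and that $o$ is not an interior vertex of the pattern, which makes the $7\times 6$-minus-corners brick $B_e=B+(a-2,b-2)$ a geometrically consistent site for the surgery.

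\textbf{Definition of $T_e$.} Set $T_e(\omega)$ equal to $\omega$ outside $B_e$, which yields (i). Inside $B_e$ the prescription is taken to be a deterministic measurable function of $\omega|_{\text{Out}(B_e)}$, giving (ii), and is defined so that its $p$-subconfiguration realises exactly the seven edges of the forbidden pattern attached to $Q(e)$ (five open, two closed), together with two disjoint open corridor paths -- one from $(a,b)$ to the southern arc of $\partial B_e$ and one from $(a+2,b+1)$ to the northern arc, with attachment vertices chosen according to where the outside arms of $\omega$ reach $\partial B_e$ -- and closes every remaining $p$-edge of $\text{In}(B_e)$. A direct inspection of $B_e$ shows that this prescription produces no other forbidden pattern entirely contained in $\text{In}(B_e)$, so $Q(e)$ remains the only potentially active diagonal edge strictly inside $B_e$. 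The $q$-edges of $T_e(\omega)$ in $\text{In}(B_e)$ can be set arbitrarily, as (iii) only constrains $p$-edges.

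\textbf{Verification of (iii).} Let $\omega\in\text{Piv}^{\text{Out}}_{B_e}$ have inside-$B_e$ $p$-edges matching $T_e(\omega)$, and pick a witness $\eta$ agreeing with $\omega$ on $\text{Out}(B_e)$ for which $e$ is pivotal. A pattern-free open path realising $\eta^{e}\in\mathcal{A}_{p,q}$ must use $e$, and hence crosses $B_e$; its outside portions provide in $\omega$ two outside arms whose $\partial B_e$ endpoints are, by construction of $T_e$, linked through the corridors to $(a,b)$ and $(a+2,b+1)$. When $Q(e)$ is open in $\omega^{Q(e)}$, concatenating the two arms with the corridors and $Q(e)$ gives a pattern-free path $o\to\partial\Lambda_n$, so $\omega^{Q(e)}\in\mathcal{A}_{p,q}$. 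When $Q(e)$ is closed in $\omega^{\neg Q(e)}$, any pattern-free path $o\to\partial\Lambda_n$ must cross $B_e$, for otherwise all its edges would lie in $\eta^{\neg e}$ and contradict $\eta^{\neg e}\notin\mathcal{A}_{p,q}$; but inside $B_e$ the only available open $p$-edges are those of the forbidden pattern and the two corridors, the corridors alone do not connect $(a,b)$ to $(a+2,b+1)$, no other diagonal $q$-edge inside $B_e$ is active, and $Q(e)$ is closed, so the only possible $(a,b)$-to-$(a+2,b+1)$ link would go through the whole forbidden pattern, which is prohibited. Hence $\omega^{\neg Q(e)}\notin\mathcal{A}_{p,q}$, so $Q(e)$ is pivotal for $\omega$. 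The main obstacle is the explicit drawing of the corridors: they must reach every $\partial B_e$ vertex that some admissible witness $\eta$ can use as an arm endpoint, remain mutually disjoint, create no new forbidden pattern, and leave no alternative pattern-free $p$-crossing of $B_e$ when $Q(e)$ is closed; this is precisely what the $7\times 6$-minus-corners shape of $B$ is engineered to make available.
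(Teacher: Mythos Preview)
Your overall strategy matches the paper's: install the forbidden pattern associated to $Q(e)$, route two corridors from its endpoints to chosen entry/exit vertices on $\partial B_e$, and close everything else in $\text{In}(B_e)$. But you skip exactly the places where the real work sits.

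First, the sentence ``a direct inspection of $B_e$ shows that this prescription produces no other forbidden pattern entirely contained in $\text{In}(B_e)$'' is not the relevant check. The danger is a forbidden pattern that \emph{straddles} $\partial B_e$: the edge $\{x,x'\}$ that you open to connect an outside arm to your corridor can combine with open $p$-edges of $\omega$ in $\text{Out}(B_e)$ to form a forbidden pattern, and then your concatenated path is no longer admissible. The paper handles this by imposing a selection rule on entry/exit points (avoid the four vertices $(a-2,b-2)+\{(1,0),(0,4),(5,5),(6,1)\}$ whenever possible) and argues that if such a boundary pattern were created, another admissible entry/exit point was available and should have been chosen instead. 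Your proposal has no mechanism for this, so (iii) is not established.

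Second, you assert that the corridors go ``to the southern arc'' and ``to the northern arc'', but the outside arms of a pivotal witness $\eta$ can hit $\partial B_e$ anywhere, and the two projections $x',y'$ onto the inner cycle may coincide. The paper routes everything along a fixed cycle $C$ at distance $1$ from $\partial B_e$, split at two vertices $u,v$ adjacent to the pattern's endpoints, and treats separately the cases $x',y'$ on the same arc, on different arcs, and $x'=y'$ (the last requires opening one further edge and a short argument that this does not accidentally create an admissible path bypassing $B_e$). You also do not treat the situations $B_e\cap\partial\Lambda_n\neq\emptyset$ (no exit point needed, connect directly to $\partial\Lambda_n$) and $o\in B_e\setminus\partial B_e$ (no entry point needed). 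These are not decorative; goodness of $e$ only excludes $o$ from the four inner pattern vertices and keeps $Q(e)$ inside $\Lambda_n$, so both situations genuinely occur. The final line of your proposal concedes that the explicit corridor drawing is ``the main obstacle'' --- that obstacle is the proof.
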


The main idea behind the construction is summarized in Figure~\ref{fig:enhancement-good-schematic} and a concrete visualization of the construction in all possible cases can be found online\footnote{\url{https://bennhenry.github.io/NeighPerc/}}.  We postpone the proof of this result to the end of this section and first see how it is used to prove~\eqref{PivotGood} in Lemma~\ref{lem:PivotGood-Bad}. 

\begin{proof}[Proof of \eqref{PivotGood}]
Let us consider the set of configurations $\omega$ coinciding with their modifications $T_e(\omega)$ inside the box $B_e$ but only for $p$-edges, more precisely we define
\[
\mathcal{R}_{T_e} := \big\{ \omega \in \Omega^+ \colon \omega(e') = T_e(\omega)(e') \text{ for any $p$-edge $e'$ in $\text{In}(B_e)$} \big\}.
\]
Then Item (iii) of Lemma~\ref{lem:mapToQedge} tells us that $\text{Piv}^\text{Out}_{B_e} \cap \mathcal{R}_{T_e} \subset \{\text{$Q(e)$ is pivotal}\}$.

Let us work conditionally on $\mathcal{G}_{\text{Out}(B_e)}$. The configuration $T_e$ then becomes deterministic on $\text{In}(B_e)$ by Lemma~\ref{lem:mapToQedge}, Item (ii). Hence the fact that a configuration $\omega$ belongs to the event $\mathcal{R}_{T_e}$ only depends on the states of its $p$-edges in $\text{In}(B_e)$-- that are $71$ in number and even less if $B_e$ exceeds $\Lambda_n$ --which have to be open or closed according to $T_e(\omega)$. Consequently,
\begin{equation}
\label{Minor-R}
\mathbb{P}^+\!\big( \mathcal{R}_{T_e} \,|\, \mathcal{G}_{\text{Out}(B_e)} \big) \geq [p(1-p)]^{71}.
\end{equation}

Using the inclusions $\{\text{$e$ is pivotal}\} \subset \text{Piv}^\text{Out}_{B_e}$ and $\text{Piv}^\text{Out}_{B_e} \cap \mathcal{R}_{T_e} \subset \{\text{$Q(e)$ is pivotal}\}$ given by Lemma~\ref{lem:mapToQedge}, we get
\begin{eqnarray*}
\mathbb{P}^+\!\big( \text{$Q(e)$ is pivotal} \big) & = & \mathbb{E} \big[ \mathbb{P}^+\!\big( \text{$Q(e)$ is pivotal} \,|\, \mathcal{G}_{\text{Out}(B_e)} \big) \big] \\
& \geq & \mathbb{E} \big[ \mathds{1}_{\text{Piv}^\text{Out}_{B_e}} \, \mathbb{P}^+\!\big( \mathcal{R}_{T_e} \,|\, \mathcal{G}_{\text{Out}(B_e)} \big) \big] \\
& \geq & [p(1-p)]^{71} \, \mathbb{P}^+\!\big( \text{Piv}^\text{Out}_{B_e} \big) \\
& \geq & [p(1-p)]^{71} \, \mathbb{P}^+\!\big( \text{$e$ is pivotal} \big),
\end{eqnarray*}
thanks to $\text{Piv}^\text{Out}_{B_e} \in \mathcal{G}_{\text{Out}(B_e)}$ and~\eqref{Minor-R}. Statement~\eqref{PivotGood} of Lemma~\ref{lem:PivotGood-Bad} is then proved.
\end{proof}

\begin{figure}[!ht]
\centering
\includegraphics[width=.65\textwidth]{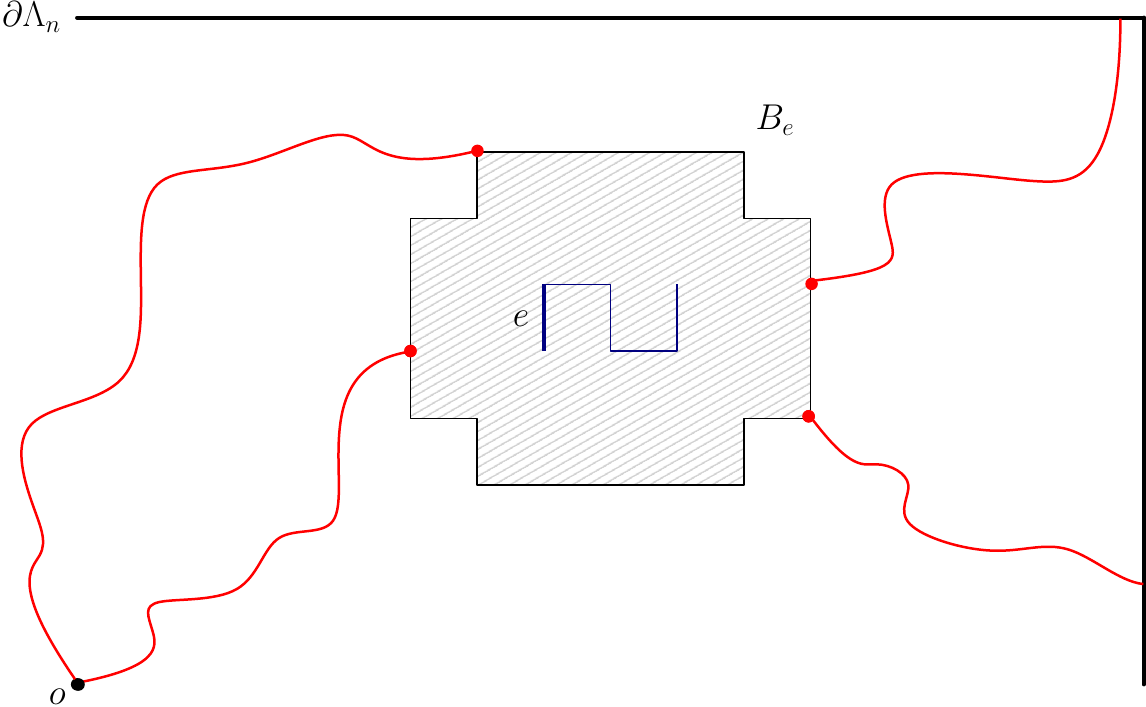}
\caption{\label{fig:enhancement-good-schematic} For good edges $e$ and their associated forbidden pattern we only need to change the status of edges in the shaded region $B_e$ around $e$. If $e$ is a pivotal p-edge, then there is at least one admissible path from the origin to the boundary of $B_e$ and at least one path connecting the boundary of $B_e$ to the boundary of $\Lambda_n$ (in red). By taking some care in choosing a suitable pair out of possibly multiple options one can then open and close edges of $\text{In}(B_e)$ only to create a configuration in which $Q(e)$ is pivotal without creating the forbidden pattern elsewhere.}
\end{figure}

Let us now provide the construction of the measurable mapping $T_e$ in Lemma~\ref{lem:mapToQedge}.

\begin{proof}[Proof of Lemma~\ref{lem:mapToQedge}]
Let $\omega \in \Omega^+$. First, for any edge $e'$ not included in $\text{In}(B_e)$, we set $T_e(\omega)(e') := \omega(e')$ and, for any edge $e'$ included in $\text{In}(B_e)$ but diagonal, we set $T_e(\omega)(e') := 0$ (they are all closed). In the case where $\omega \notin \text{Piv}^\text{Out}_{B_e}$, we also set $T_e(\omega)(e') := 0$ for any $p$-edges $e'$ of $\text{In}(B_e)$. Lemma~\ref{lem:mapToQedge} is then proved in this case.

Now, let us assume that $\omega \in \text{Piv}^\text{Out}_{B_e}$. Let $\partial B_e$ be the vertices of $B_e$ having a neighbor outside $B_e$. We also assume that $o \notin (B_e\!\setminus\!\partial B_e)$ and $B_e \cap \partial \Lambda_n = \emptyset$. These particular cases will be treated later. The proof relies on a semi-explicit construction of the configuration $T_e(\omega)$ on $\text{In}(B_e)$. Since $\omega \in \text{Piv}^\text{Out}_{B_e}$ the following holds. For the configuration $\omega$, there exist an admissible path (i.e., without forbidden patterns whose associated $q$-edge is closed) from $o$ to some vertex $x \in \partial B_e$ and an admissible path from some $y \in \partial B_e$ to $\partial\Lambda_n$ using no edges of $\text{In}(B_e)$. Moreover, there is no admissible path from $o$ to $\partial \Lambda_n$ which does not use any edge of $\text{In}(B_e)$, meaning that the previous vertices $x$ and $y$ must be different. Such vertices $x$ and $y$ are respectively called entry and exit points. Since a configuration $\omega$ may admit several pairs $(x,y)$ of entry-exit points, we pick one of them according to any given deterministic rule satisfying the constraint ($\ast$):
\[
\mbox{($\ast$)} \hspace*{1cm} \begin{array}{c} \mbox{Avoid choosing any point of $(a-2, b-2)+\{(1,0),\ (0,4),\ (5,5),\ (6,1)\}$} \\
\mbox{as entry or exit point if possible.}
\end{array}
\]
These points to avoid are represented by orange dots on Figure~\ref{fig:box}. Let us notice that the case $o \in \partial B_e$ being allowed here, the entry point $x$ could be the origin $o$.

There is a cycle $C$ in the box $B_e$ of $p$-edges at distance $1$ from $\partial B_e$ (surrounding the forbidden pattern associated to $e$). On this cycle, we identify two vertices $u := (a,b-1)$ and $v := (a+2,b+2)$ that split $C$ into two paths, a green one and a magenta one. See Figure~\ref{fig:constructionOfT-1}. Let $x'$ and $y'$ be the vertices on $C$ at distance $1$ respectively from the entry point $x$ and the exit point $y$. Although $x$ an $y$ are different, $x'$ and $y'$ could be equal. The construction of $T_e(\omega)$ on $\text{In}(B_e)$ will be different according to $x' \not= y'$ or not.

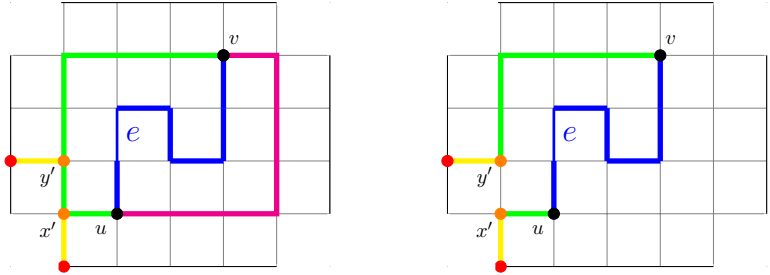
\begin{figure}[!ht]
\centering
\scalebox{0.7}{
\begin{tikzpicture}
\draw[step=1cm,gray,thin] (1,2) grid (7,7);    
\draw[blue,line width=0.5mm] (3,4) -- (3,5);
\draw (1,2) -- (1,7);
\draw (1,7) -- (7,7);
\draw (7,7) -- (7,2);
\draw (7,2) -- (1,2);
\draw[yellow,line width=1mm] (1,4) -- (2,4);
\draw[yellow,line width=1mm] (2,2) -- (2,3);
\filldraw[red] (1,4) circle (3pt);
\filldraw[red] (2,2) circle (3pt);
\draw[blue,line width=1mm] (3,5) -- (4,5);
\draw[blue,line width=1mm] (4,5) -- (4,4);
\draw[blue,line width=1mm] (4,4) -- (5,4);
\draw[blue,line width=1mm] (5,4) -- (5,5);
\draw[blue,line width=1mm] (5,5) -- (5,6);
\draw[blue,line width=1mm] (3,3) -- (3,4);
\draw[green,line width=1mm] (3,3) -- (2,3) -- (2,4) -- (2,5) -- (2,6) -- (3,6) -- (4,6) -- (5,6);
\draw[magenta,line width=1mm] (3,3) -- (4,3) -- (5,3) -- (6,3) -- (6,4) -- (6,5) -- (6,6) -- (5,6);
\node at (3.3,4.5){\textcolor{blue}{\LARGE$e$}};
\filldraw[orange] (2,4) circle (3pt);
\filldraw[orange] (2,3) circle (3pt);
\filldraw[black] (3,3) circle (3pt);
\filldraw[black] (5,6) circle (3pt);
\node at (1.7,2.7) {$x'$};
\node at (1.7,3.7) {$y'$};
\node at (2.7,2.7) {$u$};
\node at (5.2,6.3) {$v$};
\filldraw[black] (5,6) circle (3pt);
\draw[white,line width=1mm] (1,2) -- (1,3);
\draw[white,line width=1mm] (1,2) -- (1.9,2);

\draw[white,line width=1mm] (1,6) -- (1,7);
\draw[white,line width=1mm] (1,7) -- (1.95,7);

\draw[white,line width=1mm] (6,7) -- (7,7);
\draw[white,line width=1mm] (7,7) -- (7,6);

\draw[white,line width=1mm] (6,2) -- (7,2);
\draw[white,line width=1mm] (7,2) -- (7,3);
\end{tikzpicture}
}
\hspace*{1cm}
\scalebox{0.7}{
\begin{tikzpicture}
\draw[step=1cm,gray,thin] (1,2) grid (7,7);    
\draw[blue,line width=0.5mm] (3,4) -- (3,5);
\draw (1,2) -- (1,7);
\draw (1,7) -- (7,7);
\draw (7,7) -- (7,2);
\draw (7,2) -- (1,2);
\draw[yellow,line width=1mm] (1,4) -- (2,4);
\draw[yellow,line width=1mm] (2,2) -- (2,3);
\filldraw[red] (1,4) circle (3pt);
\filldraw[red] (2,2) circle (3pt);
\draw[blue,line width=1mm] (3,5) -- (4,5);
\draw[blue,line width=1mm] (4,5) -- (4,4);
\draw[blue,line width=1mm] (4,4) -- (5,4);
\draw[blue,line width=1mm] (5,4) -- (5,5);
\draw[blue,line width=1mm] (5,5) -- (5,6);
\draw[blue,line width=1mm] (3,3) -- (3,4);
\draw[green,line width=1mm] (3,3) -- (2,3);
\draw[green,line width=1mm] (2,4) -- (2,5) -- (2,6) -- (3,6) -- (4,6) -- (5,6);
\node at (3.3,4.5){\textcolor{blue}{\LARGE$e$}};
\filldraw[orange] (2,4) circle (3pt);
\filldraw[orange] (2,3) circle (3pt);
\filldraw[black] (3,3) circle (3pt);
\filldraw[black] (5,6) circle (3pt);
\node at (1.7,2.7) {$x'$};
\node at (1.7,3.7) {$y'$};
\node at (2.7,2.7) {$u$};
\node at (5.2,6.3) {$v$};

\draw[white,line width=1mm] (1,2) -- (1,3);
\draw[white,line width=1mm] (1,2) -- (1.9,2);

\draw[white,line width=1mm] (1,6) -- (1,7);
\draw[white,line width=1mm] (1,7) -- (1.95,7);

\draw[white,line width=1mm] (6,7) -- (7,7);
\draw[white,line width=1mm] (7,7) -- (7,6);

\draw[white,line width=1mm] (6,2) -- (7,2);
\draw[white,line width=1mm] (7,2) -- (7,3);
\end{tikzpicture}
}
\caption{\label{fig:constructionOfT-1} Construction of $T_e(\omega)$ when $x' \not= y'$ (in orange) and they are on the green path. Vertices $x$ and $y$ are represented by red dots. Edges $\{x,x'\}$ and $\{y,y'\}$ are in yellow. Blue edges are $\{u,(a,b)\}$, $\{(a+2,b+1),v\}$ and those corresponding to the forbidden pattern associated to $e$. Left: the green and magenta paths. Right: the yellow, green and blue edges form the multicolor path $\pi$.}
\end{figure}

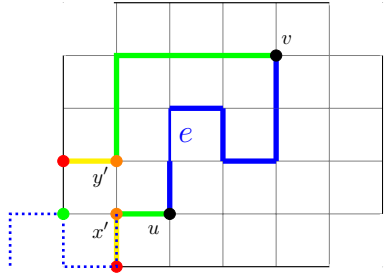
\begin{figure}[!ht]
\centering
\scalebox{0.7}{
\begin{tikzpicture}
\draw[step=1cm,gray,thin] (1,2) grid (7,7);    
\draw[blue,line width=0.5mm] (3,4) -- (3,5);
\draw (1,2) -- (1,7);
\draw (1,7) -- (7,7);
\draw (7,7) -- (7,2);
\draw (7,2) -- (1,2);
\draw[yellow,line width=1mm] (1,4) -- (2,4);
\draw[yellow,line width=1mm] (2,2) -- (2,3);
\filldraw[red] (1,4) circle (3pt);
\filldraw[red] (2,2) circle (3pt);
\draw[blue,line width=1mm] (3,5) -- (4,5);
\draw[blue,line width=1mm] (4,5) -- (4,4);
\draw[blue,line width=1mm] (4,4) -- (5,4);
\draw[blue,line width=1mm] (5,4) -- (5,5);
\draw[blue,line width=1mm] (5,5) -- (5,6);
\draw[blue,line width=1mm] (3,3) -- (3,4);
\draw[green,line width=1mm] (3,3) -- (2,3);
\draw[green,line width=1mm] (2,4) -- (2,5) -- (2,6) -- (3,6) -- (4,6) -- (5,6);
\node at (3.3,4.5){\textcolor{blue}{\LARGE$e$}};
\filldraw[orange] (2,4) circle (3pt);
\filldraw[orange] (2,3) circle (3pt);

\filldraw[black] (3,3) circle (3pt);
\filldraw[black] (5,6) circle (3pt);
\node at (1.7,2.7) {$x'$};
\node at (1.7,3.7) {$y'$};
\node at (2.7,2.7) {$u$};
\node at (5.2,6.3) {$v$};

\draw[white,line width=1mm] (1,2) -- (1,3);
\draw[white,line width=1mm] (1,2) -- (1.9,2);

\draw[white,line width=1mm] (1,6) -- (1,7);
\draw[white,line width=1mm] (1,7) -- (1.95,7);

\draw[white,line width=1mm] (6,7) -- (7,7);
\draw[white,line width=1mm] (7,7) -- (7,6);

\draw[white,line width=1mm] (6,2) -- (7,2);
\draw[white,line width=1mm] (7,2) -- (7,3);

\draw[blue, dotted, line width=0.5mm] (2,3) -- (2,2) -- (1,2) -- (1,3) -- (0,3) -- (0,2);
\filldraw[green] (1,3) circle (3pt);
\end{tikzpicture}
}
\caption{\label{fig:constructionOfT-undesired} Opening the yellow edge $\{x,x'\}$ may create a forbidden pattern with the exterior of $B_e$ (represented by the dotted blue edges). However, if so, it contradicts constraint $(\ast)$ as the green vertex could have been chosen as entry/exit point.}
\end{figure}

Let us first consider the case where $x' \not= y'$ and they belong to the same colored path (say the green one for instance). Thus, removing the green edges between $x'$ and $y'$, and also the magenta path, we finally get a multicolor path, say $\pi$, as depicted in Figure~\ref{fig:constructionOfT-1}, made up with yellow edges $\{x,x'\}$ and $\{y,y'\}$, retained green edges joining $x',y'$ to $u,v$ and blue edges from $u$ to $v$. We point out here that the constraint $(\ast)$ allows to prevent the creation of a forbidden pattern when opening the edges $\{x,x'\}$ or $\{y,y'\}$: see Figure~\ref{fig:constructionOfT-undesired}. When $x'$ and $y'$ belong to different colored paths, say $x'$ on the green one and $y'$ on the magenta one, we proceed in a slightly different way. As depicted on Figure~\ref{fig:constructionOfT-2}, we retain this time green edges between $x'$ and $u$ and magenta edges between $y'$ and $v$. This provides a new multicolor path still denoted by $\pi$. In both cases, we have built a (multicolor) path $\pi$ of $p$-edges joining $x$ and $y$ having exactly one forbidden pattern in $\text{In}(B_e)$, the one associated to the edge $e$. For the configuration $T_e(\omega)$ restricted to edges of $\text{In}(B_e)$, we only declare open the edges belonging to the multicolor path $\pi$, all others being declared closed.

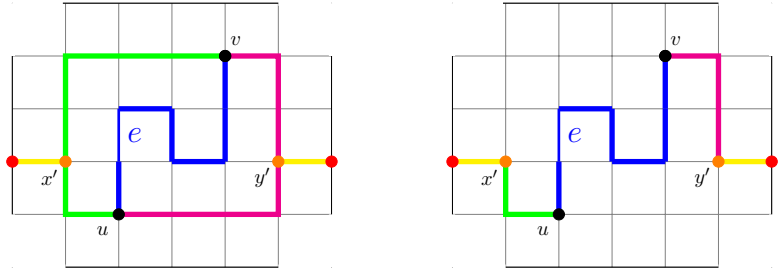
\begin{figure}[!ht]
\centering
\scalebox{0.7}{
\begin{tikzpicture}
\draw[step=1cm,gray,thin] (1,2) grid (7,7);    
\draw[blue,line width=0.5mm] (3,4) -- (3,5);
\draw (1,2) -- (1,7);
\draw (1,7) -- (7,7);
\draw (7,7) -- (7,2);
\draw (7,2) -- (1,2);
\draw[yellow,line width=1mm] (1,4) -- (2,4);
\draw[yellow,line width=1mm] (7,4) -- (6,4);
\filldraw[red] (1,4) circle (3pt);
\filldraw[red] (7,4) circle (3pt);
\draw[blue,line width=1mm] (3,5) -- (4,5);
\draw[blue,line width=1mm] (4,5) -- (4,4);
\draw[blue,line width=1mm] (4,4) -- (5,4);
\draw[blue,line width=1mm] (5,4) -- (5,5);
\draw[blue,line width=1mm] (5,5) -- (5,6);
\draw[blue,line width=1mm] (3,3) -- (3,4);
\draw[green,line width=1mm] (3,3) -- (2,3) -- (2,4) -- (2,5) -- (2,6) -- (3,6) -- (4,6) -- (5,6);
\draw[magenta,line width=1mm] (3,3) -- (4,3) -- (5,3) -- (6,3) -- (6,4) -- (6,5) -- (6,6) -- (5,6);
\node at (3.3,4.5){\textcolor{blue}{\LARGE$e$}};
\filldraw[orange] (2,4) circle (3pt);
\filldraw[orange] (6,4) circle (3pt);
\filldraw[black] (3,3) circle (3pt);
\filldraw[black] (5,6) circle (3pt);
\node at (1.7,3.7) {$x'$};
\node at (5.7,3.7) {$y'$};
\node at (2.7,2.7) {$u$};
\node at (5.2,6.3) {$v$};
\filldraw[black] (5,6) circle (3pt);
\draw[white,line width=1mm] (1,2) -- (1,3);
\draw[white,line width=1mm] (1,2) -- (2,2);

\draw[white,line width=1mm] (1,6) -- (1,7);
\draw[white,line width=1mm] (1,7) -- (1.95,7);

\draw[white,line width=1mm] (6,7) -- (7,7);
\draw[white,line width=1mm] (7,7) -- (7,6);

\draw[white,line width=1mm] (6,2) -- (7,2);
\draw[white,line width=1mm] (7,2) -- (7,3);
\end{tikzpicture}
}
\hspace*{1cm}
\scalebox{0.7}{
\begin{tikzpicture}
\draw[step=1cm,gray,thin] (1,2) grid (7,7);    
\draw[blue,line width=0.5mm] (3,4) -- (3,5);
\draw (1,2) -- (1,7);
\draw (1,7) -- (7,7);
\draw (7,7) -- (7,2);
\draw (7,2) -- (1,2);
\draw[yellow,line width=1mm] (1,4) -- (2,4);
\draw[yellow,line width=1mm] (7,4) -- (6,4);
\filldraw[red] (1,4) circle (3pt);
\filldraw[red] (7,4) circle (3pt);
\draw[blue,line width=1mm] (3,5) -- (4,5);
\draw[blue,line width=1mm] (4,5) -- (4,4);
\draw[blue,line width=1mm] (4,4) -- (5,4);
\draw[blue,line width=1mm] (5,4) -- (5,5);
\draw[blue,line width=1mm] (5,5) -- (5,6);
\draw[blue,line width=1mm] (3,3) -- (3,4);
\draw[magenta,line width=1mm] (6,4) -- (6,5) -- (6,6) -- (5,6);
\draw[green,line width=1mm] (3,3) -- (2,3) -- (2,4);
\node at (3.3,4.5){\textcolor{blue}{\LARGE$e$}};
\filldraw[orange] (2,4) circle (3pt);
\filldraw[orange] (6,4) circle (3pt);
\filldraw[black] (3,3) circle (3pt);
\filldraw[black] (5,6) circle (3pt);
\node at (1.7,3.7) {$x'$};
\node at (5.7,3.7) {$y'$};
\node at (2.7,2.7) {$u$};
\node at (5.2,6.3) {$v$};
\filldraw[black] (5,6) circle (3pt);
\draw[white,line width=1mm] (1,2) -- (1,3);
\draw[white,line width=1mm] (1,2) -- (2,2);

\draw[white,line width=1mm] (1,6) -- (1,7);
\draw[white,line width=1mm] (1,7) -- (1.95,7);

\draw[white,line width=1mm] (6,7) -- (7,7);
\draw[white,line width=1mm] (7,7) -- (7,6);

\draw[white,line width=1mm] (6,2) -- (7,2);
\draw[white,line width=1mm] (7,2) -- (7,3);
\end{tikzpicture}
}
\caption{\label{fig:constructionOfT-2}Construction of $T_e(\omega)$ when $x' \not= y'$ (in orange) and $x'$ belongs to the green path, $y'$ belongs to the magenta path. To the right: we build a multicolor path $\pi$ with the yellow edges $\{x,x'\}$ and $\{y,y'\}$, the green edges between $x'$ and $u$, the magenta edges between $y'$ and $v$, and the blue edges.}
\end{figure}

So, we have modified the configuration $\omega \in \text{Piv}^\text{Out}_{B_e}$ inside the box $B_e$ into a new configuration $T_e(\omega)$ which belongs to the event $\{ \text{$Q(e)$ is pivotal} \}$. First, $\omega$ and $T_e(\omega)$ may only differ on $\text{In}(B_e)$ (Item $(i)$). This local modification only depends on the couple of entry-exit points, i.e., on the configuration $\omega$ on $\text{Out}(B_e)$ (Item $(ii)$). Besides, the configuration $T_e(\omega)$ inside $\text{In}(B_e)$ makes active only one $q$-edge, namely $Q(e)$, since by construction $T_e(\omega)$ contains only one forbidden pattern inside $\text{In}(B_e)$, the one associated to $Q(e)$. As the event $\{ \text{$Q(e)$ is pivotal} \}$ does not depend on the state of $Q(e)$, we can assert that if the configuration $\omega \in \text{Piv}^\text{Out}_{B_e}$ admits all its $p$-edges in $\text{In}(B_e)$ equal to those of $T_e(\omega)$ then $\omega \in \{ \text{$Q(e)$ is pivotal} \}$.

\medskip

Let us now focus on the case where $x'=y'$ and we can assume without loss of generality that any possible couple $(x,y)$ of entry-exit point satisfies $x'=y'$. By symmetry, we can assume that $x = (a,b)-(1,2)$ and $y = (a,b)-(2,1)$, see Figure~\ref{fig:constructionOfT-3}. The natural strategy consists in opening (for $T_e(\omega)$) the edge $\{x,x+(1,0)\}$ and to use the couple $(x+(1,0),y)$ as entry-exit points to apply the previous construction. Opening the edge $\{x,x+(1,0)\}$ could create two problems. First, the new open path $\gamma' := \gamma \cup \{x,x+(1,0)\}$, where $\gamma$ is an admissible open path from $o$ to $x$, could not be admissible, the adding of the open edge $\{x,x+(1,0)\}$ possibly creating a forbidden pattern contained in $\gamma'$. But this cannot happen. Besides, opening $\{x,x+(1,0)\}$ could create an admissible path from $o$ to $\partial \Lambda_n$ avoiding the box $B_e$ (except $\{x,x+(1,0)\}$) and then preventing $Q(e)$ to be pivotal. But in that case, $x+(1,0)$ would be a possible exit point and then the couple of entry-exit points $(x,x+(1,0))$ would have been selected instead of $(x,y)$. By hypothesis, this cannot happen too.

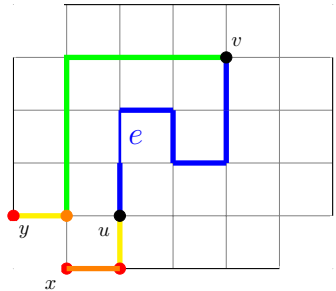
\begin{figure}[!ht]
\centering
\scalebox{0.7}{
\begin{tikzpicture}
\draw[step=1cm,gray,thin] (1,2) grid (7,7);    
\draw[blue,line width=0.5mm] (3,4) -- (3,5);
\draw (1,2) -- (1,7);
\draw (1,7) -- (7,7);
\draw (7,7) -- (7,2);
\draw (7,2) -- (1,2);
\draw[blue,line width=1mm] (3,5) -- (4,5);
\draw[blue,line width=1mm] (4,5) -- (4,4);
\draw[blue,line width=1mm] (4,4) -- (5,4);
\draw[blue,line width=1mm] (5,4) -- (5,5);
\draw[blue,line width=1mm] (5,5) -- (5,6);
\draw[blue,line width=1mm] (3,3) -- (3,4);

\draw[yellow,line width=1mm] (1,3) -- (2,3);
\draw[yellow,line width=1mm] (3,2) -- (3,3);
\draw[green,line width=1mm] (2,4) -- (2,3);
\draw[green,line width=1mm] (2,4) -- (2,5) -- (2,6) -- (3,6) -- (4,6) -- (5,6);
\node at (3.3,4.5){\textcolor{blue}{\LARGE$e$}};
\filldraw[red] (2,2) circle (3pt);
\filldraw[red] (1,3) circle (3pt);
\filldraw[red] (3,2) circle (3pt);

\filldraw[orange] (2,3) circle (3pt);
\filldraw[black] (3,3) circle (3pt);
\filldraw[black] (5,6) circle (3pt);.
\draw[orange,, line width=1mm] (2,2) -- (3,2);
\node at (2.7,2.7) {$u$};
\node at (5.2,6.3) {$v$};
\node at (1.7,1.7) {$x$};
\node at (1.2,2.7) {$y$};
\draw[white,line width=1mm] (1,2) -- (1,2.9);
\draw[white,line width=1mm] (1,2) -- (1.9,2);

\draw[white,line width=1mm] (1,6) -- (1,7);
\draw[white,line width=1mm] (1,7) -- (1.95,7);

\draw[white,line width=1mm] (6,7) -- (7,7);
\draw[white,line width=1mm] (7,7) -- (7,6);

\draw[white,line width=1mm] (6,2) -- (7,2);
\draw[white,line width=1mm] (7,2) -- (7,3);
\end{tikzpicture}}
\caption{\label{fig:constructionOfT-3}Construction of $T_e(\omega)$ in the case $x'=y'$.}
\end{figure}

\medskip

Next, let us study the case where $B_e \cap \partial \Lambda_n \not= \emptyset$. The box $B_e$ exceeds $\Lambda_n\!\setminus\!\partial \Lambda_n$ but not too much since $e$ is a good $p$-edge: $B_e$ has actually to be included in $\Lambda_{n+1}$, so in particular parts of the cycle $C$ are contained in $\partial \Lambda_n$. Since $\omega \in \text{Piv}^\text{Out}_{B_e}$, there exists an admissible path from $o$ to some entry point $x \in \partial B_e \cap \Lambda_n$. Moreover, there is no admissible path from $o$ to $\partial \Lambda_n$ which does not use any edge of $\text{In}(B_e)$. Considering what has been done previously, we choose an entry point $x$ and connect it to $u = (a,b-1)$ or $v$, depending on the choice of $x$ and the location of the cycle $C$ that lies on the boundary of $\Lambda_n$, by opening $p$-edges. We also open up all of the edges of the forbidden pattern and the edges connecting $u$ and $v$ to it. Next, we connect $v = (a+2,b+2)$ (or $u$) to $\partial \Lambda_n$. Hence, we modify the configuration $\omega$ on edges of $\text{In}(B_e)$ into a new configuration $T_e(\omega)$ satisfying the three items of Lemma~\ref{lem:mapToQedge}. 

Finally, it remains to deal with the cases where $o \in (B_e\!\setminus\!\partial B_e)$. Since the $p$-edge $e$ is good, these possible cases are of two types. Either the origin $o$ belongs to the cycle $C$, then $o$ will play the role of $x'$ in the previously explained construction and one only needs to choose an exit point from $B_e$. Or $o$ is one of the two extremities of the forbidden pattern associated to $Q(e)$, i.e., $o = (a,b)$ or $o = (a+2,b+1)$. In this case we obviously do not need to choose an entry point and just choose an exit point $y$. We then connect the pattern to this exit point $y$ by connecting the opposite end of the pattern to $u$ or $v$ and then connect it to the exit point by only using edges on the cycle $C$ plus the edge from $y'$ to $y$. This local modification also leads to a configuration satisfying the required properties. 

For a visualization of the concrete edge-by-edge construction in all the possible cases we refer the interested reader to our online tool\footnote{\url{https://bennhenry.github.io/NeighPerc/}}. 
\end{proof}

\subsubsection{From bad $p$-edges to good $p$-edges}
\label{sect:BadToGood}

Let $e \in \mathcal{E}^{\tiny{\mbox{bad}}}_n$. Recall that by symmetry, we can assume that $e$ is vertical, precisely $e = \{(a,b),(a,b+1)\}$. The edge $e$ being bad means either that the rectangle $L(e) := (a,b)+[0,2] \times [0,1]$ containing its associated $q$-edge $Q(e)$ intersects $\partial \Lambda_n$ or that $$
e\in \Big\{\{(0,-1),(0,0)\},\{(-1,-1),(-1,0)\},\{(-1,0),(-1,1)\},\{(-2,0),(-2,1)\}\Big\},
$$ in which case $Q(e)$ cannot be pivotal as there are no possible path from $0$ to $\partial\Lambda_n$ through the associated forbidden pattern. 
In this section, we aim to pick out a good $p$-edge $J(e)$ and then compare the probabilities for $e$ and $J(e)$ to be pivotal, again through a local modification argument as in the previous section, with the ultimate goal of proving~\eqref{PivotBad}.

The different ways for $e$ to be bad can be split into six distinct categories as laid out below. We then define $J(e)$ accordingly. Let us specify that the {\em north side of $\partial \Lambda_n$} is the set $\llbracket -n,n \rrbracket \times \{n\}$. The {\em south, east and west sides of $\partial \Lambda_n$} are defined similarly. In particular, the corner point $(n,n)$ belongs to two different sides, the north and east ones.
\begin{itemize}
\item[(a)] If the rectangle $L(e)$ intersects only the north side of $\partial \Lambda_n$, meaning that $-(n-1)\leq a\leq n-3$ and $b=n-1$, then we set $J(e) := e-(0,1)$.
\item[(b)] If the rectangle $L(e)$ intersects only the south side of $\partial \Lambda_n$, meaning that $-(n-1)\leq a\leq n-3$ and $b=-n$, then we set $J(e) := e+(0,1)$.
\item[(c)] If the rectangle $L(e)$ intersects only the east side of $\partial \Lambda_n$, meaning that $a \in \{n-2,n-1\}$ and $-(n-1)\leq b \leq n-2$, then we set $J(e) := \{(n-3,b),(n-3,b+1)\}$.
\item[(d)] If the rectangle $L(e)$ intersects both north and east sides of $\partial \Lambda_n$, meaning that $a \in \{n-2,n-1\}$ and $b=n-1$, then we set $J(e) := \{(n-3,n-2),(n-3,n-1)\}$.
\item[(e)] If the rectangle $L(e)$ intersects both south and east sides of $\partial \Lambda_n$, meaning that $a \in \{n-2,n-1\}$ and $b=-n$, then we set $J(e) := \{(n-3,-(n-1)),(n-3,-(n-2)\}$.
\item[(f)] If $e\in  \Big\{\{(0,-1),(0,0)\},\{(-1,-1),(-1,0)\},\{(-1,0),(-1,1)\},\{(-2,0),(-2,1)\}\Big\}$, then we set $J(e) := e-(1,0)$ if $e-(1,0)\notin \mathcal{I}_{\text{bad}}$, $J(e) := e+(0,1)$ otherwise.
\end{itemize}

\begin{figure}[!ht]
\centering
\scalebox{0.5}{
\begin{tikzpicture}
\draw[step=1cm,gray,thin] (2,2) grid (7,7);  
\draw[ultra thick] (7,8)--(7,1);
\draw[green,line width=1mm] (4,4) -- (4,5);
\node at (3.3,4.5){\textcolor{black}{\LARGE$J(e)$}};
\draw[red,line width=1mm] (6,4) -- (6,5);
\node at (5.6,4.5){\textcolor{black}{\LARGE$e$}};
\node at (7.8,6){\huge$\partial \Lambda_n$};
\end{tikzpicture}}
\hspace*{1cm}
\scalebox{0.5}{
\centering
\begin{tikzpicture}
\draw[step=1cm,gray,thin] (1,4) grid (7,8);  
\draw[ultra thick] (7,9)--(7,4);
\draw[ultra thick] (7,4)--(0,4);
\draw[green,line width=1mm] (4,5) -- (4,6);
\node at (3.3,5.5){\textcolor{black}{\LARGE$J(e)$}};
\draw[red,line width=1mm] (5,5) -- (5,6);
\draw[red,line width=1mm] (6,5) -- (6,6);
\draw[orange,line width=1mm] (4,4) -- (4,5);
\draw[blue,line width=1mm] (5,4) -- (5,5);
\draw[blue,line width=1mm] (6,4) -- (6,5);
\node at (7.8,6){\huge$\partial \Lambda_n$};
\end{tikzpicture}}
\hspace*{1cm}
\scalebox{0.45}{
\begin{tikzpicture}
\draw[step=1cm,gray,thin] (0,0) grid (10,10);    
\node at (4.8,4.8){$0$};
\draw[red,line width=1mm] (5,5) -- (5,4);
\draw[red,line width=1mm] (4,5) -- (4,4);
\draw[red,line width=1mm] (4,5) -- (4,6);
\draw[red,line width=1mm] (3,5) -- (3,6);
\draw[red,line width=1mm] (5,4) -- (4,4);
\draw[red,line width=1mm] (5,3) -- (4,3);
\draw[red,line width=1mm] (6,4) -- (5,4);
\draw[red,line width=1mm] (6,5) -- (5,5);
\draw[red,line width=1mm] (9,0) -- (9,9) -- (0,9);
\draw[red,line width=1mm] (8,0) -- (8,8) -- (0,8);
\draw[red,line width=1mm] (8,8) -- (9,8);
\draw[red,line width=1mm] (8,8) -- (8,9);
\foreach \phi in {1,2,...,9}{
\draw[red,line width=1mm] (\phi,0) -- (\phi,1);
}
\foreach \phi in {1,2,...,9}{
\draw[red,line width=1mm] (9,\phi) -- (10,\phi);
}
\foreach \phi in {1,2,...,9}{
\draw[red,line width=1mm] (\phi,9) -- (\phi,10);
}
\foreach \phi in {1,2,...,9}{
\draw[red,line width=1mm] (0,\phi) -- (1,\phi);
}
\end{tikzpicture}
}
\caption{\label{fig:badtogood}{\em Left:} the bad edge $e$ (in red) corresponding to the category (c) and the good edge $J(e)$ (in green). {\em Center:} five bad edges are represented having the same image by the map $J$ (in green); the red bad edges correspond to the category (c), the blue ones to the category (e) and the orange one to the category (b). {\em Right:} all bad edges (vertical or horizontal) for the box $\Lambda_5$ are represented in red.}
\end{figure}
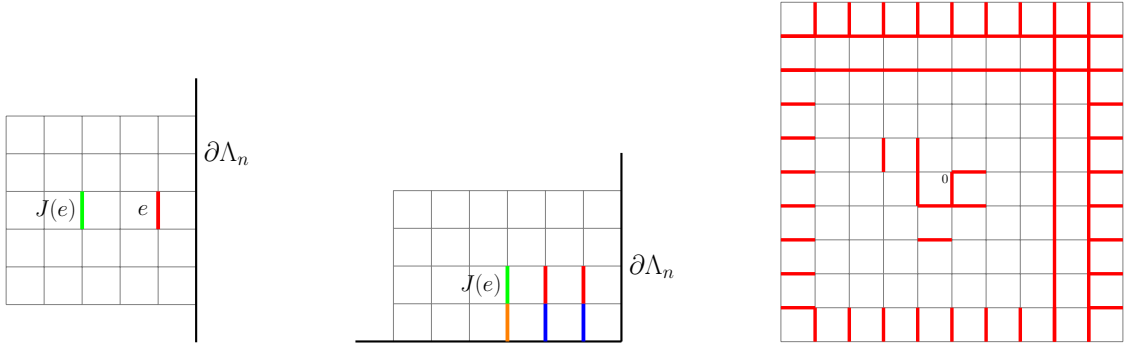

\noindent
See Figure~\ref{fig:badtogood} for examples. For a (finite) set $A$ we denote by $\# A$ the cardinality of $A$. With this notation at hand, we can now state the main step on our way to proving~\eqref{PivotBad}.

\begin{lemma}
\label{lem:badMap}
The map $J\colon  \mathcal{E}^{\tiny{\mbox{\emph{bad}}}}_n \to \mathcal{E}^{\tiny{\mbox{good}}}_n$ defined above satisfies the following properties: for any $e \in \mathcal{E}^{\tiny{\mbox{\emph{good}}}}_n$, $\# J^{-1}(\{e\}) \leq 5$ and for any $e \in \mathcal{E}^{\tiny{\mbox{bad}}}_n$,
\begin{equation}
\label{Bad-J}
\mathbb{P}^+ \left( \emph{$e$ is pivotal} \right) \leq \left[ p(1-p) \right]^{-71} \mathbb{P}^+ \left( \emph{$J(e)$ is pivotal}\right).
\end{equation}
\end{lemma}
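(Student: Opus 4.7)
The plan is to establish the two claims separately and essentially independently. The combinatorial bound $\#J^{-1}(\{e\}) \leq 5$ follows from a case-by-case enumeration. Categories (a) and (b) each give an injective restriction of $J$ since $J(e) = e \mp (0,1)$. Categories (c), (d), (e) each produce at most two bad edges mapping to the same good edge, corresponding to the two possible values $a \in \{n-2, n-1\}$. Category (f) is also at most $2$-to-$1$ since $J$ simply shifts a small finite list of edges. The maximum multiplicity is attained at the corner-adjacent good edges $\{(n-3,n-2),(n-3,n-1)\}$ and $\{(n-3,-(n-1)),(n-3,-(n-2))\}$ (and their three symmetric counterparts), where one contribution from (a) (or (b)), two from (c), and two from (d) (or (e)) pile up — exactly the five preimages depicted in the middle panel of Figure~\ref{fig:badtogood}.

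For the pivotal-probability bound, I would mimic the local modification strategy from Section~\ref{sect:pToq}. Given $e \in \mathcal{E}^{\text{bad}}_n$, I would introduce a box $B'_e$ containing $e$, the good edge $J(e)$, their respective forbidden-pattern rectangles, and — whenever $e$ is close to $\partial \Lambda_n$ — a small collar of the boundary itself. The box is chosen of the same shape as $B_e$ in Lemma~\ref{lem:mapToQedge}, ensuring that $\text{In}(B'_e)$ contains at most $71$ $p$-edges. I would then construct a measurable map $T'_e\colon \Omega^+ \to \Omega^+$ satisfying the analogues of (i)--(iii) in Lemma~\ref{lem:mapToQedge}, with the conclusion that whenever $\omega \in \text{Piv}^{\text{Out}}_{B'_e}$ (defined for $e$) and the $p$-edges of $\omega$ inside $B'_e$ coincide with those of $T'_e(\omega)$, the configuration $\omega$ belongs to $\{J(e) \text{ is pivotal}\}$. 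Once $T'_e$ is produced, the estimate~\eqref{Bad-J} follows verbatim from the computation performed at the end of Section~\ref{sect:pToq}: conditioning on $\mathcal{G}_{\text{Out}(B'_e)}$ and using that at most $71$ $p$-edges in $\text{In}(B'_e)$ must take prescribed states yields the factor $[p(1-p)]^{71}$, and the event $\text{Piv}^{\text{Out}}_{B'_e}$ is $\mathcal{G}_{\text{Out}(B'_e)}$-measurable.

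The main obstacle is the case-by-case construction of $T'_e$, because $e$ and $J(e)$ now play asymmetric roles on a domain that interacts with the boundary $\partial \Lambda_n$. In categories (a)--(e), the idea is that any admissible open path realizing the pivotality of $e$ arrives near the boundary, uses $e$, and then either reaches $\partial \Lambda_n$ directly or travels along the boundary; such a path can be rerouted one or two steps inward so as to pass through $J(e)$ by opening a short bridge of $p$-edges between the forbidden-pattern rectangles of $e$ and $J(e)$ and declaring the irrelevant edges (including $e$ itself) closed. In category (f), the origin sits on the forbidden pattern of $e$, so the path must be redirected around $o$ to use the pattern of $J(e)$, which lives in a translated copy not containing $o$. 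In every subcase I would pick entry/exit points on $\partial B'_e$ according to a variant of constraint $(\ast)$ from Section~\ref{sect:pToq} — now adapted to avoid both creating spurious forbidden patterns with the exterior and trapping $o$ on the wrong side of the modified region — and then glue them with a multicolor path containing exactly the unique forbidden pattern associated to $Q(J(e))$. Verifying admissibility and the pivotality of $J(e)$ in each subcase, particularly at the corners where multiple boundary sides are met at once, is the place where the bulk of the case analysis will be required.
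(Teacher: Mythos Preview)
Your approach is essentially the paper's: the combinatorial bound $\#J^{-1}(\{e\})\le 5$ is handled exactly as you describe (the worst case is the corner edge receiving one contribution from (b), two from (c), two from (e), as in Figure~\ref{fig:badtogood}), and for \eqref{Bad-J} the paper likewise works in the box $B_{J(e)}$ (your $B'_e$), constructs a map $S_e$ satisfying the analogues of (i)--(iii), and reruns the conditioning argument from Section~\ref{sect:pToq} to extract the factor $[p(1-p)]^{71}$.

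There is, however, one slip in your description of the local modification that would actually break Item~(iii). You propose to ``glue them with a multicolor path containing exactly the unique forbidden pattern associated to $Q(J(e))$''. That is the construction from Lemma~\ref{lem:mapToQedge}, and it makes $Q(J(e))$ pivotal, \emph{not} $J(e)$. Indeed, if the only admissible route inside $B_{J(e)}$ is the multicolor path carrying that forbidden pattern, then with $J(e)$ forced open the pattern is complete and the path is admissible only when the $q$-edge $Q(J(e))$ happens to be open; when $Q(J(e))$ is closed, $\mathcal{A}^{J(e)}_{p,q}$ fails. Since Item~(iii) must hold for \emph{all} $\omega$ whose $p$-edges in $\text{In}(B_{J(e)})$ match $S_e(\omega)$ (with no constraint on the $q$-edges), your construction does not deliver $\{J(e)\text{ is pivotal}\}$.

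The fix is simple and is what the paper does: inside $B_{J(e)}$ you should route a plain admissible path of $p$-edges through $J(e)$ that contains \emph{no} forbidden pattern at all (cf.\ Figures~\ref{fig:enhancement-bad-schematic-boundary} and~\ref{fig:enhancement-schematic-bad-origin}). Then closing $J(e)$ severs the unique connection, while opening it gives an admissible path regardless of any $q$-edge, so $J(e)$ is genuinely pivotal. Once you make this adjustment, the rest of your argument goes through unchanged.
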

We postpone the proof of Lemma~\ref{lem:badMap} to the end of this section and first show how to apply it to conclude the proof of inequality~\eqref{PivotBad} in Lemma~\ref{lem:PivotGood-Bad}. 
\begin{proof}[Proof of~\eqref{PivotBad}]
Lemma~\ref{lem:badMap} immediately gives the desired inequality
\begin{eqnarray*}
\sum_{e \in \mathcal{E}^{\tiny{\mbox{bad}}}_n} \mathbb{P}^+\!\big( \text{$e$ is pivotal} \big) & \leq & [p(1-p)]^{-71} \sum_{e \in \mathcal{E}^{\tiny{\mbox{bad}}}_n} \mathbb{P}^+\!\big( \text{$J(e)$ is pivotal} \big) \\
& \leq & 5 [p(1-p)]^{-71} \sum_{e \in \mathcal{E}^{\tiny{\mbox{good}}}_n} \mathbb{P}^+\!\big( \text{$e$ is pivotal} \big),
\end{eqnarray*}
and the proof is finished. 
\end{proof}

It then remains to prove Lemma~\ref{lem:badMap}. The inequality $\# J^{-1}(\{e\}) \leq 5$ is clear by construction, the worst case being illustrated by categories (d) or (e), and Figure~\ref{fig:badtogood}. So we focus on~\eqref{Bad-J}. The strategy to prove it is very similar to what has been done in Section~\ref{sect:pToq}. Roughly speaking, any configuration $\omega$ in $\text{Piv}^\text{Out}_{B_{J(e)}}$, where
\[
\text{Piv}^\text{Out}_{B_{J(e)}} := \big\{ \omega \in \Omega^+ \colon \exists \eta \in \{ \text{$e$ is pivotal} \} \, s.t.~\omega(e') = \eta(e') , \; \forall e' \in \text{Out}(B_{J(e)}) \big\}
\]
can be modified inside the box $B_{J(e)}$ into a new configuration $S_e(\omega)$ for which $J(e)$ is now pivotal. Notice that, in any case, the box $B_{J(e)}$ contains the bad edge $e$. 

\begin{lemma}
\label{lem:mapToJedge}
Recall that $e \in \mathcal{E}^{\tiny{\mbox{bad}}}_n$ is defined as above. There exists a measurable map $S_e\colon\Omega^+\to\Omega^+ $ such that, for any $\omega \in \Omega^+$,
\begin{itemize}
\item[(i)] The configurations $\omega$ and $S_e(\omega)$ may only differ on $\emph{In}(B_{J(e)})$.
\item[(ii)] $S_e(\omega)$ is defined on $\emph{In}(B_{J(e)})$ in a deterministic way according to $\omega$ on $\emph{Out}(B_{J(e)})$.
\item[(iii)] If $\omega \in \emph{Piv}^\emph{Out}_{B_{J(e)}}$ and all its $p$-edges in $\text{In}(B_e)$ coincide with those of $S_e(\omega)$ then $\omega \in \{ \emph{$J(e)$ is pivotal} \}$.
\end{itemize}
\end{lemma}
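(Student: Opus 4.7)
The plan is to mimic almost verbatim the construction of $T_e$ in Lemma~\ref{lem:mapToQedge}, but now targeting the forbidden pattern associated to the good edge $J(e)$ (inside the box $B_{J(e)}$) rather than the one associated to $Q(e)$. The only truly new work consists in handling the geometric peculiarities introduced by the six categories (a)--(f) defining the map $J$.

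First, I would reduce to the nontrivial case: if $\omega \notin \text{Piv}^\text{Out}_{B_{J(e)}}$ I set $S_e(\omega) := \omega$ on $\text{Out}(B_{J(e)})$ and declare every edge of $\text{In}(B_{J(e)})$ closed, so that Item~(iii) is vacuous. Assume now $\omega \in \text{Piv}^\text{Out}_{B_{J(e)}}$. By definition of $\text{Piv}^\text{Out}_{B_{J(e)}}$, there exist an admissible open path from $o$ to some $x \in \partial B_{J(e)}$ and an admissible open path from some $y \in \partial B_{J(e)}$ to $\partial \Lambda_n$, both avoiding $\text{In}(B_{J(e)})$, and no admissible path from $o$ to $\partial \Lambda_n$ bypasses $B_{J(e)}$ entirely. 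I select $(x,y)$ by a fixed deterministic rule obeying constraint~$(\ast)$ adapted to $B_{J(e)}$.

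Then I follow the same recipe as in the proof of Lemma~\ref{lem:mapToQedge}: consider the cycle $C$ of $p$-edges at $\ell^\infty$-distance $1$ from $\partial B_{J(e)}$ surrounding the forbidden pattern associated to $J(e)$, split $C$ into a green and a magenta arc at the two vertices $u,v$ adjacent to the endpoints of $J(e)$, project $x$ and $y$ to their nearest points $x',y'$ on $C$, and build a multicolor open path $\pi$ in $B_{J(e)}$ joining $x$ to $y$ whose interior uses the whole forbidden pattern associated to $J(e)$ but contains no other forbidden pattern. The configuration $S_e(\omega)$ is then obtained from $\omega$ by opening precisely the edges of $\pi$ and closing every other edge of $\text{In}(B_{J(e)})$. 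The $x'=y'$ sub-case and the ``undesired-pattern'' sub-case are handled by exactly the same auxiliary arguments as in Section~\ref{sect:pToq}.

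What is genuinely different, and where the main work lies, is treating the five boundary categories (a)--(e) and the origin-proximity category (f). For (a)--(e), the box $B_{J(e)}$ touches $\partial \Lambda_n$ (recall that $J(e)$ was defined by shifting $e$ just enough to remain good, so $B_{J(e)} \subset \Lambda_{n+1}$). I would argue, as in the analogous boundary discussion at the end of the proof of Lemma~\ref{lem:mapToQedge}, that the parts of $C$ lying in $\partial \Lambda_n$ carry the role of the ``exit side'' of $B_{J(e)}$ and that one connects the forbidden pattern to $\partial \Lambda_n$ through the appropriate half of $C$, the construction otherwise being identical. For (f), the origin $o$ lies inside $B_{J(e)}$; since $J(e)$ was chosen so that $o$ is \emph{not} in the interior of the forbidden pattern associated to $J(e)$, either $o$ lies on the cycle $C$ (in which case $o$ plays the role of $x'$ and only an exit point must be chosen) or $o$ is one of the two endpoints of the pattern (in which case the entry is trivial and we only route to an exit), exactly as in the two final paragraphs of the proof of Lemma~\ref{lem:mapToQedge}.

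Item~(i) holds by construction, Item~(ii) because $S_e(\omega)$ restricted to $\text{In}(B_{J(e)})$ depends only on the choice of $(x,y)$, itself measurable with respect to $\mathcal{G}_{\text{Out}(B_{J(e)})}$. Item~(iii) follows because $S_e(\omega)$ creates exactly one forbidden pattern in $\text{In}(B_{J(e)})$, namely the one associated to $J(e)$, so $Q(J(e))$ is the only $q$-edge in $\mathcal{D}_n^{\tiny{\mbox{in}}}$ activated inside $B_{J(e)}$ and the admissible path assembled from the external paths to $x$ and from $y$, glued to $\pi$, uses $J(e)$ and is blocked precisely when $J(e)$ is closed; hence $J(e)$ is pivotal for any $\omega$ that agrees with $S_e(\omega)$ on the $p$-edges of $\text{In}(B_{J(e)})$. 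The main obstacle throughout is the careful case analysis in (a)--(f), in particular checking that constraint~$(\ast)$ and the avoidance of the origin can always be met simultaneously; this is essentially a finite enumeration which I would defer to the supplementary online visualization already referenced in the paper.
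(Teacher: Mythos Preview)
Your proposal contains a genuine error in the core construction. You write that the path $\pi$ ``uses the whole forbidden pattern associated to $J(e)$'', copying verbatim the construction of $T_e$ from Lemma~\ref{lem:mapToQedge}. But the two lemmas have different targets: in Lemma~\ref{lem:mapToQedge} one must make the \emph{$q$-edge} $Q(e)$ pivotal, whereas in Lemma~\ref{lem:mapToJedge} one must make the \emph{$p$-edge} $J(e)$ pivotal. Routing $\pi$ through the entire forbidden pattern achieves the former, not the latter. Indeed, in your construction, when $J(e)$ is open the full forbidden pattern occurs and the only open path from $x$ to $y$ inside $B_{J(e)}$ traverses all five edges of that pattern; such a path is \emph{not} admissible for the event $\mathcal{A}_{p,q}$. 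The only admissible connection is then the diagonal bypass $Q(J(e))$, which is open only on the event $\{U_{Q(J(e))}\le q\}$. Hence $\mathcal{A}^{J(e)}_{p,q}\setminus\mathcal{A}^{\neg J(e)}_{p,q}$ fails whenever $U_{Q(J(e))}>q$, so Item~(iii) is not satisfied: you have in effect made $Q(J(e))$ pivotal (conditionally on $J(e)$ being open), not $J(e)$ itself.

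The paper's construction is genuinely different and simpler: it does \emph{not} route through the forbidden pattern at all. One opens the edge $J(e)$, parts of the cycle $C$, and a handful of connecting edges so as to produce an \emph{admissible} path (containing no forbidden pattern) from the chosen entry point to $\partial\Lambda_n$ (cases (a)--(e)) or from $o$ to the chosen exit point (case (f)), with $J(e)$ lying on that path as an ordinary $p$-edge. Then opening or closing $J(e)$ directly connects or disconnects this admissible path, giving pivotality regardless of the state of any $q$-edge. Your handling of the boundary and origin cases is otherwise along the right lines; the fix is simply to replace the S-shaped pattern segment of $\pi$ by a short admissible detour through $J(e)$ and back to the cycle.
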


With this technical helper in place, we are ready to prove the last building-block towards our main result. 

\begin{proof}[Proof of Lemma~\ref{lem:badMap}] It actually suffices to prove~\eqref{Bad-J}. We consider the set of configurations coinciding with their modifications inside the box $B_{J(e)}$ but only for $p$-edges:
\[
\mathcal{R}_{S_e} := \big\{ \omega \in \Omega^+ \colon \omega(e') = S_e(\omega)(e') \text{ for any $p$-edge $e'$ in $\text{In}(B_{J(e)})$} \big\}.
\]
Item (iii) of Lemma~\ref{lem:mapToJedge} asserts that $\text{Piv}^\text{Out}_{B_{J(e)}} \cap \mathcal{R}_{S_e} \subset \{ \text{$J(e)$ is pivotal} \}$.

Thanks to Lemma~\ref{lem:mapToJedge}, conditionally to $\mathcal{G}_{\text{Out}(B_{J(e)})}$, the configuration $S_e$ becomes deterministic on $\text{In}(B_{J(e)})$. Hence the fact that a configuration belongs to $\mathcal{R}_{S_e}$ only depends on the states of its $p$-edges in $\text{In}(B_{J(e)})$ which have to be open or closed according to $S_e(\omega)$. We then have the lower bound
\begin{equation}
\label{Minor-R-S}
\mathbb{P}^+\!\big( \mathcal{R}_{S_e} \,|\, \mathcal{G}_{\text{Out}(B_{J(e)})} \big) \geq [p(1-p)]^{71}.
\end{equation}
The rest of the proof now works as in the proof of~\eqref{PivotGood} in Section~\ref{sect:pToq} via the bounds
\begin{eqnarray*}
\mathbb{P}^+\!\big( \text{$J(e)$ is pivotal} \big) & \geq & \mathbb{E} \big[ \mathds{1}_{\text{Piv}^\text{Out}_{B_{J(e)}}} \, \mathbb{P}^+\!\big( \mathcal{R}_{S_e} \,|\, \mathcal{G}_{\text{Out}(B_{J(e)})} \big) \big] \\
& \geq & [p(1-p)]^{71} \, \mathbb{P}^+\!\big( \text{Piv}^\text{Out}_{B_{J(e)}} \big) \\
& \geq & [p(1-p)]^{71} \, \mathbb{P}^+\!\big( \text{$e$ is pivotal} \big),
\end{eqnarray*}
and the proof is finished. 
\end{proof}

\begin{proof}[Proof of Lemma~\ref{lem:mapToJedge}]
Let $e \in \mathcal{E}^{\tiny{\mbox{bad}}}_n$. Let us first focus on the cases (a)-(e) in which, by construction of $J(e)$, the box $B_{J(e)}$ overlaps $\partial \Lambda_n$. Since $\omega \in \text{Piv}^\text{Out}_{B_{J(e)}}$, there exists an admissible path from $o$ to some entry point $x \in \partial B_{J(e)} \cap (\Lambda_n\!\setminus\!\partial\Lambda_n)$ on the one hand and, on the other hand, there is no admissible path from $o$ to $\partial \Lambda_n$ which does not use any edge of $\text{In}(B_{J(e)})$. Proceeding as in Section~\ref{sect:pToq}, we modify the configuration $\omega$ inside of $\text{In}(B_{J(e)})$ into a new configuration $S_e(\omega)$ satisfying the three items of Lemma~\ref{lem:mapToJedge} and in particular $S_e(\omega) \in \{ \text{$J(e)$ is pivotal} \}$. We leave the details to the reader, but the general recipe is again to choose a suitable entry point $x$, connect it to the closest point $x'$ on the cycle and then use the necessary parts of the cycle to create a path to the origin which uses the edge $J(e)$ in a pivotal way, see Figure~\ref{fig:enhancement-bad-schematic-boundary} for a schematic illustration\footnotemark. \footnotetext{Again, the interested reader will be able to find a visualization of the constructions in all possible cases here: \url{https://bennhenry.github.io/NeighPerc/}.}

\begin{figure}[!ht]
\centering
\includegraphics[width=.65\textwidth]{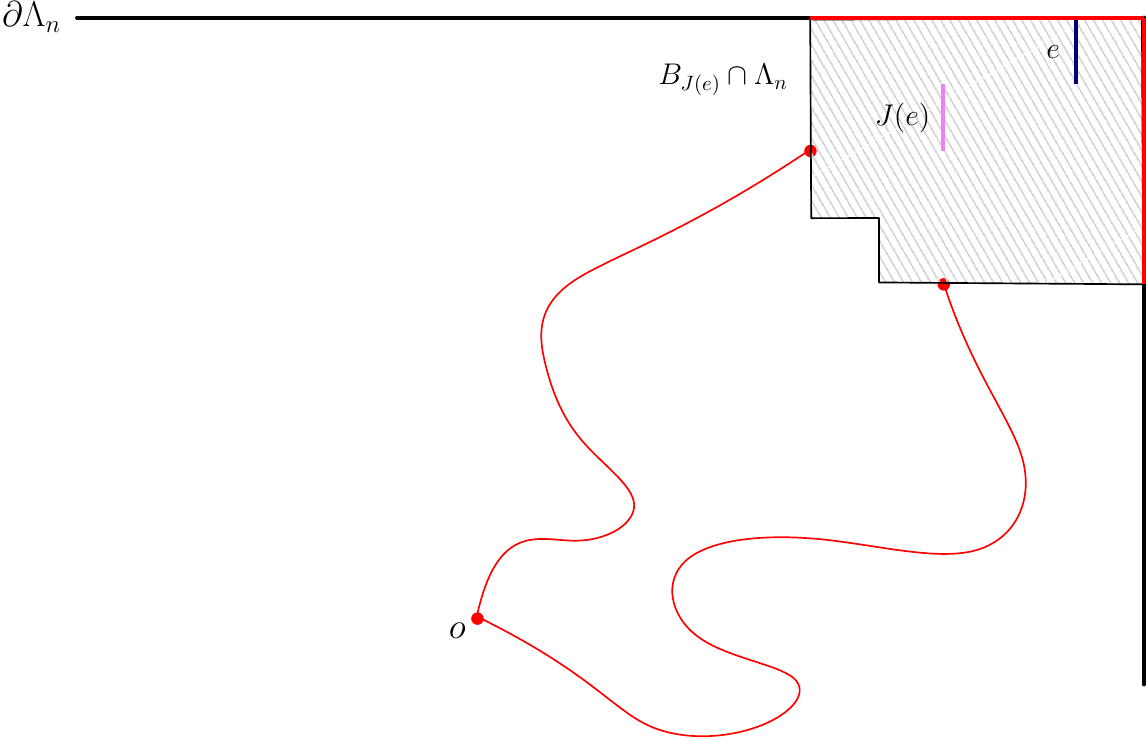}
\caption{\label{fig:enhancement-bad-schematic-boundary} In the case where the edge $e$ (blue) is bad because it is too close to the boundary $\partial \Lambda_n$, we map it to a good edge $J(e)$ (pink) which is slightly further inside of $\Lambda_n$ but still such that $e \in B_{J(e)}$. If $e$ is pivotal for a configuration $\omega$, then there must be at least one admissible path (in red) from the origin to the boundary of $B_{J(e)}$, so we can again choose an entry point and then connect it to the part of boundary $\partial\Lambda_n$ which intersects $B_{J(e)}$ via a path that uses $J(e)$ in a pivotal way. For this, we only need to change the status of edges inside of the shaded area, i.e., in $B_{J(e)}\cap \Lambda_n$.}
\end{figure}

In the last case (f), the box $B_{J(e)}$ contains the origin $o$ and we therefore do only need to choose an exit point but not an entry point. There are three different subcases regarding the precise position of $o$ inside of $B_{J(e)}$. Either $J(e)$ is directly connected to $o$, or $o$ lies on the cycle $C$, or it lies inside of the cycle but is not touched by $J(e)$. In all of these one can construct a path from the origin to the specified exit point which uses the edge $J(e)$ in a pivotal way by only opening parts of the cycle $C$, the edge $J(e)$ plus one to three additional edges, depending on the position of $o$ in $B_{J(e)}$. We again leave the details to the reader and refer to Figure~\ref{fig:enhancement-schematic-bad-origin} for an instructive example\footnotemark. 

\begin{figure}[!ht]
\centering
\includegraphics[width=.75\textwidth]{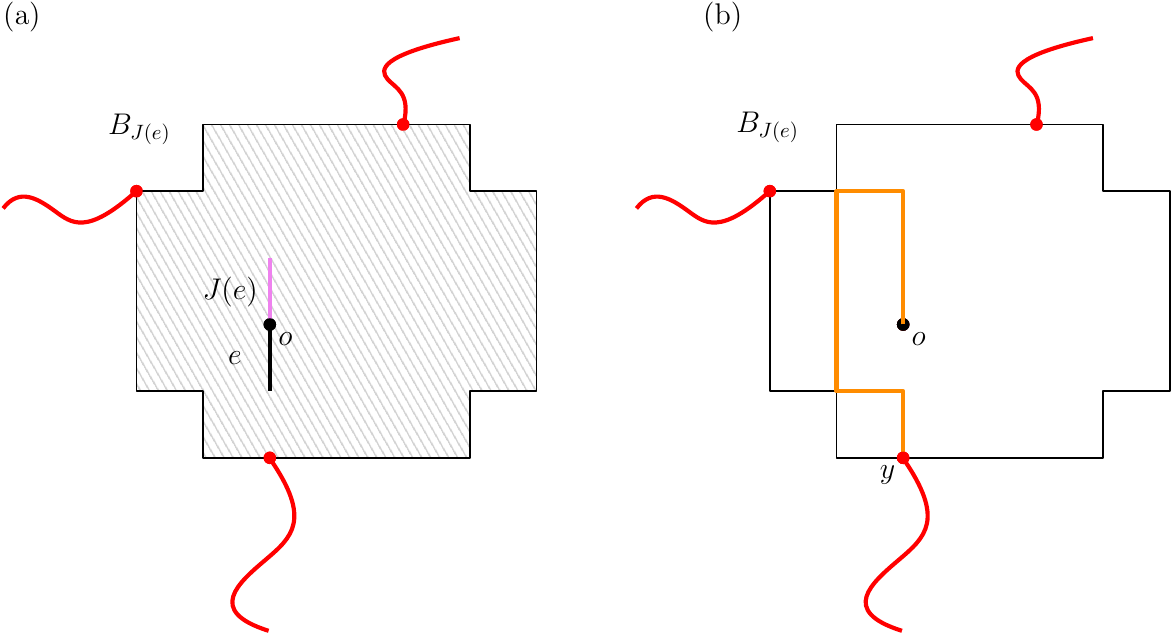}
\caption{\label{fig:enhancement-schematic-bad-origin} (a) In the case where the edge $e$ (black) is bad because the associated pattern contains the origin $o$ in its interior, we map it to a good edge $J(e)$ (pink) nearby such that this is not the case. If $e$ is pivotal for a configuration $\omega$, then there must be at least one admissible path (red) from $\partial B_{J(e)}$ to $\partial \Lambda_n$. (b) Therefore,  we can again choose an exit point $y$ and then connect it to the origin  via an admissible path (orange) in $B_{J(e)}$ that uses the edge $J(e)$ in a pivotal way. All other edges in $B_{J(e)}$ are closed.}
\end{figure}
\end{proof}

\section{Proof of Theorem~\ref{thm_upperbound_cor}}
\label{sec_upperbound_cor}

Let us denote by $\mathbb{P}^{\text{corn}}_p$ the probability measure on $\Omega=\{0,1\}^\mathcal{E}$ corresponding to the directed-corner model with parameter $p$. When $p=1/2$, each vertex chooses independently exactly one of the four combinations of open edges: north-east, north-west, south-east and south-west, each of them with probability $1/4$, see Figure~\ref{fig:corners} for an illustration. 

\begin{figure}[ht]
\centering
\includegraphics[width=.6\textwidth]{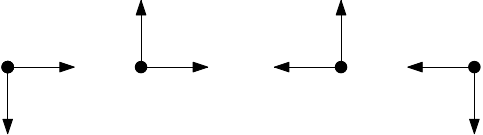}
\caption{\label{fig:corners} The four different configurations of outgoing edges a vertex can choose in the directed corner model with parameter $p=1/2$.}
\end{figure}

The proof follows step by step the one of Theorem~\ref{thm:upperBound} and the only major difference arises when one wants to get good probabilistic bounds for the number of pivotal edges that one discovers in the exploration algorithm, see Lemma~\ref{lem:NbPivotCorner} below. But let us provide some more details. 

Our strategy is again to prove that the size of the dual forward set $\df{o^\ast}{}$ admits subexponential decay under $\mathbb{P}^{\text{corn}}_{1/2}$, as in Theorem~\ref{thm:clusterSize}. Indeed, this prevents the occurrence of long paths in the dual model and allows us to conclude via the same block-renormalization argument used in Section~\ref{sect:proofTheoUB}, that the directed-corner model still percolates for some parameter $p<1/2$, thus proving that $p_c^{\text{corn}}<1/2$. To do this, we will again explore the forward set $\df{o^\ast}{}$ with the same exploration algorithm as in Section~\ref{sect:ExploAlgo}. During the exploration process, some pivotal edges (defined as in Section~\ref{sect:PivotalAlgo}, see Figure~\ref{fig:PivotalEdge}) will appear, so that the explored cluster $\edf{x^\ast}$ can be split, as before, into disjoint clusters of vertices visited during the algorithm linked by pivotal edges.
For this, note that Lemma~\ref{lem:pivot} still holds here, i.e., when the algorithm reveals a pivotal edge, it is necessarily the last edge to be explored at this stage, i.e., if it is closed, the algorithm stops. So until this point the proof works exactly like for Theorem~\ref{thm:upperBound} and it  remains to show that 
\begin{enumerate}[\bfseries (S1)]
    \item the random number of pivotal edges has sufficiently light tails (as in Lemma~\ref{lem:SubGeom}) and 
    \item the size of each visited cluster $\kedf{o^\ast}{k}$ separating two consecutive pivotal edges is also sufficiently small (as in Theorem~\ref{theo:subGeomClusterSize}).
\end{enumerate}
Then, putting together the steps $\mathbf{(S1)}$ and $\mathbf{(S2)}$, as in the proof of Theorem~\ref{thm:clusterSize} at the end of Section~\ref{sect:VisitedCluster}, we get our conclusion. 

\medskip

Let us first focus on the size of the explored clusters $\kedf{o^\ast}{k}$, $k\geq 1$, i.e., on step $\mathbf{(S2)}$. The important remark is that each (non-pivotal) revealed edge $e^\ast$ during the exploration of $\kedf{o^\ast}{k}$ has probability at most $1/2$ to be open conditionally to what has been already explored. By symmetry, let us assume that the dual directed edge $e^\ast=(x^\ast,y^\ast)$ is the east side of a unit square centered at the primal vertex $z$ and let $e^\ast_N$, $e^\ast_W$ and $e^\ast_S$ be the north, west and south dual directed edges associated to $z$. The states of these four edges are then dependent. Recall that Rule~1 of the exploration algorithm implies that $e^\ast_N$ has not been explored yet. So only $e^\ast_W$ and $e^\ast_S$ can have been  explored before and, in that case, are necessarily open (otherwise $e^\ast$ would be pivotal but there is no pivotal edge in $\kedf{o^\ast}{k}$ by construction). If neither of the two edges $e^\ast_W$ and $e^\ast_S$ have been explored then
\[
\mathbb{P}^{\text{corn}}_{1/2} \Big( \scalebox{1.1}{\dualopen} \Big| \scalebox{1.1}{\dualempty} \Big) = \mathbb{P}^{\text{corn}}_{1/2} \Big( \scalebox{1.1}{\dualopen} \Big) = 1/2. 
\]
If only $e^\ast_W$ has been explored (and is open) then the two possible combinations for open outgoing edges starting at the primal vertex $z$ are north-east and south-east. In both cases, the dual edge $e^\ast$ will be closed:
\[
\mathbb{P}^{\text{corn}}_{1/2} \Big(\begin{tikzpicture}[baseline=0.6em,scale=0.65]
			\node[inner sep=0, outer sep=0] at (-0.2,0) {\hspace{2pt}};
			\node[inner sep=0, outer sep=0] at (1.2,0) {\hspace{2pt}};
			\draw (0,0) -- (0,1) -- (1,1) -- (1,0) -- (0,0);
			\draw[fill=red, color=red] (0.5,0.5) circle  (0.075);
			\draw[color=red, very thick ,opacity= 0.4] (0.5,0.5) -- (1,0.5);
			\draw[color=red, very thick ,opacity= 0.4] (0.5,0.5) -- (0,0.5);
			\draw[->,blue,very thick] (1,0) -- (1,1); 
			\path[pattern=north west lines, pattern color=blue] (0.9,0) rectangle (1,1);
			\begin{scope}[rotate around={180:(0.5,0.5)}]
				\draw[->,blue,very thick] (1,0) -- (1,1); 
				\path[pattern=north west lines, pattern color=blue] (0.9,0) rectangle (1,1);
			\end{scope}
		\end{tikzpicture}
  \Big| 
\begin{tikzpicture}[baseline=0.6em,scale=0.65]
			\node[inner sep=0, outer sep=0] at (-0.2,0) {\hspace{2pt}};
			\node[inner sep=0, outer sep=0] at (1.2,0) {\hspace{2pt}};
			\draw (0,0) -- (0,1) -- (1,1) -- (1,0) -- (0,0);
			\draw[fill=red, color=red] (0.5,0.5) circle  (0.075);
			\draw[color=red, very thick ,opacity= 0.4] (0.5,0.5) -- (0,0.5);
			\begin{scope}[rotate around={180:(0.5,0.5)}]
				\draw[->,blue,very thick] (1,0) -- (1,1); 
				\path[pattern=north west lines, pattern color=blue] (0.9,0) rectangle (1,1);
			\end{scope}
		\end{tikzpicture}
\Big) = 0.
\]
Other cases are treated similarly:
\[
\mathbb{P}^{\text{corn}}_{1/2} \Big(\begin{tikzpicture}[baseline=0.6em,scale=0.65]
			\node[inner sep=0, outer sep=0] at (-0.2,0) {\hspace{2pt}};
			\node[inner sep=0, outer sep=0] at (1.2,0) {\hspace{2pt}};
			\draw (0,0) -- (0,1) -- (1,1) -- (1,0) -- (0,0);
			\draw[fill=red, color=red] (0.5,0.5) circle  (0.075);fill=red
			\draw[color=red, very thick ,opacity= 0.4] (0.5,0.5) -- (1,0.5);
			\draw[color=red, very thick ,opacity= 0.4] (0.5,0.5) -- (0.5,0);
			\draw[->,blue,very thick] (1,0) -- (1,1); 
			\path[pattern=north west lines, pattern color=blue] (0.9,0) rectangle (1,1);
			\begin{scope}[rotate around={270:(0.5,0.5)}]
				\draw[->,blue,very thick] (1,0) -- (1,1); 
				\path[pattern=north west lines, pattern color=blue] (0.9,0) rectangle (1,1);
			\end{scope}
		\end{tikzpicture}
  \Big| 
  \begin{tikzpicture}[baseline=0.6em,scale=0.65]
			\node[inner sep=0, outer sep=0] at (-0.2,0) {\hspace{2pt}};
			\node[inner sep=0, outer sep=0] at (1.2,0) {\hspace{2pt}};
			\draw (0,0) -- (0,1) -- (1,1) -- (1,0) -- (0,0);
			\draw[fill=red, color=red] (0.5,0.5) circle  (0.075);fill=red
			\draw[color=red, very thick ,opacity= 0.4] (0.5,0.5) -- (0.5,0);
			\begin{scope}[rotate around={270:(0.5,0.5)}]
				\draw[->,blue,very thick] (1,0) -- (1,1); 
				\path[pattern=north west lines, pattern color=blue] (0.9,0) rectangle (1,1);
			\end{scope}
		\end{tikzpicture}
  \Big) = 1/2 \; \mbox{ and } \; \mathbb{P}^{\text{corn}}_{1/2} \Big( \scalebox{1.1}{\dualopenthree} \Big| \scalebox{1.1}{\dualopentwo} \Big) = 0.
\]
Moreover, the left winding pattern \scalebox{0.3}{\begin{tikzpicture}
    \draw[step=1cm, gray,very thin] (-1,0) grid (2,1);
    \draw[blue, line width=1mm, -latex] (0,1) -- (0,0);
    \draw[blue, line width=1mm, -latex] (0,0) -- (1,0);
    \draw[blue, line width=1mm, -latex] (1,0) -- (1,1);
\end{tikzpicture}} cannot be visited during the exploration process. Henceforth, proceeding as in Section~\ref{sec:UnderConstraints}, we can again stochastically dominate $\kedf{o^\ast}{k}$ by the cluster of the origin in a (undirected and independent) Bernoulli percolation model with parameter $1/2$ under the constraint that the pattern \scalebox{0.7}{\pattern} is forbidden to be used by a percolating path. Thus applying Theorem~\ref{prop:enhancement}, we then obtain an analogue of Theorem~\ref{theo:subGeomClusterSize}: under the measure $\mathbb{P}^{\text{corn}}_{1/2}$ the probability of the event that $|\kedf{o^\ast}{k}|$ is larger than $n$ decreases exponentially fast with $n$. So we are done with step $\mathbf{(S2)}$.

\medskip
Step $\mathbf{(S1)}$ follows from the following lemma. 
\begin{lemma}
\label{lem:NbPivotCorner}
The number $\mathcal{T}_{\text{piv}}$ of open pivotal edges revealed by the exploration algorithm satisfies the following inequality: for any $n\geq 1$,
\[
\mathbb{P}^{\text{corn}}_{1/2} \big( \mathcal{T}_{\text{piv}} \geq n \big) \leq C e^{-c n},
\]
where $c,C>0$ are constants that do not depend on $n$. 
\end{lemma}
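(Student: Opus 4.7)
My plan is to mirror the proof of Lemma~\ref{lem:SubGeom}. Conditioning on the filtration $\mathcal{F}_{T_{k-1}}$ generated by the exploration up to and including the $(k-1)$-th pivotal, the goal is to establish a uniform bound
\[
\mathbb{P}^{\text{corn}}_{1/2}\bigl(T_{k}<\infty\,\big|\,\mathcal{F}_{T_{k-1}}\bigr)\leq \rho\,\mathds{1}_{T_{k-1}<\infty}
\]
for some $\rho<1$ independent of $k$, from which the exponential tail $\mathbb{P}^{\text{corn}}_{1/2}(\mathcal{T}_{\text{piv}}\geq n)\leq \rho^{n}$ follows by iteration as in the proof of Lemma~\ref{lem:SubGeom}. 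The planar arguments in the proof of Lemma~\ref{lem:pivot} use no probability and carry over verbatim to the corner model; in particular, by rotational symmetry one may still assume the pivotal dual edge is the east side $e^{\ast}$ of the primal square centered at some vertex $z$, with only $e^{\ast}_W$ and/or $e^{\ast}_S$ possibly revealed beforehand, subject to the same case constraints.

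In the corner model at $p=1/2$, the four outgoing edges at each primal vertex split into a horizontal pair and a vertical pair, each containing exactly one open edge chosen with probability $1/2$, independently of the other. Dually, exactly one of $\{e^{\ast},e^{\ast}_W\}$ is open, exactly one of $\{e^{\ast}_N,e^{\ast}_S\}$ is open, and the two pairs are independent. A direct enumeration then yields
\[
\mathbb{P}^{\text{corn}}_{1/2}\bigl(e^{\ast}\text{ open}\,\big|\,e^{\ast}_S\text{ closed}\bigr)=\tfrac{1}{2}
\qquad\text{but}\qquad
\mathbb{P}^{\text{corn}}_{1/2}\bigl(e^{\ast}\text{ open}\,\big|\,e^{\ast}_W\text{ closed}\bigr)=1,
\]
with analogous identities governing the Case~2 and Case~3 configurations from the proof of Lemma~\ref{lem:pivot}. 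The second identity is the main obstacle: when the horizontal sister has already been revealed closed, the pivotal is open deterministically, so the per-pivotal version of the argument of Lemma~\ref{lem:SubGeom} fails.

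To circumvent this, I would shift perspective from the pivotal itself to the list $L$ of at most three directed dual edges added immediately after the pivotal is declared open. These edges start at $y^{\ast}_{T_{k-1}}$; the first two, going east and north from $y^{\ast}_{T_{k-1}}$, are attached to the fresh primal squares centered at $z+(1,1)$ and $z+(0,1)$, and by the analogue of~\eqref{eq:casesForDomination} for the corner model (the same rigidity argument shows that a non-pivotal edge is dual-open with conditional probability at most $1/2$) are independently dual-open with conditional probability exactly $1/2$ each. The third new edge coincides with $e^{\ast}_N$, which is attached to the primal square at $z$ itself and whose conditional distribution is already partially determined by what was revealed at $z$; an explicit case check along the four subcases of Lemma~\ref{lem:pivot} shows that $e^{\ast}_N$ is dual-closed with conditional probability bounded below by some $\eta>0$ in every case. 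Multiplying, with probability at least $\eta/4$ all three new edges are dual-closed, no further edges enter $L$, the algorithm terminates at or before step $T_k$, and hence $T_{k}=\infty$. This yields $\rho\leq 1-\eta/4$ and, combined with the direct $1/2$ bound available whenever $e^{\ast}_W$ was not previously revealed, the desired uniform $\rho<1$.

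The hardest step will be the detailed verification that the third new edge $e^{\ast}_N$ really does contribute a uniform closing probability in each of the four case configurations of Lemma~\ref{lem:pivot}; the delicate subcase is Case~B conditioned on the pivotal being open, where $z$ is forced into the SW configuration and $e^{\ast}_N$ becomes open deterministically, so the product bound $(1/2)^{3}$ collapses. In that degenerate regime, rather than requiring immediate termination I would use the exponential tails of the visited-cluster size for the corner model (obtained by the same Bernoulli-domination and enhancement arguments of Sections~\ref{sec:UnderConstraints}--\ref{sect:enhancement}, which rely only on the rigidity $E^{\ast}+W^{\ast}=1$, $N^{\ast}+S^{\ast}=1$ shared by the two models and on the forbidden-pattern analysis which goes through unchanged) together with a planarity argument in the spirit of the proof of Lemma~\ref{lem:pivot} to show that the depth-first sub-exploration triggered by $e^{\ast}_N$ terminates without producing a further pivotal with positive probability bounded away from zero; absorbing this into $\eta$ completes the uniform bound on $\rho$.
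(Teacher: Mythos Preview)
Your overall plan---bounding $\mathbb{P}^{\text{corn}}_{1/2}(T_k<\infty\mid\mathcal{F}_{T_{k-1}})$ uniformly by some $\rho<1$---is the right one, and your identification of the obstruction (the pivotal is deterministically open when $e^\ast_W$ was revealed closed) is correct. However, the concrete argument you give to obtain $\rho<1$ contains a genuine gap.

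The claim that the first two edges added after opening the pivotal, namely $(y^\ast,a^\ast)$ and $(y^\ast,v^\ast)$, are attached to \emph{fresh} primal squares is false. In the automatically-open case ($e^\ast_W$ explored and closed), the vertex $u^\ast$ has already been visited, and the paper shows that $(u^\ast,y^\ast)$ has necessarily been explored and found closed. Since $(u^\ast,y^\ast)$ is the south dual edge of the square at $z+(0,1)$, the edge $(y^\ast,v^\ast)$ is \emph{pivotal}, not fresh. Worse, $(y^\ast,a^\ast)$ can itself be automatically open: the paper's ``less favorable case'' is precisely a configuration in which $(b^\ast,v^\ast)$ was previously explored and closed, forcing $(y^\ast,a^\ast)$---and then $(a^\ast,c^\ast)$---to be open with probability $1$. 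So your product bound $(1/2)^2$ for the first two edges does not hold in general, and the fallback you sketch (exponential tails of visited clusters plus ``a planarity argument'') does not give what you need: the cluster-size bounds control how large $\kedf{o^\ast}{k}$ is, not whether it terminates the algorithm versus ending at yet another pivotal.

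The paper's proof uses a different and sharper observation. It splits pivotals into automatically-open ($e^\ast_W$ revealed closed) and the rest; the latter are closed with probability at least $1/2$ and are handled exactly as in Lemma~\ref{lem:SubGeom}. For the former, the key ``tricky remark'' is that $(y^\ast,v^\ast)$, if it is ever reached, is pivotal but \emph{never} automatically open: automatic openness would require $(w^\ast,u^\ast)$ to be explored and closed, which together with $(u^\ast,y^\ast)$ closed yields a configuration excluded by the planarity analysis of Case~2 in Lemma~\ref{lem:pivot}. With this in hand, a short finite case analysis (the ``most favorable'' and ``less favorable'' cases) shows that after at most three consecutive automatically-open pivotals the remaining list consists only of non-auto-open pivotals, each closed with probability at least $1/2$, so the exploration stops with probability at least $1/8$. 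This gives $\mathbb{P}^{\text{corn}}_{1/2}(\mathcal{T}'_{\text{piv}}\ge n)\le (7/8)^{n/3}$ for the auto-open count, which combined with the $2^{-n}$ bound on the non-auto-open count yields the lemma.
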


For the rest of the proof let us denote by $e^\ast=(x^\ast,y^\ast)$ the edge revealed at the $n$-th step of the exploration process of $\df{o^\ast}{}$ and assume $e^\ast$ is pivotal. By symmetry, we can assume that $e^\ast$ is the east side of a unit square centered at a primal vertex $z$ (we also use the notations $e^\ast_N$, $e^\ast_W$ and $e^\ast_S$ previously introduced). We denote by $\mathcal{F}_n$ the $\sigma$-algebra gathering all the information generated by the exploration process until step $n$. Unlike in the proof of Theorem~\ref{thm:upperBound}, the revealed pivotal edge $e^\ast$ may be open with probability $1$ conditionally to $\mathcal{F}_n$. We will say that such a pivotal edge is {\em automatically open}, or use the shorter expression {\em auto-open}. The property of an edge $e^\ast$ to be automatically open or not is measurable with respect to $\mathcal{F}_n$: 

\begin{lemma}
\label{lem:RkAutoOpen}
In the previously used notation, the pivotal edge $e^\ast$ is automatically open if and only if $e^\ast_W$ has been already explored and is closed. Besides,
\begin{equation}
\label{NotAutoOpen}
\mathbb{P}^{\text{corn}}_{1/2} \big( \mbox{$e^\ast$ is open} \, |\, \mathcal{F}_n \big) \mathds{1}\{\mbox{$e^\ast$ is not auto-open}\} \leq 1/2.
\end{equation}
\end{lemma}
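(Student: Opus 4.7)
The plan is to use the i.i.d.\ structure of the directed-corner model over primal vertices to localize the conditional probability, and then to perform a short case enumeration around the primal vertex $z$. Under $\mathbb{P}^{\text{corn}}_{1/2}$ the four outgoing-edge configurations $\mathrm{NE},\mathrm{NW},\mathrm{SE},\mathrm{SW}$ at each primal vertex are independent and each occur with probability $1/4$, and the event $\{e^\ast \text{ open}\}$ is measurable with respect to the configuration at $z$ alone. Hence $\mathbb{P}^{\text{corn}}_{1/2}(e^\ast \text{ open} \mid \mathcal{F}_n)$ depends on $\mathcal{F}_n$ only through the information about the four dual edges $e^\ast, e^\ast_N, e^\ast_W, e^\ast_S$ surrounding $z$, that is, through which of $e^\ast_N, e^\ast_W, e^\ast_S$ have been explored before step $n$ and, if so, their revealed states.

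Next I would restrict the admissible pivotal histories. By Rule~1 the edge $e^\ast_N$ cannot have been explored before $e^\ast$, since its starting vertex coincides with the (as yet unvisited) endpoint $y^\ast_n$ of $e^\ast$; hence at least one of $e^\ast_W, e^\ast_S$ must have been explored and be closed. The planarity observation made at the end of Case~2 in the proof of Lemma~\ref{lem:pivot}, which depends only on the exploration procedure and Rules~1--2 and therefore carries over verbatim to the corner setting, forbids both $e^\ast_W$ and $e^\ast_S$ from being simultaneously explored and closed. This leaves exactly three pivotal histories around $z$: (a) $e^\ast_W$ is explored and closed (with $e^\ast_S$ either unexplored or explored and open); (b) $e^\ast_S$ is explored and closed while $e^\ast_W$ is unexplored; and (c) $e^\ast_S$ is explored and closed while $e^\ast_W$ is explored and open.

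The rest reduces to a direct table look-up: in the corner model at parameter $1/2$, $e^\ast$ is open exactly in configurations $\mathrm{NW}$ and $\mathrm{SW}$, $e^\ast_W$ is closed exactly in $\mathrm{NW}$ and $\mathrm{SW}$, and $e^\ast_S$ is closed exactly in $\mathrm{SE}$ and $\mathrm{SW}$. In case~(a) the conditioning localizes the configuration at $z$ to $\{\mathrm{NW},\mathrm{SW}\}$, on which $e^\ast$ is always open, yielding conditional probability $1$, so $e^\ast$ is auto-open. In case~(b) the conditional law is uniform on $\{\mathrm{SE},\mathrm{SW}\}$ and $e^\ast$ is open only in $\mathrm{SW}$, giving conditional probability $1/2$; in case~(c) the conditional law is supported on $\{\mathrm{NE},\mathrm{SE}\}\cap\{\mathrm{SE},\mathrm{SW}\}=\{\mathrm{SE}\}$, where $e^\ast$ is closed, giving conditional probability $0$. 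Hence the auto-open regime is precisely case~(a), i.e., exactly when $e^\ast_W$ has already been explored and is closed, proving the first claim; and on the complementary event the conditional probability of $\{e^\ast\text{ open}\}$ is at most $1/2$, which establishes~\eqref{NotAutoOpen}.

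The only delicate point is to confirm that the planarity exclusion really transfers unchanged from Lemma~\ref{lem:pivot} to the corner-model exploration; since that exclusion is a topological property of the exploration paths rather than of the underlying edge law, this should be essentially immediate. Apart from this bookkeeping, the proof is a direct enumeration of the configuration at $z$ based on the i.i.d.\ structure of the four corner possibilities.
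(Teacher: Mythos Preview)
Your proof is correct and follows essentially the same route as the paper: both arguments localize the conditional law to the corner configuration at the single primal vertex $z$ and then enumerate. The only difference is organizational: the paper splits according to the status of $e^\ast_W$ alone (closed / open / unexplored) and never needs the planarity exclusion from Lemma~\ref{lem:pivot}, whereas you first list the three admissible pivotal histories and invoke that exclusion to rule out both $e^\ast_W$ and $e^\ast_S$ being explored and closed. That invocation is valid but unnecessary here, since even in that hypothetical history the conditioning would restrict to $\{\mathrm{SW}\}$ and still give probability~$1$, so case~(a) remains auto-open regardless.
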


\begin{proof}[Proof of Lemma~\ref{lem:RkAutoOpen}] If $e^\ast_W$ has already been explored and is closed, then the two possible combinations for open outgoing edges starting at the primal vertex $z$ are north-west and south-west. In both cases, the dual edge $e^\ast$ is open with probability one, i.e., it is automatically open. Otherwise, either $e^\ast_W$ has already been explored and is open, or $e^\ast_W$ has not been explored yet. In the first case, the two possible combinations for open outgoing edges starting at $z$ are north-east and south-east: $e^\ast$ is open with zero probability. In the second case, the edge $e^\ast_S$ has already been explored and is closed, since $e^\ast$ is pivotal. So the two possible combinations for open outgoing edges starting at $z$ are south-west and south-east. In both cases, $e^\ast$ is open with probability $1/2$. Hence, these two later cases are the only two ways for the pivotal edge $e^\ast$ to not be automatically open and we get~\eqref{NotAutoOpen}.
\end{proof}

\begin{proof}[Proof of Lemma~\ref{lem:NbPivotCorner}] Let us denote by $\mathcal{T}_{\text{piv}}'$ (respectively $\mathcal{T}_{\text{piv}}''$) the number of open pivotal edges revealed by the exploration process of $\df{o^\ast}{}$ that are automatically open (respectively that are not), so that $\mathcal{T}_{\text{piv}} = \mathcal{T}_{\text{piv}}'+\mathcal{T}_{\text{piv}}''$. Thanks to \eqref{NotAutoOpen}, we prove that the probability $\mathbb{P}^{\text{corn}}(\mathcal{T}_{\text{piv}}'' \geq n)$ is at most $2^{-n}$ (proceeding as in the proof of Lemma~\ref{lem:SubGeom}).

It then remains to get the same kind of inequality but for $\mathcal{T}_{\text{piv}}'$ to obtain Lemma~\ref{lem:NbPivotCorner}. To do this, we use the fact that, since any automatically open pivotal edge is in particular pivotal, it is still the last way for the exploration process to continue. Hence, even though it cannot immediately stop our exploration process because it is always open, we will show that the exploration process has a decent chance to stop a few steps after meeting an automatically open pivotal edge.

So we focus on the case where $e^\ast=(x^\ast,y^\ast)$ is automatically open, which means that $e^\ast_W$ has already been explored and is closed by Lemma~\ref{lem:RkAutoOpen}. The vertex $u^\ast$, defined as the starting point of $e^\ast_W := (u^\ast,\cdot)$, has then been visited before step $n$. So does the edge $(u^\ast,y^\ast)$ (just after $e^\ast_W$), since the algorithm proceeds in the depth-first fashion and in the counter-clockwise sense: it is necessarily closed by Rule~1. Let us make the following {\em tricky remark.} It is crucial to point out that the edge $(y^\ast,v^\ast)$, where $v^\ast := y^\ast+(0,1)$, in the case where it will be revealed, will be a pivotal edge but not an automatically open one. Indeed, $(y^\ast,v^\ast)$ is pivotal since $(u^\ast,y^\ast)$ has been already explored and is closed. The fact that $(y^\ast,v^\ast)$ is automatically open would mean, by Lemma~\ref{lem:RkAutoOpen}, that $(w^\ast,u^\ast)$, where $w^\ast := u^\ast+(0,1)$, has been already explored and is closed, leading to a situation which is forbidden by planarity (refer to Case~2 in the proof of Lemma~\ref{lem:pivot}). In conclusion, the edge $(y^\ast,v^\ast)$, in the case where it will be revealed, has a probability at least $1/2$ to be closed. See Figure~\ref{fig:MostFavorable} for an illustration.

At the $(n+1)$-th step of the algorithm, the edge $(y^\ast,a^\ast)$, with $a^\ast := y^\ast+(1,0)$, is revealed and several situations may occur according to $\mathcal{F}_n$. In the sequel, we only investigate two cases: the most favorable one, i.e., in order to quickly stop the exploration process, in which $(y^\ast, a^\ast)$ is not an automatically open pivotal edge and in the opposite situation, the less favorable case, see Figure~\ref{fig:LessFavorable}. We leave the verification of the intermediate cases to the reader.

\medskip

\textbf{The most favorable case:} The edge $(y^\ast,a^\ast)$ is not an automatically open pivotal edge. Either it is pivotal (but not automatically open) and has probability at least $1/2$ to be closed by Lemma~\ref{lem:RkAutoOpen}. Or it is not pivotal and has also probability at least $1/2$ to be closed. In both cases, the edge $(y^\ast,a^\ast)$ revealed at the $(n+1)$-th step is closed with probability at least $1/2$ (refer to the previous computations). Assume such situation occurs. Then the last chance for the algorithm to continue its exploration is the edge $(y^\ast,v^\ast)$. Either $v^\ast$ has been already explored and the algorithm stops by Rule~1. Or $(y^\ast,v^\ast)$ is revealed and is closed with probability at least $1/2$ by the previous analysis. In conclusion, in any case, the algorithm process does not visit any extra vertex after $y^\ast$ with probability at least $(1/2)^2$ by independence.

\begin{figure}[!ht]
\begin{center}
\includegraphics[width=10cm,height=4cm]{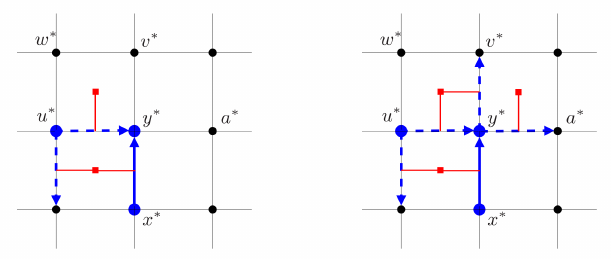}
\caption{\label{fig:MostFavorable}Dual vertices visited by the exploration process are blue points. Open and revealed edges are depicted with blue lines while closed and revealed edges are depicted with blue dotted lines. By red lines, we indicate the primal vertices at which the states of dual edges depend. {\em Left:} the situation at the end of the $n$-th step, the edge $e^\ast=(x^\ast,y^\ast)$ has just been revealed. It is an automatically open pivotal edge. {\em Right:} the most favorable case. Both edges $(y^\ast, a^\ast)$ thus $(y^\ast, v^\ast)$ are closed with probability at least $1/4$. The exploration algorithm then stops.}
\end{center}
\end{figure}

\medskip

\textbf{The less favorable case:} The edges $(y^\ast,a^\ast)$ and $(a^\ast,c^\ast)$ are automatically open pivotal edges, where $c^\ast := a^\ast - (0,1)$. See Figure~\ref{fig:LessFavorable}. This hypothesis admits several consequences. First, the edge $(b^\ast,v^\ast)$ has been already explored and is closed (this justifies that $(y^\ast,a^\ast)$ is an automatically open pivotal edge). Hence, the vertex $b^\ast$ has been already visited, like the edge $(b^\ast,a^\ast)$ which is closed by Rule~1. The same holds for the automatically open pivotal edge $(a^\ast,c^\ast)$: $j^\ast$ has been already visited, both edges $(j^\ast,i^\ast)$ and $(j^\ast,c^\ast)$ are explored too, and closed. The left part of Figure~\ref{fig:LessFavorable} represents the corresponding situation. Now, the important remark is that, at the end of step $n+2$, all the edges that are to be explored (i.e., in the list $L^{(n+3)}$) can be each closed with positive probability. By Rule 1, $(c^\ast,x^\ast)$, $(c^\ast,j^\ast)$ and $(a^\ast,b^\ast)$ will not be explored. So, the list $L^{(n+3)}$ only contains the edges $(c^\ast,d^\ast)$, $(a^\ast,i^\ast)$ and $(y^\ast,v^\ast)$ (in this particular order) which are all pivotal but not automatically open, thanks to the tricky remark (see above). By Lemma~\ref{lem:RkAutoOpen}, each of them has probability at least $1/2$ to be closed and, by independence, the algorithm stops after the visit of $c^\ast$ with probability at least $(1/2)^3$.

\begin{figure}[!ht]
\begin{center}
\includegraphics[width=12cm,height=5.5cm]{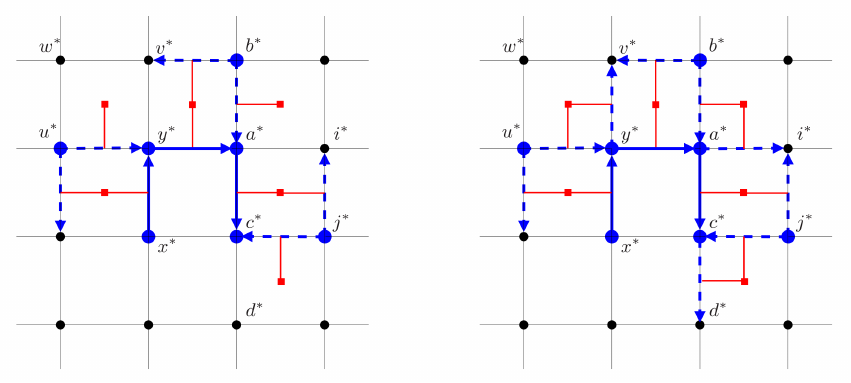}
\caption{\label{fig:LessFavorable}The color code is the same as in Figure~\ref{fig:MostFavorable}. {\em Left:} the situation after the revealment of the three automatically open pivotal edges that are $(x^\ast,y^\ast)$, $(y^\ast,a^\ast)$ and $(a^\ast,c^\ast)$. {\em Right:} the three edges remaining to be explored, namely $(c^\ast,d^\ast)$, $(a^\ast,i^\ast)$ and $(y^\ast,v^\ast)$, are all closed with probability at least $1/8$. The algorithm then stops.}
\end{center}
\end{figure}

Let us now conclude. Proceeding as in the proof of Lemma~\ref{lem:SubGeom}, we can state that the probability $\mathbb{P}^{\text{corn}}_{1/2}(\mathcal{T}_{\text{piv}}' \geq n)$ is at most $(1 - 1/8)^{n/3}$. Indeed, for each automatically open pivotal edge $e^\ast$, with probability at least $1/8$, the exploration process will visit at most two extra automatically open pivotal edges after $e^\ast$. The power $n/3$ means that we apply our argument to one automatically open pivotal edge in three.
\end{proof}

\section*{Appendix}
\label{sec_prelim}

This appendix is devoted to the proofs of Lemmas~\ref{lem:monotoneP_kappa}, \ref{lem_low1st}, \ref{lem:Critical-iid} and~\ref{lem:Critical-0-4}.

\begin{proof}[Proof of Lemma~\ref{lem:monotoneP_kappa}]
Let $p \in [0,1]$ and recall that $k=\lfloor 2dp\rfloor$ and $\varepsilon=2dp-k$. Recall also that the probability measure $\PP_p$ is defined on the configuration set $\Omega$ as follows:
\begin{itemize}
\item Pick $k$ directed edges among the $2d$ outgoing edges from $x$ and,
\item with probability $\varepsilon$, pick a $(k+1)$-th directed edge among the $2d - k$ remaining ones.
\end{itemize}

For any vertex $x$, we denote by $(e_i(x))_{1\leq i\leq 2d}$ the $2d$ outgoing edges from $x$. Consider a random permutation of these edges, denoted by $(e_{\sigma_i}(x))_{1\leq i\leq 2d}$, where $\sigma = \sigma^x = (\sigma_i)_{1\leq i\leq 2d}$ is chosen uniformly among the permutations of $(1,\dots, 2d)$. Moreover, the random permutations $\{\sigma^x\}_{x\in\Z^{d}}$ are chosen independently from each other. We also consider a sequence $\{U_x\}_{x\in\Z^{d}}$ of i.i.d.~random variables whose common distribution is the uniform law on $[0,1]$, independent from the $\{\sigma^x\}_{x\in\Z^{d}}$ too. The collection $\{\sigma^x,U_x\}_{x\in\Z^{d}}$ constitutes the random input of our construction. 

Let us build a random variable $X_p$ valued in $\Omega = \{0,1\}^{\E}$ as follows. For any vertex $x$, we declare open the edges $e_{\sigma_1}(x),\ldots,e_{\sigma_k}(x)$ and also the edge $e_{\sigma_{k+1}}(x)$ only if $U_x < \varepsilon$. Now, for any $e \in \E$, we set $X_p(e) = 1$ if and only if $e$ has been declared open. Otherwise $X_p(e) = 0$. Hence, $X_p$ is distributed according to $\PP_p$. Indeed, for any vertex $x$, the edges $e_{\sigma_1}(x),\ldots,e_{\sigma_{k+1}}(x)$ are chosen uniformly among $(e_i(x))_{1\leq i\leq 2d}$ and $e_{\sigma_{k+1}}(x)$ is selected uniformly among the remaining edges and it is open with probability $\varepsilon$.

Henceforth, the previous construction provides a monotone coupling as $X_p \prec X_{p'}$ whenever $p \leq p'$ where $\prec$ denotes the partial order on $\Omega$. If $k < k'$ then by construction all the edges that are open for $X_p$ are also open for $X_{p'}$. If $k = k'$ then, for any vertex $x$, $U_x < \varepsilon$ implies $U_x < \varepsilon'$ since $\varepsilon = p-k$ and $\varepsilon' = p'-k'$, meaning that, if the edge $e_{\sigma_{k+1}}(x)$ is open for $X_p$, it is also open for $X_{p'}$. Consequently
\[
\PP_p(o \rightsquigarrow \infty) = P(X_p \in \{o\rightsquigarrow\infty\}) \leq P(X_{p'} \in \{o\rightsquigarrow\infty\}) = \PP_{p'}(o \rightsquigarrow \infty),
\]
as desired. 
\end{proof}

\begin{proof}[Proof of Lemma~\ref{lem_low1st}]
For all $n\geq 0$, le us write
\begin{align*}
\theta_p = \PP_p(o \rightsquigarrow \infty) \leq \mathbb{P}_p \big( \exists \text{ self-avoiding open path of length $n$ starting at the origin} \big)
\end{align*}
and hence, using the union bound, 
\begin{equation}
\label{UnionBound:c_n}
\theta_p \leq \limsup_{n \uparrow \infty} c_n(d) p^n,
\end{equation}
where $c_n(d)$ denotes the number of self-avoiding paths of length $n$ in the $d$-dimensional hypercubic lattice that start at the origin. Above, it is also used that a self-avoiding path visits a vertex only once and hence independence applies. So, the right-hand side in \eqref{UnionBound:c_n} equals zero whenever $c(d) p < 1$ where $c(d) = \lim_{n\uparrow\infty} c_n(d)^{1/n}$ is the connective constant. This implies the desired result.
\end{proof}

\begin{proof}[Proof of Lemma~\ref{lem:Critical-iid}]
We only consider the case $d=2$ since the other cases can be proved similarly. Recall that $\mathbb{P}^{\text{iid}}_p$ is the probability distribution on the configuration set $\Omega = \{0,1\}^\E$, where $\E = \{(x,y)\colon \|x-y\|_{\ell^{1}} = 1\}$ is the set of {\em directed} edges of $\mathbb Z^2$, in which each directed edge is independently open with probability $p$. Also, $\mathcal{P}_p$ denotes the probability distribution on $\Omega' = \{0,1\}^{\E'}$, where $\E' = \{\{x,y\}\colon \|x-y\|_{\ell^{1}} = 1\}$ is the set of {\em undirected} edges of $\mathbb Z^2$, in which each undirected edge is independently open with probability $p$. On a common probability space, we aim to build two variables $X_p$ and $Y_p$ respectively valued in $\Omega$ and $\Omega'$ such that $X_p \sim \mathbb{P}^{\text{iid}}_p$, $Y_p \sim \mathcal{P}_p$ and
\[
\mathbb{P}^{\text{iid}}_p(o \rightsquigarrow \infty) = P(X_p \in \{o \rightsquigarrow \infty\}) = P(Y_p \in \{o \rightsquigarrow \infty\}) = \mathcal{P}_p(o \rightsquigarrow \infty).
\]
With a slight abuse of notations, we still denote by $o \rightsquigarrow \infty$ for the existence of an infinite self-avoiding open path of undirected edges starting at $o$. Then, using well-known percolation results for $\mathcal{P}_p$ in $\Z^2$,the statement of Lemma~\ref{lem:Critical-iid} follows directly.

In order to define the coupling, let us consider a family $\{U_e\colon e \in \E\}$ of i.i.d.~random variables uniformly distributed on $[0,1]$. For any directed edge $e$, we set $X_p(e) = 1$ if and only if $U_e < p$. The random variable $X_p$, valued in $\Omega$, is distributed according to $\mathbb{P}^{\text{iid}}_p$.

Now, let us explore the forward set $\df{o}{}$ associated to $X_p$ while defining step by step a variable $Y_p$ valued in $\Omega'$. This exploration process is performed in breadth-first fashion and in the counter-clockwise sense. At the beginning, we set $V=\{o\}$ and $L$ as the list made up with the four directed edges starting at $o$. The list $L$ will play the role of directed edges to be (possibly) revealed. While $L$ is non empty, we proceed as follows. Take $(x,y)$ be the first element of $L$:
\begin{itemize}
\item If $(y,x)$ has been already revealed then we do not explore $(x,y)$. The vertex $y$ has been already visited: returning to $y$ will not show anything new about the exploration of $\df{o}{}$. We delete $(x,y)$ from the list $L$.
\item If $(y,x)$ has not been revealed yet, we reveal $(x,y)$:
\begin{itemize}
\item If $X_p((x,y)) = 0$ (closed) then we set $Y_p(\{x,y\}) = 0$ and delete $(x,y)$ from the list $L$.
\item If $X_p((x,y)) = 1$ (open) then we set $Y_p(\{x,y\}) = 1$, add the vertex $y$ to the set $V$ and add the four directed edges starting at $y$ to the list $L$ (at the beginning of the list). We delete $(x,y)$ from the list $L$.
\end{itemize}
\end{itemize}

Note that, if the edge $(x,y)$ has been revealed during the exploration process, then $(y,x)$ has not been, so that the value $Y_p(\{x,y\})$ is defined without ambiguity. If neither of the edges $(x,y)$ and $(y,x)$ have been revealed during the exploration process, we merely set $Y_p(\{x,y\})$ as being equal to $X_p((x,y))$ or $X_p((y,x))$ according to the lexicographic order between $x$ and $y$. Henceforth the random variable $Y_p$ is defined on the whole set $\E'$. Note also that, for any edge $\{x,y\}$, exactly one of the uniform random variables $U_{(x,y)}$ and $U_{(y,x)}$ has been used to define $Y_p(\{x,y\})$ and this choice may depend on the other $U_e$'s (through the exploration process) but not on $U_{(x,y)}$ and $U_{(y,x)}$ themselves. For this reason $Y_p$ is distributed according to $\mathcal{P}_p$.

Since the exploration process is performed in breadth-first fashion, the whole set $\df{o}{}$ is explored by the exploration process (whether it is finite or not). Hence our coupling means that the forward set $\df{o}{}$ (for $X_p$) is equal to the cluster of the origin (for $Y_p$)-- made up with vertices that can be reached from $o$ by an open undirected path. As a consequence, $P(X_p \in \{o \rightsquigarrow \infty\}) = P(Y_p \in \{o \rightsquigarrow \infty\})$ which finishes the proof.
\end{proof}

\begin{proof}[Proof of Lemma~\ref{lem:Critical-0-4}]
We trivially couple the all-or-none directed percolation model with i.i.d.\ Bernoulli site percolation by saying that a vertex is open if and only if the vertex connects (via directed edges) to all its four neighbors. In particular, if $p>p_c^{\text{site}}$, then (by ergodicity of the model) the origin is connected to infinity via a self-avoiding path of open vertices with positive probability. But this implies that also the origin is connected to infinity via a directed path in the all-or-none model with positive probability. Hence, $p_c^{\text{aon}}\le p_c^{\text{site}}$. But the same also holds the other way around, which concludes the proof. 
\end{proof}

\section*{Acknowledgements}

BJ and JK acknowledge the financial support of the Leibniz Association within the Leibniz Junior Research Group on {\em Probabilistic Methods for Dynamic Communication Networks} as part of the Leibniz Competition. DC and BH are supported in part by the CNRS RT 2179 MAIAGES, DC is supported by the ANR project GrHyDy (ANR-20-CE40-0002). DC and BH thank J.-B. Gou\'er\'e and V. Tassion for stimulating discussions and valuable advice.

\bibliographystyle{amsalpha}
\bibliography{bib}
\appendix

\end{document}